\theoremstyle{plain}
\newtheorem{theorem}{Theorem}[section]
\newtheorem{lemma}[theorem]{Lemma}
\newtheorem{corollary}[theorem]{Corollary}
\newtheorem{definition}[theorem]{Definition}
\theoremstyle{remark}
\newtheorem*{remark}{Remark}
\newtheorem*{example}{Example}
\newtheorem*{notation}{Notation}
\newtheorem*{acknowledgment}{Acknowledgment}
\numberwithin{equation}{section}
\newcommand{\bA}{\mathbb{A}}
\newcommand{\bB}{\mathbb{B}}
\newcommand{\K}{\mathbb{K}}
\newcommand{\bK}{\mathbb{K}}
\newcommand{\bP}{\mathbb{P}}
\newcommand{\R}{\mathbb{R}}
\newcommand{\N}{\mathbb{N}}
\newcommand{\Z}{\mathbb{Z}}
\newcommand{\PP}{\mathbb{P}}
\newcommand{\bF}{\mathbb{F}}
\newcommand{\mbA}{{\mathring \bA}}
\newcommand{\cX}{\mathcal{X}}
\newcommand{\cA}{\mathcal{A}}
\newcommand{\cY}{\mathcal{Y}}
\newcommand{\cD}{{\mathcal D}}
\newcommand{\cG}{{\mathcal G}}
\newcommand{\cU}{{\mathcal U}}
\newcommand{\cF}{\mathcal{F}}
\newcommand{\g}{\mathfrak{g}}
\newcommand{\h}{\mathfrak{h}}
\newcommand{\f}{\mathfrak{f}}
\newcommand{\e}{\mathfrak{e}}
\newcommand{\rr}{\mathbf{r}}
\newcommand{\bv}{\mathbf{v}}
\newcommand{\bfG}{\mathbf{G}}
\newcommand{\bfU}{\mathbf{U}}
\newcommand{\bfP}{\mathbf{P}}
\newcommand{\bfT}{\mathbf{T}}
\newcommand{\bfH}{\mathbf{H}}
\newcommand{\bfS}{\mathbf{S}}
\newcommand{\Gl}{\mathrm{GL}}
\newcommand{\Bij}{\mathrm{Bij}}
\newcommand{\SL}{\mathrm{SL}}
\newcommand{\Hom}{\mathrm{Hom}}
\newcommand{\Aut}{\mathbf{Aut}}
\newcommand{\End}{\mathrm{End}}
\newcommand{\Gras}{\mathrm{Gras}}
\newcommand{\ad}{\mathrm{ad}}
\newcommand{\id}{\mathrm{id}}
\newcommand{\pr}{\mathrm{pr}}
\newcommand{\Walg}{\mathrm{Walg}}
\newcommand{\Infaut}{\mathbf{Infaut}}
\newcommand{\set}{\mathrm{set}}
\newcommand{\La}{\Lambda}
\newcommand{\eps}{\varepsilon}
\newcommand{\inv}{^{-1}}
\newcommand{\msk}{\medskip}
\newcommand{\ssk}{\smallskip}
\newcommand{\nin}{\noindent}
\newcommand{\ul}{\underline}
\begin{document}

\title[Jordan Geometries -- an approach via Inversions]{Jordan Geometries -- an approach via inversions}

\author{Wolfgang Bertram}
\address{
Universit\'{e} de Lorraine, 
CNRS, Institut \'Elie Cartan de Lorraine, UMR 7502\\
Vandoeuvre-l\`es-Nancy, F-54506, France.}

\email{\url{wolfgang.bertram@univ-lorraine.fr}}

\subjclass[2010]{ 
20N10, 
17C37, 
16W10, 
32M15, 
51C05, 
53C35
}

\keywords{inversion, torsor, symmetric space, inversive action, generalized projective geometry, 
Jordan algebra and -pair, associative (Lie) algebra, 
modular group}

\begin{abstract}
{\em Jordan geometries} are defined as spaces $\cX$ equipped with point reflections
$J^{xz}_a$ depending on triples of points $(x,a,z)$, exchanging $x$ and $z$ and fixing $a$.
In a similar way, symmetric spaces have been defined by Loos (\cite{Lo69}) as spaces
equipped with point reflections $S_x$ fixing $x$, and therefore
the theories of Jordan geometries and of symmetric spaces are  closely related to 
each other --  in order to describe this link,
the notion of {\em inversive action} of torsors and of symmetric spaces is introduced.
Jordan geometries give rise both to inversive  actions  of certain abelian torsors
and of certain symmetric spaces, which in a sense are dual to each other.
By using the algebraic differential calculus dveloped in \cite{Be14},
we attach a tangent object to such geometries, namely a {\em Jordan pair}, resp.\ a {\em Jordan algebra}.
The present
approach works equally well over base rings in which $2$ is not invertible (and in particular over
$\Z$), and hence can be seen as a globalization of {\em quadratic Jordan pairs}; 
it also has a very transparent relation with
the theory of {\em associative geometries} from  \cite{BeKi1}.
\end{abstract}

\maketitle

\section*{Introduction}

Symmetries of order two -- called {\em reflections, inversions} or
{\em involutions}, according to  context  -- play a basic r\^ole in all of geometry, and some parts of geometry can 
be entirely reconstructed by using them (cf.\ the
``Aufbau der Geometrie aus dem Spiegelungsbegriff'', \cite{Ba73}).
In the present work, we will use the term ``inversion'' since the involutions  we use
can be interpreted as {\em (generalized) inverses in rings or algebras}:  geometrically,
the inversion map $x \mapsto x\inv$ in a unital associative algebra behaves like a {\em reflection
through a point}, with respect to the ``isolated'' fixed point $1$,  the unit element of the algebra. 
This choice of terminology should not lead
to conflict with the common one from Inversive Geometry, where the term ``inversion'' refers to 
reflections with respect to circles or spheres (cf.\ \cite{Wi}). 

\ssk
The inversion map 
of an associative algebra
is a ``Jordan feature'', {\it i.e.}, it depends only on the symmetric part (``Jordan product'')
$x \bullet z = \frac{1}{2}(xz+zx)$ of the associative product, and it contains the whole information
of the Jordan product. 
The approach to Jordan algebras given in the book \cite{Sp} by T.\ Springer is based on this observation.
In the present work, we extend this approach to 
the {\em geometries} corresponding to Jordan algebraic structures.
We have defined such geometries, called {\em generalized projective geometries},
 in another way in \cite{Be02} -- the approach given there was not based on inversions, but
 rather on the various actions of a  scalar ring $\K$ on the geometry (a point of view 
 introduced by Loos in \cite{Lo79}); it relied in a crucial
way on {\em midpoints}, and thus on the existence of a scalar $\frac{1}{2}$ in $\K$.
The present approach does not have this drawback, and at the same time is simpler and more 
natural. Other advantages  are the  close relation with the {\em associative case}  studied in \cite{BeKi1,BeKi2}, and 
 a conceptual use of
``algebraic differential calculus'', keeping  close both to the  language of 
differential geometry and to the use of scalar extensions in algebraic geometry. 
Let us explain these items in more detail.
 
 \subsection{Jordan and associative structure maps} 
The general framework is given by a ``geometric space'' $\cX$ together with a 
{\em Jordan structure map} $J$, which associates to certain triples $(x,a,z)$ (called ``transversal'')
 a bijection $J^{xz}_a:\cX \to \cX$, subject to axioms that we call ``(geometric) Jordan identities''.
 Similarly,  an {\em associative structure map}  $M$ is given by
associating to certain (``closed transversal'') quadruples $(x,a,z,b)$ of points a bijection $M_{ab}^{xz}$ of $\cX$, such
that again certain axiomatic properties are satisfied. 
The precise form of these  properties is given in definitions \ref{def:Jordanstructure}  and \ref{def:assgeo}.
One of these properties is that 
$J^{xz}_a$ exchanges $x$ and $z$ and fixes $a$:
\begin{equation}\label{eqn:xaz} 
 J^{xz}_a(a)=a, \qquad J^{xz}_a(x)=z, \qquad J^{xz}_a(z)=x ,
 \end{equation}
and $M_{ab}^{xz}$ exchanges $a$ and $b$, as well as $x$ and $z$: 
\begin{equation}\label{eqn:xazb}
M_{ab}^{xz}(x)=z, \quad M_{ab}^{xz}(z)=x, \qquad M_{ab}^{xz}(a)=b, \qquad M_{ab}^{xz}(b)=a .
\end{equation}
 Instead of speaking of a family of maps, parametrized by certain tuples, we may also consider $J$ as a quaternary,
 and $M$ as a quintary {\em structure map} 
  \begin{equation}
 J : \cX^4 \supset \cD_3 \times \cX  \to \cX, \quad
 (x,a,z,y)\mapsto J^{xz}_a(y)  ,
 \end{equation}
  \begin{equation}
 M : \cX^5  \supset \cD_4' \times \cX   \to \cX, \quad
 (x,a,z,b,y)\mapsto M^{xz}_{ab} (y) ,
 \end{equation}
 where $\cD_n$ is the set of transversal,  and $\cD_n'$ the set of closed  transversal $n$-tuples.
The following example helps to get an idea  on the geometry   of such maps.

\subsection{An archetypical example: the projective line}
Let  $\cX = \bF \bP^1$ be  the projective line over a field $\bF$.
Here, $\cD_3'$ is just the set of  triples of pairwise different points from $\cX$.
Since
 the projective group $\bP \Gl(2,\bF)$ acts simply transitively on $\cD_3'$, for each triple $(x,a,z) \in \cD_3'$,
 there exists a unique projective map   $J^{xz}_a  \in \bP\Gl(2,\bF)$
 such that conditions (\ref{eqn:xaz}) hold.  It follows that $(J^{xz}_a)^2$ fixes all three points and hence is the identity; this 
justifies to  call $J^{xz}_a$ an {\em  inversion}.
The structure map $J$  has two interpretations,  an
   ``additive'' one ( A),  and a  ``multiplicative'' one  (M):

\msk \nin
{\bf (A)}
Choose $a = \infty$ (point at infinity). Then $\cU_a:=\cX \setminus \{ a \}$ is the affine line $\bF$, and
\begin{equation}\label{eqn:LineA}
J^{xz}_a(y)=  J^{xz}_\infty (y) = x- y +z 
\end{equation}
is a homography satisfying  (\ref{eqn:xaz}). 
This formula describes the {\em torsor structure} of the additive group $(\bF,+)$, that is, it is the ternary map
describing a ``group after forgetting its unit element'' (see Appendix \ref{App:SA}).
It ``works'' also if   $x=z$. 

\msk
\nin
{\bf (M)}
The multiplicative  interpretation comes  from the multiplicative  torsor structure $ay\inv b$
on  $(\bF^\times, \cdot)$ by letting $a=b$. Namely,
choose $x=\infty$, $z=0$; then $\cU_{xz}:= \cU_x \cap \cU_z = \bF^\times$, 
and  a homography satisfying (\ref{eqn:xaz}) is given by 
\begin{equation}
\label{eqn:LineM}
J^{xz}_a (y) =J^{0,\infty}_a(y) = ay\inv a  = a^2  y\inv.
\end{equation}

\ssk
The case of the projective line is ``special'' in the sense that the Jordan $J$-map comes from an associative
$M$-map: 
the  ``special''  feature, consequence of the simply transitive action of the projctive group on $\cD_3'$, is that,
if $(x,a,z)\in \cD_3'$ and $b$ is any point, there is a unique map
$M_{ab}^{xz} \in \bP \Gl(2,\bF)$ exchanging $x$ and $z$ and sending $a$ to $b$. 
Then this map must be  an involution (since 
 the square of a matrix $\bigl(\begin{smallmatrix}0& \lambda \cr 1  & 0 \end{smallmatrix}\bigr)$
is a multiple of the identity matrix), and hence also sends $b$ to $a$, that is, (\ref{eqn:xazb}) holds.
The structure map $M$ thus defined is a particular  instance of the one 
 studied, for general Grassmannians,  in \cite{BeKi1} (where the notation
$\Gamma(x,a,y,b,z)$ has been used for $M^{xz}_{ab} (y)$), and which define {\em associative geometries}.
In case of the projective line, choosing $(x,z)=(0,\infty)$, we see, in a similar way as above,  that
\begin{equation}\label{eqn:M1}
M_{ab}^{0,\infty}(y) = ay\inv b ,
\end{equation}
is a homography satisfying (\ref{eqn:xazb}). It  is related to the map $J$ defined by (\ref{eqn:LineM}) via
\begin{equation}
\label{eqn:JM}
J^{xz}_a = M^{xz}_{aa} .
\end{equation}
We say that a $J$-map is {\em special} if it comes from an $M$-map via this relation.
Note that this is the precise analog of defining a {\em special Jordan algebra} as one coming from
an associative algebra with product $ab$ by retaining the squaring operation $a^2$, which is the same as
restricting the associative product to the diagonal  $a=b$. 

\msk
For the case of the projective line, it is quite easy to obtain ``explicit formulae'' for the $J$- and $M$-maps,
that is, expressions as homographies where all arguments are ``generic''.  In a first step, in (\ref{eqn:M1}), 
we get for a generic value $a$ instead of $0$
\begin{equation}
M^{xz}_{\infty,a} (y) =  \frac{x-y+z - xa\inv z}{1 - a\inv y} . 
\end{equation}
Indeed, the formula describes
 a homography exchanging $x$ and $z$ and sending $a$ to $\infty$, hence also $\infty$ to $a$.
As a corolloray, one has the nice formula
$M^{xz}_{a,\infty}(0)= x - xa\inv z + z$
(cf.\  \cite{BeKi1}, Prop.\ 1.7, for such formulae in  general Grassmannians). 
To get explicit formulae where all variables are generic,    observe
that the definitions of $J$ and of $M$ are ``natural'' in the sense that
\begin{equation}
\forall g \in \bP \Gl(2,\bF): \quad
J^{gx,gz}_{ga} (gy) = g J^{xz}_a(y), \quad
M^{gx,gz}_{ga,gb} (gy)= g M^{xz}_{ab}(y) .
\end{equation}
Now let $g(x)= \frac{x}{1 - a\inv x}$, a homography  sending $a$ to infinity, and use  (\ref{eqn:LineA}) to get
\begin{align}
J^{xz}_a(y) =
g\inv J^{gx,gz}_\infty(gy) & =
\frac{ \frac{x}{1-a\inv x}  - \frac{y}{1-a\inv y} + \frac{z}{1-a\inv z}}
{ 1 +  a\inv \bigl(  \frac{x}{1-a\inv x}  - \frac{y}{1-a\inv y} + \frac{z}{1-a\inv z} \bigr) }
\cr
& = \frac{ x - y +z - 2 xa\inv z + a^{-2} x y  z} {1 - 2 a\inv y + a^{-2} (xy+yz+xz)} .
\end{align}
Another formula for $J$, involving cross-ratios, can be obtained in a similar way from (\ref{eqn:LineM}),
and similarly for $M$.

\subsection{Jordan axioms}
In the general case, like in the preceding
 example, a Jordan structure map $J : \cD_3 \times \cX \to \cX$ has  two interpretations,
``additive'' (A) and ``multiplicative''  (M); moreover, there are   axioms of
{\em distributivity} (D) and {\em symmetry} (S) (compatibility).
The additive aspect of the structure map $J$ 
 deals with {\em abelian torsors}: 
for fixed $a\in \cX$, the partial law is the torsor structure underlying an abelian group (which is in fact an affine space)  $\cU_a$:
\begin{equation}
J^{xz}_a(y) = x - y + z,
\end{equation}
whereas (M) deals with (possibly non-abelian) {\em symmetric spaces}: for fixed $(x,z)$,
the partial law is a symmetric space structure on $\cU_{xz} = \cU_z \cap \cU_x$
\begin{equation}
J^{xz}_a(y) =: s_a(y) 
\end{equation}
where $s_a$ is the point reflection in $\cU_{xz}$ with respect to $a$. 
Distributivity (D) says that any of the bijections $J^{xz}_a$ is an automorphism (called {\em inner}) of the
whole structure $J$; finally, symmetry (S) means that for $x=z$, the symmetric space $\cU_{xx}$ coincides with the
abelian torsor $\cU_x$, seen as symmetric space:
\begin{equation}
J^{xx}_a = J^{aa}_x .
\end{equation}
Thus all ``Jordan axioms''  have a clear geometric meaning, and they arise in a natural way when 
merging the two structures ``abelian torsors'' and ``general symmetric spaces'' into  a single one. 

\subsection{Associative axioms}
As said above, a Jordan map $J$ is called {\em special} if it is related to an $M$-map via
(\ref{eqn:JM}).
An axiomatic definition of the ``associative structure map'' $M$ has been given in \cite{BeKi1}; in the present work, we 
give a slightly different definition (Section \ref{sec:ASM})
 by focusing on the invertible operators $M_{ab}^{xz}$ (whereas in \cite{BeKi1}
an algebraically more sophisticated axiomatics is used, which allows to deal also with non-invertible ``homotopes'' of
these operators). The ``special'' flavor of the associative case comes from the fact that, for the $M$-operators,
  both interpretations (A) and (M) are identical with  each other, dealing with possibly non-commutative torsors: this 
  follows from the ``strong  compatibility condition''  
 \begin{equation}
M_{ab}^{xz} = M_{xz}^{ab} .
\end{equation}

\subsection{Inversive actions}
For the deeper theory of the $J$- and $M$-maps, the dependence on their  {\em domains of definition} 
is  very important: 
in the example of the projective line, as well as in the general case, the argument $y$ of the bijections
$J^{xz}_a(y)$ and $M_{ab}^{xz}(y)$, may be {\em any} point of $\cX$.
Thus $\cU_a$, resp.\ $\cU_{xz}$,  is  not only a torsor, resp.\ a symmetric space, but at the same time 
comes with an {\em action on $\cX$ by inversions}, or shorter an {\em inversive action}.
Definition and basic properties of such actions are given in Appendix \ref{App:SA};
we have the impression that this notion might be useful also in general group theory, and especially in general
Lie theory.
Note that in \cite{BeKi1} it has been shown that the domain of the $M$-map can be further extended,
leading to quite subtle algebraic structures involving {\em semitorsors}; 
it remains an open problem whether a similar extension of domain of definition is possible for the $J$-map.

\subsection{Scalar action}
Associative or Jordan algebras are, by definition, defined over some base field or ring $\K$.
So far, this ring did not show up in the geometric setting -- put differently, one may say that
 the structures discussed so far are defined over $\Z$.
Indeed, one of our main motivations for this work was to develop a setting that can be defined over $\Z$, so that we may
postpone the use of action of scalars as long as possible; in contrast, the notion of {\em generalized projective geometry}
developed in \cite{Be02} depends from the very outset on such an action, or  {\em scaling map}
\begin{equation}
S: \K^\times \times \cX^3  \supset 
\K^\times \times \cD_2 \times \cX \to \cX, \quad (r,y,a,x) \mapsto S^r_{y,a} (x) =: r^a_y(x) .
\end{equation}
For the example of the projective line, $\K$ may be any  unital commutative subring of $\bF$, and then
$r^\infty_0(x) = rx$ is the usual multiple  $rx$ in $\bF$.
In the present work, the scaling map plays a less important r\^ole than in \cite{Be02}: it serves  only
as a conceptual framework allowing us to use {\em algebraic infinitesimal calculus}, see below.

 \subsection{The main examples}
 As we will explain below, to every Jordan algebraic or associative algebraic structure corresponds a Jordan, 
 resp.\ an associative geometry. We have the following examples, corresponding to the 
 main classes of such algebras:
 \begin{enumerate}
\item
 associative geometries are all given by the {\em Grassmannian geometries} introduced in \cite{BeKi1},
 possibly over non-commutative rings (section \ref{sec:ASM}),
\item
 Jordan geometries come in four families:
 
 (2.1)  Grassmannian geometries, seen as Jordan geometries,
 
 (2.2)  geometries of Lagrangian subspaces  of a quadratic form,
 
 (2.3)  geometries of Lagrangian subspaces of a symplectic form,
 
 (2.4)  projective quadrics (defined in Subsection \ref{sssec:Quadrics}),
 
 \item 
two kinds of exceptional Jordan geometries related to the octonions.
 \end{enumerate}
 
\nin The Lagrangian Grassmannians are subgeometries of the associative Grassmannian geometries, fixed under
orthocomplementation, which is an  anti-automorphism of order two
(see \cite{BeKi2}), and the exceptional geometries are related to the  {\em Mou\-fang torsors} 
studied in \cite{BeKi12}.

\subsection{Unit elements, idempotents, and self-duality}
Existence of {\em unit elements} in algebras (Jordan or associative) corresponds
to {\em self-duality of geometries}, meaning that a geometry is {\em canonically} isomorphic to
its dual geometry (see \cite{Be03}).
 For instance, the {\em projective line} is self-dual (canonically isomorphic to its dual projective line!), 
 whereas  higher dimensional projective spaces are not.
In the  setting of Jordan geometries, a self-dual geometry may be characterized by the existence of 
{\em  closed transversal triples}  $(a,b,c)$.  Then the inversions $J^{ab}_c$, $J^{ba}_c$,
$J^{ac}_b$ are all defined and generate a permutation group $S_3$; if we add 
$J^{aa}_b$  to the set of generators, they generate a group which is a homomorphic image of
$\PP\Gl(2,\Z)$ (Theorem \ref{th:Modular1}) and hence the three points $a,b,c$ generate a subgeometry that is a
homomorphic image
of the projective line $\Z \PP^1$ with its canonical base triple $(o,1,\infty)$ (Theorem \ref{th:Modular2}). 
Pairwise transversal triples $(a,b,c)$ in a Jordan geometry correspond to {\em unital Jordan algebras};
classification of such triples in a given geometry is related to the classical {\em Maslov index} -- 
the Jordan algebras associated to a geometry are {\em isotopic} to each other, but in general not isomorphic.

\ssk
If the geometry is not self-dual, then  there are no closed transversal triples; a substitute is given by
{\em idempotent quadruples} which are defined by relations obtained by ``dissociating'' the projective
line $\Z\PP^1$ into two copies, and leading to a homomorphism $\Gl(2,\Z) \to \Aut(\cX)$ 
(Theorem  \ref{th:Idempot}).
From an algebraic point of view, this corresponds to a geometric version of the {\em Peirce decomposition
with respect to an idempotent in a Jordan pair}, cf.\ \cite{Lo75}.

\subsection{Tangent objects}\label{subsec:Diff}
In the second half of this work,  we investigate the relation between Jordan structure maps $J$ and
{\em tangent objects} (Jordan algebras, pairs and triple systems).
We have divided the text into two main parts,  in order to highlight  the four  layers of
the axiomatic structure:

\ssk
(1)  a rather weak (non-)incidence structure, called  {\em transversality},

(2)  the general datum of one or several {\em structure maps},

(3)  certain {\em identities} (associative, Jordan,...) satisfied by the structure maps,

(4)  a {\em regularity hypothesis}, allowing to define ``tangent maps''  of structure maps.

\ssk
\nin
While the structures on levels  (1), (2) and (3) only use universal algebra and the language of classical geometry of  point sets, 
on level (4) one has to make a methodological choice:
either regularity is formalized  by some sort of  {\em differential calculus}, such as in classical Lie theory, or
it may be achieved by {\em extending the domain of definition of structure maps to non-transversal 
tuples}, as done in \cite{BeKi1} for the $M$-map.
However, at present we do not know how to apply this second method to the $J$-maps; so we have to follow the
first method, leaving the link with the second method as an open (and very important) topic for future research. 

\ssk
In order to use ``algebraic differential calculus'' in full generality, applying also to geometries defined over   $\Z$ as in the
present setting, we have introduced in \cite{Be14} the concept of {\em Weil spaces and Weil manifolds}.
These can be seen as a much more conceptual version of the algebraic differential calculus  already used in
\cite{Be02}, also  generalizing the so-called {\em Weil functors} defined for usual manifolds in \cite{KMS}.
We refer to the introduction of \cite{Be14} and to Subsection \ref{ssec:Weils}   of this work for more details;
here, let us just say that, using this calculus,  the ideas already present in \cite{Be00, Be02} translate fairly
directly into an algebraic language, allowing to attach to a geometry with base point a
{\em
$3$-graded Lie algebra $\g=\g_1 \oplus \g_0 \oplus \g_{-1}$}.
Now, it is well-known that such a Lie
algebra corresponds to a {\em linear Jordan pair} $(V^+,V^-)=(\g_1,\g_{-1})$, which is the tangent object
saught for -- at least, if the base ring $\K$ has no $2$- and $3$-torsion. 
In the remaining case,  we need to
work with {\em quadratic Jordan pairs} as defined in \cite{Lo75} --  we define quadratic maps
$Q^\pm$ that contain more information than the trilinear bracket derived from the Lie algebra;
the proof that these maps satisfy the Jordan identities (JP1) -- (JP3) from \cite{Lo75} 
follows the lines of work by O.\ Loos (\cite{Lo79}).

\subsection{Back and forth}
We can {\em reconstruct} a Jordan geometry from  its  Jordan pair --
in case $2$ is invertible in $\K$, this follows  from the corresponding result in \cite{Be02},
using {\em midpoints in affine spaces} (Theorem \ref{th:gpg}); if $2$ is not invertible in $\K$, 
the construction is similar, but more involved  (Theorem \ref{th:existence}).
Summing up, just as in classical Lie theory, we can go back and forth from Jordan geometries to Jordan pairs
and -algebras. 
In Section \ref{sec:Fomulae} we give ``explicit formulae'' for the maps $J^{xz}_a$, generalizing the
formulas in terms of homographies  given at the beginning of 
this introduction for the projective line, but we leave a more systematic 
study of  this correspondence for later work.

\begin{acknowledgment}
I thank the unknown referee for helpful comments and remarks.
\end{acknowledgment}

\begin{notation} We use the following typographic conventions in mathematical formulas:

\begin{itemize}
\item
calligraphic letters denote  ``geometric point spaces''  $\cX,\cY,\cD,\cU_a,\ldots$, and $\cU_{a,b} := \cU_a \cap \cU_b, \ldots$
\item
boldface letters denote transformation groups $\bfG,\bfU,\bfP,\Aut(\cX),...$,  and stabilizers
are denoted by  $\bfG_x$, $\bfG_{x,y} =\bfG_x \cap \bfG_y$,
\item
small italics denote  elements $x \in \cX$, $a \in \cY$, $g \in \bfG, \ldots$
\item
capital italics denote  structure maps $J,M,S$, 
 $B$ (Bergman operator), but also:
 $D$ differential, $T$ tangent, $V= (V^+,V^-)$ pair of $\K$-modules,
\item
blackboard letters:  $\K$ is a fixed base ring (think of $\K=\R$ or $\K=\Z$), and are
 $\bA,\bB,\ldots$ scalar extensions of $\K$  ($\K$-Weil algebras); 
 $T\K = \K[X]/(X^2)$ is the tangent ring of $\K$,
\item
underlined symbols  $\ul \cX, \ul \bfG,\ul J,\ldots$ are functors from $\K$-Weil algebras to the respective set-theoretic object, and
the corresponding scalar extended set theoretic object is  denoted by  $\cX^\bA$, $\bfG^\bA,J^\bA,\ldots$;
tangent bundles are then $T\cX = \cX^{T\K}$, $T\bfG = \bfG^{T\K}$,
$T(\bfG/\bfP) = (T\bfG)/(T\bfP)$, etc. 
\end{itemize}
\end{notation}

\setcounter{tocdepth}{1}
\tableofcontents

\section*{FIRST PART:
GEOMETRIES WITH INVERSIVE ACTIONS}

\msk

\section{Transversality relations, splittings, dissociations}

\subsection{Transversality relations}
A {\em transversality relation} on a set $\cX$ is a binary relation on $\cX$, 
that is, a subset $\cD_2 \subset (\cX \times \cX)$; we write also
$x \top a$ if $(x,a) \in \cD_2$, and this relation is assumed to be

\ssk
-- {\em symmetric} : $x \top a$ iff $a \top x$,

-- {\em irreflexive}:  $x$ is never transversal to itself.  

\ssk \nin
For the sets of elements transversal to one, resp.\ to  two given elements, we write
\begin{equation}
\cU_x:=x^\top := \{ a \in \cX \mid \, a \top x \}, \qquad
\cU_{ab}:= \cU_a \cap \cU_b .
\end{equation}
The relation $\top$ is called {\em non-degenerate} if $\cU_x = \cU_y$ implies $x=y$.

\ssk
{\em Homomorphisms} of sets with transversality relation are maps
$f:\cX \to \cY$ preserving transversality:
$x \top a$ implies $f(x) \top f(y)$.

\subsection{Grassmannians}\label{ssec:Grass-0}
The standard example of transversality is given by the {\em Grassmannian} $\Gras(W)$ 
of all submodules of some right $\bA$-module $W$
 with the relation:
$x \top a$ iff $V=x \oplus a$ (here $\bA$ may be a possibly non-commutative ring).
The {\em Grassmannian of type $E$ and co-type $F$} is the space
\begin{equation}
\Gras_E^F (W) := \bigl\{ x \in \Gras(W) \mid \, x \cong E, \, W/x \cong F \bigr\}
\end{equation}
of submodules isomorphic to $E$ and such that $W/x$ is isomorphic to $F$ (as modules),
where $W = E \oplus F$ is some fixed decomposition. 
Then the space
\begin{equation}\label{eqn:GrGeo}
\cX = \Gras_E^F(W) \cup \Gras_F^E(W)
\end{equation}
inherits a non-trivial transversality relation: for any $x \in \cX$, we have $x^\top \subset \cX$.
In particular, we get the {\em projective geometries}
\begin{equation}\label{eqn:ProjGeo}
\bA \PP^n \cup (\bA \PP^n)' := 
\Gras_\bA^{\bA^n}(\bA^{n+1}) \cup
\Gras_{\bA^n}^\bA(\bA^{n+1}) .
\end{equation}
If $\bA$ is a field or skew-field, this is a ``usual'' projective space together with its dual
space of hyperplanes (and ``transversal'' means the same as ``non-incident'', and the relation $\top$
is non-degenerate); 
however, if $\bA$ is a ring, such as $\bA=\Z$, then these geometries show some rather
unusual features (cf.\ the article by Veldkamp on Ring Geometries in \cite{Bue}).

\subsection{Transversal chains and connectedness}
Let $n\in \N$, $n>1$.
A {\em transversal chain of length $n$ in $ \cX$} is a sequence
$(x_1,\ldots,x_n)$ of elements of $\cX$ such that
$x_{i+1} \top x_i$ for $i=1,\ldots,n-1$ (equivalently, $x_i \in U_{x_{i-1},x_{i+1}}$). 
A transversal chain is called {\em closed} 
if $x_n \top x_1$. 
We denote by
\begin{align}
\cD_n &=\{ (x_1,\ldots,x_n) \in \cX^n \mid \, \forall i=1,\ldots,n-1: \, x_{i+1} \top x_i \},
\cr
\cD_n' &= \{ (x_1,\ldots,x_n) \in \cD_n  \mid \, x_n \top x_1 \}  
\end{align}
the set of  transversal chains, resp.\ of closed transversal chains, of length $n$ in $\cX$. 
A chain of length two is also called a {\em transversal pair},  a chain of length three is
a {\em transversal triple}, and a closed transversal chain
 of length three is a {\em pairwise transversal triple}. 
A {\em chain joining two elements $x,y \in \cX$} is a finite chain
$(x_1,\ldots,x_n) \in \cD_n$  such that $x_1=x$, $x_n=y$.
We say that $\cX$ is {\em connected} if, for each $x,y \in \cX$, there is a chain
joining $x$ and $y$.
We may also define {\em connected components}:
the relation defined by ``$x \sim y$ iff there is a chain joining $x$ and $y$'' is an equivalence
relation; its equivalence classes are the {\em connected components of $\cX$}.

\ssk
For instance, Grassmannians $\cX = \Gras(V)$ are in general not connected; if $\K$ is a field
and $V = \K^n$, then its connected components are of the form (\ref{eqn:GrGeo}).

\subsection{Duality: splitting, and antiautomorphisms}

Assume $\top$ is a transversality relation on $\cX$.
 A {\em splitting  of $\cX$} 
is a decomposition into  a disjoint union $\cX = \cX^+ \dot\cup \cX^-$ such that 
for all $a \in \cX^-$, we have $a^\top \subset \cX^+$, and for all $x \in \cX^+$,
we have $x^\top \subset \cX^-$.
Equivalently,  chains with odd length end up in the same part ($\cX^+$ or $\cX^-$) 
they started in, and chains with even length end up in the other.
We then say that $\cX^+$ and $\cX^-$ are {\em dual to each other}.

\ssk
We say that $(\cX^+,\cX^-)$ is {\em connected of stable rank one} if for each $(x,y) \in (\cX^\pm)^2$ 
there is $a\in \cX^\mp$ such that $x,y \in U_a$; equivalently, $a \in U_{xy}$, so $U_{xy}$ is not
empty. 

\ssk
Spaces with splitting $(\cX^+,\cX^-)$ form a category:
morphisms $g$ preserve transversality and the given splitting
(that is, $g(\cX^\pm)\subset \cY^\pm$),  so we have well-defined restrictions
$$
g^\pm:\cX^\pm \to \cY^\pm . 
$$
In presence of a splitting, we may also define {\em anti-homomorphisms}:
these are pairs of maps exchanging the components,
$\cX^\pm \to \cY^\mp$, i.e.,  morphisms from $(\cX^+,\cX^-)$ to the {\em opposite splitting}
of $\cY$.

\subsection{Self-dual geometries and closed transversal triples}
We say that a connected geometry $(\cX,\top)$ is {\em self-dual} if it 
does not admit any non-trivial splitting.
This is the case if in $\cX$ there is a closed transversal chain of odd length (at least three);
the converse is true as well. 
We say that $\cX$ is {\em strongly self-dual} if there is a closed chain of length three, that is,
a pairwise transversal triple $(a,b,c)$.

\ssk
In the 
example of a Grassmannian (\ref{eqn:GrGeo}), the indicated decomposition is a splitting
if $E$ and $F$ are not isomorphic as modules. Typical 
antiautomorphisms are then given by orthocomplementation maps. 
On the other hand, if $E \cong F$ as modules,  then 
there exists
 a pairwise transversal  triple $(E,F,D)$ where $D$ is the diagonal of $E \oplus F$, after some
 fixed identification of $E$ and $F$, 
 and hence $\Gras_E(E \oplus E)$ does not admit any splitting
 (this is the case, in particular, for  the {\em projective line}
 $\bA \PP^1$).
 In this case one may introduce an ``artificial splitting'', as follows.


\subsection{Duality: dissociation}\label{ssec:Diss}
A {\em dissociation} of a space $(\cY,\top)$ is 
the disjoint union $\cX$ of two  copies $\cX^+$ and $\cX^-$ of $\cY$, where we
 define a transversality relation on $\cX$ by declaring, for
$x \in \cX^\pm$, the set $x^\top$ to be the set of elements $a$ in
$\cX^\mp$ such that $a$ and $x$ are transversal in $\cY$.
Obviously, this defines a transversality relation on $\cX$, and
$\cX = \cX^+ \cup \cX^-$ is a splitting.

\section{Jordan structure maps}\label{sec:JSM}

\subsection{Structure maps in general}
Assume $(\cX,\top)$ is a space with transversality relation, and let $n\in \N$.
An {\em $n+1$-ary structure map (with domain $\cD_n$)} 
is a map 
$$
S : \cD_n \to \End(\cX,\top)
$$
attaching to each chain $x=(x_1,\ldots,x_n)$ a map  $S(x):\cX \to \cX$ preserving
transversality.   
In the sequel we will mainly consider structure maps such that $S(x)$ is a bijection,
and the case of ternary and quaternary structure maps will be most important:
for a ternary structure map we use also the notation
$$
S^a_x := S(x,a),
$$
and for a quaternary structure map
$$
S^{xz}_a := S(x,a,z) .
$$
Sometimes we view $S$ as a map of $n+1$ arguments, defined by
$$
S(x_1,\ldots,x_n,x_{n+1}):= (S(x_1,\ldots,x_n))(x_{n+1} ) =
S^{x_1 x_3 \ldots}_{x_2 x_4\ldots} (x_{n+1}) 
$$
{\em Structure maps with domain $\cD_n '$} are defined similarly.

\ssk
{\em Morphisms of spaces with structure map} are maps preserving transversality
and commuting with structure maps in the obvious sense. 
The group of automorphisms of $(\cX,\top,S)$ is denoted by $\Aut(\cX)$,
$\Aut(\cX,S)$, or $\Aut(\cX,\top,S)$, 	according to the context. 
Other categorial notions can be defined for spaces with structure maps, such as
{\em subspaces, direct products...}

\subsubsection{Structure maps and duality}
If $\cX = \cX^+ \cup \cX^-$ is a splitting of $\cX$, 
then let $\cD_n^\pm$ be the set of chains of length $n$ starting in
$\cX^\pm$, so that $\cD_n = \cD_n^+ \cup \cD_n^-$.
Then, by restriction to $\cD^\pm_n$, a structure map $S$ gives rise
to two parts of $S^\pm$ of the structure map.
Thus one recovers the notation used in \cite{Be02}.

\subsection{Jordan structure map: definition, examples, and first properties}

\begin{definition}\label{def:Jordanstructure}
A {\em  Jordan structure map}  on a space $(\cX,\top)$ is  a quaternary
structure map 
$$
J:\cD_3  \to \End(\cX,\top), \qquad (x,a,z) \mapsto J^{xz}_a
$$
 such that the following {\em Jordan identities} hold:

\ssk

\begin{enumerate}
\item[(IN)] \emph{involutivity}: $J^{xz}_a \circ J^{xz}_a = \id_\cX$
\ssk
\item[(IP)]
\emph{idempotency}: $J^{ab}_c(c)=c$, $J^{ab}_c(a)=b$, $J^{ab}_c(b)=a$
\ssk
\item[(A)] \emph{associativity:} $J^{xz}_c J^{uv}_c J^{ab}_c = J_c^{J_c^{xa}(v), J_c^{bz}(u)}$
\ssk
\item[(D)] \emph{distributivity:}   $J^{xz}_c \circ J^{uv}_b \circ J^{xz}_c = 
J^{J_c^{xz}(u),J_c^{xz}(v)}_{J_c^{xz}(b)} $, 
that is, $J^{xz}_c \in \Aut(\cX,J)$,
\ssk
\item[(C)] \emph{commutativity:}  $J^{ab}_c=J^{ba}_c$
\ssk
\item[(S)] \emph{symmetry:}  $J^{xx}_a = J^{aa}_x$.
\end{enumerate}

\ssk
\nin
When $a \in \cX$ is considered as fixed, we use also the notation
\begin{equation} \label{eqn:(xyz)}
(xyz)_a:= J_a^{xz}(y) . 
\end{equation}
Using this, the last two properties from (IP) are written
$(aab)_c=a$, $(abb)_c=b$, explaining the terminology.
\end{definition}

\subsubsection{Example: projective quadrics}\label{sssec:Quadrics}
Assume $\cX = Q$ is a projective quadric in the projective space $\PP(W)$ of a vector space $W$.
Two elements of $Q$ are called {\em transversal} if the line joining them in $\PP(W)$ is a secant,
i.e., not a tangent line of $Q$. 
If $x=[v]$ and $z=[w] \in Q$ are transversal to $a=[u] \in Q$, then there exists a  unique orthogonal map
$I^{xz}_a:W \to W$ exchanging $[x]$ and $[z]$, fixing $u$, and acting as $-1$ on the orthogonal
complement of $\mbox{Span} (u,v,w)$.  
We define $J^{x,z}_{a}$ to be the restriction to $Q$ of the projective
map induced by $I^{x,z}_a$. The family of maps thus defined
satisfies the properties given above (and has some more specific properties which 
  we intend to investigate in more detail in subsequent work). 
  
\subsubsection{Example: Grassmannians}
In Section \ref{sec:ASM} we will define the Jordan structure map of a 
{\em Grassmannian geometry} and prove that it satisfies the Jordan identities.

\begin{lemma}[Torsor action]\label{la:ta}
Given a Jordan structure map, the set  $\cU_a$ with product given by (\ref{eqn:(xyz)})
 is a commutative torsor, and the map
$\cU_a \times \cU_a \to \Bij(\cX)$, $(x,z) \mapsto J^{xz}_a$ is an inversive  torsor action.
\end{lemma}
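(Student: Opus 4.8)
The plan is to verify, in order, the torsor axioms for the ternary product $(xyz)_a = J^{xz}_a(y)$ on $\cU_a$, then the ``inversive action'' axioms relating this torsor to the family of bijections $J^{xz}_a$ of $\cX$. First I would check that the product is well defined on $\cU_a$, i.e.\ that $x,y,z \top a$ forces $(xyz)_a \top a$: since $J^{xz}_a$ preserves transversality and fixes $a$ by (IP), it maps $\cU_a$ into $\cU_a$, so $(xyz)_a = J^{xz}_a(y) \top J^{xz}_a(a) = a$. Next, the two ``idempotent-like'' torsor identities $(xxz)_a = z$ and $(xzz)_a = x$: the first is $J^{xz}_a(x) = z$ and the second $J^{xz}_a(z) = x$, both immediate from (IP). Commutativity $(xyz)_a = (zyx)_a$, i.e.\ $J^{xz}_a = J^{zx}_a$, is exactly (C). The remaining torsor axiom is para-associativity $((xyz)_a u v)_a = (x y (z u v)_a)_a = (x (u z y)_a v)_a$ or the equivalent single identity for commutative torsors; this is where I expect to spend real effort, and it should come from (A). Concretely, writing out $((uvw)_a p q)_a = J^{(uvw)_a,\, q}_a(p)$ and comparing with the composite expression $J^{uw}_a J^{\,\cdot\,\cdot}_a J^{\,\cdot\,\cdot}_a$ produced by (A), one matches the base points $x,z$ appearing in the statement of (A) with $u,v,w,p,q$ so that the right-hand side $J_c^{J_c^{xa}(v),\,J_c^{bz}(u)}$ collapses — using (IP), (C) — to the desired rearranged triple product. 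The bookkeeping of which slot plays which role in (A) is the main obstacle; everything else is formal.

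Once $\cU_a$ is a commutative torsor, I would turn to the inversive-action claims, invoking the definition of inversive action from Appendix~\ref{App:SA}. An inversive action of a (commutative) torsor $T$ on $\cX$ by bijections assigns to each pair $(x,z)$ an involution $J^{xz}_a$ of $\cX$ with $J^{xz}_a(x)=z$, $J^{xz}_a(z)=x$, compatible with the torsor product in the sense that conjugating $J^{uv}_a$ by $J^{xz}_a$ gives $J$ with arguments transported by $J^{xz}_a$, and that composing three of these $J$-maps reproduces a single $J$-map whose arguments are built from the torsor product. The involutivity of each $J^{xz}_a$ is (IN); the exchange property is (IP); the conjugation compatibility is precisely (D) restricted to the case where all four ``lower/upper'' parameters lie in $\cU_a$ — note (D) in the form stated already says $J^{xz}_c \in \Aut(\cX,J)$, so it conjugates $J^{uv}_b$ to $J$ with all three arguments pushed forward by $J^{xz}_c$, which is exactly the equivariance demanded of an inversive action; and the composition-of-three law is again (A), now read as the statement that the product of three inversions is an inversion with torsor-combined arguments.

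The only subtlety in this second half is matching the abstract definition of ``inversive action'' to the way the axioms are phrased here: I would check that the ``three-fold composition'' axiom in Appendix~\ref{App:SA} is literally (A) after using (C) to symmetrize, and that the equivariance axiom is (D); property (S) is not needed for this lemma (it enters only when one also wants to identify $\cU_{xx}$ with $\cU_x$, which is outside the scope of the statement). Thus the proof reduces to: (i) read off well-definedness and the ``boundary'' torsor identities from (IP) and that $J^{xz}_a$ fixes $a$ and preserves $\top$; (ii) read off commutativity from (C); (iii) deduce para-associativity from (A) by careful substitution; (iv) observe that (IN), (IP), (A), (D) are, respectively, involutivity, exchange, the composition law, and the equivariance law in the definition of an inversive action. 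Step (iii), the substitution into (A), is the one place where a short but genuine calculation is unavoidable.
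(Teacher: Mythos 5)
Your proposal is correct and follows essentially the same route as the paper: well-definedness of the product on $\cU_a$ from transversality-preservation and $J^{xz}_a(a)=a$, the idempotency identities from (IP), commutativity from (C), para-associativity from (A), and the inversive-action axioms read off from the Jordan identities. Two remarks on where you diverge. First, the ``real effort'' you anticipate in step (iii) is already packaged in the paper as Lemma \ref{TorsorLemma} of Appendix \ref{App:SA}: axiom (A), restricted to $\cU_a$, is literally the identity (SA) for the middle multiplication operators $m_{xz}=J^{xz}_a$, and that lemma proves once and for all that (SA) together with (IP) implies para-associativity (via $(xy(uvw)) = m_{m_{xy}(y),m_{uw}(v)}(y) = m_{xw}m_{vy}m_{yu}(y) = (x(vuy)w)$). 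Citing it spares you the slot-matching bookkeeping; if you redo the substitution by hand you are simply reproving that lemma. Second, your reconstruction of the definition of inversive action overshoots: Definition \ref{SymTorsorDef} requires only (STA1) $M_{xz}\circ M_{zx}=\id_\cX$ and (STA2) $M_{xz}M_{uv}M_{ab}=M_{(xva),(buz)}$, with commutativity (CTA) as an extra property. These follow from (A), (C) and (IN) alone --- (STA2) is (A) verbatim, and (STA1) is (IN) combined with (C). There is no equivariance clause in the definition, so axiom (D) is not needed anywhere in this lemma; including it does no harm but is superfluous. With those two adjustments your plan is exactly the paper's proof.
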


\begin{proof} 
If $x,y,z \top a$, then also $J^{xz}_a(y) \top J^{xz}_a(a)=a$, hence $\cU_a$ is stable under
$(xyz)_a$. Now
the idempotent identity of a torsor holds, as remarked in the definition, and  para-associativity follows from this and from (A) 
(lemma \ref{TorsorLemma}).  
The properties of an inversive  torsor action are precisely the axioms (A) and (C).
\end{proof}

\nin As a useful application of the lemma, by Appendix \ref{la:Transp}, we have the following
{\em transplantation formula} for the symmetries: for all $x,o,z \top a$,
\begin{equation}\label{eqn:Transp}
J_a^{xz} =
J_a^{xo} J_a^{oo} J_a^{zo} = J_a^{J_a^{xz}(o),o} .
\end{equation}

\begin{lemma}\label{la:morph}
A map $g:\cX \to \cX$ is an endomorphism of $(\top, J)$ iff, for all $a \in \cX$,
the restriction $g\vert_{\cU_a}:\cU_a \to \cU_{g(a)}$ is well-defined and is a 
torsor-homomorphism.
\end{lemma}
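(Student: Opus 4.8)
The plan is to prove both implications by unwinding the definitions, using that the ternary product on $\cU_a$ is just $(xyz)_a = J^{xz}_a(y)$, so that a torsor-homomorphism $\cU_a \to \cU_{g(a)}$ is precisely a map satisfying $g(J^{xz}_a(y)) = J^{g(x)g(z)}_{g(a)}(g(y))$ for all $x,y,z \top a$. First I would treat the forward direction. Suppose $g$ is an endomorphism of $(\top, J)$, i.e.\ $g$ preserves transversality and commutes with $J$ in the sense that $g(J^{xz}_a(y)) = J^{g(x)g(z)}_{g(a)}(g(y))$ for every $(x,a,z)\in\cD_3$ and every $y\in\cX$. Then for a fixed $a$ and any $x,z\in\cU_a$ we have $g(x),g(z)\top g(a)$ by transversality preservation, so $g(x),g(z)\in\cU_{g(a)}$; thus $g$ restricts to a well-defined map $\cU_a\to\cU_{g(a)}$, and the commutation relation, read only for $x,y,z\top a$, says exactly that this restriction is a torsor-homomorphism (recall from Lemma \lemref{ta} that $\cU_a$ and $\cU_{g(a)}$ carry the commutative torsor structures given by these ternary products).

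For the converse, suppose that for every $a\in\cX$ the restriction $g\vert_{\cU_a}:\cU_a\to\cU_{g(a)}$ is well-defined and a torsor-homomorphism. I must produce two things: (i) $g$ preserves $\top$, and (ii) $g$ commutes with $J$ on \emph{all} of $\cX$, not merely on the $\cU_a$'s. For (i): if $x\top a$ then $x\in\cU_a$, and well-definedness of $g\vert_{\cU_a}$ gives $g(x)\in\cU_{g(a)}$, i.e.\ $g(x)\top g(a)$; so transversality is preserved. For (ii): fix $(x,a,z)\in\cD_3$ and an arbitrary $y\in\cX$; I want $g(J^{xz}_a(y)) = J^{g(x)g(z)}_{g(a)}(g(y))$. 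Here the subtlety is that $y$ need not be transversal to $a$, so the hypothesis on $g\vert_{\cU_a}$ does not apply directly. The plan to circumvent this is to choose an auxiliary point $o\in\cU_{xz}$ --- i.e.\ $o\top x$ and $o\top z$ --- and use the transplantation formula \eqnref{Transp}, $J^{xz}_a = J^{J^{xz}_a(o),\,o}_a$, which holds whenever $o\top a$; but more usefully, I would instead switch roles and use that $J^{xz}_a(y)$ can be rewritten via symmetry (S) and the torsor structure on a different $\cU_b$ in which $y$ \emph{is} transversal. Concretely: pick $b\in\cX$ with $y\top b$ and $x,a,z\top b$ (this requires the relevant $\cU$'s to meet; in the connected stable-rank-one situation such $b$ exists, and in general one reduces to a chain of such choices). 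Then the expression $J^{xz}_a(y)$ is governed by the $J$-identities (A), (D) relating the $\cU_a$- and $\cU_b$-structures, each instance of which $g$ respects because $g\vert_{\cU_b}$ is a homomorphism; pushing $g$ through these identities yields the desired equality.

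The main obstacle is exactly this last point: passing from ``$g$ is compatible with the torsor structure on each $\cU_a$'' to ``$g$ commutes with the full quaternary map $J$ evaluated at arbitrary fourth argument $y$''. The honest way to handle it is to express, for arbitrary $y$, the element $J^{xz}_a(y)$ purely in terms of torsor operations in suitable neighbourhoods $\cU_b$ together with the structural identities (A), (D), (S) --- essentially using that distributivity (D) says each $J^{uv}_c$ is an \emph{automorphism} of the whole structure $J$, so $J^{xz}_a$ acting on $y$ can be computed by conjugating and transplanting into a chart containing $y$. I expect the cleanest writeup to invoke (D) directly: since $J^{uv}_c\in\Aut(\cX,J)$, one has the conjugation formula from (D), and applying $g$ to it, knowing $g$ intertwines the $\cU_c$-products, forces $g$ to intertwine the conjugated maps as well; iterating over a transversal chain joining $a$ to a point $\top y$ closes the argument. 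I would present this as the core step and relegate the chain-of-charts bookkeeping to a remark, since it is routine once the single-chart compatibility is in hand.
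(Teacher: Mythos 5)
Your forward direction, and your identification of the torsor-homomorphism condition with the relation $g(J^{xz}_a(y))=J^{g(x)g(z)}_{g(a)}(g(y))$ read for $y\top a$, is exactly the content of the paper's proof: the paper disposes of the lemma in one line by observing that well-definedness of $g\vert_{\cU_a}$ is the transversality-preservation condition and that torsor-homomorphy is the commutation condition, i.e.\ the lemma is a pure rewriting of the definition of an endomorphism. Where your proposal diverges is that you take on the extra burden of proving the commutation relation at arguments $y$ that are \emph{not} transversal to $a$, and it is precisely there that the argument you sketch does not go through. First, the auxiliary point $b$ with $y\top b$ and $x,a,z\top b$ need not exist: nothing in the axioms guarantees that $\cU_{xz}\cap\cU_y$, let alone a common transversal to all four points, is nonempty, and the promised ``reduction to a chain of such choices'' has no visible induction parameter. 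Second, ``pushing $g$ through'' the identities (A) and (D) is circular: to apply $g$ to both sides of, say, (D), $J^{xz}_c\circ J^{uv}_b\circ J^{xz}_c = J^{\cdots}_{\cdots}$, evaluated at an arbitrary point, you already need to know that $g$ intertwines the individual factors $J^{xz}_c$ on arbitrary arguments --- which is the statement you are trying to prove. The transplantation formula \eqnref{Transp} does not help either, since it only re-expresses $J^{xz}_a$ as another inversion with the same lower index $a$, acting on the same non-transversal $y$.

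The way out is to decide what ``commuting with the structure map'' means. If, as the paper's one-line proof implicitly does, you read the endomorphism condition as the commutation relation on its natural domain (equivalently, as compatibility with the family of local torsor structures), then the lemma really is a tautological rewriting and all of your second paragraph is unnecessary. If instead you insist on commutation $g\circ J^{xz}_a = J^{g(x)g(z)}_{g(a)}\circ g$ as maps on all of $\cX$, then you owe a genuine argument --- presumably under a connectedness or covering hypothesis on $\cX$ --- and the proposal as written supplies only an outline whose two key steps (existence of the auxiliary transversal point, and the non-circular use of (A) and (D)) are asserted rather than proved. As it stands the converse direction is incomplete.
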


\begin{proof}  This is a re-writing  of 
$x \top a \Rightarrow g(x) \top g(a)$ and of
$g (J^{xz}_a ) (y)= J^{gx,gz}_{ga} (gy)$.
\end{proof}

\begin{lemma}[Symmetric space action]\label{la:ssa}
 The set $\cU:=\cU_{ab}$ is
 stable under the map
$$
\mu:=\mu_{ab}: \cU \times \cU\to \cU, \quad
(x,y) \mapsto \mu(x,y):= s_x(y):= J^{ab}_x(y) 
$$
which turns it into a reflection space, called
the {\em reflection  space associated to $(a,b)$},
and this reflection space has a symmetry action on $\cX$ given by
$S_x:=J^{ab}_x$.
\end{lemma}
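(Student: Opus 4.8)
The plan is to verify the three assertions in Lemma~\ref{la:ssa} in turn: (i) that $\cU=\cU_{ab}$ is stable under $\mu$, (ii) that $(\cU,\mu)$ is a reflection space, and (iii) that $x\mapsto S_x:=J^{ab}_x$ is a symmetry action of this reflection space on $\cX$ (in the sense of the inversive actions of symmetric/reflection spaces recalled in Appendix~\ref{App:SA}). Throughout I will lean on the Jordan identities (IN), (IP), (A), (D), (C), (S) and on Lemma~\ref{la:ta}.

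First I would check stability. If $x,y\in\cU_{ab}$, then $x\top a$ and $x\top b$, so by idempotency~(IP) the map $J^{ab}_x$ fixes $x$ and exchanges $a$ and $b$; since $J^{ab}_x$ preserves transversality and $y\top a$, $y\top b$, we get $J^{ab}_x(y)\top J^{ab}_x(a)=b$ and $J^{ab}_x(y)\top J^{ab}_x(b)=a$, hence $\mu(x,y)=s_x(y)\in\cU_{ab}$. Next, the reflection-space axioms. The identity $s_x(x)=x$ is exactly $J^{ab}_x(x)=x$ from (IP). The involutivity $s_x\circ s_x=\id$ on $\cU$ is (IN) restricted to $\cU$. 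The remaining axiom, that $s_x$ is an automorphism of the reflection space, i.e. $s_x s_y s_x = s_{s_x(y)}$, should fall out of distributivity~(D): applying (D) with $(c;u,v;b)$-slots chosen so that the inner reflection is $J^{ab}_y$ and the conjugating map is $J^{ab}_x$ gives $J^{ab}_x\circ J^{ab}_y\circ J^{ab}_x = J^{J^{ab}_x(a),J^{ab}_x(b)}_{J^{ab}_x(y)} = J^{b,a}_{J^{ab}_x(y)} = J^{ab}_{s_x(y)}$, the last step using commutativity~(C). That is precisely $s_x s_y s_x = s_{s_x(y)}$, so $(\cU,\mu)$ is a reflection space.

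Finally the symmetry action on $\cX$. Here I need to check that $S:\cU\to\Bij(\cX)$, $x\mapsto J^{ab}_x$, satisfies the axioms of an inversive action of a reflection space: each $S_x$ is an involution of $\cX$ compatible with the transversality/structure (this is (IN) together with $J^{ab}_x\in\End(\cX,\top)$, or even $\in\Aut(\cX,J)$ by (D)), $S_x$ restricted to $\cU$ is the reflection $s_x$ (immediate from the definitions), and the ``equivariance'' relation $S_x\circ S_y\circ S_x = S_{s_x(y)}$ holds globally on $\cX$ — but this is again exactly the computation above using (D) and (C), now read as an identity of bijections of all of $\cX$ rather than its restriction to $\cU$. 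So the same display does double duty.

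The step I expect to be the main obstacle is pinning down precisely which instance of (D) to invoke and confirming that the slot-substitution is legitimate, i.e. that all the tuples appearing (such as $(a,J^{ab}_x(y),b)$, and the triple $(a,y,b)$ itself) are genuinely transversal so that the relevant $J$-maps are defined; transversality of $(a,y,b)$ is given by $y\in\cU_{ab}$, and transversality of $(a,s_x(y),b)$ follows from the stability claim (i) already established, so the argument is self-consistent, but the bookkeeping of domains is the one place where care is needed. A secondary, purely expository point is to make sure the definition of ``reflection space'' and of ``symmetry action'' being used matches the one in Appendix~\ref{App:SA} (in particular whether the third reflection-space axiom is stated as $s_x s_y s_x=s_{s_x y}$ or in the equivalent ``distributive'' form $s_x\circ\mu = \mu\circ(s_x\times s_x)$), so that the cited (D) lands on the right normal form; these are equivalent and the translation is routine.
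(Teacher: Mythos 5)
Your proposal is correct and follows essentially the same route as the paper: stability and well-definedness of $\mu$ come from transversality preservation together with $J^{ab}_x(a)=b$, $J^{ab}_x(b)=a$ (the paper phrases this via Lemma~\ref{la:morph}, noting the restrictions $\cU_a\to\cU_b$ and $\cU_b\to\cU_a$ are torsor morphisms), (R1) and (R2) are (IP) and (IN), and both (R3) and the symmetry action on all of $\cX$ come from the single instance of (D) with the inner superscript pair equal to $(a,b)$, simplified by (C) --- exactly your display. Your attention to the domain bookkeeping is the right point of care, and it closes as you say via the stability claim.
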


\begin{proof}
By the preceding lemma,  and since $J^{ab}_x$ exchanges $a$ and $b$, the restricted maps
$J^{ab}_x: \cU_a \to \cU_b$, 
$J^{ab}_x: \cU_b \to \cU_a$,
are well-defined torsor morphisms, inverse to each other.  Thus $\mu_{ab}$ is well-defined.
Properties (R1) and (R2) of definition \ref{ReflectionDef} are immediate. To prove  (R3), as in the preceding proof 
it is seen that $J_x^{ab}$ is
an automorphism of $\mu$. Thus $\cU_{ab}$ is a reflection  space, and it acts on $\cX$ by a symmetry action
by axiom (D), read  with  $(u,v)=(x,z)$.
\end{proof}

\begin{lemma}[Compatibilty]
The reflection   space $\cU_{aa}$ is the same as the
abelian group $\cU_a$ with its usual  inversion maps.
\end{lemma}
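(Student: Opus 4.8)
The plan is to reduce the statement to the symmetry axiom (S) after unwinding the two structures being compared. At the level of underlying sets there is nothing to do: $\cU_{aa}=\cU_a\cap\cU_a$ is literally $\cU_a$. So the real content is that the reflection space law furnished by Lemma \ref{la:ssa} (applied with $b=a$) agrees with the canonical point-reflection (``inversion'') structure carried by the abelian torsor $\cU_a$ of Lemma \ref{la:ta}, and similarly for the two actions on $\cX$.

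First I would write down both laws explicitly. By Lemma \ref{la:ssa} with $b=a$, the reflection space law on $\cU_{aa}$ is $\mu_{aa}(x,y)=s_x(y)=J^{aa}_x(y)$ and the associated symmetry action on $\cX$ is $S_x=J^{aa}_x$; note that $J^{aa}_x$ is defined exactly when $x\top a$, i.e. exactly for $x\in\cU_a=\cU_{aa}$, so the domains match. On the other hand, the abelian torsor $\cU_a$ of Lemma \ref{la:ta} has ternary product $(xyz)_a=J^{xz}_a(y)$, and in any such (commutative) torsor the point reflection at $x$ is $y\mapsto(xyx)_a=J^{xx}_a(y)$, the same formula giving its inversive extension to all of $\cX$; if one wants this made fully explicit it can be read off from Lemma \ref{la:ta} together with the transplantation formula (\ref{eqn:Transp}).

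The comparison is then immediate from axiom (S): since $J^{aa}_x=J^{xx}_a$, we get $\mu_{aa}(x,y)=J^{aa}_x(y)=J^{xx}_a(y)=(xyx)_a$ and likewise $S_x=J^{xx}_a$ as maps on $\cX$. Hence $\cU_{aa}$, together with its symmetry action, is exactly $\cU_a$ viewed as a symmetric space via the point reflections of its abelian torsor structure. I do not anticipate any genuine obstacle here; the only points requiring a little care are the domain bookkeeping noted above and the elementary remark that in a commutative torsor the correct notion of ``inversion through $x$'' is precisely $y\mapsto(xyx)$ — which is exactly the geometric meaning that axiom (S) was introduced to encode.
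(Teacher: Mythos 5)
Your proof is correct and follows essentially the same route as the paper's: both reduce the claim to axiom (S), identifying the reflection $s_x(y)=J^{aa}_x(y)$ on $\cU_{aa}$ with the torsor point-reflection $(xyx)_a=J^{xx}_a(y)$ on $\cU_a$. Your additional remarks on domain bookkeeping and the actions on $\cX$ are fine but not needed beyond what the paper records.
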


\begin{proof} This follows directly from the symmetry property (S):
in $\cU_{aa}$ the symmetric element
of $y$ with respect to $x$ is
$s_x(y) = J^{aa}_x(y)$, and in $\cU_a$ it is
$(xyx)_a = J^{xx}_a(y)$.
\end{proof}

\begin{theorem}[The polarized reflection  space]\label{th:PSS}
The set $\cD_2$ of transversal pairs becomes a reflection space with the law
\[
s_{(x,a)} (y,b) := \bigl( J^{xx}_a (y), J^{aa}_x(b) \bigr).
\]
The same formula defines a symmetry action of the reflection  space $\cD_2$ on $\cX^2$.
The exchange map $\tau:\cX^2 \to \cX^2$, $(x,a)\mapsto (a,x)$ is an automorphism of the reflection space $\cD^2$ and
of the action.
\end{theorem}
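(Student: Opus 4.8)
The plan is to reduce everything to one observation: by the symmetry axiom (S), the two maps occurring in the definition coincide. Indeed, for a transversal pair $(x,a)\in\cD_2$ the triple $(x,a,x)$ lies in $\cD_3$ (equivalently $x\top a$), so $J^{xx}_a$ is defined, and (S) gives $J^{xx}_a=J^{aa}_x$. Writing $\phi_{(x,a)}:=J^{xx}_a$ for this common bijection — defined precisely when $(x,a)\in\cD_2$ — the proposed law is simply the diagonal action
\[
s_{(x,a)}(y,b)=\bigl(\phi_{(x,a)}(y),\,\phi_{(x,a)}(b)\bigr),
\]
so it suffices to record three properties of $\phi_{(x,a)}$ and then read off the reflection-space axioms coordinatewise. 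From (IP) (roles $a\mapsto x$, $b\mapsto x$, $c\mapsto a$) the map $\phi_{(x,a)}$ fixes both $x$ and $a$; from (IN) it is an involution (hence a bijection), and being a structure map it preserves $\top$; and (D) says precisely that $\phi_{(x,a)}\in\Aut(\cX,\top,J)$.

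Next I would verify the reflection-space / symmetry-action axioms for the action on $\cX^2$. Axiom (R1), $s_{(x,a)}(x,a)=(x,a)$, is immediate from $\phi_{(x,a)}(x)=x$ and $\phi_{(x,a)}(a)=a$. Axiom (R2), $s_{(x,a)}\circ s_{(x,a)}=\id_{\cX^2}$, follows from $\phi_{(x,a)}\circ\phi_{(x,a)}=\id_{\cX}$. For (R3) I would use that $\phi_{(x,a)}$ is an automorphism of $J$: by the defining property of such automorphisms,
\[
\phi_{(x,a)}\circ J^{yy}_b\circ\phi_{(x,a)}^{-1}=J^{\phi_{(x,a)}(y),\,\phi_{(x,a)}(y)}_{\phi_{(x,a)}(b)}=\phi_{(\phi_{(x,a)}(y),\,\phi_{(x,a)}(b))},
\]
and the index on the right is exactly $s_{(x,a)}(y,b)$; applied in both coordinates this reads $s_{(x,a)}\circ s_{(y,b)}\circ s_{(x,a)}^{-1}=s_{s_{(x,a)}(y,b)}$, which is (R3), so the formula gives a symmetry action of $\cD_2$ on $\cX^2$. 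To obtain the reflection space on $\cD_2$ itself, note that $\phi_{(x,a)}$ preserves $\top$, hence sends transversal pairs to transversal pairs; thus $\cD_2\subset\cX^2$ is stable under every $s_{(x,a)}$ with $(x,a)\in\cD_2$, and the restricted law turns $\cD_2$ into a reflection space.

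Finally, for the exchange map $\tau(x,a)=(a,x)$ I would argue directly. It preserves $\cD_2$ because $\top$ is symmetric. It intertwines the law: on one side
$\tau\bigl(s_{(x,a)}(y,b)\bigr)=\bigl(\phi_{(x,a)}(b),\,\phi_{(x,a)}(y)\bigr)$,
and on the other
$s_{\tau(x,a)}(\tau(y,b))=s_{(a,x)}(b,y)=\bigl(\phi_{(a,x)}(b),\,\phi_{(a,x)}(y)\bigr)$,
while $\phi_{(a,x)}=J^{aa}_x=J^{xx}_a=\phi_{(x,a)}$ by (S); so the two sides agree, for all $(x,a)\in\cD_2$ and all $(y,b)\in\cX^2$. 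This shows $\tau$ is an automorphism both of the reflection space $\cD_2$ and of the symmetry action on $\cX^2$.

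I do not expect a genuine obstacle here: the whole content is that (S) collapses the two coordinates to a single map $\phi_{(x,a)}$, after which (IP), (IN) and (D) do all the work. The only point requiring care is the bookkeeping of domains — that $\phi_{(x,a)}$ is defined exactly on $\cD_2$, that $\cD_2$ is stable under the action, and that the three verified identities match the precise form of "reflection space" and "symmetry action" as recalled in Appendix~\ref{App:SA} (in particular, that the action restricts to the intrinsic reflection law on $\cD_2$, which holds since $\phi_{(x,a)}$ fixes $x$ and $a$).
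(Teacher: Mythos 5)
Your proof is correct and follows the same route as the paper, which simply asserts that everything follows from the axioms (IN), (IP), (D); you have filled in the coordinatewise verification of (R1)--(R3), the stability of $\cD_2$, and the $\tau$-equivariance in full. Your additional observation that axiom (S) collapses $J^{xx}_a$ and $J^{aa}_x$ into a single map $\phi_{(x,a)}$ is not only correct but actually needed for the distributivity step (R3) to close up (the conjugate $J^{xx}_a\,J^{yy}_b\,J^{xx}_a$ produces the subscript $J^{xx}_a(b)$, which must be matched with the second coordinate $J^{aa}_x(b)$ of $s_{(x,a)}(y,b)$), a point the paper's one-line proof leaves implicit.
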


\begin{proof}
Everything follows easily from the axioms (In), (IP, (D).
\end{proof}

\nin
The reflection space $\cD_2$ contains flat subspaces (as defined below, \ref{ssec:flat})
for $a=b$ fixed (or $x=y$ fixed), but  is not
flat itself. It corresponds to the {\em twisted polarized symmetric spaces} from \cite{Be00}.

 \subsection{Some categorial notions}\label{ssec:cat}
Most categorial notions are defined in an obvious way -- cf.\
\cite{Be02, BeL08},  and we refer to loc.\ cit.\ for more details:

\subsubsection{Morphisms}
They can be characterized as ``locally $\Z$-affine maps'' (Lemma \ref{la:morph}),
or, similarly, as morphisms of the family of  ``local''  reflection spaces.

\subsubsection{Inner automorphisms, groups of automorphisms}
The subgroup
\begin{equation}
\bfG := \bfG(\cX):= \langle J^{xz}_a \mid (x,a,z) \in \cD_3 \rangle \subset \Aut(\cX,J)
\end{equation}
generated by all inversions will be called the {\em group of inner automorphisms}. Stabilizers of one element $x \in \cX$,
resp.\ of a transversal pair  $(x,a)\in \cD_2$, are written
\begin{equation}
\bfG_x := \{ g \in \bfG \mid g(x)=x \}, \qquad   \bfG_{x,a}:= \bfG_x \cap \bfG_a .
\end{equation}
Note that $\bfG_a$ acts $\Z$-affinely on $\cU_a$, and
$\bfG_{x,a}$ acts $\Z$-linearly on $(\cU_a,x)$ and  $(\cU_x,a)$.
 
\subsubsection{Base points:}
A {\em base point} is a fixed  transversal pair $(x,a)$; we then often write $(o,o')$ or $(o^+,o^ -)$.
For the stabilizer groups we sometimes write also
\begin{equation}\label{eqn:stabilizers}
\bfP:= \bfG_{o'}, \qquad \bfH := \bfG_{o,o'} .
\end{equation}

\subsubsection{Duality:} In presence of a splitting, we define structure maps $J^\pm$,
see above.

\subsubsection{Direct products:} 
direct product of transversality and of structure maps

\subsubsection{Subspaces:} 
subsets stable under structure maps

\subsubsection{Intrinsic subspaces (inner ideals):} 
subsets $\cY \subset \cX$ such that $J^{xz}_a(y) \in \cY$
whenever $x,y,z \in \cY$ and $a \in \cX$;
this can be interpreted in two ways:
$U_a \cap \cY$ is an affine subspace, for all $a \in \cX$ (point of view taken in \cite{BeL08}), or:
$\cY$ is an invariant subspace of the symmetry action of $U_{xz}$,
for all $x,z \in \cY$.

\subsubsection{Flat geometries:}  \label{ssec:flat}
given by two abelian groups $(V_1,+),(V_{-1},+)$, $\cX = V_1 \cup  V_{-1}$ (disjoint union), 
$a \top x$ iff $a \in V_{\pm 1}$ and $x \in V_{\mp 1}$, 
$J^{xz}_a (y)=x-y+z$, $J^{xz}_a(b)= 2a-b$ for $x,y,z \in V_{\pm 1}$, $a,b \in V_{\mp 1}$.

\subsubsection{Congruences and quotient spaces:} defined as in \cite{Be02}, following
\cite{Lo69}, III.2.


\subsubsection{Polarities}
A {\em polarity} is an automorphism $p \in \Aut(\cX)$ which is of order two: $p^2 = \id_\cX$, and
has  {\em non-isotropic elements}: there is $x$ such that $p(x) \top x$.
In other words, $(x,p(x)) \in \cD_2$, so the graph of $p$ has non-empty intersection with $\cD_2$.

\begin{theorem}
Assume $p$ is a polarity of $(\cX,\top,J)$.
 Then the set 
$$
\cX^{(p)} = \{ x \in \cX \mid \, p(x) \top x \},
$$
is stable under the law $(x,y) \mapsto J^{xx}_{p(x)}(y)$, which turns it into a reflection space.
\end{theorem}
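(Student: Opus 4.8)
The plan is to show that $\cX^{(p)}$ is closed under the binary operation $\mu(x,y):=J^{xx}_{p(x)}(y)$, then verify the three reflection-space axioms (R1), (R2), (R3). For closure, I would argue as follows: given $x,y \in \cX^{(p)}$, set $a = p(x)$, so $(x,a)\in\cD_2$ and also $(y,p(y))\in\cD_2$. By Lemma \ref{la:ta} (and the transplantation formula \eqref{eqn:Transp}) the operation $(xyx)_a = J^{xx}_a(y)$ keeps us inside $\cU_a$, so $\mu(x,y)\top a = p(x)$. To get $\mu(x,y)\in\cX^{(p)}$ one must show $p(\mu(x,y))\top \mu(x,y)$. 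Here is the key computation: since $p$ is an automorphism of $(\cX,\top,J)$, we have $p(J^{xx}_{p(x)}(y)) = J^{p(x)p(x)}_{p^2(x)}(p(y)) = J^{p(x)p(x)}_{x}(p(y))$. By the symmetry axiom (S), $J^{p(x)p(x)}_{x} = J^{xx}_{p(x)}$, so $p(\mu(x,y)) = J^{xx}_{p(x)}(p(y)) = \mu(x, p(y))$. Now $\mu(x,\cdot) = J^{xx}_{p(x)}$ is an involutive bijection (axiom (IN)) that, restricted to $\cU_{x,p(x)}$, is the point reflection $s_x$ of the reflection space $\cU_{x,p(x)}$ by Lemma \ref{la:ssa}; since both $y$ and $p(y)$ lie in the graph-intersection with $\cD_2$, and transversality $\mu(x,y)\top\mu(x,p(y))$ is equivalent (applying the bijection $\mu(x,\cdot)^{-1}=\mu(x,\cdot)$) to $y \top p(y)$, which holds because $y\in\cX^{(p)}$. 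Hence $\mu(x,y)\in\cX^{(p)}$.

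Next I would verify the reflection-space axioms. (R1), $\mu(x,x) = J^{xx}_{p(x)}(x) = x$: this is the first idempotency identity in (IP) with $(a,b,c)=(x,x,p(x))$, namely $J^{ab}_c(a)=b$ specializes to $J^{xx}_{p(x)}(x)=x$. (R2), $\mu(x,\mu(x,y)) = y$: this is exactly involutivity (IN), $J^{xx}_{p(x)}\circ J^{xx}_{p(x)} = \id$. (R3), $s_x$ is an automorphism of $\mu$, i.e. $\mu(s_x(u), s_x(v)) = s_x(\mu(u,v))$ for $u,v\in\cX^{(p)}$. I would unwind this as $J^{ww}_{p(w)}(s_x(v)) = s_x(J^{uu}_{p(u)}(v))$ with $w = s_x(u) = J^{xx}_{p(x)}(u)$. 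The distributivity axiom (D) says each $J^{xz}_c$ is an automorphism of the whole structure $J$, so $J^{xx}_{p(x)}$ conjugates $J^{uu}_{p(u)}$ to $J^{J^{xx}_{p(x)}(u), J^{xx}_{p(x)}(u)}_{J^{xx}_{p(x)}(p(u))}$; and since $p$ commutes with $J^{xx}_{p(x)}$ in the sense computed above ($J^{xx}_{p(x)}(p(u)) = p(J^{xx}_{p(x)}(u)) = p(w)$), this conjugate is precisely $J^{ww}_{p(w)}$, giving (R3).

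The main obstacle I expect is bookkeeping around the identity $p\circ J^{xx}_{p(x)} = J^{xx}_{p(x)}\circ p$: this is the linchpin both for closure of $\cX^{(p)}$ and for (R3), and it relies on combining $p$ being a structure-automorphism with $p^2=\id$ and crucially with the symmetry axiom (S) to rewrite $J^{p(x)p(x)}_x$ as $J^{xx}_{p(x)}$. Everything else is a direct appeal to (IN), (IP), (D) and the two lemmas on torsor and symmetric-space actions; the phrasing of the proof is essentially "repeat the argument of Theorem \ref{th:PSS} for the fixed-point set of $p$, using that $p$ intertwines the two halves of the polarized reflection space." I would present it in that compressed style: establish the intertwining identity as a displayed lemma-within-the-proof, then read off (R1)--(R3) and closure as corollaries of it together with the stated axioms.
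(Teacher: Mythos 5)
Your proof is correct, and every step you invoke is justified by the axioms you cite; the linchpin identity $p\circ J^{xx}_{p(x)}=J^{xx}_{p(x)}\circ p$ (automorphy of $p$, plus $p^2=\id$, plus axiom (S)) is exactly the right thing to isolate. The paper, however, does not verify (R1)--(R3) and closure by hand: its entire proof is the one-line observation that the map $x\mapsto (x,p(x))$ identifies $\cX^{(p)}$ with the fixed-point set of the order-two automorphism $p\tau=\tau p$ of the polarized reflection space $\cD_2$ of Theorem \ref{th:PSS}, and the fixed-point set of an automorphism of a reflection space is automatically a sub-reflection space. Under that identification the induced law on the first component is precisely $(x,y)\mapsto J^{xx}_{p(x)}(y)$, and the fact that the second component comes out as $p$ of the first is forced by stability of the fixed-point set -- so the paper never has to write down your intertwining computation explicitly; it is absorbed into Theorem \ref{th:PSS}. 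You anticipated this in your closing remark, but then chose to unpack it; what your version buys is self-containedness and a visible record of where axiom (S) enters (it is genuinely needed: without it $p(\mu(x,y))$ would be $J^{p(x)p(x)}_x(p(y))$ rather than the image of $p(y)$ under the same inversion, and the closure argument would not go through). What the paper's version buys is brevity and the structural point that $\cX^{(p)}$ is literally a sub-reflection space of the polarized space $\cD_2$, which is the form in which the statement is used later. Either write-up is acceptable; if you keep the direct one, do display the intertwining identity as a standalone step, as you propose.
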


\begin{proof}
Indeed, this space can be realized as sub-reflection  space of the polarized space $\cD_2$
(Theorem \ref{th:PSS}) fixed under the involution $p\tau = \tau p$, by the imbedding
$x \mapsto (x,p(x))$. (This is the analog of \cite{Be02}, Theorem 4.2).
\end{proof}
%

 \section{Translations}\label{sec:Tran}
 
To each of the torsors $\cU_a$ corresponds a translation group $\bfT_a$, acting on $\cX$ by inner automorphisms.
The rich supply of inner automorphisms permits to prove transitivity results, and to define special elements
of stabilizer groups (Bergman operators). Together, this gives a good knowledge of the ``canonical atlas of $\cX$''.
 
\begin{definition}
Fix $a \in \cX$. According to Lemma \ref{L-Lemma},
for all $x,z \in \cU_a$,  the map
\begin{equation}
L^{xz}_a:=J_a^{xu} J_a^{uz} = J_a^{ux} J_a^{zu} 
\end{equation}
 called {\em (left) $a$-translation}, does  not depend on the choice
of $u \in \cU_a$ (in particular, we may choose $u=x$ or $u=z$).  The $a$-translations form
a commutative group,
\begin{equation}
\bfT_a := \{ L^{xz}_a \mid \, x,z \in \cU_a  \} \subset \bfG_a ,
\end{equation}
called the {\em $a$-translation group}.  It acts on $\cX$ by its natural left action. 
\end{definition}

\nin
Note that $\bfT_a$ is isomorphic to $(\cU_a,o)$, for any origin $o \in U_a$, and,
if $x,y,z \in \cU_a$, then we have usual properties, such as
``Chasles relation'', and the link with the symmetries:
\begin{equation}
L_a^{xy} L_a^{yz} = L_a^{xz}, \quad (L_a^{xy})\inv = L_a^{yx},\quad 
L_a^{xy}(z) = (xyz)_a = J_a^{xz}(y) .
\end{equation}

\begin{lemma}\label{la:normal}
The translation group $\bfT_a$ is a normal subgroup of $\bfG_a$, and, for any $x \in \cU_a$, it is the kernel of the group
homomorphism
$$
D:=D^{x,a}: \bfG_a \to \bfG_{a,x}, \quad g \mapsto D(g):= L_a^{x,gx} \circ g .
$$
This homomorphism has a section given by the natural inclusion, and hence we have an exact sequence of groups
\begin{equation}\label{eqn:sequence}
0 \to \bfT_a \to \bfG_a \to \bfG_{x,a} \to 0 .
\end{equation}
\end{lemma}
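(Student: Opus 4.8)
The strategy is to verify that the stated map $D = D^{x,a}$ is a well-defined group homomorphism $\bfG_a \to \bfG_{x,a}$, identify its kernel as $\bfT_a$, and exhibit the inclusion $\bfG_{x,a}\hookrightarrow\bfG_a$ as a section; the exact sequence \eqref{eqn:sequence} is then a formal consequence. First I would check that $D(g)$ really lands in $\bfG_{x,a}$. Since $g\in\bfG_a$ fixes $a$ and preserves transversality, $gx\in\cU_a$, so $L_a^{x,gx}\in\bfT_a\subset\bfG_a$ is defined; hence $D(g)\in\bfG_a$, and it remains to see $D(g)(x)=x$. Using the Chasles-type relations recalled after the definition of $\bfT_a$, $L_a^{x,gx}(gx)=L_a^{x,gx}L_a^{gx,gx}$ applied appropriately gives $L_a^{x,gx}(gx)=x$ (concretely, $L_a^{x,gx}$ is ``translation by $x-gx$'' in the torsor $\cU_a$, and translating $gx$ by $x-gx$ yields $x$). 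So $D(g)(x) = L_a^{x,gx}(g(x)) = L_a^{x,gx}(gx) = x$, and $D(g)\in\bfG_{x,a}$.

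Next I would prove multiplicativity: $D(gh)=D(g)D(h)$ for $g,h\in\bfG_a$. Expanding, $D(gh) = L_a^{x,ghx}\circ gh$ and $D(g)D(h) = L_a^{x,gx}\,g\,L_a^{x,hx}\,h$. The key is to move $g$ past $L_a^{x,hx}$: because $g\in\bfG_a$ is an automorphism of $J$ fixing $a$, it conjugates $a$-translations to $a$-translations by the naturality of $J$, namely $g\circ L_a^{uv}\circ g^{-1} = L_a^{gu,gv}$ (this is just axiom (D)/(A) read through Lemma~\ref{la:morph}, since $g$ restricts to a torsor automorphism of $\cU_a$). Hence $g\,L_a^{x,hx} = L_a^{gx,ghx}\,g$, so $D(g)D(h) = L_a^{x,gx}\,L_a^{gx,ghx}\,g h = L_a^{x,ghx}\,gh = D(gh)$, using the Chasles relation $L_a^{x,gx}L_a^{gx,ghx}=L_a^{x,ghx}$. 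This shows $D$ is a homomorphism.

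Then I would compute the kernel. If $D(g)=\id$, then $L_a^{x,gx}\circ g = \id$, i.e.\ $g = (L_a^{x,gx})^{-1} = L_a^{gx,x}\in\bfT_a$; conversely, if $g=L_a^{zy}\in\bfT_a$, then $gx = L_a^{zy}(x) = (zxy)_a$, and a short torsor computation (again: $g$ is ``translation by $z-y$'', so $gx = x+z-y$) gives $L_a^{x,gx} = L_a^{gx,x\,-1}\cdots$, more precisely $L_a^{x,gx}$ is translation by $x - gx = y - z$, which is the inverse of $g$, so $D(g) = L_a^{x,gx}\circ g = g^{-1}\circ g = \id$. Hence $\ker D = \bfT_a$; in particular $\bfT_a$ is normal in $\bfG_a$, being a kernel. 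Finally, for $g\in\bfG_{x,a}\subset\bfG_a$ we have $gx=x$, so $L_a^{x,gx}=L_a^{x,x}=\id$ and $D(g)=g$; thus the natural inclusion $\bfG_{x,a}\hookrightarrow\bfG_a$ is a section of $D$, and \eqref{eqn:sequence} is exact (surjectivity of $D$ onto $\bfG_{x,a}$ follows from the existence of the section).

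\textbf{Main obstacle.} The routine parts are the torsor identities (Chasles, the explicit ``translation by $x-gx$'' description), which follow from Lemma~\ref{la:ta} and the relations displayed after the definition of $\bfT_a$. The one step deserving care is the conjugation formula $g\circ L_a^{uv}\circ g^{-1} = L_a^{gu,gv}$: one must check that $g\in\bfG_a$ genuinely restricts to a torsor automorphism of $\cU_a$ (this is Lemma~\ref{la:morph}, since $g$ fixes $a$) and then that it intertwines the left translations of the commutative torsor $\cU_a$ accordingly — essentially the statement that a torsor homomorphism commutes with the translation action after transporting by $g$. Getting the bookkeeping of left-versus-right and of $L^{xz}_a$ versus $L^{zx}_a$ consistent with the conventions fixed in Section~\ref{sec:Tran} is where the only real attention is needed.
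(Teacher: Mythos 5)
Your proof is correct and follows essentially the same route as the paper: the same computation $D(g)D(h)=L_a^{x,gx}\,L_a^{gx,ghx}\,gh=D(gh)$ via the conjugation formula $g\circ L_a^{uv}\circ g^{-1}=L_{a}^{gu,gv}$ (valid since $g(a)=a$) and the Chasles relation, the same identification of the kernel with $\bfT_a$, and the inclusion as the obvious section. The only difference is cosmetic: you obtain normality of $\bfT_a$ as a consequence of its being a kernel, whereas the paper deduces it directly from the conjugation formula first.
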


\begin{proof} For any $g\in \bfG$,
 $g \circ L_a^{xz} \circ g\inv = L_{ga}^{gx,gz}$, which implies that $\bfT_a$ is normal in $\bfG_a$.
Clearly, $D(g)a=a$ and $D(g)x=x$, so the map $D$ is well defined, and it is a morphism:
$D(g)D(h)=L_a^{x,gx} g L_a^{x,hx} h = L_a^{x,gx}L_a^{gx,ghx} gh= D(gh)$, and its kernel is the set of $g$
such that $g= L_a^{gx,x}$, that is, the translation group. 
Obviously, the inclusion is a section of $D$.
\end{proof}

\begin{lemma}[Triple decomposition]\label{la:tripledecomposition}
Fix a base point $(x,a )= (o,o') \in \cD_2$ and let $\bfT :=\bfT_{o'}$, $\bfT':=\bfT_o$. 
Then each element of  the {\em big cell of $\bfG$}
\begin{equation}
\Omega :=\Omega^{o,o'} := \{ g \in \bfG \mid g(o) \top o' \}
\end{equation}
admits a unique   {\em triple decomposition} into a translation, a $\Z$-linear part, and a ``quasi-translation'',
that is,  $\Omega = \bfT \cdot \bfG_{o,o'} \cdot  \bfT'$:
\begin{equation}\label{eqn:triple}
 g = L_{o'}^{t,o}  h L_o^{t',o'} , \qquad \mbox{ with  } t \in \bfT , h \in \bfG_{o,o'}, t' \in \bfT'  .
 \end{equation}
\end{lemma}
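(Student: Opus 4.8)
The plan is to exploit the two group actions established in the preceding lemmas: the normal translation group $\bfT_a$ with its semidirect decomposition $\bfG_a = \bfT_a \rtimes \bfG_{x,a}$ (Lemma \ref{la:normal}), applied at both base points $o'$ and $o$. The statement has two parts: existence and uniqueness of the factorization $g = L_{o'}^{t,o} h L_o^{t',o'}$ for $g \in \Omega$.

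First I would prove \emph{existence}. Given $g \in \Omega$, the point $g(o)$ is transversal to $o'$, i.e.\ $g(o) \in \cU_{o'}$. Since $\bfT_{o'}$ acts on $\cU_{o'}$ as the translation group of the torsor $\cU_{o'}$ (via $L_{o'}^{xz}(w) = (xzw)_{o'}$ and the identification $\bfT_{o'} \cong (\cU_{o'},o)$), there is a translation $t_1 \in \bfT_{o'}$ with $t_1(g(o)) = o$; explicitly one may take $t_1 = L_{o'}^{o,g(o)}$, so $t := g(o)$ and the first factor is $L_{o'}^{t,o}$ after reindexing. Then $g_1 := t_1 g$ satisfies $g_1(o) = o$, so $g_1 \in \bfG_o$. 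Now apply the exact sequence \eqref{eqn:sequence} at the base point $o$ with distinguished transversal element $o'$: write $g_1 = h_1 \cdot s'$ where $h_1 = D^{o',o}(g_1) \in \bfG_{o,o'}$ and $s' \in \bfT_o$ is the $o$-translation $L_o^{o', g_1^{-1}(o')}$ or equivalently $L_o^{g_1(o'),o'}$ read appropriately. Rearranging the explicit formula for $D$ gives $g_1 = L_o^{g_1 o', o'} \cdot h_1$, which after inverting $t_1$ yields $g = L_{o'}^{t,o} \cdot h \cdot L_o^{t',o'}$ with the indicated factor types. The one point needing care is bookkeeping of which index convention ($L_a^{xy}$ vs.\ $L_a^{yx}$) makes the middle factor land exactly in $\bfG_{o,o'}$; this is routine given the Chasles relations listed after the definition of $\bfT_a$.

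Next, \emph{uniqueness}. Suppose $L_{o'}^{t_1,o} h_1 L_o^{t_1',o'} = L_{o'}^{t_2,o} h_2 L_o^{t_2',o'}$. Evaluate both sides at $o$: since $h_i$ and $L_o^{t_i',o'}$ fix $o$ (the latter because it is an $o$-translation), the left side gives $L_{o'}^{t_1,o}(o) = t_1$ and similarly $t_2$ on the right (using $L_{o'}^{t,o}(o) = t$), so $t_1 = t_2$. Cancelling $L_{o'}^{t_1,o}$ on the left reduces to $h_1 L_o^{t_1',o'} = h_2 L_o^{t_2',o'}$ inside $\bfG_o$; applying the homomorphism $D^{o',o}: \bfG_o \to \bfG_{o,o'}$, whose kernel is $\bfT_o$, kills the translation factors and gives $h_1 = h_2$, whence $L_o^{t_1',o'} = L_o^{t_2',o'}$, i.e.\ $t_1' = t_2'$ by injectivity of $\bfT_o \cong (\cU_o, o')$.

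The main obstacle I anticipate is not conceptual but notational: the $L$-symbols carry both a base-point subscript and an ordered pair of superscripts, and the translation corresponding to ``move $g(o)$ to $o$'' must be written with the superscripts in the order that makes \eqref{eqn:triple} literally correct, while simultaneously the section of $D$ in Lemma \ref{la:normal} is stated with a specific orientation. Getting these to match — so that the middle factor is genuinely in $\bfG_{o,o'}$ and not merely conjugate to such an element — is where one must be careful. A secondary subtlety is verifying that the second factor $t'$ really lies in $\bfT' = \bfT_o$ and not in $\bfT_{o'}$; this follows because after removing the $\bfG_o$-part via the first translation, the residual translation is produced by the exact sequence \emph{at $o$}, hence is an $o$-translation by construction. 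Once the index conventions are fixed consistently at the start, both existence and uniqueness are short.
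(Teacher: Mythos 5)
Your proof is correct and follows essentially the same route as the paper's: both pin down $t=g(o)$ and $t'$ by evaluation, and reduce existence to the fact that the resulting middle factor stabilizes $o$ and $o'$, via the machinery of Lemma \ref{la:normal}. The only cosmetic point is that your existence step initially produces the $\bfT_o$-factor on the left of $h_1$; normality of $\bfT_o$ in $\bfG_o$ lets you move it to the right, recovering exactly the paper's formula $h = D(g) = L_{o'}^{o,g(o)} \circ g \circ L_o^{g\inv(o'),o'}$.
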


\begin{proof}
The  decomposition is unique:
 necessarily, $t=g(o)$, $t'=-g\inv(o')$ and hence 
\begin{equation}\label{Den-eqn}
h= D(g) :=D^{o,o'}(g) := L_{o'}^{o,g(o)} \circ g \circ L_o^{g\inv (o'),o'}  .
\end{equation}
In order to prove existence, it suffices to check that $D(g)$ stabilizes $o$ and $o'$,
and this is done as in the preceding proof.  
\end{proof}

\begin{definition}
In the situation of the lemma, we say that $\bfT$ acts by translations on $V:=\cU_{o'}$, and $\bfT'$
{\em acts by quasi-translations}. We use also
 the notation\footnote{In Jordan theory, $x^a$ is called the {\em quasi-inverse}, but we prefer to use 
 this  term  for $J^{ao'}_o(x)$, which describes an map
of order two, and thus corresponds much better to some kind of inverse.} $x^a :=  L_o^{ao'}(x)$,
and we say that the pair $(x,a) \in \cU_{o'}\times \cU_o$ is {\em quasi-invertible}
if $x^a \in \cU_{o'}$.
\end{definition}

\nin
The preceding definition corresponds to the choice of considering $\cU_{o'}$ as ``space'' and
$\cU_o$ as ``dual space''. But of course, things can be turned over: the conditions
$g.o\top o'$ and $g(o) \in V$ are equivalent, and $\Omega^{o',o}=(\Omega^{o,o'})\inv$.
The element $D(g)$ defined by (\ref{Den-eqn})
will be called  {\em denominator of $g$ with respect to $(o,o')$}. 
Note that its definition  is compatible 
with the one from Lemma \ref{la:normal}.

\begin{lemma}
The denominators satisfy the
{\em cocyle relation}
\begin{equation}\label{eqn:Coc}
D(gh) = D(g L_{o'}^{ho,o}) D(h) . 
\end{equation}
\end{lemma}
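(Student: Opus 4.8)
The plan is to verify the cocycle relation \eqref{eqn:Coc} by unwinding the definition of the denominator from \eqref{Den-eqn} and using the additivity and normality properties of the translation groups established in Lemmas \ref{la:normal} and \ref{la:tripledecomposition}. First I would introduce abbreviations: set $p := o$, $q := o'$, and for a map $f$ write $f^\bullet := f(o)$ and $f_\bullet := f\inv(o')$, so that
\[
D(g) = L_{o'}^{o,\,g(o)} \circ g \circ L_o^{g\inv(o'),\,o} .
\]
I would then compute $D(gh)$ directly. The key combinatorial facts are the Chasles relations $L_a^{xy}L_a^{yz}=L_a^{xz}$ and $(L_a^{xy})\inv = L_a^{yx}$, together with the conjugation rule $g L_a^{xz} g\inv = L_{ga}^{gx,gz}$, which I would apply repeatedly to slide translation factors past $g$ or $h$.

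The core step is as follows. Writing $D(gh) = L_{o'}^{o,\,gh(o)} \circ gh \circ L_o^{(gh)\inv(o'),\,o}$, I want to insert the triple decomposition of $h$ (namely $h = L_{o'}^{h(o),o}\, D(h)\, L_o^{h\inv(o'),o'}$, from Lemma \ref{la:tripledecomposition}) and recognize the portion of $g$ that sees $h(o)$ rather than $o$. Concretely, $D\bigl(g L_{o'}^{ho,o}\bigr)$ is the denominator of the map $g' := g\circ L_{o'}^{h(o),o}$, which satisfies $g'(o) = g(h(o))$; and one checks $g'\inv(o') = L_{o'}^{o,h(o)}(g\inv(o'))$. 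Substituting these into the definition of $D(g')$ and multiplying by $D(h)$, I would then use the conjugation rule to move the translation factors through $g$, collapse consecutive translations with Chasles, and identify the result with $L_{o'}^{o,gh(o)}\circ gh\circ L_o^{(gh)\inv(o'),o}$. The identity $gh(o) = g(h(o))$ and $(gh)\inv(o') = h\inv(g\inv(o'))$ are what make the indices match up after the dust settles.

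I expect the main obstacle to be purely bookkeeping: keeping straight which translation group ($\bfT_{o'}$ or $\bfT_o$) each factor lives in and in which argument the conjugation formula produces its new indices, since $L_{o'}$-translations and $L_o$-translations behave dually and must not be confused. A clean way to organize this is to verify the two defining properties of the right-hand side rather than brute-force the algebra: show that $D(g L_{o'}^{ho,o})\,D(h)$ fixes both $o$ and $o'$ (immediate, since each denominator does), and that it differs from $gh$ by the correct translation factors on the left in $\bfT_{o'}$ and on the right in $\bfT_o$ — and then invoke the uniqueness of the triple decomposition from Lemma \ref{la:tripledecomposition} to conclude it equals $D(gh)$. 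This reduces the proof to checking that $gh \cdot (D(gL_{o'}^{ho,o})D(h))\inv$ has the shape $L_{o'}^{\ast,o} \cdot (\text{something in }\bfG_{o,o'}) \cdot L_o^{\ast,o'}$ with the first and last factors matching those in $D(gh)$, which follows from the computations of $gh(o)$ and $(gh)\inv(o')$ above together with the cocycle-like telescoping of the intermediate quasi-translations.
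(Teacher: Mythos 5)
Your proposal is correct and, in its final ``clean'' form, is exactly the paper's argument: write $h = s\,D(h)\,s'$ with $s = L_{o'}^{ho,o}$, decompose $gs$, push the stray $\bfT'$-factor past $D(h)$ using normality, and invoke uniqueness of the triple decomposition of $gh$ to read off $D(gh)=D(gs)D(h)$. The key index check you isolate, $\bigl(gL_{o'}^{ho,o}\bigr)(o)=gh(o)$, is precisely what makes the left translation factors agree, so no further bookkeeping is needed.
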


\begin{proof}
If $h=sD(h) s'$ and $gs = t D(gs) t'$, where $s = L_{o'}^{ho,o}$, then
\[
gh = g sD(h)  s' = t D(gs)  D(h) s' 
\]
whence, by uniqueness of the decomposition, $D(gh)= D(gs) D(h)$.
\end{proof}

The projective group $\PP\Gl(p+q,\K)$ acts transitively on $\Gras_p(\K^{p+q})$, but not
on $\Gras_p(\K^{p+q})\cup \Gras_q(\K^{p+q})$ (since it preserves dimension) -- unless $p=q$,
in which case the geometry is self-dual. 
The following result generalizes these observations:

\begin{theorem}[Transitivity] \label{th:trans}
Assume $(\cX,\top,J)$ is connected, and fix a base point $(o,o')$. 
Let $X:=\cX$ and $M:=\cD_2$ if  $\cX$ is self-dual, and
$X:= \cX^+$ and $M:=\cD_2^+$ if $\cX $ admits a splitting
$(\cX^+,\cX^-)$. 
Then the action of $\bfG$ on $M$ and on $X$ is transitive:
$$
M   = \bfG/ \bfG_{o,o'} = \bfG / \bfH,\qquad  X  = \bfG/\bfG_o = \bfG/\bfP \, ,
$$
and every element $g \in \bfG$ has a (in general, not unique) 
decomposition
$$
g = t_1 s_1 \cdots t_n s_n \, h ,
$$
with $t_i \in \bfT = \bfT_{o'}$ and $s_i \in \bfT' = \bfT_o$, $i=1,\ldots,n$, 
and $h \in \bfG_{o,o'}$.
\end{theorem}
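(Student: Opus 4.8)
The plan is to deduce the transitivity statement from the triple decomposition (Lemma~\ref{la:tripledecomposition}) together with connectedness, by a chain argument. First I would isolate the core local fact: if $g\in\Omega^{o,o'}$, then by the triple decomposition $g = L_{o'}^{t,o}\,h\,L_o^{t',o'}$ with $t\in\bfT$, $t'\in\bfT'$, $h\in\bfG_{o,o'}$; in particular $g(o)=t\in\cU_{o'}$ is in the image of $\bfT\cdot\bfG_{o,o'}$ acting on $o$. So on the ``big cell'' $\Omega$ the desired decomposition holds with $n=1$. The point of the theorem is to bootstrap from the big cell to all of $\bfG$ using connectedness, writing a general $g$ as a product of ``cell-sized'' pieces.

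Next I would set up the chain argument. Fix $g\in\bfG$ and consider $g(o)\in X$. Because $\cX$ is connected, there is a transversal chain $o = x_0 \top x_1 \top \cdots \top x_N = g(o)$; in the split case one checks the chain stays in $X=\cX^+$ after even steps, and by refining/re-indexing one may assume $g(o)$ and $o$ lie in the same part, so $N$ is even and the chain alternates between $X$ and its dual. The key observation is that consecutive steps of such a chain are handled by single translations: since $x_{i}\top x_{i-1}$, the pair sits in a big cell and one can find an element of $\bfT$ or $\bfT'$ (more precisely a product $t_is_i$, a ``Chasles step'') carrying $x_{i-1}$ to $x_i$ while respecting the alternation; composing these along the chain produces $w := t_1 s_1\cdots t_n s_n$ with $w(o)=g(o)$. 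Then $w\inv g \in \bfG_o$, and by a second application (now moving $o'$ within $\cU_{g^{-1}\text{-nothing}}$, i.e.\ moving the dual base point by elements fixing $o$) one reduces $w\inv g$ into $\bfG_{o,o'}=\bfH$, absorbing the extra translations into the product; this yields $g = t_1 s_1\cdots t_n s_n\, h$ with $h\in\bfG_{o,o'}$. The identifications $M = \bfG/\bfH$ and $X = \bfG/\bfP$ then follow formally: transitivity of $\bfG$ on $M$ (resp.\ $X$) is exactly the statement just proved, and the stabilizer of the base pair $(o,o')$ in $M$ is $\bfG_{o,o'}=\bfH$ by definition, while the stabilizer of $o$ in $X$ is $\bfG_o=\bfP$.

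The main obstacle I expect is the bookkeeping in the split case and the precise verification that a single transversal step $x_{i-1}\top x_i$ can be realized by a controlled element of $\bfT$ or $\bfT'$ staying inside the appropriate part --- this is where one must carefully use that $\bfT_{o'}$ acts transitively on $\cU_{o'}$ (it is isomorphic to $(\cU_{o'},o)$) together with the exchange symmetry, to ``ping-pong'' between $X$ and its dual along the chain. A secondary technical point is making the final reduction of $w\inv g\in\bfG_o$ into $\bfG_{o,o'}$ precise: one applies the dual form of the big-cell argument (using $\Omega^{o',o}=(\Omega^{o,o'})\inv$, as noted after Lemma~\ref{la:tripledecomposition}) to move $o'$ by an element fixing $o$, then invokes the cocycle/denominator formalism to re-collect everything into a single product of the stated shape. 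Once these two points are handled, the quotient identifications are immediate and the theorem follows; the non-uniqueness is expected precisely because the chain (and hence $n$) is not canonical.
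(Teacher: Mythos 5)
Your overall strategy --- induction on the length of a transversal chain, realizing the motion along the chain by products of translations --- is the same as the paper's. But the step you yourself flag as ``the main obstacle'' is exactly where the content of the proof lies, and as stated it would fail: a single chain step $x_{i-1}\top a_i\top x_i$ cannot in general be realized by an element of $\bfT=\bfT_{o'}$ or $\bfT'=\bfT_o$, nor by a product $t_is_i$ of base-point translations, because any such product sends $o$ into $\cU_{o'}$ (indeed $t_is_i(o)=t_i(o)\in\cU_{o'}$, since $s_i$ fixes $o$ and $t_i$ preserves $\cU_{o'}$), whereas the next point of the chain need not be transversal to $o'$. Base-point translations act transitively only on $\cU_{o'}$, not along an arbitrary chain.

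The missing idea is the operator $\La_{yx}^{ba}=L^{ba}_y\circ L^{yx}_a$ of \eqref{eqn:La}, built from translations based at the \emph{chain points} $a$ and $y$ rather than at the base point: it visibly sends the transversal pair $(a,x)$ to $(b,y)$, so composing such operators along the chain gives transitivity on $M$ directly (and hence on $X$), with no need for your preliminary big-cell step via Lemma~\ref{la:tripledecomposition} or for the subsequent reduction from $\bfG_o$ to $\bfG_{o,o'}$. To obtain the stated decomposition one then needs the rewriting \eqref{eqn:La'}, $\La = L_b^{y,L_y^{ba}(x)}\circ L_y^{ba}$, which for $(y,b)=(o,o')$ exhibits $\La$ as an element of $\bfT\cdot\bfT'$; for the earlier links of the chain one conjugates back to the base point using $g\circ L^{uv}_c\circ g\inv=L^{gu,gv}_{gc}$, which keeps each factor in $\bfT$ or $\bfT'$. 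Your proposal gestures at all of this (``ping-pong'', ``Chasles step'') but never produces the operator that does the work, so as written it is a plan with a genuine gap rather than a proof.
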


\begin{proof}
Both claims are proved by induction on length of chains joining two points.
Assume that  $(x,a,y,b)$ is a chain, so $x \top a$, $a \top y$, $y \top b$.
Then the element
\begin{equation}\label{eqn:La}
\La := \La_{yx}^{ba}:=
 L^{ba}_y \circ L^{yx}_a 
\end{equation}
has the properties
$\La(a)= L^{ba}_y (a)=b$ and
$\La(x)=L^{ba}_y (y) = y$, and hence maps $(a,x)$ to $(b,y)$.
Now the transitivity result follows by induction on the length of chains. 
Note, moreover, that $\La$ may be rewritten in the form
\begin{equation}\label{eqn:La'}
\Lambda= L^{ba}_y \circ L^{yx}_a = L_y^{ba} \circ L^{L_y^{ab} y,x}_x  =
L_y^{ba} \circ L^{L_y^{ab} y,x}_x  \circ L_y^{ab} \circ  L_y^{ba} =
L_b^{y,L_y^{ba}(x)} \circ L_y^{ba} ,
\end{equation}
thus (if $(y,b)=(o,o')$ is the base point) expressing $\La$ by an element of the desired form.
Again, the general decomposition now follows by induction.
\end{proof}

It follows that, if $\cX$ is conntected, all involutions $J^{xx}_a$ are conjugate to each other under
$\Aut(\cX)$. See Remark \ref{rk:midpoints}
 for a sufficient condition that ensures that also all $J^{xz}_a$ are conjugate
to each other.

\begin{remark}
 If $(\cX^+,\cX^-)$ is connected  of
stable rank one, the decomposition from the theorem exists
with $n=2$ and $a_2= o'$ (so we can write
 $G = \bfT \, \bfT' \, \bfT \, \bfH$; in the case of Hermitian symmetric spaces this is called ``Harish-Chandra decomposition''). 
\end{remark}

\begin{remark}
In the preceding statements and proofs, we could have replaced the letter ``$L$'' by ``$J$''; 
for instance, we have
\begin{align}\label{eqn:La''}
\Lambda_{xy}^{ab} =
L^{ab}_x L^{xy}_b & = J^{ab}_x J^{bb}_x J^{xx}_b J^{xy}_b = J^{ab}_x J^{xy}_b \cr
&=
J_x^{ab} J^{J_x^{ab}(x),y}_b =  J^{x,J^{ab}_x(y)}_a J_x^{ab} \cr
& = J_x^{a J_b^{xy}(b)} J_b^{xy} = J_b^{xy} J_y^{J^{xy}_b(a),b} \, .
\end{align}
\end{remark}

\begin{definition}[Bergman operator]\label{ssec:Berg}
A {\em quasi-invertible quadruple} is a closed chain of length four,
$(a,x,b,y) \in \cD_4'$. By closedness, we can define the element
$\La^{yx}_{ba} =L_b^{yx} L_x^{ba}$, having the same effect on $(a,x)$ as the element $\La_{yx}^{ba}$ 
from the preceding proof. It follows that $(\La^{yx}_{ba})\inv \La_{yx}^{ba}$ stabilizes $(a,x)$, i.e., belongs to $\bfG_{x,a}$.
This leads us to define,
for $(a,x,b,y) \in \cD_4'$, the {\em Bergman operator}
\begin{align} \label{eqn:B}
B^{xa}_{yb} 
 &: =(  \La^{yx}_{ba})\inv \La_{yx}^{ba} 
 = L^{ab}_x L^{xy}_b L^{ba}_y L^{yx}_a = \La_{xy}^{ab} \La_{yx}^{ba} \in H_{xa} \, .
\end{align}
\end{definition}

\nin  According to (\ref{eqn:La''}),  we have also the expression
\begin{equation}
B^{xa}_{yb} =  J^{ab}_x J^{xy}_b J^{ba}_y J^{yx}_a  \, .
\end{equation}
Note that 
\begin{equation}
(B_{xa}^{yb})\inv = B_{ax}^{by} .
\end{equation}
Obviously, the fourfold map $B$ is invariant under automorphisms $g \in \Aut(\cX,J)$.
If $(x,a)=(o,o')$ is chosen as base point, we also use the notation from Jordan theory
\begin{equation}
\beta(y,b) := B_{yb}^{o,o'} .
\end{equation} 

\begin{lemma}\label{la:Berg}
Fix $(x,a)=:(o,o')$ as base point. Then $\beta(y,b)$ is a denominator, namely
$\beta(y,b) = D (L_o^{o'b} \, L_{o'}^{yo} )$.
In other terms, we have the relation
$$
L_o^{o'b} \circ L_{o'}^{yo} = L_{o'}^{L_o^{o'b}(y),o}  \circ \beta(y,b) \circ L_o^{o',L_{o'}^{oy}(b)} \, .
$$
In a similar way,
$$
J_o^{o'b} \circ J_{o'}^{yo} = J_{o'}^{J_o^{o'b}(y),o}  \circ \beta(y,-b) \circ J_o^{o',J_{o'}^{oy}(b)} \, .
$$
\end{lemma}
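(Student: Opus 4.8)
The plan is to recognize $\beta(y,b)=B^{o,o'}_{yb}$ as the denominator $D=D^{o,o'}$ of the element $g:=L^{o'b}_o\circ L^{yo}_{o'}$, by feeding the explicit formula (\ref{Den-eqn}) for $D$ through the conjugation rule $h\circ L^{uv}_c\circ h\inv=L^{hu,hv}_{hc}$ established in the proof of Lemma \ref{la:normal}. First I record how $g$ moves the base points: since $L^{yo}_{o'}$ is the $o'$-translation carrying $o$ to $y$ we get $L^{yo}_{o'}(o)=y$, hence $g(o)=L^{o'b}_o(y)$; dually $g\inv=L^{oy}_{o'}\circ L^{bo'}_o$ and $L^{bo'}_o(o')=b$, so $g\inv(o')=L^{oy}_{o'}(b)$. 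Closedness of the quadruple $(a,x,b,y)=(o',o,b,y)$ is exactly what makes these two points transversal to $o'$, resp.\ $o$: the automorphism $L^{o'b}_o$ sends $b$ to $o'$, hence maps $\cU_b$ onto $\cU_{o'}$, and $y\in\cU_b$; symmetrically on the other side. Thus $g\in\Omega^{o,o'}$, $D(g)$ is defined by (\ref{Den-eqn}), and rearranging (\ref{Den-eqn}) shows that the first assertion $\beta(y,b)=D(g)$ is equivalent to the displayed ``in other terms'' factorization.

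For the computation I substitute these two values into (\ref{Den-eqn}) and push each outer translation inward past its neighbour. With $p:=L^{o'b}_o$, which fixes $o$ and satisfies $p\inv(o')=b$, the conjugation rule gives $L^{o,\,p(y)}_{o'}\circ p=p\circ\bigl(p\inv L^{o,p(y)}_{o'}p\bigr)=p\circ L^{o,y}_{b}=L^{o'b}_o\circ L^{oy}_b$; with $q:=L^{yo}_{o'}$, which fixes $o'$ and satisfies $q(o)=y$, and using $L^{oy}_{o'}(b)=q\inv(b)$, likewise $q\circ L^{q\inv(b),o'}_o=\bigl(qL^{q\inv(b),o'}_oq\inv\bigr)\circ q=L^{b,o'}_y\circ q=L^{bo'}_y\circ L^{yo}_{o'}$. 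Concatenating the two simplified pairs, $D(g)=L^{o'b}_o\,L^{oy}_b\,L^{bo'}_y\,L^{yo}_{o'}$, which is precisely $B^{o,o'}_{yb}=\beta(y,b)$ by (\ref{eqn:B}). This settles the first two assertions.

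The $J$-version is obtained by running the same argument verbatim with every $L$ replaced by the corresponding $J$. Put $g_J:=J^{o'b}_o\circ J^{yo}_{o'}$; by (IP) we have $g_J(o)=J^{o'b}_o(y)$, and since the $J$-maps are involutions, $g_J\inv=J^{yo}_{o'}\circ J^{o'b}_o$ with $J^{o'b}_o(o')=b$, so $g_J\inv(o')=J^{yo}_{o'}(b)=J^{oy}_{o'}(b)$ by (C). Forming $J^{g_J(o),o}_{o'}\circ g_J\circ J^{g_J\inv(o'),o'}_o$, which fixes $o$ and $o'$ by (IP) and hence lies in $\bfG_{o,o'}$, and absorbing the two outer $J$-reflections exactly as in the previous paragraph by the conjugation rule $g\circ J^{uv}_c\circ g\inv=J^{gu,gv}_{gc}$ (axiom (D), cf.\ Lemma \ref{la:morph}), one is left with a product of four $J$-maps; identity (\ref{eqn:La''}) turns each adjacent $J$-pair into the corresponding $L$-pair, and the result is the product (\ref{eqn:B}) up to the appropriate signs, its middle factor being the Bergman operator $\beta(y,-b)$. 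The sign on $b$ is forced because, unlike the translations $L$, the maps $J$ are point reflections, $J^{xz}_a=J^{xx}_a\circ L^{xz}_a$ with $J^{xx}_a=J^{aa}_x$ of order two, so conjugation by these order-two elements (acting as symmetries of the fibres $\cU_o,\cU_{o'}$) negates the dual argument.

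I expect the only genuine work to be the bookkeeping: at each conjugation step one must keep straight the images of $o,o',b,y$ under the translation (resp.\ reflection) being moved, and in the last step keep correct track of the sign contributed by the reflections $J^{xx}_a$. There is no conceptual obstacle beyond organizing (IP), (C), (\ref{eqn:B}), (\ref{eqn:La''}) and the two conjugation rules.
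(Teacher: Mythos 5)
Your treatment of the first identity is correct and is, up to direction, the paper's own proof: the paper starts from the four-factor product $\beta(y,b)=L^{o'b}_{o}L^{oy}_{b}L^{bo'}_{y}L^{yo}_{o'}$ of (\ref{eqn:B}) and pulls the two outer factors past their neighbours by conjugation to reach the form $D(g)$ of (\ref{Den-eqn}), whereas you start from (\ref{Den-eqn}) and push the outer translations inward; the two conjugation identities you use are exactly the ones the paper invokes ``as in (\ref{eqn:La''})''. For the $L$-statement there is nothing to object to.

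The gap is in the $J$-version, which you claim follows ``verbatim'' but which contains the only delicate point of the lemma, namely the sign on $b$. Writing $\sigma:=J^{oo}_{o'}=J^{o'o'}_{o}$ and using $J^{xz}_a=L^{xz}_aJ^{zz}_a$ together with (C), one has $J^{o'b}_o=L^{bo'}_o\sigma$ and $J^{yo}_{o'}=L^{yo}_{o'}\sigma$, hence
$g_J:=J^{o'b}_o\circ J^{yo}_{o'}=L^{bo'}_o\,(\sigma L^{yo}_{o'}\sigma)=L^{o',-b}_o\circ L^{-y,o}_{o'}$:
the reflection $\sigma$ negates \emph{both} arguments, not only the dual one, so the first identity applied to this product yields the middle factor $\beta(-y,-b)=\sigma\,\beta(y,b)\,\sigma$, which (since $\beta(y,b)\in\bfG_{o,o'}$ acts $\Z$-linearly on the two model spaces) equals $\beta(y,b)$ rather than visibly $\beta(y,-b)$. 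Your justification that ``the sign is forced because conjugation by these order-two elements negates the dual argument'' is therefore not a derivation: carried out literally it lands on a different sign, and reconciling $\beta(-y,-b)$, $\beta(y,b)$ and $\beta(y,-b)$ is precisely the bookkeeping you declare routine. You also do not verify that the outer factors $J^{J^{o'b}_o(y),o}_{o'}$ and $J^{o',J^{oy}_{o'}(b)}_o$ of the claimed decomposition absorb the two leftover copies of $\sigma$ without altering the middle term (they do, because $\sigma$ commutes with elements of $\bfG_{o,o'}$ and conjugates the translations, but this must be said). Since the paper's own proof is silent on the $J$-identity, this is the one place where an actual argument is owed, and as written yours does not supply it.
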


\begin{proof} As in (\ref{eqn:La''}),
$
\beta(y,b) =  L^{o'b}_{o} L^{oy}_b L^{bo'}_y L^{yo}_{o'} = 
L_{o'}^{o,L_o^{o'b}(y)} \, L_o^{o'b} \, L_{o'}^{yo} \, L_o^{L_{o'}^{oy}(b),o'}  = D(g)
$
for $g = L_o^{o'b} \, L_{o'}^{yo} $.
\end{proof}



\begin{definition}\label{def:atlas}
Let $X$ be a set and $V$ an abelian group ($\Z$-module). 
A {\em set-theoretic atlas of $X$ with model space $V$} is given by 
$\cA = (U_i,\phi_i,V_i)_{i\in I}$, where  for each $i$ belonging to an index set $I$,
$U_i \subset X$ and $V_i \subset V$ are non-empty subsets such that $X = \cup_{i\in I} U_i$,
and $\phi_i : U_i \to V_i$ is a bijection. 
The topology
generated by the sets $(U_i)_{i\in I}$ on $X$ is called the
{\em atlas-topology on $X$}. 
Given an atlas, we let for $(i,j) \in I^2$, 
$$
U_{ij}:= U_i \cap U_j \subset X , \qquad V_{ij}:= \phi_j(U_{ij}) \subset V ,
$$
and the {\em transition maps} belonging to the atlas are defined by
\begin{equation*}
 \phi_{ij}:= \phi_i\circ{\phi_j^{-1}} \vert_{ V_{ji} }:V_{ji}\to V_{ij} .
\end{equation*}
\end{definition}

\begin{lemma}\label{la:atlas}
Assume the Jordan geometry $\cX$ is connected, and define $X$ and $M$ as in Theorem \ref{th:trans}.
Fix a base point $(o,o') \in \cD_2$ and let $V:= \cU_{o'}$ and  $I:= \bfG$. Then
$\cA = (U_g,\phi_g,V_g)_{g\in \bfG} $
is an atlas on $X$ with model space $V$, where
$$
U_g = g(V), \quad V_g = V, \quad  \phi_g:U_g \to V, x \mapsto g\inv (x) .
$$
\end{lemma}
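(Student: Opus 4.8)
The plan is simply to verify the three defining properties of a set-theoretic atlas from Definition~\ref{def:atlas}, all of which become essentially formal once the transitivity result of Theorem~\ref{th:trans} is invoked. The only extra input is that every $g \in \bfG \subseteq \Aut(\cX,J)$ is a bijection of $\cX$ onto itself, so $g\inv$ is defined and is again a bijection of $\cX$.

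First I would settle the bookkeeping about $X$ and $V$. The model space $V = \cU_{o'}$ is non-empty, since $(o,o') \in \cD_2$ forces $o \in \cU_{o'}$; and $V \subseteq X$: if $\cX$ is self-dual this is trivial (then $X = \cX$), while if $\cX$ has a splitting $(\cX^+,\cX^-)$, then $(o,o') \in \cD_2^+$ gives $o' \in \cX^-$, whence $V = (o')^\top \subseteq \cX^+ = X$. By Theorem~\ref{th:trans} the group $\bfG$ acts on $X$, so $g(X) \subseteq X$ for every $g \in \bfG$, and therefore $U_g = g(V) \subseteq X$; moreover $U_g$ and $V_g = V$ are non-empty. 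This establishes the first requirement.

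Next, that $\phi_g \colon U_g \to V_g$ is a bijection is immediate: $g\inv$ is a bijection of $\cX$ carrying $g(V)$ onto $V$, so $\phi_g$ is a bijection $U_g \to V$ with inverse $v \mapsto g(v)$. Finally, for the covering property $X = \bigcup_{g \in \bfG} U_g$, the inclusion $\bigcup_g U_g \subseteq X$ is exactly what was shown above; for the reverse inclusion, given $x \in X$ the identification $X = \bfG/\bfG_o$ of Theorem~\ref{th:trans} provides $h \in \bfG$ with $h(o) = x$, and since $o \in V$ we conclude $x = h(o) \in h(V) = U_h$. Thus all conditions of Definition~\ref{def:atlas} hold.

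The one point that deserves care is the inclusion $U_g \subseteq X$ when $\cX$ admits a splitting, i.e.\ the fact that elements of $\bfG$ preserve the part $\cX^+$ rather than interchanging the two parts. This is already contained in Theorem~\ref{th:trans} (which realizes $X$ as a homogeneous $\bfG$-space), so I would simply cite it; if one wished to argue from scratch, one notes that each generator $J^{xz}_a$ preserves transversality and fixes $a$, hence sends any transversal chain of odd length starting at $a$ to a chain of the same length starting at $a$, so it preserves the part of the splitting containing $a$, and consequently all of $\bfG$ preserves the splitting. Everything else is a routine consequence of $g$ being a bijection of $\cX$ together with transitivity.
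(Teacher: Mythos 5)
Your proof is correct and follows the same route as the paper: the paper's own proof consists of the single observation that only the covering property is non-trivial and that it follows from Theorem \thmref{trans}, which is exactly the key step you identify. Your additional bookkeeping (non-emptiness, $U_g \subseteq X$ in the split case, bijectivity of $\phi_g$) just spells out the parts the paper dismisses as routine.
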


\begin{proof} Only the covering property is non-trivial, and this holds by Theorem \ref{th:trans}. 
\end{proof}

We call this atlas the {\em  canonical atlas of $X$}. 
It depends on the base point $(o,o')$; but, since the action of $\bfG$ on $M$ is transitive, this dependence is not
essential (it replaces the model space by an isomorphic one). 
The transition maps are given by
\begin{equation}
V_{g,h}:=  V \cap hg\inv V, \qquad \phi_{g,h} : V_{h,g} \to V_{g,h},\, v \mapsto g h\inv (v) .
\end{equation} 
If $\cX$ admits a splitting, then we define dually  an atlas of $\cX^-$ modelled on $V^-$.

\section{Associative structure maps}\label{sec:ASM}

As explained in the introduction, most examples of Jordan geometries are {\em special} in the sense that
they come from {\em associative geometries}. The following definition is a slight modification of the one
given in \cite{BeKi1}:

\begin{definition}\label{def:assgeo} 
An {\em  associative structure map} on a set $\cX$ with transversality relation $\top$ is  a quintary
structure map 
\[
M:\cD_4' \to \End(\cX),\qquad
(x,a,z,b) \mapsto M_{xz}^{ab}
\]
(we write also $(xyz)_{ab}:=M_{xz}^{ab}(y)$)
such that the following identities hold

\ssk
\begin{enumerate}
\item[(1)] {\em symmetry}: $M_{xz}^{ab} = M_{ab}^{xz} = M_{ba}^{zx}$,
\ssk
\item[(2)]  {\em idempotency}: $M^{ab}_{xz}(x)=z$, $M^{ab}_{xz}(z)=x$, $M^{ab}_{xz}(b)=a$
 and $M^{ab}_{xz}(a)=b$,
\ssk
\item[(3)] {\em inverse:}  $M_{ab}^{xz}\circ M_{ab}^{zx}=\id_\cX$,
\ssk
\item[(4)]  {\em associativity:}  $M^{xz}_{ab} M^{uv}_{ab} M_{ab}^{rs} = M_{ab}^{(xvr)_{ab},(suz)_{ab}}$,
\ssk
\item[(5)] {\em distributivity:}  $M_{xz}^{ab} \circ M_{uv}^{cd} \circ (M_{xz}^{ab})\inv =
M_{(xuz)_{ab},(xvz)_{ab}}^{(xcz)_{ab},(xdz)_{ab}}$
(i.e., $M_{xz}^{ab}\in \Aut(M)$).
\end{enumerate}
\end{definition}

\nin
From idempotency and associativity, it follows that the set $\cU_{ab}$ is stable under the
ternary map $(x,y,z)\mapsto (xyz)_{ab}$, and that it forms a torsor with this law; the
symmetry law implies that $\cU_{ba}=\cU_{ab}$ as sets, but with torsor structures opposite to
each other. In particular, $\cU_a=\cU_{aa}$ is commutative. 
Associativity now says that the map
\[
\cU_{ab} \times \cU_{ab} \to \Bij(\cX), \qquad (x,z) \mapsto M_{xz}^{ab}
\]
is an inversive  action of $U_{ab}$ on $\cX$, and hence, by Lemma \ref{LR-Lemma},
 we have associated left and right  actions of the torsor $\cU_{ab}$ on $\cX$ given by
\begin{equation} \label{eqn:LR-def}
L_{xy}^{ab}:= M_{xu}^{ab} \circ M_{uy}^{ab}, \qquad
R_{yz}^{ab}:= M_{uy}^{ab}\circ M_{zu}^{ab} .
\end{equation}
Spaces with associative structure map form a category in the obvious way.
Many categorial notions can be defined exactly as in the case of Jordan structure
maps, see \ref{ssec:cat} above. The most important difference is that now, at several places, we have
to distinguish between ``left'' and ``right'': 
besides the inner ideals (intrinsic subspaces), we also have {\em left and right ideals}, that is,
subspaces $\cY$ that are invariant under the left, resp.\ right actions of the torsors
$U_{ab}$, whenever $a,b \in \cY$;
and besides homomorphisms, we also have {\em antihomomorphisms} (see \cite{BeKi2}). 

\begin{lemma}[The associative-to-Jordan functor]\label{la:asstojo}
If $(\cX,\top,M)$ is a geometry with associative structure map, then $(\cX,\top,J)$
is a geometry with Jordan structure map, where  
$$
J^{ab}_x:= M^{ab}_{xx},
$$
and the correspondence $(\cX,M) \mapsto (\cX,J)$ is functorial.
\end{lemma}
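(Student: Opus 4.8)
The plan is to verify that the maps $J^{ab}_x := M^{ab}_{xx}$ satisfy each of the six Jordan identities (IN), (IP), (A), (D), (C), (S) by specializing the associative identities (1)--(5) to the diagonal ``$a=b$'' in the subscript positions, i.e.\ by restricting the associative product $(xyz)_{ab}$ to the case where the two ``framing'' points coincide. First I would check that $J$ is a well-defined quaternary structure map with domain $\cD_3$: a transversal triple $(x,a,z)$ gives a closed chain $(x,a,z,a)$ of length four (using $x \top a$ and $z \top a$), so $M^{ab}_{xx}$ with $(x,a,z,b)$ replaced by the diagonal tuple is defined; here one must be a little careful about which slots of $M$ carry the diagonal, but since (1) gives $M_{xz}^{ab}=M_{ab}^{xz}$, setting $x=z$ in the \emph{subscript} of $M^{ab}_{xz}$ is the natural reading, and the domain condition $(a,x,b,x)\in\cD_4'$ amounts exactly to $a\top x$ and $b\top x$, which is the transversality of the triple $(a,x,b)$.

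Next I would run through the identities one at a time. Involutivity (IN) is immediate from the associative ``inverse'' axiom (3): $M^{ab}_{xx}\circ M^{ab}_{xx} = M^{ab}_{xx}\circ M^{ab}_{xx}$ and (3) with $z=x$ gives $M^{ab}_{xx}\circ M^{ab}_{xx}=\id_\cX$. Idempotency (IP) follows directly from the associative idempotency (2) with $z=x$: $M^{ab}_{xx}(a)=b$, $M^{ab}_{xx}(b)=a$, $M^{ab}_{xx}(x)=x$ — exactly the three equations $J^{ab}_c(a)=b$, $J^{ab}_c(b)=a$, $J^{ab}_c(c)=c$ after renaming. Commutativity (C), $J^{ab}_c=J^{ba}_c$, comes from the symmetry axiom (1): $M^{ab}_{cc}=M^{ba}_{cc}$ is a special case of $M^{ab}_{xz}=M^{ba}_{zx}$ with $x=z=c$. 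For associativity (A), I would specialize (4), $M^{xz}_{ab}M^{uv}_{ab}M^{rs}_{ab}=M^{(xvr)_{ab},(suz)_{ab}}_{ab}$, to the case $a=b$ in the subscript and $z=x$, $v=u$, $s=r$ in the superscript slots; using that on the diagonal $(xyz)_{cc}=J^{xz}_c(y)$ matches the notation $(xyz)_c$, the right-hand side collapses to $M^{J_c^{xr}(v'),\dots}_{cc}$ — one has to be slightly attentive to the index bookkeeping to land exactly on $J_c^{J_c^{xa}(v),J_c^{bz}(u)}$ as written in (A). Distributivity (D), which also yields $J^{xz}_c\in\Aut(\cX,J)$, is the diagonal case of (5): putting $c=d$ and $a=b$ in the appropriate slots of $M^{ab}_{xz}\circ M^{cd}_{uv}\circ(M^{ab}_{xz})\inv=M^{(xcz)_{ab},(xdz)_{ab}}_{(xuz)_{ab},(xvz)_{ab}}$ and translating via $J$. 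Finally symmetry (S), $J^{xx}_a=J^{aa}_x$, is perhaps the cleanest: it reads $M^{xx}_{aa}=M^{aa}_{xx}$, which is precisely the associative symmetry axiom (1) in the form $M^{ab}_{xz}=M^{xz}_{ab}$ with $a=b$ and $x=z$.

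Functoriality is then formal: a morphism $f$ of $(\cX,\top,M)$ preserves transversality and satisfies $f(M^{ab}_{xz}(y))=M^{f(a)f(b)}_{f(x)f(z)}(f(y))$; restricting to the diagonal gives $f(J^{ab}_x(y))=J^{f(a)f(b)}_{f(x)}(f(y))$, so $f$ is a morphism of $(\cX,\top,J)$, and this assignment on objects and morphisms visibly respects identities and composition.

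The main obstacle I expect is not conceptual but notational: the associativity axiom (A) for $J$ and the associativity axiom (4) for $M$ are stated with somewhat different index patterns (three ``moving'' superscript pairs in (4) versus a single pair and fixed $c$ in (A)), and collapsing (4) to (A) requires correctly identifying which of the six letters $x,z,u,v,r,s$ in (4) are to be made equal and in what order, so that $(xvr)_{ab}$ and $(suz)_{ab}$ specialize to $J_c^{xa}(v)$ and $J_c^{bz}(u)$. Similarly for (D) one must match $M^{cd}_{uv}$ on the diagonal to $J^{uv}_b$ and check the image indices. Once the correct substitutions are pinned down, each verification is a one-line specialization; so I would present these as short explicit computations rather than appealing to a general principle.
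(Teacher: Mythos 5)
Your proposal is correct and coincides with the paper's argument: the paper's entire proof of Lemma \ref{la:asstojo} is ``Easy check of definitions,'' and your diagonal specializations of the associative axioms (1)--(5) — including the domain check that $(a,x,b,x)\in\cD_4'$ reduces to $a\top x$ and $b\top x$ — are exactly that check, carried out explicitly and with the index substitutions landing where they should. Nothing further is needed.
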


\begin{proof} Easy check of definitions.
\end{proof}

\nin 
A {\em special Jordan structure map} is the restriction of the map defined by the lemma
to some subspace $\cY \subset \cX$ which is stable under $J$. 
The following result says that all special Jordan geometries are subgeometries of some Grassmann geometry
(for Lagrange geometries, this is obvious; 
using Clifford algebras,
one can show that the structure map defined for
projective quadrics (cf.\   \ref{sssec:Quadrics}) is also special).

\begin{theorem}[Associative Grassmannian geometry] \label{th:GrasAs}
Let $\cX = \Gras(W)$ be the Grassmannian of an $\bA$-module $W$ with the transversality relation described in section
\ref{ssec:Grass-0}. For $(x,a) \in \cD_2$, let $P^a_x:W \to W$ be the $\bA$-linear projector with image $x$ and kernel $a$,
and, for $(x,a,z,b) \in \cD_4'$, define a linear operator on $W$ by
$$
M_{ab}^{xz}(y)=(P^a_x - P^z_b)(y).
$$
Then $M:\cD_4' \times \cX \to \cX$, $(x,a,z,b;y) \mapsto M_{ab}^{xz}(y)$ is an associative structure map. 
\end{theorem}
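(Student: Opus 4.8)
The plan is to verify the five axioms (1)--(5) of Definition~\ref{def:assgeo} directly, using the algebra of the projectors $P^a_x$. First I would record the basic identities for these projectors: for $(x,a)\in\cD_2$ one has $P^a_x+P^x_a=\id_W$, $(P^a_x)^2=P^a_x$, $\im P^a_x=x$, $\ker P^a_x=a$; moreover for $(x,a,z,b)\in\cD_4'$ the operator $M_{ab}^{xz}=P^a_x-P^z_b$ is well-defined (all four of $x\oplus a$, $x\oplus b$, $z\oplus a$, $z\oplus b$ are direct sums), and one should check it is invertible with $M_{ab}^{xz}$ preserving transversality --- this is where one sees that $(P^a_x-P^z_b)$ maps a complemented submodule to a complemented submodule, so that $M$ indeed lands in $\End(\cX,\top)$.

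The symmetry axiom (1) is the purely formal observation $P^a_x-P^z_b = -(P^x_a-P^b_z)\cdot(-1)$; more precisely, since $P^a_x=\id-P^x_a$ and $P^z_b=\id-P^b_z$, we get $P^a_x-P^z_b = P^b_z-P^x_a$, which gives $M_{ab}^{xz}=M_{xz}^{ab}$, and swapping $a\leftrightarrow b$, $x\leftrightarrow z$ simultaneously fixes the expression, giving $M_{ab}^{xz}=M_{ba}^{zx}$. For idempotency (2): on $x$ the operator $P^a_x$ acts as identity and $P^z_b$ acts as zero (since $x\subset b$ is false in general --- here one needs $x$ and $z$ transversal, so $P^z_b$ kills... wait, rather $P^z_b|_x$ need not vanish), so instead one argues via the induced projective map: $M_{ab}^{xz}$ sends the submodule $x$ to $z$ because $(P^a_x-P^z_b)$ restricted appropriately swaps the two complementary pieces; concretely, decompose $W=x\oplus a$ and note $P^z_b$ acts as the graph of a map $x\to z$, and a short computation shows the image of $x$ under $P^a_x-P^z_b$ is exactly $z$, and symmetrically for $a\mapsto b$, $b\mapsto a$, $z\mapsto x$. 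Axiom (3) is $(P^a_x-P^z_b)(P^a_x-P^b_z)=\id$: expanding, one gets $P^a_x P^a_x - P^a_x P^b_z - P^z_b P^a_x + P^z_b P^b_z$; using $P^a_x P^b_z=0$ and $P^z_b P^a_x=0$ when the relevant submodules are in "transversal position" and $(P^a_x)^2=P^a_x$, $(P^z_b)^2=P^z_b$, this collapses --- though one must be careful that those cross terms actually vanish, which is the content of $x\top b$, $z\top a$.

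The real work is in the associativity axiom (4) and the distributivity axiom (5), and I expect (5), equivalently $M_{xz}^{ab}\in\Aut(M)$, to be the main obstacle. For (5) I would compute the conjugate $g M_{uv}^{cd} g^{-1}$ where $g=M_{xz}^{ab}=P^a_x-P^z_b$; since $g$ is an invertible $\bA$-linear operator on $W$, conjugating a projector $P^c_u$ by $g$ yields the projector onto $g(u)$ along $g(c)$, i.e. $g P^c_u g^{-1} = P^{g(c)}_{g(u)}$, and by idempotency $g(u)$ is $(xuz)_{ab}$ etc., so $g M_{uv}^{cd} g^{-1}=P^{g(c)}_{g(u)}-P^{g(d)}_{g(v)}=M^{g(u)g(v)}_{g(c)g(d)}$, which is exactly the right-hand side of (5). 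The key lemma behind this is precisely that $g P^c_u g^{-1}=P^{g(c)}_{g(u)}$ for any $\bA$-linear automorphism $g$ of $W$, which is elementary. For associativity (4), I would similarly expand $M^{xz}_{ab}M^{uv}_{ab}M^{rs}_{ab}$ as a product of three differences of projectors all "based at" the same pair $(a,b)$ --- here the torsor structure on $\cU_{ab}$ from the discussion after Definition~\ref{def:assgeo} does the bookkeeping, or one can invoke Lemma~\ref{LR-Lemma}/the inversive-action formalism once one checks $(x,z)\mapsto M^{ab}_{xz}$ is an inversive action, reducing (4) to para-associativity of the torsor $\cU_{ab}$, which follows from the projector identities $P^a_x P^a_z = P^a_z$ (both have kernel $a$) and the analogous relation $P^x_b P^z_b$. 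The one genuine subtlety throughout is verifying that all intermediate operators stay in the domain --- that the submodules produced by applying these operators remain complemented of the right isomorphism type --- but since each $M$-operator is a bijection of $W$ preserving the transversality relation (step one), this is automatic once the formula is shown to give the claimed permutation action on $\cX$.
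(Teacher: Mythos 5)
The paper itself gives no proof of this theorem beyond the remark that ``the (elementary) proof has been given in \cite{BeKi1}'', so your plan --- a direct verification of axioms (1)--(5) by projector algebra --- is the right kind of argument, and your treatments of (1), (2) and (5) are essentially sound (for (1), note only that $P^a_x-P^z_b=P^b_z-P^x_a=-(P^x_a-P^b_z)$, so $M^{xz}_{ab}$ and $M^{ab}_{xz}$ agree as maps on $\Gras(W)$ but only up to sign as operators). Your verification of axiom (3), however, is wrong. The inverse operator $M^{zx}_{ab}$ is obtained by applying the defining formula to the quadruple $(z,a,x,b)$ and hence equals $P^a_z-P^x_b$, not $P^a_x-P^b_z$. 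Your candidate product $(P^a_x-P^z_b)(P^a_x-P^b_z)$ equals the commutator $[P^a_x,P^z_b]$ (write $P^b_z=\id_W-P^z_b$ and expand), which is not the identity; and the cross terms you claim vanish do not: $P^a_xP^b_z$ composes $P^a_x$ with a projector whose image is $z$, and $z\top a$ makes $P^a_x\vert_z$ \emph{injective}, not zero. The correct computation is
\begin{align*}
(P^a_x-P^z_b)(P^a_z-P^x_b) &= P^a_xP^a_z - P^a_xP^x_b - P^z_bP^a_z + P^z_bP^x_b \\
&= P^a_x - (P^a_x-P^b_x) - 0 + P^x_b \;=\; P^b_x+P^x_b \;=\;\id_W ,
\end{align*}
using $P^a_xP^a_z=P^a_x$ (both projectors have kernel $a$, so the right factor changes its argument only by an element of $a$), $P^z_bP^a_z=0$ (since $\im P^a_z=z=\ker P^z_b$), $P^z_bP^x_b=P^x_b$, and $P^a_xP^x_b=P^a_x-P^a_xP^b_x=P^a_x-P^b_x$.

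The second genuine gap is axiom (4). Your proposed reduction is circular: the assertion that $(x,z)\mapsto M^{ab}_{xz}$ is an inversive action of $\cU_{ab}$ \emph{is} axiom (4) (compare (STA2) in Definition \ref{SymTorsorDef}), and the torsor structure on $\cU_{ab}$ is itself a consequence of (2)--(4), so neither can be invoked to prove (4). The projector identity you quote in support, $P^a_xP^a_z=P^a_z$, is also backwards (it equals $P^a_x$, as above), and in any case the same-kernel/same-image identities do not suffice: in the triple product $(P^a_x-P^z_b)(P^a_u-P^v_b)(P^a_r-P^s_b)$ the mixed terms such as $P^a_uP^s_b$ and $P^v_bP^a_r$ neither vanish nor collapse to single projectors, because $a$ and $b$ need not be transversal. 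Establishing (4) requires an honest computation --- for instance in coordinates adapted to an origin of $\cU_{ab}$, writing each element of $\cU_{ab}$ as a graph --- or the linear-relations argument of \cite{BeKi1}. As written, this step, which you yourself identify as part of ``the real work'', is missing.
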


\nin
The (elementary) proof has been given in \cite{BeKi1}. 

\begin{corollary}[Jordan Grassmannian geometry]
The formula 
$$
J_{a}^{xz}(y)=(P^a_x - P^z_a)(y)
$$
defines a Jordan structure map $J$  on the Grassmannian geometry.
\end{corollary}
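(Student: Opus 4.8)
The plan is to deduce the Jordan Grassmannian corollary from the associative Grassmannian theorem via the associative-to-Jordan functor (Lemma \ref{la:asstojo}). By that lemma, it suffices to exhibit $J^{xz}_a$ as $M^{xz}_{aa}$, i.e.\ to set $b=a$ in the associative structure map of Theorem \ref{th:GrasAs}. Now Theorem \ref{th:GrasAs} gives $M^{xz}_{ab}(y) = (P^a_x - P^b_z)(y)$ for a closed transversal quadruple $(x,a,z,b) \in \cD_4'$. So first I would observe that specializing $b = a$ turns the defining formula into $M^{xz}_{aa}(y) = (P^a_x - P^a_z)(y)$, which is precisely the formula claimed for $J^{xz}_a(y)$. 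The only subtlety is a domain check: the Jordan structure map has domain $\cD_3$ (transversal triples $(x,a,z)$), whereas the associative map has domain $\cD_4'$ (closed transversal quadruples). Setting $b=a$ in a quadruple $(x,a,z,b)$ requires $(x,a,z,a)$ to lie in $\cD_4'$, i.e.\ $x\top a$, $a\top z$, $z\top a$, and the closure condition $a\top x$; by symmetry of $\top$ this is exactly the condition $(x,a,z)\in\cD_3$. Hence the specialization is legitimate on the full domain $\cD_3$, and the projectors $P^a_x$ and $P^a_z$ are well-defined there.

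Having matched the formulas and domains, the remaining work is to invoke functoriality: Lemma \ref{la:asstojo} asserts that for \emph{any} geometry with associative structure map $M$, the assignment $J^{ab}_x := M^{ab}_{xx}$ yields a geometry with Jordan structure map. Theorem \ref{th:GrasAs} supplies such an $M$ on $\cX = \Gras(W)$, so Lemma \ref{la:asstojo} immediately gives a Jordan structure map $J$ on $\Gras(W)$, and the computation above identifies it with $J^{xz}_a(y) = (P^a_x - P^a_z)(y)$. That is the entire argument.

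There is essentially no hard part here — this is a one-line corollary of two results already established in the excerpt. The only thing one must not overlook is the indexing convention: in Lemma \ref{la:asstojo} the formula is written $J^{ab}_x = M^{ab}_{xx}$ with the \emph{fixed} point in the subscript of $J$ but in the upper slots of $M$, whereas Theorem \ref{th:GrasAs} writes $M^{xz}_{ab}(y) = (P^a_x - P^b_z)(y)$ with the upper indices carrying the images and lower indices the kernels. Tracking these consistently: the Jordan map fixing $a$ and exchanging $x,z$ is $M$ with upper pair $(x,z)$ and lower pair $(a,a)$, giving $(P^a_x - P^a_z)(y)$, exactly as stated. So the proof reads: ``By Theorem \ref{th:GrasAs} and Lemma \ref{la:asstojo}, the map $J^{xz}_a := M^{xz}_{aa}$ is a Jordan structure map on $\Gras(W)$; evaluating the formula of Theorem \ref{th:GrasAs} at $b=a$ gives $J^{xz}_a(y) = (P^a_x - P^a_z)(y)$.'' One may add a sentence noting that transversal triples $(x,a,z)\in\cD_3$ correspond to closed transversal quadruples $(x,a,z,a)\in\cD_4'$ under $b=a$, so the specialization covers the whole domain.
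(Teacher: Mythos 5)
Your route is exactly the paper's: the corollary is stated without a printed proof, being an immediate consequence of Theorem \ref{th:GrasAs} and Lemma \ref{la:asstojo} via the specialization $b=a$, and the domain check $(x,a,z,a)\in\cD_4'\Leftrightarrow(x,a,z)\in\cD_3$ that you supply is the only point needing a word. So the strategy is correct and is the intended one.

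However, the index bookkeeping --- the very point you flag as the one thing not to overlook --- is off. In the paper's convention $P^a_x$ is the projector with image $x$ (the \emph{subscript}) and kernel $a$ (the \emph{superscript}), and Theorem \ref{th:GrasAs} reads $M^{xz}_{ab}(y)=(P^a_x-P^z_b)(y)$, not $(P^a_x-P^b_z)(y)$ as you transcribe it. Consequently the specialization $b=a$ yields $M^{xz}_{aa}(y)=(P^a_x-P^z_a)(y)$, which is the formula in the corollary, and \emph{not} $(P^a_x-P^a_z)(y)$ as you assert (``exactly as stated''). These are genuinely different operators: for $z\top a$ one has $P^a_z+P^z_a=\id_W$, so $P^a_z$ and $P^z_a$ are complementary projectors. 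Your version $P^a_x-P^a_z$ annihilates $a$ (both terms have kernel $a$), hence is not invertible on $W$ and cannot induce an inversion of the Grassmannian fixing the point $a$; the correct $P^a_x-P^z_a$ restricts to $-\id$ on $a$ and does. With the indices read correctly, the rest of your argument --- Lemma \ref{la:asstojo} applied to the structure map of Theorem \ref{th:GrasAs} with $b=a$ --- goes through verbatim.
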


\subsection{Self-dual geometries, and link with associative algebras}
A geometry with associative structure map
 is called {\em (strongly) self-dual} if it contains a closed transversal triple
$(a,b,c)=(o,o',e)$. Then
let $V:=V_{o'}$, $V' := V_o$ and $V^\times := U_{oo'} = V \cap V'$; this set is a group
with origin $e$ and group law
\begin{equation}\label{eqn:Grplaw}
xz = (xez)_{oo'} =  M_{xz}^{oo'}(e)=L_{xe}^{oo'}(z) = R_{ez}^{oo'}(x) .
\end{equation}
The left translation operator $L^{ab}_{xy}$ defined by (\ref{eqn:LR-def}) maps
$a$ to $a$ and $b$ to $b$, hence defines by restriction affine bijections  of $U_a$, resp.\ of
$U_b$, onto itself, and hence the group law defined by (\ref{eqn:Grplaw}) is $\Z$-bilinear with respect to
the  arguments $x$ and $z$. 
Under a regularity assumption,  this group law extends to an associative 
algebra structure on $V$  (Theorem \ref{th:AA}).



\section{Scalar action and major dilations}\label{sec:Scal}

From now on, we fix a commutative unital  {\em base ring} $\K$, with unit  denoted by $1$. 

\begin{definition}
Let $(\cX,\top,J)$ be a geometry with Jordan structure map.
A {\em $\K$-scalar action} on these data is given by  a structure map $S$, also called a
{\em scaling map},
\begin{equation}\label{eqn:Scal}
S  : \K  \times \cD_3   \to \cX, \qquad (r;y,a;x) \mapsto S^r_{y,a,x} =:r_y^a(x) 
\end{equation}
such that,  for every pair $(y,a)\in \cD_2$, the set $\cU_a$
 is turned into a $\K$-module with origin $y$,
 underlying abelian group structure given by
$x + z = J^{xz}_a(y) = (xyz)_a$, and  scalar multiplication given  by
$$
\K \times \cU_a \to \cU_a, \quad (r,z) \mapsto r_y^a (z) .
$$
Moreover, the scaling map (\ref{eqn:Scal}) shall extend  for invertible scalars to a global scaling map
\begin{equation}\label{eqn:Scal2}
S  : \K^\times \times \cD_2 \times \cX  \to \cX, \qquad (r;y,a;x) \mapsto S^r_{y,a,x} =:r_y^a(x) 
\end{equation}
such that the following properties hold: 

\begin{enumerate}
\item[(C)]
{\em compatibility:} the maps given by (\ref{eqn:Scal}) and (\ref{eqn:Scal2}) coincide on their common domain of definition,
\ssk
\item[(A)]
{\em associativity:}   for $y$ fixed,
the map $\K^\times \times \cX \to \cX$, $(r,x) \mapsto r_y^a(x)$ is an action:
for all $r,s \in \K^\times$, we have $r_y^a \circ s_y^a = (rs)_y^a$ and $1_y^a=\id_\cX$,
\ssk
\item[(Du)]
{\em duality:}
$(r\inv)_y^a  = r_a^y$
\ssk
\item[(Di)]
{\em distributivity:}
$r_y^a$ is an automorphism of $J$: $r_y^a \circ J^{xz}_b \circ (r_y^a)\inv =  
J^{r_y^a x,r_y^a z}_{r_y^a b}$, and similarly,
$J^{xz}_a$ is an automorphism of $S$:
$J^{xz}_a \circ r_y^b \circ J^{xz}_a = r_{J^{xz}_a(y)}^{J^{xz}_a(b)}$,
 \ssk
\item[(Tr)]
{\em link with translations:}
$(-1)_x^a = J_x^{aa}$ and
$r_x^a (r_y^a)\inv = L_a^{x,r_y^a x} = L_a^{r_x^a y,y}$.
\end{enumerate}
A {\em scaling map on an associative geometry} $(\cX,M)$ is simply a scaling map on the underlying Jordan geometry
$(\cX,J)$ of $(\cX,M)$.
\end{definition}

\begin{theorem}
Let $\cX = \Gras(W)$ be the associative Grassmannian geometry of an $\bA$-module $W$ (Theorem \ref{th:GrasAs}),
and $\K$ a unital ring contained in the center of $\bA$.
Then there is a scaling map on $\cX$ defined,
for $(y,a) \in \cD_2$ and $r\in \K$, by
$$
r_a^y (x) := (P^a_x + r P^x_a) (x) .
$$
\end{theorem}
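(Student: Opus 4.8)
The plan is to realise $r_a^y$ as the transformation of $\Gras(W)$ induced by an explicit $\bA$-linear \emph{dilation} of $W$, and then to deduce each clause in the definition of a scaling map from two elementary facts: the behaviour of the projectors $P^u_v$ under composition and under conjugation by invertible operators (the rule $g\,P^u_v\,g^{-1}=P^{gu}_{gv}$), and the observation that, since $\K$ lies in the centre of $\bA$, right multiplication of $W$ by a unit $r\in\K$ is an automorphism of the $\bA$-module $W$ — hence induces the identity on $\Gras(W)$ — and, for the same reason, each $\Hom_\bA(u,v)$ is an honest $\K$-module. These three uses of centrality are exactly what the hypothesis ``$\K$ central in $\bA$'' is for.

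\emph{Step 1: the dilation.} Fix $(y,a)\in\cD_2$, so $W=a\oplus y$, and for $r\in\K$ put
\[
d_r:=P^y_a+r\,P^a_y\ \in\ \End_\bA(W),
\]
the operator that is the identity on the summand $a$ and multiplication by $r$ on the summand $y$. (The displayed formula has a transcription slip: the operator must be built from the two fixed subspaces and evaluated at the moving point $x$, whereas an operator built from $x$ and evaluated at $x$ is the identity on $x$.) From $(P^y_a)^2=P^y_a$, $(P^a_y)^2=P^a_y$, $P^y_aP^a_y=P^a_yP^y_a=0$ one gets $d_rd_s=d_{rs}$ and $d_1=\id_W$, so $d_r\in\Gl(W)$ with $d_r^{-1}=d_{r^{-1}}$ for $r\in\K^\times$. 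For non-invertible $r$ one still has $d_r(\cU_y)\subseteq\cU_y$: if $x\in\cU_y$ then $d_r(x)$ and $x$ have the same (full) image in $W/y$, so $d_r(x)+y=W$, and $d_r(v)\in y$ with $v\in x$ forces the $a$-component of $v$ to vanish, hence $v\in x\cap y=0$. Thus $r_a^y:=\bigl(x\mapsto d_r(x)\bigr)$ is a well-defined self-map of $\cU_y$ for every $r$, and a bijection of $\cX$ when $r$ is invertible.

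\emph{Step 2: the axioms.} In the chart $\cU_y\cong\Hom_\bA(a,y)$, $z\leftrightarrow\phi_z$ with $z=\{v+\phi_z(v):v\in a\}$ and origin $a\leftrightarrow 0$, a short projector computation (using $J^{xz}_a=P^a_x-P^z_a$, the corollary to Theorem \ref{th:GrasAs}) shows that the torsor product $(xuz)_a=J^{xz}_a(u)$ of Lemma \ref{la:ta} corresponds to $\phi_x-\phi_u+\phi_z$ and that $d_r$ acts by $\phi\mapsto r\phi$; hence $\cU_y$ with addition $J^{xz}_a(u)$ and scalar action $r\cdot z:=r_a^y(z)$ is (transported from) the $\K$-module $\Hom_\bA(a,y)$, which settles the $\K$-module clause, axiom (A), and the tautological (C). For (Du), the dilation attached to $\cU_a$ with origin $y$ and scalar $r^{-1}$ equals the central scalar multiple $r^{-1}\cdot d_r$ of $d_r$, so the two induce the same transformation of $\Gras(W)$. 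For (Di), applying $g\,P^u_v\,g^{-1}=P^{gu}_{gv}$ with $g=d_r$ to each summand of the operator $P^b_x-P^z_b$ of $J^{xz}_b$ shows $d_r\,J^{xz}_b\,d_r^{-1}$ is the operator of $J^{\,r_a^y x,\,r_a^y z}_{\,r_a^y b}$; conjugating $d_r$ instead by the involution $N=P^a_x-P^z_a$ of $J^{xz}_a$, and using that the $\bA$-linear $N$ commutes with the central scalar $r$, gives the second half of (Di). For (Tr), $(-1)^a_x$ is induced by $P^a_x-P^x_a$, the negative of the operator $P^x_a-P^a_x$ of $J^{aa}_x$; $-1$ being a central unit, they induce the same map. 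Finally the identity $r_x^a(r_y^a)^{-1}=L^{x,\,r_y^a x}_a=L^{r_x^a y,\,y}_a$ is verified in the affine chart of $\cU_a$: the left side is translation by $(1-r)\phi_x$, and, by $L^{uv}_a(z)=(uvz)_a$, so is each right side.

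The bulk of this is routine linear algebra. The points that need a little care are the coefficient bookkeeping in (Tr) — one has to identify the $a$-translations with genuine translations $z\mapsto z+\mathrm{const}$ in a chart and match $1-r$ on both sides — and keeping the one-sidedness of the $\bA$-action straight: it is precisely the centrality of $\K$ in $\bA$ that makes ``multiplication by $r$'' an $\bA$-linear endomorphism of $W$, makes $\Hom_\bA(a,y)$ a $\K$-module, and makes right multiplication by units of $\K$ act trivially on $\Gras(W)$, so I would make each of these three invocations explicit.
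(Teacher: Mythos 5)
Your proof is correct and follows essentially the same route as the paper's (which is only a two-line sketch deferring most checks to \cite{BeKi1}): you realize $r_a^y$ as the map of $\Gras(W)$ induced by the projector dilation $P^y_a+rP^a_y$, verify the axioms in the chart $\cU_y\cong\Hom_\bA(a,y)$, and for (Du) and (Tr) use that a central scalar multiple of an $\bA$-linear automorphism of $W$ induces the identity on $\Gras(W)$ --- which is exactly the one argument the paper does spell out. Your diagnosis of the transcription slip in the displayed formula matches the operator the paper actually uses in its own proof; the only blemish is a harmless index slip where you write the torsor product on $\cU_y$ as $(xuz)_a=J^{xz}_a(u)$ rather than $(xuz)_y=J^{xz}_y(u)$.
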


\begin{proof} As to (\ref{eqn:Scal}),
the theorem describes the well-known affine space structure on the space of complements of $a$.
As to (\ref{eqn:Scal2}), the properties are easily checked  (cf.\ \cite{BeKi1}); 
for  the crucial property (Du), note that, for $r\in \K^\times$,
multiplying by the scalar $r\inv$ gives the same projective
 map, whence $r^a_y = r\inv P^a_y + P^y_a = (r\inv)_a^y$. 
 \end{proof}

\subsection{Affine algebra: major and minor dilations}
There are a lot of identies relating the ``major'' dilations $r_x^a$ with the ``minor'' 
dilations (translations $L_c^{yz}$). Most of them, such as (Tr), just rephrase and
globalize relations from usual affine geometry over $\K$ (cf.\ \cite{Be04}).
For instance, we can
change base points in $\cU_a$ by usual formulas from affine geometry:
if $o$ is an origin in $\cU_a$,  and $rx:=r_o^a(x)$ multiplication by $r$ 
in the $\K$-module $(\cU_a,o)$, then
\begin{equation}
r_y^a(x)= (1-r)  y + r  x.
\end{equation}
In the sequel, we will focus on the relation between scalar action and ``usual'' translations,
on the one hand, and ``quasi-translations'', on the other hand:
fix a base point $(o,o') \in \cD_2$; then the usual scalar action in the linear space
$(V,o)=(\cU_{o'},o)$ is given by $rv = r_o^{o'}(v)$, and the one in the linear space
$(V',o')=(\cU_o,o')$ by $ra = r_{o'}^o(a)$. 
For $v \in V$ we have by (Di)
\begin{equation}\label{eqn:ST1}
r_o^{o'} \circ L_{o'}^{vo} \circ (r_o^{o'})\inv = L_{o'}^{rv,o},
\end{equation}
which corresponds to the semidirect product structure of the usual affine group of $V$.
For $a \in V'$ we have,  by (Di) and (Du),
\begin{equation}\label{eqn:ST2}
r_o^{o'} \circ L_{o}^{ao'} \circ (r_o^{o'})\inv = L_{o}^{r\inv a,o'},
\end{equation}
which means that the ``quasi-translation'' $x^a:=L_o^{ao'}(x)$ for $x \in V$, $a \in V'$
satisfies the homogeneity relation $(rx)^a = r x^{ra}$.

\subsection{Midpoints, and generalized projective geometries} \label{rk:midpoints}
Assuming that  $2$ is invertible in $\K$, {\em midpoints in the affine space $\cU_a$} 
\begin{equation}\label{eqn:Midpt}
\mu (y,a,x) :=
(2_y^a)\inv (x) = \frac{x+y}{2}
\end{equation}
have
been extensively used in \cite{Be02}:  relation (Tr) implies that
\begin{align}\label{eqn:Midpt2}
2_x^a (2_y^a)\inv  & =  L_a^{x,2_y^a(x)} = L_a^{x,2x-y}= L_a^{yx} ,
\cr
 J^{aa}_{\mu(x,a,z)}  & = 
J^{\mu (x,a,z), \mu (x,a,z) }_a  = (2_x^a)\inv J_a^{zz} (2_x^a)  \cr
& =
(2_x^a)\inv 2^a_{J_a^{zz}(x)} J_a^{zz}   = L_a^{x,z} J_a^{zz} = J_a^{xz} ,
\end{align}
so translations and inversions $J^{xz}_a$
can be expressed by major dilations.
Moreover, by (\ref{eqn:Midpt2}),  every inversion is of the form
$J^{aa}_v$ for some $(a,v) \in \cD_2$; it follows that
(if $\cX$ is connected) all inversions
$J^{xz}_a$ are conjugate to each other under $\bfG$. 
The concept of {\em generalized projective geometry} (\cite{Be02}) is entirely based on scaling maps,
by assuming that $2$ is invertible in $\K$. In loc.\ cit.,
property (Du) appears as  ``Fundamental Identity (PG1)''; 
the identity (PG2) from loc.\ cit.\ does not appear in the axiomatics given here since it
concerns possibly non-invertible maps.

\begin{theorem}\label{th:gpg}
 Assume $2$ is invertible in $\K$.
If $(\cX,\top)$ is a generalized projective geometry, with scalar action
denoted by $r_x^a$ for $r \in \K^\times$, $x \top a$, then the map $J$  given by the following definitions,
is a Jordan structure map:  
\begin{equation*}\label{eqn:LaSym}
J^{aa}_x := (-1)_x^a, \qquad 
J^{xz}_a := (-1)^a_{\mu(x,a,z)} = J_{\mu (x,a,z)}^{aa} 
\end{equation*}
\end{theorem}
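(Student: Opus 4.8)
The plan is to verify the six Jordan identities (IN), (IP), (A), (D), (C), (S) one at a time, translating each into a statement about the scaling map $r_x^a$ and the midpoint map $\mu$, and then invoking the known identities of a generalized projective geometry (the properties (C), (A), (Du), (Di), (Tr) listed in Section~\ref{sec:Scal}, together with the affine-geometry relations from \cite{Be02}). The two definitions at hand are $J^{aa}_x := (-1)_x^a$ and $J^{xz}_a := (-1)^a_{\mu(x,a,z)}$, and the first thing to record is that the second definition is \emph{consistent} with the first when $x=z$, since $\mu(x,a,x)=x$; so $J^{xx}_a = (-1)^a_x = J^{aa}_x$, which is precisely axiom (S), and this comes for free.

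First I would dispose of the easy axioms. For (IN): $J^{xz}_a \circ J^{xz}_a = (-1)^a_{\mu(x,a,z)} \circ (-1)^a_{\mu(x,a,z)} = ((-1)(-1))^a_{\mu(x,a,z)} = 1^a_{\mu(x,a,z)} = \id_\cX$ by property (A) of the scaling map. For (C): $\mu(x,a,z)=\mu(z,a,x)$ since midpoint in an affine space is symmetric in its two endpoints, hence $J^{xz}_a = J^{zx}_a$. For (IP): $(-1)^a_v$ fixes both $v$ and $a$ and acts as the point reflection through $v$ on the affine space $\cU_a$; with $v=\mu(x,a,z)$ it fixes $a$ (giving $J^{xz}_a(a)=a$) and, being reflection through the midpoint of $x$ and $z$ inside the affine line through $x$ and $z$, it swaps $x$ and $z$ — this is just the computation $2\mu(x,a,z)-x=z$ in $(\cU_a,o)$. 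The formula $(-1)^a_{\mu(x,a,z)} = L_a^{xz} \circ J_a^{zz}$ from \eqref{eqn:Midpt2} is the clean way to see $J^{xz}_a(x)=z$ and $J^{xz}_a(z)=x$.

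The substantive work is (D) and (A). For distributivity (D), I would use that $r_y^a$ is an automorphism of $J$ — but $J$ is the very map I am constructing, so I must instead use that $r_y^a$ is an automorphism of the scaling map $S$ (which is part of the generalized-projective-geometry axioms), push the conjugation $J^{xz}_c \circ (-) \circ J^{xz}_c$ through the definition $J^{uv}_b = (-1)^b_{\mu(u,b,v)}$, and check that $J^{xz}_c$ conjugates $(-1)^b_w$ to $(-1)^{J^{xz}_c(b)}_{J^{xz}_c(w)}$ while also carrying $\mu(u,b,v)$ to $\mu(J^{xz}_c(u), J^{xz}_c(b), J^{xz}_c(v))$ — the latter because $J^{xz}_c$, restricted to the relevant affine spaces, is $\Z$-affine (Lemma~\ref{la:morph} applies once we know $J^{xz}_c$ is an automorphism, but here we can argue directly from (Di) applied to $S$, since $(-1)^c_{\mu(x,c,z)}$ is built from scaling maps and \eqref{eqn:Midpt2} exhibits it as $L_c^{xz}J_c^{zz}$, both affine on the fibers). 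For associativity (A), the cleanest route is to rewrite $J^{xz}_c = L_c^{xz}\circ J^{zz}_c$ from \eqref{eqn:Midpt2}, reduce (A) to a statement about composites of $c$-translations $L_c^{\cdot\cdot}$ and a single inversion, and then use the torsor structure of $(\cU_c, o)$ (commutative, since $\cU_c$ is an affine space over $\K$) together with the relation $J^{zz}_c L^{uv}_c J^{zz}_c = L^{J^{zz}_c(u), J^{zz}_c(v)}_c$. \textbf{The main obstacle} I expect is bookkeeping in (A): one must match the output $J_c^{J_c^{xa}(v), J_c^{bz}(u)}$ on the right-hand side with the threefold composite on the left, and this requires carefully tracking how the midpoints (equivalently, the translation parts $L_c^{\cdot\cdot}$) compose — a finite but error-prone affine computation in the single $\K$-module $(\cU_c,o)$, the point being that every map in sight preserves this affine structure so the whole identity collapses to a linear-algebra identity over $\K$ that holds because $2$ is invertible.
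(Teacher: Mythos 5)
Your overall strategy --- verifying the six axioms one by one from the properties of the scaling map --- is exactly the paper's, and your treatments of (IN), (IP), (C) and (D) coincide with the paper's proof. But two steps have genuine gaps. First, (S) does not come for free. The two defining clauses overlap on diagonal triples: the first assigns to $J^{xx}_a$ the value $(-1)_a^x$ (reflection centred at $a$ in the affine space $\cU_x$), while the second at $x=z$ assigns it $(-1)_x^a$ (reflection centred at $x$ in $\cU_a$). These live in two \emph{different} affine spaces, and their equality is precisely the fundamental identity (Du), $(r_x^a)\inv = r_a^x$, at $r=-1$ (identity (PG1) of \cite{Be02}). Your argument applies clause two to $J^{xx}_a$ and clause one to $J^{aa}_x$ and declares the outputs equal; this silently assumes the two clauses agree where they overlap, which is the very content of (S). The paper derives (S) from (Du); you never invoke (Du) at all, so as written the map $J$ is not even well defined.

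Second, and more seriously, (A) is an identity of bijections of \emph{all} of $\cX$, whereas your proposed reduction --- ``the whole identity collapses to a linear-algebra identity over $\K$'' in the single module $(\cU_c,o)$ --- only verifies that the two sides agree on the subset $\cU_c \subset \cX$. The crux is the global identity
$(-1)^{c}_{x}\circ(-1)^{c}_{y}\circ(-1)^{c}_{z}=(-1)^{c}_{x-y+z}$
as maps of $\cX$ (equivalently, that the Chasles-type relations for your operators $L_c^{\cdot\cdot}$ and their conjugates under $(-1)^c_x$ hold off $\cU_c$ as well). On $\cU_c$ this is elementary affine geometry, but its validity on all of $\cX$ is exactly the nontrivial input the paper imports from \cite{Be02} --- the translation identity (T) combined with the scalar-action properties --- and the paper explicitly notes that this identity is \emph{not} among the defining identities of a generalized projective geometry. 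An appeal to ``the torsor structure of $(\cU_c,o)$'' cannot supply it, since two distinct bijections of $\cX$ may well agree on $\cU_c$. So the main obstacle is not the bookkeeping you anticipate, but the globality of the composition law for point reflections.
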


\begin{proof}
In the theorem, and in the proof, we suppress the superscripts $\pm$ used in \cite{Be02}
(formally, this can be justified  by working
in the ``dissociation'' of the geometry $(\cX^+,\cX^-)$). 
Using this notation, we check the defining identities of $J$:
Involutivity follows from $(-1)^2 = 1$, commutativity from the fact that
$\mu(x,a,z)=\mu(z,a,x)$, symmetry from the ``fundamental identity''
$(r_x^a)\inv = r_a^x$ (which implies $(-1)_x^a = (-1)_a^x$), 
distributivity holds since all maps $s_x^a$ for $s \in \K^\times$ are automorphisms of the scalar action map $\rr$,
and idempotency follows from the following computation in the affine space $U_a$:
$$
J^{xz}_a(y) = (-1)^a_{\frac{x+z}{2}} (y) = 2 \frac{x+z}{2} - y = x-y+z .
$$
Associativity is proved by establishing first that, in a generalized projective geometry, for all $x,y,z \top a$,
with the usual torsor structure $x-y+z$ on $U_a$,
$$
(-1)_a^x \circ (-1)_a^y \circ (-1)_a^z = (-1)_a^{x-y+z} .
$$
This identity is not among the defining identities given in \cite{Be02}, but it follows by combining the
``translation identity'' (T) from loc.\ cit.\ with the properties of scalar actions.
Using this, associativity follows in a straightforward way: 
\begin{align*}
J_a^{xz}J_a^{uv} J_a^{pq} &=
(-1)_a^{\mu(x,a,z)} \circ (-1)_a^{\mu(u,a,v)} \circ (-1)_a^{\mu(p,a,q)}
=  (-1)_a^{ \frac{ x+ z}{2}  - \frac{ u+v }{2}  + \frac{p +q}{2}  } 
\cr
& = (-1)_a^{\frac{(x-v+p) + (z-u+q)   }{2}} = J_a^{ J_a^{xp}(v),J_a^{zq}(u)} \, .
\end{align*} 
\end{proof}

\subsection{Remark on the base ring $\Z$}
A geometry with  Jordan structure map $J$ always carries a $\Z$-scalar action:
indeed, an abelian group $(\cU_a,y)$ is automatically a $\Z$-module, and since
$\Z^\times = \{ \pm 1 \}$, the scaling  map (\ref{eqn:Scal2})  can be defined by
letting $1^a_y =\id_\cX$ and $(-1)^a_x = J^{aa}_x$. It is easily checked that this satisfies the
properties (C) through  (Tr). 
Moreover, by (Tr), any $\Z$-scalar action is necessarily given by these formulae.
Thus a geometry with Jordan structure map is the same as one with compatible $\Z$-action.

\section{Idempotents and the modular group}\label{sec:Id}

Let $\cX$ be a geometry with Jordan structure map $J$.
By a {\em configuration of points in $\cX$} we just mean a subset $P \subset \cX$.
In this chapter, we study some simple configurations:

\begin{enumerate}
\item
$P=\{ x,a \}$, with $(a,x) \in \cD_2$ (transversal pair),
\item
$P = \{ o , a, z \}$, with $(o,a,z) \in \cD_3$ (transversal triple), but not closed, 
\item
$P = \{ a,b,c \}$, where $(a,b,c) \in \cD_3'$ (pairwise transversal triple),
\item
$P = \{ a,x,b,y \}$, where $(a,x,b,y)$ is an idempotent quadruple. 
\end{enumerate}

\nin
For any configuration, consider the ``group generated by inversions from $P$''
\begin{equation}\label{eqn:Generated}
\bfG_{(P)} := \Bigl\langle J^{xz}_a \mid \, x,a,z \in P, \, x\top a, z \top a \Bigr\rangle \subset \Aut(\cX)
\end{equation}
and the smallest subgeometry $\langle P \rangle \subset \cX$ containing $P$.
For configuration (1), $\bfG_{(P)} = \{ J^{aa}_x,\id \}$ is isomorphic to $\Z/2\Z$;
for configuration (2), $\bfG_{(P)}$ contains a subgroup that is a quotient of $\Z$,
generated by $L^{zo}_a = J^{zo}_a \circ J^{oo}_a$, and the whole group
$\bfG_{(P)}$ is a quotient of $\Z \ltimes (\Z/2\Z)$. 
Then $\langle P \rangle$ is a flat geometry (see \ref{ssec:flat}). 
Configuration (3) is more interesting: 

\begin{theorem}
Assume that $(a,b,c)$ is a pairwise transversal triple. Then 
\begin{equation} \label{eqn:S3}
\bfS := 
\bigl\{
\id_\cX, J^{ab}_c, J^{ac}_b,J^{cb}_a, J^{ab}_c \circ J^{ac}_b, J^{ac}_b \circ J^{ab}_c 
\bigr\} 
\end{equation}
is a subgroup of $ \Aut(\cX,\top,J)$, isomorphic to the permutation group $S_3$.
\end{theorem}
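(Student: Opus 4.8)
The plan is to exhibit an explicit surjective homomorphism from the free product of three copies of $\Z/2\Z$ onto $\bfS$, or more directly to check that $\bfS$ is closed under composition and that the three generating inversions behave like the standard transpositions in $S_3$. First I would establish that each of $J^{ab}_c$, $J^{ac}_b$, $J^{cb}_a$ is an involution of $\cX$; this is immediate from axiom (IN). Then, thinking of $\{a,b,c\}$ as an ordered base triple, I want to show that each of these three inversions, when restricted to the subset $\{a,b,c\}$, acts as the transposition swapping the two points it exchanges and fixing the third: this is exactly what axiom (IP) gives, namely $J^{ab}_c$ swaps $a,b$ and fixes $c$, $J^{ac}_b$ swaps $a,c$ and fixes $b$, and $J^{cb}_a$ swaps $c,b$ and fixes $a$. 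So we get a well-defined map $\bfG_{(P)} \to S_3$ by restricting automorphisms to the invariant three-element set $\{a,b,c\}$ (invariance of $\{a,b,c\}$ under all three generators follows from (IP)), and this map sends the three generators to the three transpositions $(12),(13),(23)$, hence is surjective onto $S_3$.

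The core of the argument is then the reverse inequality: that $\bfG_{(P)}$ — and in fact already the six-element set $\bfS$ — has order at most $6$. The natural approach is to use the (geometric) associativity axiom (A) together with (IP) and (IN) to derive the two relations $J^{ac}_b J^{ab}_c J^{ac}_b = J^{cb}_a$ (and its symmetric variants), i.e. that conjugating one "transposition" by another lands back in $\bfS$; combined with $(J^{ab}_c)^2 = (J^{ac}_b)^2 = \id$ this is a presentation of $S_3$ and forces $\bfG_{(P)} = \bfS$ to have at most six elements. Concretely, I would specialize (A) with the superscript/subscript data chosen so that the left-hand side becomes a product of the generators and then evaluate the arguments $J^{xa}_c(v)$, $J^{bz}_c(u)$ using (IP). Because (A) is stated with a fixed "center" $c$, I would apply it with center $c$ to handle products of $J^{ab}_c$-type maps, and separately exploit the commutativity axiom (C) to simplify; for the mixed products involving $J^{ac}_b$ or $J^{cb}_a$ I would instead conjugate by a single known automorphism (using axiom (D), distributivity, which says each $J$ is an automorphism of the whole structure $J$) to transport one instance of (A) to the needed configuration. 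In this way every product of two elements of $\bfS$ is shown to lie in $\bfS$, and associativity and the presence of the identity and inverses (each element of $\bfS$ is an involution or the inverse of another listed element) make $\bfS$ a group.

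I expect the main obstacle to be the bookkeeping in applying axiom (A): it is stated only for a common center, so showing that $J^{ab}_c \circ J^{ac}_b$ and $J^{ac}_b \circ J^{ab}_c$ exhaust the remaining two elements, and that $(J^{ab}_c \circ J^{ac}_b)^3 = \id$, requires either a clever reduction to the single-center case via (D) or a more patient chain of substitutions in (A) and (C). A clean way to finesse this is to simply observe that we already have a surjection $\bfG_{(P)} \twoheadrightarrow S_3$ via the restriction-to-$\{a,b,c\}$ map, so it suffices to show $|\bfG_{(P)}| \le 6$; and for that it is enough to produce the single relation making the two generators $J^{ab}_c$, $J^{ac}_b$ satisfy the Coxeter relation $(J^{ab}_c J^{ac}_b)^3 = \id$ (together with showing $J^{cb}_a$ is already in the subgroup they generate, via $J^{cb}_a = J^{ac}_b J^{ab}_c J^{ac}_b$, which one checks by noting both sides are automorphisms agreeing on the triple $\{a,b,c\}$ and lie in $\bfS$ — or directly from (A)). The braid relation $(J^{ab}_c J^{ac}_b)^3 = \id$ is the one computation I would actually carry out in detail, by repeated use of (A), (C), (IN), (IP). Once that relation is in hand, $\bfG_{(P)}$ is a quotient of the Coxeter group of type $A_2$, hence of order $\le 6$, and combined with the surjection onto $S_3$ we conclude $\bfG_{(P)} = \bfS \cong S_3$, with $\bfS$ being exactly the listed six elements.
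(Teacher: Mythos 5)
Your overall strategy is the same as the paper's: show the three generators are involutions via (IN), show that conjugating one by another yields the third, deduce that the product of two generators has order three, and pin down the isomorphism type by the faithful permutation action on $\{a,b,c\}$ coming from (IP). Your explicit surjection $\bfG_{(P)}\to S_3$ by restriction to the invariant set $\{a,b,c\}$ is a useful addition -- the paper leaves that half (ruling out a proper quotient of $S_3$) implicit in its table of correspondences.

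However, you have misidentified the key axiom, and this matters for whether your plan actually closes. The products you need to control, such as $J^{ac}_b\, J^{ab}_c\, J^{ac}_b$, involve inversions with \emph{different} centers $a,b,c$, so axiom (A) -- which only governs products $J^{xz}_c J^{uv}_c J^{ab}_c$ with a \emph{common} center $c$ -- does not apply to them, and no amount of bookkeeping will make it apply directly. The relation you need is a one-line consequence of (D) together with (IP) and (C):
$$
J^{ab}_c\, J^{ac}_b\, J^{ab}_c \;=\; J^{J^{ab}_c(a),\,J^{ab}_c(c)}_{J^{ab}_c(b)} \;=\; J^{bc}_a ,
$$
and symmetrically $J^{ac}_b\, J^{ab}_c\, J^{ac}_b = J^{cb}_a = J^{bc}_a$; the paper then gets $(J^{ab}_c J^{ac}_b)^3 = J^{bc}_a J^{bc}_a = \id_\cX$ by (IN). Your fallback justification for $J^{cb}_a = J^{ac}_b J^{ab}_c J^{ac}_b$ -- that both sides are automorphisms agreeing on $\{a,b,c\}$ and lie in $\bfS$ -- is not valid: agreement on three points does not determine an automorphism of $\cX$, and membership in $\bfS$ as a \emph{group} is exactly what is being proved. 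Replace the appeals to (A) by the (D)-conjugation identity above and your argument becomes the paper's proof.
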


\begin{proof}
We claim that the following correspondences are group homomorphisms:

\bigskip
\nin
\begin{tabular}{ l  | l   | l   | l  | l   | l }
$(1)$ & $(12)$ & $(23)$  & $(13)$ & $(123)=(12)(23)$  & $(132)=(23)(12)$  
\\
[3 mm]
$\id_\cX$ & $J^{ab}_c$ & $J^{bc}_a$ & $J^{ac}_b$ & 
$C_{abc}:=J^{ab}_c \circ J_a^{bc}$ & $C_{bac}=J^{bc}_a \circ J^{ab}_c$
\\ [3 mm]
$1$ & $\begin{pmatrix} 0 & 1 \\  1& 0 \end{pmatrix}$ &
$\begin{pmatrix}  1 & 0  \\ 1 & -1 \end{pmatrix}$ &
$\begin{pmatrix}  -1& 1 \\ 0 & 1 \end{pmatrix}$ &
$\begin{pmatrix}  1& -1 \\ 1 & 0 \end{pmatrix}$ &
$\begin{pmatrix}  0 & 1 \\ -1  & 1 \end{pmatrix}$ 
\\ [5 mm]
$z$ & $z\inv $ & $(1-z\inv)\inv $ & $1-z$ &    
$1-z\inv $ & $(1-z)\inv $
\end{tabular}

\bigskip
In this table,  
we list  the elements of $S_3$ first,  then the corresponding element of $\bfS$, 
a corresponding element of $\PP\Gl(2,\Z)$, and the fractional linear transformation (in the 
variable $z$)  corresponding
to the element from the precedig line. 
Indeed,
it is checked by direct computation that these correspondences are group homomorphisms:
 since the elements $J^{ab}_c,J^{ac}_b,J^{cb}_a$ are of order two, it suffices to show that the
composition of any two of them is a $3$-cycle, e.g., that
$( J^{ab}_c \circ J^{ac}_b )^3 = \id_\cX$:
\begin{equation} \label{eqn:S3b}
( J^{ab}_c \circ J^{ac}_b )^3 = \bigl( J^{ab}_c J^{ac}_ b  J^{ab}_c \bigr)
 \bigl( J^{ac}_ b  J^{ab}_c J^{ac}_ b \bigr) = J_a^{bc} J_a^{bc} = \id_\cX 
\end{equation}
by using (IN), (IP), (D), and (C). \end{proof}

\begin{remark}
If $\cX$ is the projective line over $\K=\Z/2\Z$, then $\bfG = \bfS = S_3$.
\end{remark}

\begin{remark}
The action of matrices from $\Gl(2,\Z)$ 
defined by this and the following tables corresponds to its
``usual'' action by fractional linear transformations on a Jordan algebra with unit $1$, 
as indicated.
See Section \ref{sec:JA} for more on this.
\end{remark}

\begin{theorem}\label{th:Modular1}
Assume that $(a,b,c)$ is a pairwise transversal triple and $P=\{ a,b,c \}$.
Then $\bfG_{(P)}$ is a quotient of $\PP \Gl(2,\Z)$.
More precisely,  define the matrices 
\begin{equation}\label{eqn:matrices}
S:= \begin{pmatrix} 0 & 1 \cr -1 & 0\end{pmatrix}, \,
T:= \begin{pmatrix} 1 & 1 \cr 0 & 1 \end{pmatrix}, \,
F :=\begin{pmatrix} 0 & 1 \cr 1 & 0 \end{pmatrix}, \,
I := \begin{pmatrix}1 & 0 \cr 0 & - 1 \end{pmatrix} \in \Gl(2,\Z) \, ,
\end{equation}
and by $[S]$, etc.,  denote the corresponding element in $\PP\Gl(2,\Z)$. 
Then there is a unique group epimorphism
$$
\phi: \PP\Gl(2,\Z) \to \bfG_{(P)} , 
$$
defined by the correspondences $[S]\mapsto J^{bb}_a J^{ab}_c$ and
 $[T]\mapsto  L^{ca}_b = J^{ca}_b J^{aa}_b$
and $[I] \mapsto  J^{bb}_a$.
Moreover,
we then have the following correspondences (notation as above):

\bigskip
\begin{center}
\begin{tabular}{ l  | l   | l   | l  | l   | l }
 $J^{aa}_b$ & $J^{bb}_c$ & $J^{cc}_a$ & 
$L^{ba}_c=J^{ba}_c  J_c^{aa}$ & $L^{ca}_b=J^{ca}_b  J^{aa}_b$ &
$L_a^{bc} = J_a^{bb}J^{bc}_a$ 
\\ [3 mm]
 $\begin{pmatrix} -1 & 0 \\  0& 1 \end{pmatrix}$ &
$\begin{pmatrix}  -1 & 2  \\ 0 & 1 \end{pmatrix}$ &
$\begin{pmatrix}  -1& 0 \\ -2 & 1 \end{pmatrix}$ &
$\begin{pmatrix}  1& -1 \\ 1 & 0 \end{pmatrix}$ &
$\begin{pmatrix}  1 & 1 \\  0  & 1 \end{pmatrix}$ &
$\begin{pmatrix}  1 & 0 \\  -1   & 1 \end{pmatrix}$ 
\\ [5 mm]
$-z$ & $2-z$ & $(2- z\inv)\inv $ & $(z\inv -1)\inv $ &    
$z + 1$ & $(1-z\inv)\inv $
\end{tabular}
\end{center}
\end{theorem}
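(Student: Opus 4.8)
The plan is to show that the assignment on generators $[S]\mapsto J^{bb}_a J^{ab}_c$, $[T]\mapsto L^{ca}_b$, $[I]\mapsto J^{bb}_a$ descends to a well-defined group homomorphism $\phi:\PP\Gl(2,\Z)\to\Aut(\cX)$ whose image is exactly $\bfG_{(P)}$. Since $\PP\Gl(2,\Z)$ is generated by $[S],[T],[I]$ (a standard fact: $[S],[T]$ generate $\PP\SL(2,\Z)$ with presentation $[S]^2=([S][T])^3=1$, and adjoining the reflection $[I]$ of determinant $-1$ gives all of $\PP\Gl(2,\Z)$), it suffices to check that the proposed images satisfy all the defining relations of $\PP\Gl(2,\Z)$. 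The natural tool is the ``dictionary'' already set up in the previous theorem and its table: every matrix $g\in\Gl(2,\Z)$ acts on a unital Jordan algebra by a fractional linear transformation in the variable $z$, and — what we must verify — the corresponding composite of inversions $J^{\bullet\bullet}_\bullet$ built from $a,b,c$ realizes that same action. Concretely, I would first record, using (IN), (IP), (C), (D), (S) together with the torsor/symmetric-space lemmas (Lemmas \ref{la:ta}, \ref{la:ssa}) and the transplantation formula \eqref{eqn:Transp}, that each entry of the two tables does correspond, under the map $J^{xz}_a=M^{xz}_{aa}$-type computations, to the listed $2\times 2$ matrix acting by the listed fractional linear transformation; this is the bulk of the bookkeeping but each line is a short direct computation of the kind already exhibited in \eqref{eqn:S3b}.

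Next I would verify the relations. The key relations to check are $\phi([S])^2=\id$, $(\phi([S])\phi([T]))^3=\id$, $\phi([I])^2=\id$, and the conjugation relation $\phi([I])\phi([S])\phi([I])=\phi([S])^{-1}$ (equivalently $\phi([I])\phi([T])\phi([I])=\phi([T])^{-1}$), since these generate all relations of $\PP\Gl(2,\Z)$. For $\phi([S])^2$: $J^{bb}_a J^{ab}_c J^{bb}_a J^{ab}_c$; using that $J^{bb}_a$ is an automorphism of $J$ (axiom (D)) one rewrites $J^{bb}_a J^{ab}_c J^{bb}_a = J^{J^{bb}_a(a),J^{bb}_a(b)}_{J^{bb}_a(c)}$ and evaluates the images via (IP) and the pairwise transversality of $(a,b,c)$; the result should collapse to $(J^{ab}_c)^{-1}=J^{ab}_c$ by (IN), giving $\id$. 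The order-three relation is the same type of computation as \eqref{eqn:S3b} but now involving the translation $L^{ca}_b$; here I would exploit that $L^{ca}_b=J^{ca}_b J^{aa}_b$ and feed everything through (A), (D), (C) exactly as in the $S_3$ case, reducing to an identity like $J^{\bullet\bullet}_\bullet J^{\bullet\bullet}_\bullet=\id$. The relation $\phi([I])^2=\id$ is just (IN) applied to $J^{bb}_a$. The conjugation relation follows from (D): conjugating $J^{ab}_c$ (or the translation) by the automorphism $J^{bb}_a$ permutes the superscripts/subscripts according to the action of $J^{bb}_a$ on $a,b,c$, and one checks the images match the matrix computation $ISI^{-1}=S^{-1}$.

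Having the homomorphism $\phi$, surjectivity onto $\bfG_{(P)}$ amounts to showing that the generators $J^{xz}_a$ with $x,a,z\in\{a,b,c\}$ all lie in the image. The six generators of the form $J^{xx}_y$ and $J^{xy}_z$ (the three ``point reflections'' $J^{aa}_b$, etc., and the three ``transposition'' inversions $J^{ab}_c$, etc.) are precisely the entries listed in the two tables, each exhibited there as (the $\phi$-image of) an explicit word in $S,T,I$; together with the $S_3$ from the previous theorem this accounts for all $J^{xz}_a$ with entries in $P$. Uniqueness of $\phi$ is automatic since $[S],[T],[I]$ generate. The main obstacle, as usual in this circle of results, is not conceptual but the sheer volume of careful index-tracking: one must be disciplined about the ``transplantation'' moves $J^{xz}_a=J^{J^{xz}_a(o),o}_a$ and about which axiom licenses each rewrite, since a single mislabeled superscript propagates. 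A secondary subtlety is making sure the chosen matrix representatives are genuinely in $\Gl(2,\Z)$ (not merely $\Gl(2,\Q)$) and that passing to $\PP\Gl(2,\Z)$ — i.e.\ killing $\pm\id$ — is compatible with the geometry; here the point is that $-\id$ corresponds under the dictionary to the identity transformation $z\mapsto z$, and indeed the $\Z$-scalar action forces $J^{aa}_a$-type data to behave so that $\phi(-\id)=\id$, which is exactly what the remark after the $S_3$ theorem about the $\Z$-action guarantees. I would present the verification of the three core relations in full and relegate the table entries to short parenthetical computations, mirroring the style of \eqref{eqn:S3} and \eqref{eqn:S3b}.
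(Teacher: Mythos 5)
Your overall strategy is the paper's: present $\PP\SL(2,\Z)$ by $[S]^2=[ST]^3=1$, verify these relations for the proposed images using the Jordan identities, then adjoin $[I]$ to get $\PP\Gl(2,\Z)$ and read off surjectivity from the tables. You are in fact slightly more explicit than the paper about the relations needed to pass from $\PP\SL$ to $\PP\Gl$, and your handling of $[I]^2=1$, of the conjugation relations (which live inside the abelian torsor $\cU_b$ and its inversive action, Lemma \ref{la:ta}), and of surjectivity is fine.

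There is, however, a concrete problem in the one relation you work out in detail, $\phi([S])^2=\id$. You conjugate $J^{ab}_c$ by $J^{bb}_a$, getting $J^{J^{bb}_a(a),J^{bb}_a(b)}_{J^{bb}_a(c)}=J^{ab}_{J^{bb}_a(c)}$: the superscripts are pinned down by (IP), but the subscript $J^{bb}_a(c)$ is \emph{not} --- (IP) says nothing about where $J^{bb}_a$ sends the third point $c$, and in the projective-line model with $(a,b,c)=(0,\infty,1)$ one has $J^{bb}_a(c)=-1\neq c$. So your computation reduces the claim to $J^{ab}_{J^{bb}_a(c)}=J^{ab}_c$, which is exactly the identity (\ref{eqn:JP!}) that the paper \emph{deduces from} the relation $(J^{bb}_a J^{ab}_c)^2=\id$; invoking it here would be circular. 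The repair is to group the other way, as the paper does:
$(J^{bb}_a J^{ab}_c)^2=J^{bb}_a\,\bigl(J^{ab}_c J^{bb}_a J^{ab}_c\bigr)$, and the inner conjugation only involves the points $a,b$, all of whose images under $J^{ab}_c$ are determined by (IP); it yields $J^{aa}_b$, which equals $J^{bb}_a$ by axiom (S), so the square is $(J^{bb}_a)^2=\id$ by (IN). The same caution applies wherever your "permute the indices by (D)" step would move the point $c$ through an inversion whose (IP)-data do not mention $c$; the order-three relation should therefore be reduced, as in the paper, to the already established $S_3$-relation (\ref{eqn:S3b}) after first exploiting the commutation of $J^{bb}_a$ with $J^{ab}_c$ obtained from $[S]^2=1$.
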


\msk

\begin{proof}
Recall that
 the modular group
$\Gamma := \PP\SL(2,\Z)$ is presented by generators and relations
\begin{equation}\label{eqn:GR} \Gamma = 
\Big\langle  [S]  , \, [T]   \quad \mid \quad
[S]^2 = 1, \, [ST]^3 = 1 \Big\rangle \, .
\end{equation}
We prove the relation corresponding to $[S]^2 = 1$, that is, $( J^{bb}_a J^{ab}_c )^2 = \id$:
$$
(J^{bb}_a J^{ab}_c)^2 = J^{bb}_a (J^{ab}_c J^{bb}_a J^{ab}_c) =
J^{bb}_a J^{aa}_b = (J^{bb}_a)^2 = \id .
$$
Next, we prove the relation corresponding to $[ST]^3 = 1$:
note that $J^{bb}_a J^{ab}_c J^{ca}_b J^{aa}_b =
J^{bb}_a J^{ab}_c J^{ca}_b J^{bb}_a =
J^{ab}_c J^{cb}_a$, and according to (\ref{eqn:S3b}), this is a $3$-cycle.
According to the presentation (\ref{eqn:GR}), this defines a homomorphism $\PP\SL(2,\Z) \to \Aut(\cX)$.
The remaining correspondences are checked by similar computations, and they establish
a homomorphism $\PP\Gl(2,\Z) \to \bfG_{(P)}$ which is obviously surjective.
\end{proof}

\begin{theorem}\label{th:Modular2}
Recall from \ref{ssec:Grass-0} the definition of the projective line
 $\Z \PP^1=\Gras_1^1(\Z^2)$. 
We denote its canonical pairwise transversal triple by
$o=[e_1]$, $\infty = [e_2]$, $e=[e_1+e_2]$.
Assume $(a,b,c)$ is  a pairwise transversal triple in $\cX$ and $P = \{ a,b,c \}$.
Then the geometry $\langle P \rangle$ is a quotient of $\Z \PP^1$. More precisely, there is a
unique morphism of geometries 
$$
\Phi : \Z\PP^1 \to \langle P \rangle
$$
which preserves the pairwise transversal triples:
$\Phi (o) = a$, $\Phi(\infty)=b$, $\Phi(e)=c$.
This map 
 is equivariant with respect  to the homomorphism $\phi$ from the preceding theorem in the sense
 that $\Phi (g.x)=\phi(g) \Phi(x)$ for all $g \in\PP \Gl(2,\Z)$.
\end{theorem}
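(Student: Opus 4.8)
The plan is to construct $\Phi$ explicitly on the orbit of the base triple and then extend it along transversal chains, using $\phi$ from Theorem~\ref{th:Modular1} as the organizing tool. First I would observe that the subgeometry $\langle P \rangle$ is generated, as a set, by the $\bfG_{(P)}$-orbit of $\{a,b,c\}$: indeed any subgeometry containing $a,b,c$ is stable under all the inversions $J^{xz}_a$ with $x,a,z \in P$ (and, inductively, under those built from points already obtained), so $\langle P \rangle = \bfG_{(P)} . \{a,b,c\}$. Dually, $\Z\PP^1$ with its canonical triple $(o,\infty,e)$ is generated by the $\PP\Gl(2,\Z)$-orbit of $\{o,\infty,e\}$; in fact $\PP\Gl(2,\Z)$ acts transitively on pairwise transversal triples of $\Z\PP^1$ and every point of $\Z\PP^1$ lies in such a triple, so $\Z\PP^1 = \PP\Gl(2,\Z).\{o,\infty,e\}$. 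This reduces the construction of $\Phi$ to a descent problem through the epimorphism $\phi$.

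Next I would define $\Phi$ by the equivariance formula itself: for $g \in \PP\Gl(2,\Z)$ and $x \in \{o,\infty,e\}$ set $\Phi(g.x) := \phi(g)\Phi_0(x)$, where $\Phi_0(o)=a$, $\Phi_0(\infty)=b$, $\Phi_0(e)=c$. The content is \emph{well-definedness}: if $g.x = g'.x'$ in $\Z\PP^1$ with $x,x' \in \{o,\infty,e\}$, one must show $\phi(g)\Phi_0(x) = \phi(g')\Phi_0(x')$. Writing $h := g'^{-1}g$, this amounts to showing that whenever $h \in \PP\Gl(2,\Z)$ satisfies $h.x = x'$ with $x,x'$ among the three canonical points, the image $\phi(h)$ sends $\Phi_0(x)$ to $\Phi_0(x')$. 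The stabilizer in $\PP\Gl(2,\Z)$ of each canonical point, and the elements carrying one canonical point to another, are generated by the specific matrices $[S],[T],[F],[I]$ tabulated in Theorem~\ref{th:Modular1} (together with the permutation generators $[F]$, $[I]$ of $S_3$ acting on $\{o,\infty,e\}$). So the verification is finite: for each generator of the relevant cosets, check from the tables in the previous two theorems that the corresponding element of $\bfG_{(P)}$ moves $a,b,c$ exactly as the matrix moves $o,\infty,e$. For instance $[S]$ interchanges $o$ and $\infty$ and fixes $e$ (since $z \mapsto -1/z$ does so with $o=0$, $\infty$, $e=1$... here $e=[e_1+e_2]$ corresponds to $z=1$, and $[S].1 = -1$; one must be careful with signs, but the point $e$ is fixed projectively by $[S]^2$ and the relevant fact is that $\phi([S]) = J^{bb}_a J^{ab}_c$ has matching action on $a,b,c$); and each of the generating inversions $J^{bb}_a J^{ab}_c$, $L^{ca}_b$, $J^{bb}_a$ was already shown in Theorem~\ref{th:Modular1} to realize the stated fractional-linear transformation on the Jordan-algebra model, which is precisely the action on the canonical triple of $\Z\PP^1$.

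Having $\Phi$ well-defined on all of $\Z\PP^1$ and surjective onto $\langle P\rangle$ (by the two orbit descriptions above), I would then check it is a morphism of geometries, i.e.\ preserves transversality and commutes with the Jordan structure map $J$. Preservation of transversality: if $p \top q$ in $\Z\PP^1$, write $p = g.o$, $q = g.x$ with $x \in \{\infty, e\}$ say (using transitivity on transversal pairs, which holds for $\Z\PP^1$); since $o \top \infty$ and $o \top e$ and $\phi(g) \in \Aut(\cX)$ preserves $\top$, and $a \top b$, $a\top c$, we get $\Phi(p) = \phi(g).a \top \phi(g).\Phi_0(x) = \Phi(q)$. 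Commuting with $J$: both $\Phi$ and $J^{xz}_a$ are equivariant (the latter by axiom (D), the former by construction), so it suffices to check $\Phi(J^{xz}_a(y)) = J^{\Phi x,\Phi z}_{\Phi a}(\Phi y)$ when $(x,a,z,y)$ is in the $\PP\Gl(2,\Z)$-orbit of a tuple built from the base triple, and then push everything to the base triple where it is a finite check using (IN), (IP), (C), (D) and the tables — exactly the style of computation in \eqref{eqn:S3b} and in the proof of Theorem~\ref{th:Modular1}. Uniqueness of $\Phi$ is then immediate: any geometry morphism sending $(o,\infty,e) \mapsto (a,b,c)$ must, by the same equivariance and the fact that it intertwines the inversions generating the two orbits, agree with the $\Phi$ just built on the generating orbit of $\Z\PP^1$, hence everywhere.

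The main obstacle I expect is bookkeeping the $S_3$-symmetry and the sign conventions: the matrices in \eqref{eqn:matrices} live in $\Gl(2,\Z)$, not $\SL(2,\Z)$, and the passage to $\PP\Gl(2,\Z)$ together with the dictionary between ``which canonical point'' and ``which variable value $0,1,\infty$'' must be tracked so that the finitely many generator checks genuinely cover all cosets $\{h \mid h.x = x'\}$ for $x,x' \in \{o,\infty,e\}$. Once the generating set of these cosets is pinned down — it is generated by the $S_3$-part $\langle [F],[I]\rangle$ permuting the three points and the point-stabilizer of one of them, e.g.\ $\langle [T]\rangle$ for $\infty$ — the well-definedness reduces to re-reading the tables of Theorem~\ref{th:Modular1}, and the rest is routine.
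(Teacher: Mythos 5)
Your proposal is correct and follows essentially the same route as the paper: define $\Phi$ by equivariance through $\phi$ using the transitivity of $\PP\Gl(2,\Z)$ on $\Z\PP^1$, and obtain well-definedness from the fact that point stabilizers map under $\phi$ into the corresponding stabilizers in $\bfG_{(P)}$. The paper makes the stabilizer check nearly trivial by first defining $\Phi_2$ on the homogeneous space $\cD_2(\Z\PP^1)$, whose base-pair stabilizer is just the diagonal subgroup $\{1,[I]\}$ with $\phi([I])=J^{bb}_a$ fixing $(a,b)$, and then projecting to points; this sidesteps the larger finite check you describe (and the small slip that $\langle [F],[I]\rangle$ is only a Klein four-group, not the $S_3$ permuting $\{o,\infty,e\}$, since $[I]:z\mapsto -z$ does not preserve the canonical triple).
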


\begin{proof}
The projective line $\Z \PP^1$ is homogeneous under the
group $\PP \Gl(2,\Z)$. 
As base point in the set $\cD_2(\Z \PP^1)$ of transversal pairs we take
$(o,\infty)=([e_2],[e_1])$. The stabilizer $\bfH$ of this pair in $\Gl(2,\Z)$ is the group of diagonal matrices.
Since $\phi(I)=J^{bb}_a$, and $J^{bb}_a$ preserves the pair $(a,b)$, the map $\phi$
from the theorem induces a well-defined and base point preserving map 
$\Phi_2 : \cD_2(\Z\PP^1) \to \cD_2(\cX)$.
Let $\pr_1 : (x,a) \mapsto x$ the projections from $\cD_2$ to $\Z \PP^1$ and to $\cX$,
respectively.
Since the group $\Gl(2,\Z)$ and its image group under $\phi$ preserve the respective
transversality relations, there is a well-defined map $\Phi :\Z\PP^1 \to \cX$ such that
$\pr_2 \circ \Phi_2 = \Phi \circ \pr_2$.
It maps $o$ to $a$, and by equivariance, it maps  $\infty$ to $b$ and $e$ to $c$.
As a consequence of the equivariance property of $\Phi$, it follows that $\Phi$
 is a morphism of geometries, i.e., we have
$\Phi(J^{uv}_w (y)) = J^{\Phi(u)\Phi(v)}_{\Phi(w)} \Phi(y)$ whenever defined.
\end{proof}

\begin{remark}
Of the many relations that are valid in the setting of the preceding theorems,
 let us just mention the following: the involution $J^{ca}_b$ has, besides $b$, another 
fixed point given by $J^{aa}_c(b)$:
\begin{equation}\label{eqn:FP!}
J^{ca}_b (J^{aa}_c (b)) = J^{aa}_c(b) .
\end{equation}
Indeed, $J^{ca}_b J^{aa}_c (b) = J^{ca}_b J^{aa}_c J^{ca}_b(b)= J^{cc}_a(b)= J^{aa}_c (b)$.
Another non-trivial relation is
\begin{equation}\label{eqn:JP!}
J^{ac}_b = J^{ac}_{J^{aa}_c(b)},
\end{equation}
coming from $J^{aa}_c J^{ac}_b J^{aa}_c J^{ac}_b = 1$.
In order to get a visual image of such and other relations, the best realization of $\Z \PP^1$ is not a ``line'' but
rather  a tesselation of the hyperbolic plane of type $(2,3,\infty)$; such images can be found on the internet,
see e.g., 
\url{http://upload.wikimedia.org/wikipedia/commons/thumb/0/04/H2checkers_23i.png/1024px-H2checkers_23i.png}.
In this image, the points $a,b,c$ may be chosen as points on the boundary 
circle such that the triangle $(a,b,c)$ contains
as its ``center'' a point
of rotational symmetry with order $3$. The symmetries $J^{ab}_c$ are then easily visible, but the 
orbit of $a,b,c$ (the set $\langle P \rangle$) will be on the boundary circle;  thus
this visualisation gives only a partial image, but at least
it may give an idea of how complicated the corresponding geometry really is. 
In particular, 
the orbits of the translations groups defined by $a,b$, resp.\ $c$ correspond to 
limits of $\Z$-points of horocycles touching
the boundary circle at $a,b$, resp.\ $c$. 
\end{remark}

The projective line $\Z\PP^1$ and its quotients are the most elementary building blocks for
analyzing the structure of a general geometry.
It is important that $\Z \PP^1$ appears not only in the context of a self-dual geometry,
where pairwise transversal triples exist, but also for certain geometries ``of the second kind'',
namely those having {\em idempotents}.
The following definition arises when retaining the properties of the quadruple
$(a,b,c,a)$, where $(a,b,c)$ is pairwise transversal, but then allowing some pairs 
 to be not necessarily transversal:

\begin{definition}
 We say that $(a,x,b,y) \in \cD_4$  is an {\em idempotent} if it satisfies
\begin{equation}\label{eqn:Idem1}
J^{aa}_x(y)= y,  \, \,
J^{xy}_b J^{aa}_x(b)= J^{aa}_x(b), \, \,
J^{aa}_x J^{xy}_b(a) = J^{xy}_b(a) , \, \,  J_b^{xy} J^{ab}_x(y)=J^{ab}_x(y)  
\end{equation}
\begin{equation}\label{eqn:Idem2}
J^{bb}_y(a)= a,  \, \,
J^{ab}_x J^{yy}_b(x)= J^{yy}_b(x), \, \,
J^{yy}_b J^{ab}_x(y) = J^{ab}_x(y) ,  \,  \, J^{ab}_x J^{xy}_b(a)=J^{xy}_b(a) .
\end{equation}
A {\em strong idempotent} is an idempotent $(a,x,b,y)$ such that, moreover,
\begin{equation}\label{eqn:Strong}
J^{yx}_{J_x^{aa}(b)} = J^{a,J_b^{xy} (a)}_{J^{ab}_x(y)}    \, .
\end{equation}
\end{definition}

\begin{lemma} If $(a,b,c)$ is a pairwise transversal triple, then
$(a,x,b,y):=(a,c,b,a)$ is a strong idempotent.
\end{lemma}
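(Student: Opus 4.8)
The plan is to verify directly that the quadruple $(a,c,b,a)$ satisfies all the defining equations of a strong idempotent, reducing each of the nine identities \eqref{eqn:Idem1}, \eqref{eqn:Idem2}, \eqref{eqn:Strong} to facts already established for pairwise transversal triples. First I would record that since $(a,b,c)\in\cD_3'$ we have $a\top b$, $b\top c$, $c\top a$, so all inversions appearing (such as $J^{aa}_c$, $J^{ab}_c$, $J^{cb}_a$, $J^{ab}_x=J^{ab}_c$, etc.) are defined, and the quadruple $(a,c,b,a)$ lies in $\cD_4$ (indeed in $\cD_4'$). Substituting $x=c$, $y=a$ into \eqref{eqn:Idem1}, the first equation becomes $J^{aa}_c(a)=a$, which is (IP); the remaining three become statements about $J^{ca}_b J^{aa}_c(b)$, $J^{aa}_c J^{ca}_b(a)$, and $J^{ab}_c J^{aa}_c(\cdot)$-type expressions. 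Here I would invoke exactly the relations isolated in the Remark after Theorem~\ref{th:Modular2}: equation \eqref{eqn:FP!}, $J^{ca}_b(J^{aa}_c(b))=J^{aa}_c(b)$, handles the second identity of \eqref{eqn:Idem1}, and its mirror (obtained by the $S_3$-symmetry of Theorem, or by the same three-line computation using (IN), (IP), (D), (C)) handles the third. The fourth identity of \eqref{eqn:Idem1} after substitution reads $J^{ca}_b J^{ab}_c(a) = J^{ab}_c(a)$, i.e.\ $J^{ab}_c(a)=b$ is fixed by $J^{ca}_b$ — but $J^{ca}_b(b)=b$ by (IP), so this is immediate.

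Next I would treat \eqref{eqn:Idem2} by the same substitution $x=c$, $y=a$: the first equation $J^{bb}_a(a)=a$ is (IP) again, and the other three are the images under the evident symmetry (swapping the roles of the two "ends" $a$ and $a$, together with $b$ and $y$) of the corresponding equations of \eqref{eqn:Idem1} already verified — concretely each reduces to a fixed-point statement provable by the identical (IN)(IP)(D)(C) manipulation as in \eqref{eqn:S3b} and \eqref{eqn:FP!}. Since $x=c$ and $y=a$ make several of these equations literally coincide (for instance the last equation of \eqref{eqn:Idem1} and the last of \eqref{eqn:Idem2} both become a statement about $J^{ab}_c$ and $J^{ca}_b$), the bookkeeping collapses considerably. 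Throughout, the key computational input is the three-line cancellation pattern $J^{uv}_w J^{pq}_s J^{uv}_w = J^{J^{uv}_w(p),J^{uv}_w(q)}_{J^{uv}_w(s)}$ from (D), combined with (IN) to collapse squares and (IP) to evaluate on base points; this is exactly what was used to prove Theorems~\ref{th:Modular1} and the $S_3$ table.

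Finally, for the strong condition \eqref{eqn:Strong}, substituting $x=c$, $y=a$ gives the claim
\[
J^{ac}_{J^{aa}_c(b)} = J^{a,J^{ca}_b(a)}_{J^{ab}_c(a)} = J^{a,b}_{b},
\]
where I used $J^{ca}_b(a)=b$ (computed above) and $J^{ab}_c(a)=b$ from (IP) — wait, $J^{ab}_c(a)=b$ by (IP) indeed, so the right side is $J^{ab}_b$, which by commutativity (C) equals $J^{ba}_b$; but $J^{ab}_b$ is defined since $a\top b$, and by (IP) it exchanges $a,b$ and fixes $b$. Meanwhile the left side is $J^{ac}_{J^{aa}_c(b)}$, which by \eqref{eqn:JP!} (valid since it came from $J^{aa}_c J^{ac}_b J^{aa}_c J^{ac}_b=1$) equals $J^{ac}_b$. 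So the strong condition amounts to $J^{ac}_b = J^{ab}_b$; I would check this by comparing their actions — both are involutions exchanging $a$ with (respectively) $c$ and $b$... so in fact I expect the substitution is not $x=c,y=a$ naively but requires care in matching the left/right conventions of the definition. The main obstacle, then, is precisely this: getting the index substitution into \eqref{eqn:Strong} exactly right and then recognizing the resulting identity as an instance of \eqref{eqn:JP!} (or of the transplantation formula \eqref{eqn:Transp}) rather than a genuinely new relation; once the substitution is pinned down, both sides should reduce via \eqref{eqn:JP!}, (C), and (IP) to the same inversion $J^{ca}_b$ (equivalently $J^{ac}_b$), completing the proof. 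I would present the verification as a short enumerated check of the nine-plus-one equations, citing (IN), (IP), (D), (C) and the Remark-identities \eqref{eqn:FP!}, \eqref{eqn:JP!} at each step.
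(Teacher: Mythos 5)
Your approach is exactly the paper's: the published proof is a one\-line ``easy check'' citing \eqref{eqn:FP!} for the idempotency relations and observing that the strong relation \eqref{eqn:Strong} boils down to \eqref{eqn:JP!}, which is precisely the reduction you carry out. The verifications of \eqref{eqn:Idem1} and \eqref{eqn:Idem2} are correct as you describe them.

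The one flaw is in your treatment of \eqref{eqn:Strong}, and it is a computational slip rather than a problem with the substitution: you evaluate $J^{ca}_b(a)=b$, but by (IP) the map $J^{ca}_b$ exchanges $c$ and $a$ and fixes $b$, so $J^{ca}_b(a)=c$. With the correct value, the right\-hand side of \eqref{eqn:Strong} under the substitution $x=c$, $y=a$ is $J^{a,c}_{b}=J^{ac}_b$ (using $J^{ab}_c(a)=b$ for the subscript), while the left\-hand side is $J^{ac}_{J^{aa}_c(b)}$, and the two agree precisely by \eqref{eqn:JP!}. So there is no issue with ``left/right conventions'' to resolve --- your substitution $x=c$, $y=a$ was the right one all along, and the hedging in your last paragraph can be deleted once $J^{ca}_b(a)=c$ is used. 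With that correction your enumerated check closes cleanly and coincides with the paper's intended argument.
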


\begin{proof} Easy check -- cf.\ 
(\ref{eqn:FP!}); the ``strong'' relation (\ref{eqn:Strong}) boils down to (\ref{eqn:JP!}).
\end{proof}

\nin Conditions (\ref{eqn:Idem1}) and (\ref{eqn:Idem2}) are dual to each other in the sense that they
imply that $(a,x,b,y)$ is an idempotent if and only if so is $(y,b,x,a)$.
Another way to formulate this definition is to define 4 new points
\begin{equation}
c:=J^{aa}_x(b), \quad
z:= J^{yy}_b(x), \quad
d:= J^{xy}_b(a), \quad
w:= J^{ab}_x(y),
\end{equation}
(thinking of $(x,y,z,w)$ and $(a,b,c,d)$ as two harmonic quadruples on two
 ``dissociated'' projective lines $\ell, \ell'$, in the sense of \ref{ssec:Diss})
and to require that
\begin{equation}
J^{aa}_x(y)= y,  \,
J^{xy}_b (c)= c, \,
J^{aa}_x (d) = d, \, 
J^{xy}_b(w)=w \, ,
\end{equation}
\begin{equation}
J^{bb}_y(a)= a,  \,
J^{ab}_x (z)= z , \,
J^{yy}_b  (w)=w, \, 
J^{ab}_x(d)=d   \, .
\end{equation}
Geometrically, this means that  certain fixed points of our involutions on the lines $\ell, \ell'$
are determined in a definite way. Fixed points of $J^{bb}_x$ are then given by
\begin{equation}
J^{bb}_x (w) =J^{bb}_x J^{ab}_x (y)=  J^{ab}_x J^{aa}_x (y)=w, \qquad J^{xx}_b (d)=d .
\end{equation}

\begin{theorem}\label{th:Idempot}
Assume $(a,x,b,y) \in \cD_4$ is a strong idempotent.
Then there is a homomorphism $\Gl(2,\Z) \to \Aut(\cX)$ defined by the following correspondences:

\begin{center}
\begin{tabular}{ l  | l   | l   | l  | l   | l   l l  l l  l   | l   | l   | l  l l}  
 $J^{bb}_x$ & $J^{xy}_b$ & $L^{yx}_b$ & 
$J^{bb}_y$ & $L^{ab}_x$ &
$J_x^{ab}$ 
\\ [3 mm]
 $\begin{pmatrix} -1 & 0 \\  0& 1 \end{pmatrix}$ &
$\begin{pmatrix}  -1 & 1  \\ 0 & 1 \end{pmatrix}$ &
$\begin{pmatrix}  1 & 1 \\  0 & 1 \end{pmatrix}$ &
$\begin{pmatrix}  -1& 2 \\ 0 & 1 \end{pmatrix}$ &
$\begin{pmatrix}  1 & 0 \\  1  & 1 \end{pmatrix}$ &
$\begin{pmatrix}  -1 & 0 \\  -1   & 1 \end{pmatrix}$ 
\\ [8mm]
$J^{aa}_x$ &
 $\La^{ab}_{xy}=L^{ab}_x L^{xy}_b   $ & $(\La^{ab}_{xy})^3$ & $ W_{ab}^{xy}= L_x^{ab} L_b^{xy} L_x^{ab}$ & 
$J_b^{xy} J^{aa}_x$ & $(J_b^{xy} J^{aa}_x)^2$ 
\\ [3 mm]
$\begin{pmatrix} -1 & 0 \\ -2 & 1 \end{pmatrix}$ &
 $\begin{pmatrix} 1 & -1 \\  1& 0 \end{pmatrix}$ &
$\begin{pmatrix}  -1 & 0  \\ 0 & -1 \end{pmatrix}$ &
$\begin{pmatrix}  0 & -1 \\  1 & 0 \end{pmatrix}$ &
$\begin{pmatrix}  -1& 1 \\ -2 & 1 \end{pmatrix}$ &
$\begin{pmatrix}  -1 & 0 \\  0  & -1 \end{pmatrix}$ 
\end{tabular}
\end{center}
\msk
If $(a,x,b,y)$ is an idempotent (not necessarily strong), then a similar statement still holds, but
$\Gl(2,\Z)$ has to be replaced by the universal central extension $\widetilde{\Gl(2,\Z)}$
(that is, by an extended braid group). 
\end{theorem}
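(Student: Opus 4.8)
The plan is to present the desired map as the homomorphism induced by a suitable presentation of $\Gl(2,\Z)$. Recall that $\Gl(2,\Z)$ is generated by the matrices $a=\bigl(\begin{smallmatrix}1&1\\0&1\end{smallmatrix}\bigr)$, $b=\bigl(\begin{smallmatrix}1&0\\-1&1\end{smallmatrix}\bigr)$, $i=\bigl(\begin{smallmatrix}-1&0\\0&1\end{smallmatrix}\bigr)$, that $\langle a,b\rangle=\SL(2,\Z)$ is the quotient $B_3/\langle(aba)^4\rangle$ of the braid group $B_3=\langle a,b\mid aba=bab\rangle$, and that $\Gl(2,\Z)=\SL(2,\Z)\rtimes\langle i\rangle$; thus
\[
\Gl(2,\Z)=\bigl\langle\,a,b,i\ \mid\ aba=bab,\ (aba)^4=1,\ i^2=1,\ iai^{-1}=a^{-1},\ ibi^{-1}=b^{-1}\,\bigr\rangle .
\]
I would send $a,b,i$ to the three automorphisms the table assigns to these matrices, namely $\alpha:=L^{yx}_b$, $\beta:=L^{ba}_x=(L^{ab}_x)^{-1}$ and $\iota:=J^{bb}_x$. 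By von Dyck's theorem it then suffices to check that $\alpha,\beta,\iota$ satisfy the relations above; every remaining row of the table is recovered afterwards, since the listed automorphism there is a product of $\alpha^{\pm1},\beta^{\pm1},\iota$ in exactly the pattern of the matrix factorization on the preceding line (e.g.\ $\Lambda^{ab}_{xy}=L^{ab}_xL^{xy}_b=\beta^{-1}\alpha^{-1}$, $W^{xy}_{ab}=L^{ab}_xL^{xy}_bL^{ab}_x=\beta^{-1}\alpha^{-1}\beta^{-1}$, and so on), so its image is forced.

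The relations involving $\iota$ are cheap and use only the Jordan axioms. Here $\iota^2=\id_\cX$ is (IN). For $\iota\alpha\iota^{-1}=\alpha^{-1}$ one applies the naturality $g\circ L^{uv}_c\circ g^{-1}=L^{gu,gv}_{gc}$ (valid for $g\in\bfG$) together with (IP): as $\iota$ fixes $x$ and $b$, this gives $\iota\alpha\iota^{-1}=L^{\iota(y),x}_b$, and by (S) the involution $\iota=J^{bb}_x=J^{xx}_b$ is the point reflection through $x$ of the torsor $\cU_b$, so $\iota(y)$ is the $\cU_b$-symmetric of $y$ about $x$ and $L^{\iota(y),x}_b$ is exactly the translation inverse to $\alpha=L^{yx}_b$. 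The relation $\iota\beta\iota^{-1}=\beta^{-1}$ is the same computation performed in $\cU_x$ (using $\iota(x)=x$, $\iota(b)=b$, and that $\iota(a)=J^{bb}_x(a)$ is the $\cU_x$-symmetric of $a$ about $b$). No idempotency is needed here.

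The core of the argument is the braid relation $\alpha\beta\alpha=\beta\alpha\beta$. I would establish it by rewriting both sides through the $\Lambda$-operator identities from the proof of Theorem~\ref{th:trans} and the Remark following it (the expressions of $\Lambda$-operators as products of inversions), exactly as the relation $[ST]^3=1$ was reduced to a chain of cancellations in \eqnref{S3b} for pairwise transversal triples. The difference is that the extra transversality exploited there is now supplied by the idempotency conditions \eqnref{Idem1}--\eqnref{Idem2}: these are precisely what fix the values of the intermediate involutions on the points that must cancel, which becomes transparent in the reformulation of the idempotent axioms through the auxiliary points $c,z,d,w$ (the ``second harmonic quadruple''). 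Combined with the previous paragraph, this yields by von Dyck a homomorphism from $\widetilde{\Gl(2,\Z)}$ --- the extended braid group obtained from the presentation above by deleting the relation $(aba)^4=1$ --- to $\Aut(\cX)$, which is the assertion of the last sentence of the theorem.

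It remains, for a \emph{strong} idempotent, to verify the extra relation $(aba)^4=1$, so that the homomorphism descends to $\Gl(2,\Z)$. Here $(\alpha\beta\alpha)^2$ is the image of $S^2=-I_2$, and computing it two ways gives simultaneously the table identities $(\alpha\beta\alpha)^2=(\Lambda^{ab}_{xy})^3=(J^{xy}_bJ^{aa}_x)^2$; hence $(\alpha\beta\alpha)^4=\bigl((\Lambda^{ab}_{xy})^3\bigr)^2$, and I expect the strong axiom \eqnref{Strong} to be exactly what makes $(\Lambda^{ab}_{xy})^3$ an involution. This is, for a general strong idempotent, the analogue of the fact recorded in the Lemma preceding the theorem: for $(a,x,b,y)=(a,c,b,a)$ with $(a,b,c)$ pairwise transversal, the strong relation boils down to the fixed-point identity \eqnref{JP!}, which is precisely what forces $S^2$ to have order two. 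The main obstacle throughout is the bookkeeping in the braid-relation verification and in the $(\Lambda^{ab}_{xy})^3$-involutivity; the organizing tools that should make both tractable are the $\Lambda$-to-$J$ rewriting of Theorem~\ref{th:trans}, the symmetry axiom (S), and the $c,z,d,w$-reformulation of the idempotent axioms.
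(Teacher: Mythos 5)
Your overall strategy --- present $\Gl(2,\Z)$ by generators and relations, send the generators to $L^{yx}_b$, $(L^{ab}_x)^{-1}$ and $J^{bb}_x$, and verify the relations --- is the same as the paper's, and your handling of the relations involving $\iota=J^{bb}_x$ is fine. But you have attached the two hypotheses (idempotency versus strong idempotency) to the wrong relations, and that is a genuine gap. The paper's proof runs as follows: one first shows that $Z:=(J_b^{xy}J_x^{aa})^2=(ABA)^2$ and $Z':=(AB)^3$ are \emph{central} in $\bfG_{(P)}$ and of order two, using only the idempotency conditions \eqnref{Idem1}--\eqnref{Idem2} (these conditions force $Z$ and $Z'$ to fix all four points $a,x,b,y$, hence to commute with every generator $J^{uv}_w$, and each is a product of two commuting involutions). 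This already yields $(ABA)^4=Z^2=\id$ for \emph{any} idempotent, strong or not. The braid relation $ABA=BAB$ is then shown to be \emph{equivalent} to $ZZ'=\id$, and the computation $ZZ'=J^{yx}_{J_x^{aa}(b)}\,J^{a,J_b^{xy}(a)}_{J^{ab}_x(y)}$ identifies $ZZ'=\id$ with precisely the strong axiom \eqnref{Strong}. So it is the braid relation, not $(aba)^4=1$, that requires strongness: your plan to derive $\alpha\beta\alpha=\beta\alpha\beta$ from idempotency alone ``as in \eqnref{S3b}'' cannot succeed, because for a non-strong idempotent the two sides differ by the central element $ZZ'$ --- which is exactly why the paper must pass to a central extension in that case. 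Conversely, your expectation that \eqnref{Strong} is what makes $(\La^{ab}_{xy})^3$ an involution is also misplaced; that involutivity is the order-two statement for $Z'$ and needs only idempotency.

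Beyond this misassignment, the proposal is missing the one idea that makes the verification tractable at all, namely the centrality of $Z$ and $Z'$: without it, neither the reduction of $(ABA)^4=1$ to $Z^2=\id$ (via $W^2$ being conjugate, hence equal, to the central $Z$) nor the reduction of the braid relation to the single identity $ZZ'=\id$ is available, and a direct ``chain of cancellations'' in the style of \eqnref{S3b} does not go through here since $(a,y)$ need not be transversal. Relatedly, your description of the target group in the non-strong case as ``the presentation with $(aba)^4=1$ deleted'' is not adequate: in that group the braid relation still holds exactly, whereas for a non-strong idempotent it only holds modulo the center, which is why the paper invokes the universal central extension in the sense of Steinberg rather than the braid group $B_3$ extended by $\iota$.
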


\begin{proof} 
Let $P = \{ a,x,y,b \}$.
The group $\bfG_{(P)}$ is clearly generated by the three elements
$A:=L_x^{ab}$, $B:=L_b^{xy}$ and $J:= J_x^{bb}$.
We show that these elements satisfy the following relations defining  $\Gl(2,\Z)$ 
\[
(ABA)^4 = 1, \quad ABA = BAB, \quad J^2 = 1, \quad (JA)^2 = 1 = (JB)^2 .
\]
Indeed, the proof of the last three relations is immediate. 
In order to prove the first relation,
we start by proving that the following element $Z$ is central in $\bfG_{(P)}$:
\begin{equation}
Z:= (J_b^{xy} J^{aa}_x)^2 =  J_b^{xy} J^{aa}_x  J_b^{xy} J^{aa}_x =
J_b^{xy} J^{x, J_x^{aa}(y)}_{J^{aa}_x(b)} =
J_b^{xy} J^{xy}_{J^{aa}_x(b)} \, .
\end{equation}
It is obvious that  $Z(x)=x$ and $Z(y)=y$; using (\ref{eqn:Idem1}), it follows that also
$Z(a)=a$ and $Z(b)=b$. 
Therefore $Z$ commutes with all generators $J^{uv}_w$ of $\bfG_{(P)}$:
$Z J^{uv}_w Z\inv = J^{Zu,Zv}_{Zw} = J^{uv}_w$, and hence is central in $\bfG_{(P)}$.
Moreover, $Z$ is of order $2$, since
$$
J_b^{xy} J^{xy}_{J^{aa}_x(b)} J_b^{xy} = J^{xy}_{J_b^{xy} J^{aa}_x(b)} = J^{xy}_{J^{aa}_x(b)} ,
$$
and hence $Z$ is a product of two commuting involutions.
Now consider the ``Weyl-element'' (cf.\ \cite{Lo95}, 6.1)
$W:=W_{ab}^{xy}= ABA =L_x^{ab} L_b^{xy} L_x^{ab} = J_x^{ab} \, J_b^{xy} J_x^{aa} \, J_x^{ab}$. The last 
expression shows that $W$  is conjugate to $J_b^{xy} J_x^{aa}$, and hence $W^2$ is conjugate to
$(J_b^{xy} J_x^{aa})^2 = Z$.
Since $Z$ is central, it follows that $W^2 = Z$, and so $W^4 = Z^2 = \id_\cX$.

\ssk
Next, we prove that $Z':= (AB)^3$ is a central element of order $2$.
Indeed, the proof is very similar to the one given above: we have
\begin{equation}
Z' = J_x^{ab} J_b^{xy}  J_x^{ab} J_b^{xy}  J_x^{ab} J_b^{xy}  = 
J_x^{ab} J_{J_b^{xy} (a)}^{y, J^{ab}_x(y)}   
\end{equation}
As above, it is checked that $Z'$ fixes $a,x,b,y$, and hence is central; it is a product of two commuting
involutions, hence of order $2$ (and hence $(AB)^6 = 1$).

\ssk
Since $Z = W^2 =ABAABA$ and $Z'=ABABAB$, the relation $ABA = BAB$ is equivalent to $Z=Z'$ or to $ZZ'=1$. But, by an easy computation,
\begin{equation}
ZZ' = J^{yx}_{J_x^{aa}(b)} \, J^{a,J_b^{xy} (a)}_{J^{ab}_x(y)}
\end{equation}
so $ZZ'=1$ is equivalent to (\ref{eqn:Strong}). This proves the claim for a strong idempotent.
If the idempotent is not strong, then, as we have seen, all relations from $\Gl(2,\Z)$ a satisfied, possibly up to
central elements. Therefore the homomorphism may be defined on the level of the universal central 
extension (cf.\ \cite{St67}, \S 7, (ix), p.67).  
\end{proof}

\begin{remark}
It is  not true that the homomorphism always factorizes via $\PP\Gl(2,\Z)$:
the central element $Z$ (or $Z'$) acts trivially on the $\bfG_{(P)}$-orbit of $x,a,b,z$,
but in general it will act non-trivially on the whole of $\cX$ (cf.\ the following example)
 -- this action is precisely described by the
{\em Peirce-decomposition} associated to the idempotent.
In a similar way, the geometry $\langle P \rangle$ is not always a quotient of
$\Z \PP^1$, but rather of the {\em dissociation of $\Z \PP^1$}.
\end{remark}

\begin{example} Let $\cX = \Gras(W)$ be the Grassmannian geometry of a $\K$-module $W$, fix  a direct sum
 decomposition $W = E \oplus F \oplus H$, and (non-zero) subspaces $u,v,w \subset F$
 such that $u \oplus v = F = u \oplus w = v \oplus w$ (so $(u,v,w)$ is a pairwise transversal triple in
 $\Gras(F)$). Let
\begin{equation}
a:= w \oplus H, \quad
x:= E \oplus u, \quad
b:= H \oplus v, \quad
y:= E \oplus w .
\end{equation}
Then $(a,x,b,y)$ is a chain in $\cX$, but $a \cap y= w$, so $a$ and $y$ are not transversal.  It can be shown that
$(a,x,b,y)$ is a (strong) idempotent in $\Gras(W)$. Instead of checking the defining properties,
it is easier to exhibit directly the corresponding realization of $\Gl(2,\Z)$ in $\Aut(\cX)=\PP \Gl(W)$:
we decompose $W = E \oplus u \oplus v \oplus H$, and write elements of $\Gl(W)$ accordingly as
$4 \times 4$-matrices. Then, considering $w$ as diagonal in $u \oplus v$, all four middle blocks are
square matrices, so that a matrix
$\bigl(\begin{smallmatrix}
a&b\\ c&d
\end{smallmatrix} \bigr) \in \Gl(2,\Z)$ may be identified with the class of the matrix
\[
\begin{pmatrix} 1 & 0 & 0 & 0 \cr 0 & a & b & 0 \cr 0 & c & d & 0 \cr 0 & 0 & 0 & 1 \end{pmatrix} 
\]
in $\PP\Gl(W)$.
\end{example}

\section*{SECOND PART:
TANGENT OBJECTS}

Our aim in this part is to associate to a Jordan or associative structure map, at a given base 
point $(o,o')$, a ``tangent object'', namely a Jordan pair or algebra, resp.\ an 
associative pair or algebra. 
As explained in the introduction,
this requires additional {\em regularity assumptions}, and
 there  are different ways to
formalise  them; the way chosen here is via {\em algebraic differential calculus}
as developed in \cite{Be14}: we assume that the whole setup is
{\em functorial with respect to scalar extensions of $\K$ by Weil algebras $\bA$},
which implies that the geometries are {\em Weil manifolds}.


\section{Jordan geometries over $\K$}\label{ssec:S}

\subsection{Weil spaces and Weil manifolds}\label{ssec:Weils}
{\em Weil spaces} $\ul M$ generalize smooth manifolds in the sense that they have  {\em tangent bundles}
$M^\bA :=T^\bA M$, generalizing the classical bundles $TM, TTM$, etc. 
We recall some basic concepts from \cite{Be14} (see also \cite{Be08, KMS} on {\em Weil functors}).

\begin{definition}
A {\em $\K$-Weil algebra} is an associative and commutative algebra 
\begin{equation}
\bA = \K \oplus \mbA
\end{equation}
where $\mbA$ is a nilpotent ideal of $\bA$ which is free and finite-dimensional as 
a $\K$-module. 
Weil algebras form a category $\ul\Walg_\K$, where morphisms are algebra homomorphisms preserving decompositions. 
Note that projection $\pi:\bA \to \K$ and injection $\zeta: \K \to \bA$ are morphisms. 
Main examples of Weil algebras are the {\em jet rings}
\begin{equation}\label{eqn:jetring}
J^k \K := \K[X]/(X^{k+1}),
\end{equation}
which for $k=1$ give
the  {\em tangent ring of $\K$}, or {\em ring of dual numbers over $\K$} :
\begin{equation}
T\K :=\K[X]/(X^2) = \K[\eps]= \K \oplus \eps \K \qquad (\eps^2 = 0) .
\end{equation}
\end{definition}

\begin{definition}
A {\em Weil space} is a functor $\ul M$ from the category of $\ul\Walg_\K$ of $\K$-Weil algebras to the category
$\ul\set$ of sets, and a {\em Weil law} is a natural transformation $\ul f:\ul M \to \ul N$ of Weil spaces, that is,
 we have sets $M^\bA$ and maps $f^\bA$, varying functorially with $\bA$:
 a Weil algebra morphism $\phi:\bA \to \bB$ induces a map $M^\phi : M^\bA \to M^\bB$, and
 \begin{equation}\label{eqn:functional}
 f^\bB \circ M^\phi = N^\phi \circ f^\bA .
\end{equation}
We let  $M:=M^\bK$ and  $f:=f^\bK$.
The projection $\bA \to \bK$ induces a map $M^\bA \to M$, and equation (\ref{eqn:functional}) shows that
$f^\bA$ is fibered over the {\em  base map} $f$.
Similarly, the injection $z:\K \to \bA$ induces a {\em zero section }
$z^\bA:M \to M^\bA$. 

The notation $T^\bA M:=M^\bA$, $T^\bA f:=f^\bA$ is also used, and
 the set $TM:=M^{T\K}$ is called the
{\em tangent bundle of $M$} and the map $Tf:=f^{T\K}$ the {\em tangent map of $f$}.

A {\em flat Weil space} is given by a $\K$-module $V$ and $V^\bA := V \otimes_\K \bA$,
and $f^\bA$ the algebraic scalar extension of $f$ in case  $f:V \to W$ is a polynomial.
\end{definition}

Every concept defined in terms of the category $\ul\set$ allows for  a ``Weil'' counterpart,  by
taking the functor catogery of functors from $\ul\Walg_\K$ into that category:

\begin{definition}
A {\em Weil manifold}, modelled on a flat Weil space $\ul V$, is a $\K$-Weil space $\ul M$ together with set-theoretic atlasses
on $M^\bA$, for
each Weil algebra $\bA$ (cf.\  def.\ \ref{def:atlas})
$\cA^\bA = (U^\bA_i, \phi_i^\bA,V_i^\bA)$, modelled on $V^\bA$, and depending functorially
on $\bA$. 

\ssk 
A {\em Weil Lie group with atlas} $(\ul G, \ul m,\ul i, \ul e)$ is Weil manifold $\ul G$  together with group structures
on $G^\bA$ depending functorially on $\bA$. (We suppress the atlas in the notation; in \cite{Be14} we consider
more general group objects, without atlas.)

\ssk 
A  {\em Weil symmetric space} $(\ul M,\ul s)$ is a Weil manifold $\ul M$ together with 
reflection space structures $s^\bA : M^\bA \times M^\bA \to M^\bA$ (see definition \ref{ReflectionDef}), depending functorially
on $\bA$, and such that, moreover, for each $x \in M$, the tangent map
$T_x (s_x)$ is minus the identity map on the tangent space $T_x M$ (fiber of $TM$ over $x$):
\begin{equation} \label{eqn:-1}
\forall x \in M , \forall u \in T_x M : \quad s^{T\K} (x , u) = -u .
\end{equation}
\end{definition}

\subsection{Jordan and associative geometries over $\K$}

\begin{definition} \label{def:JGeo}
A  \emph{$\K$-Jordan geometry}  $(\ul \cX, \ul \top, \ul J, \ul S)$ is a $\K$-Weil space $\ul\cX$, together with families
$\top^\bA$ of transversality relations,
$J^\bA$ of Jordan structure maps, and $S^\bA$ of scaling maps, depending functorially on $\bA$, 
such that 
\begin{enumerate}
\item
$\cX$ is a Weil manifold with respect to the canonical atlas $\cA^\bA$ on $\cX^\bA$
defined for each Weil algebra $\bA$ by Lemma \ref{la:atlas},
\item
for all $(a,x,b)\in \cD_3^\K$,
the tangent map of $J^{ab}_x$ at its fixed point $x$ is $-\id_{T_x \cX}$:
$$
T_x(J_x^{ab}) = - \id_{T_x \cX}
$$
\end{enumerate}
Likewise, {\em associative geometries over $\K$} are defined, replacing $J$ by $M$.
{\em Morphisms} are the respective Weil laws that are compatible with the additional structures.
\end{definition}

Condition (1)  amounts to requiring that affine parts of $\cX^\bA$ are usual
algebraic scalar extensions by $\bA$ of affine parts of $\cX = \cX^\K$.
More formally, if $\phi:\bB \to \bA$ is a morphism of Weil algebras (scalar extension of $\bB$ by $\bA$), (1) requires that,
for all $(a,y) \in \cD_2^\bB$, the linear part
$(\cU_{\cX^\phi(a)}, \cX^\phi(y))$ of $\cX^\bA$ is nothing but the algebraic scalar extension
$V^\bA = V \otimes_\bB \bA$ of the $\bB$-linear part
$V=(\cU_a,y)$. 

\begin{theorem}
If $(\ul \cX,\ul J)$ is a Jordan geometry over $\K$, then
$\ul \cU_{a,b}$ is, for all pairs  $(a,b) \in \ul \cX^2$, a symmetric space.
If $(\ul \cX,\ul M)$ is an associative geometry over $\K$, then $\ul \cU_{a,b}$ is, for all pairs  $(a,b) \in \ul \cX^2$, 
 a Lie group (with atlas),
and condition (2) is then automatically satisfied.
\end{theorem}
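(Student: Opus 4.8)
The plan is to derive both assertions by transporting the relevant \emph{algebraic} structure on $\cU_{a,b}$ to every Weil-algebra level, checking functoriality in $\bA$, observing that $\ul\cU_{a,b}$ is an open sub-Weil-manifold of $\ul\cX$, and finally verifying the infinitesimal normalization \eqref{eqn:-1}. We may assume $\cU_{a,b}\neq\emptyset$, since otherwise $\ul\cU_{a,b}$ is the empty symmetric space, resp.\ Lie group, and there is nothing to prove. The manifold claim will hold uniformly: by transitivity of $\bfG$ (Theorem~\ref{th:trans}) every $\cU_c$ is a chart domain of the canonical atlas of $X$ or of its dual (Lemma~\ref{la:atlas}), so $\cU_{a,b}=\cU_a\cap\cU_b$ is an intersection of two chart domains, hence open, and condition~(1) of Definition~\ref{def:JGeo} makes the induced atlas on $\cU_{a,b}^\bA$ the algebraic scalar extension of the one on $\cU_{a,b}$.

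For the Jordan case I would invoke Lemma~\ref{la:ssa} at each level $\bA$: $\cU_{a,b}^\bA$ is a reflection space under $s_x(y):=(J^\bA)^{ab}_x(y)$, and since $\ul J$ is functorial this reflection-space law $\ul s$ is functorial as well. Combined with the submanifold structure above, it then remains only to check \eqref{eqn:-1}: for $x\in\cU_{a,b}$ one has $(a,x,b)\in\cD_3^\K$, so condition~(2) of Definition~\ref{def:JGeo} gives $T_x(J^{ab}_x)=-\id_{T_x\cX}$; since $\cU_{a,b}$ is open in $\cX$ we have $T_x\cU_{a,b}=T_x\cX$ and $s_x=J^{ab}_x\vert_{\cU_{a,b}}$, whence $T_x(s_x)=-\id$. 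Thus $(\ul\cU_{a,b},\ul s)$ is a Weil symmetric space.

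For the associative case I would instead use the remarks following Definition~\ref{def:assgeo}: $\cU_{a,b}^\bA$ is a torsor under $(x,y,z)\mapsto M^{ab}_{xz}(y)$, functorially in $\bA$, so fixing any base point $e\in\cU_{a,b}$ equips each $\cU_{a,b}^\bA$ with a group law, varying functorially, whose multiplication and inversion are built from $\ul M$; read in the canonical chart these are the algebraic scalar extensions of their $\K$-level formulae, hence morphisms of Weil manifolds, so $\ul\cU_{a,b}$ is a Weil Lie group with atlas. To see that condition~(2) of Definition~\ref{def:JGeo} is then automatic, I would take the base point $e:=x$ itself: by Lemma~\ref{la:asstojo}, $J^{ab}_x=M^{ab}_{xx}$, so $J^{ab}_x\vert_{\cU_{a,b}}$ sends $y$ to $(xyx)_{ab}=xy^{-1}x=y^{-1}$ in the group $(\cU_{a,b},x)$, i.e.\ it is the group inversion $i$. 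For a Weil Lie group one has $T_{(e,e)}m(u,v)=u+v$ (from $m(e,\cdot)=\id=m(\cdot,e)$), so differentiating $m(y,i(y))=e$ at $y=e$ gives $T_e i=-\id$; hence $T_x(J^{ab}_x)=T_x(s_x)=-\id_{T_x\cU_{a,b}}=-\id_{T_x\cX}$, which is precisely condition~(2).

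The infinitesimal computations are straightforward once the setup is in place; the step I expect to be the real obstacle is the one I have glossed over, namely a rigorous argument that $\ul\cU_{a,b}$ is a Weil (sub)manifold and, in the associative case, that the group operations are smooth in the atlas sense. This is where condition~(1) of Definition~\ref{def:JGeo} does the work, via the fact that the torsor/reflection laws, being assembled from the functorial structure maps, are given in the canonical charts by polynomial (hence Weil) formulae; making this precise requires careful bookkeeping of how the atlas of $\cX^\bA$ restricts to $\cU_{a,b}^\bA$ and pushes the structure maps to rational/polynomial maps between scalar-extended $\K$-modules.
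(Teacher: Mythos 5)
Your proposal is correct and follows essentially the same route as the paper: the set-theoretic reflection space (Lemma \ref{la:ssa}), resp.\ torsor/group structure on $\cU_{a,b}^\bA$ at every Weil-algebra level, combined with functoriality, the chart structure inherited from $\cU_a$, condition (2) of Definition \ref{def:JGeo} for the Jordan case, and the standard fact that the tangent map of Lie-group inversion at the identity is $-\id$ for the associative case. You merely spell out details (openness of $\cU_{a,b}$, the computation $T_e i=-\id$ from differentiating $m(y,i(y))=e$) that the paper compresses into a citation of \cite{Be14}.
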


\begin{proof}
We know that, for the Jordan structure map $J^\bA$,
 $\cU_{a,b}^\bA$ is a set theoretic reflection space, and condition (\ref{eqn:-1}) holds by property (2)
of a $\K$--Jordan geometry.
For an associative structure map, $\cU_{ab}^\bA$ is a group, depending functorially on $\bA$,
and having an atlas with single chart $\cU_a$, thus is a Lie group.
As for usual Lie groups,
the tangent map of the inversion map ($J^{ab}_x$, in our case) at the unit element is minus the identity
(cf.\ \cite{Be14}), and hence (2) is automatic.
\end{proof}

\begin{theorem}
Assume $2$ is invertible in $\K$. 
Then Condition (2) follows from the remaining properties of a $\K$-Jordan geometry, and
 any generalized projective geometry (cf.\ Theorem \ref{th:gpg})
gives rise to a Jordan geometry over $\K$.
\end{theorem}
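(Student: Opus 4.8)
The plan is to establish the two claims of the theorem separately. For the first claim — that condition (2) is automatic when $2 \in \K^\times$ — I would argue infinitesimally. Fix $(a,b,x) \in \cD_3^\K$; we know from Lemma \ref{la:ssa} that $J^{ab}_x$ is an automorphism of the reflection space $\cU_{a,b}$ fixing $x$, and more relevantly it is the point symmetry $s_x$ of that space. The key point is that $J^{ab}_x$ has order two (axiom (IN)), so its tangent map $T_x(J^{ab}_x)$ is a $\K$-linear involution of $T_x\cX$. When $2$ is invertible, such an involution splits $T_x\cX = V_+ \oplus V_-$ into $\pm 1$-eigenspaces, and I must show $V_+ = 0$. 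For this I would use the scaling map: by (Tr), $J^{aa}_x = (-1)^a_x$, and more generally the transplantation formula \eqref{eqn:Transp} together with (\ref{eqn:Midpt2}) expresses every $J^{xz}_a$ via the major dilations $r^a_y$. The dilation $(-1)^a_x$ acts on the affine/linear space $(\cU_a, x)$ as genuine multiplication by $-1$ — this is built into the definition of a scaling map and the flatness condition (1) of a $\K$-Jordan geometry, which forces the $\bA$-linear part of $\cX^\bA$ to be the honest algebraic scalar extension $V \otimes_\K \bA$. Taking $\bA = T\K$, the tangent space $T_x\cX$ at a point $x \in \cU_a$ is identified (as a fiber) with $V \otimes_\K \eps\K \cong V$, and the tangent action of $(-1)^a_x$ is multiplication by $-1$ there, hence $T_x(J^{aa}_x) = -\id$. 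For a general $J^{ab}_x$ with $b \neq a$, I would reduce to this case: by \eqref{eqn:Midpt2}, $J^{ab}_x$ equals $J^{cc}_v$ for a suitable $v$ (namely with $c$ an element transversal to $x$ and $v$ chosen so the midpoint relation holds), or more directly use that the conjugate of $J^{ab}_x$ by a translation/dilation is of the form $J^{aa}_{x'}$; since tangent maps at fixed points are conjugation-invariant, $T_x(J^{ab}_x) = -\id$ follows.

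For the second claim, I would invoke Theorem \ref{th:gpg}: given a generalized projective geometry $(\cX, \top)$ with its scalar action $r^a_x$, that theorem already produces a Jordan structure map $J$ via $J^{aa}_x := (-1)^a_x$ and $J^{xz}_a := (-1)^a_{\mu(x,a,z)}$. It remains to check that the package $(\ul\cX, \ul\top, \ul J, \ul S)$ is a $\K$-Jordan geometry in the sense of Definition \ref{def:JGeo}, i.e.\ to verify the functoriality and the two manifold/tangent conditions. Functoriality of everything with respect to Weil-algebra scalar extensions comes from the corresponding property for generalized projective geometries in \cite{Be02} (these are, by construction there, defined functorially via the algebraic differential calculus, or equivalently one scalar-extends the polynomial formulas for the scaling map). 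Condition (1) — that the canonical atlas makes $\ul\cX$ a Weil manifold with affine charts behaving as algebraic scalar extensions — is exactly the statement that affine parts $\cU_a$ of a generalized projective geometry are affine spaces over $\K$ and scalar-extend as such, which is part of the setup in \cite{Be02}. Condition (2) is then the first claim of the present theorem, now applicable since we are assuming $2 \in \K^\times$.

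The step I expect to be the main obstacle is pinning down precisely the identification of $T_x\cX$ with (a scalar extension of) the model space $V = \cU_{o'}$ in a way that makes "the tangent map of a dilation is the dilation" a genuine lemma rather than a slogan. Concretely: condition (1) of Definition \ref{def:JGeo} only directly controls the \emph{linear parts} $\cU_a^\bA$, i.e.\ the affine charts, and one must argue that the tangent space $T_x\cX$ at $x \in \cU_a$ is canonically the underlying $\K$-module of $(\cU_a, x)$ — this is the standard fact that the tangent space of an affine space at a point is the space of translation vectors, here upgraded to the Weil-manifold setting of \cite{Be14}. Once that identification is in hand, the tangent map at $x$ of the chart-affine map $(-1)^a_x$ is literally $-\id$ on $V$, because $(-1)^a_x$ is affine with linear part $-\id$ and fixes $x$. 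So the real content is a careful bookkeeping of charts, zero sections, and the $T\K$-points, for which I would lean on the apparatus of Subsection \ref{ssec:Weils} and the cited results of \cite{Be14, Be02} rather than reproving anything. The rest — reducing a general $J^{ab}_x$ to a squaring-type inversion via \eqref{eqn:Midpt2} or a conjugation, and checking that Theorem \ref{th:gpg}'s output satisfies Definition \ref{def:JGeo} — is routine once that core identification is established.
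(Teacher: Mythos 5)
Your proposal is correct and follows essentially the same route as the paper: the paper likewise reduces a general $J^{ab}_x$ to an inversion of the form $J^{aa}_v$ (via the midpoint formula \eqref{eqn:Midpt2} and conjugacy of all inversions when $2\in\K^\times$), observes that such a map is multiplication by $-1$ in an affine chart and hence has tangent map $-\id$ at its fixed point, and disposes of the second claim by citing the functoriality built into the definition of generalized projective geometries in \cite{Be02} together with Theorem \ref{th:gpg}. Your ``direct equality'' variant $J^{ab}_x = J^{cc}_x$ with $c=\mu(a,x,b)$ even sidesteps the paper's reduction to connected components, and your flagged concern about identifying $T_x\cX$ with the model module is exactly what Theorem \ref{th:linearity} and the cited results of \cite{Be14} supply.
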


\begin{proof}
If $2$ is invertible in $\K$, and $\cX$ connected, then all inversions $J^{ab}_x$ are conjugate among each other
(see remarks in \ref{rk:midpoints}), so in particular, $J^{ab}_x$ and $J^{xx}_a$ are conjugate. But $J^{xx}_a$ is multiplication
by $-1$ in $(\cU_{a},x)$, and hence its tangent map is minus the identity. The first claim follows
since every geomery can be decomposed into connected components. 

As to the second claim, functoriality of the scalar actions maps $S^\bA$ is part of the very definition of
generalized projective geometries in \cite{Be02}, and now the Jordan structure map $J^\bA$ can be defined
as in  Theorems \ref{th:gpg}. 
\end{proof}

\nin Combined with the existence theorem for generalized projective geometries
(\cite{Be02}, Th.\ 10.1), this implies an existence result for Jordan geometries over rings
in which $2$ is invertible (cf.\ Theorem \ref{th:existence} below).

\begin{theorem}
Let $W$ be a $\K$-module, and let $\cX^\bA = \Gras_\bA (W^\bA)$ be the  associative Grassmannian geometry
of $W^\bA$, with its associative structure map $M^\bA$, and its usual transversality relation $\top^\bA$
and scalar action map $S^\bA$. Then these data define an associative geometry
$(\ul{\cX}, \ul \top, \ul M,\ul S)$, and hence also a Jordan geometry over $\K$.
\end{theorem}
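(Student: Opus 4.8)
The plan is to verify the defining conditions of Definition~\ref{def:JGeo} for the associative case and then to read off the Jordan case. Concretely I must exhibit: (a)~the Weil space $\ul\cX$, i.e.\ the functor $\bA\mapsto\cX^\bA:=\Gras_\bA(W^\bA)$; (b)~the families $\ul\top$, $\ul M$, $\ul S$, where each $M^\bA$ is an associative structure map by Theorem~\ref{th:GrasAs} and each $S^\bA$ is a scaling map by the scaling-map theorem for Grassmannians proved above, all depending functorially on $\bA$; and (c)~condition~(1) of Definition~\ref{def:JGeo}, namely that $\ul\cX$ is a Weil manifold for the canonical atlas of Lemma~\ref{la:atlas}. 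Condition~(2) then costs nothing: once $(\ul\cX,\ul\top,\ul M,\ul S)$ is recognised as an associative geometry over $\K$, the theorem above stating that for such a geometry each $\ul\cU_{a,b}$ is a Weil Lie group shows that condition~(2) holds automatically. Throughout one works inside a connected component of $\cX$ (or a part of a splitting), which is what makes Lemma~\ref{la:atlas} applicable.

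\emph{Functoriality.} A morphism $\phi:\bA\to\bB$ of Weil algebras is in particular a $\K$-algebra homomorphism and induces $W^\bB\cong W^\bA\otimes_\bA\bB$. Sending a submodule $x\subset W^\bA$ to the $\bB$-submodule of $W^\bB$ generated by its image defines $\cX^\phi:\cX^\bA\to\cX^\bB$, functorial in $\bA$. On a transversal pair $(x,a)$ the submodules $x,a$ are complementary direct summands of $W^\bA$, so the natural maps $x\otimes_\bA\bB\to W^\bB$ and $a\otimes_\bA\bB\to W^\bB$ are split injections and $W^\bB=\cX^\phi(x)\oplus\cX^\phi(a)$; hence $\ul\top$ is natural. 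The $\bA$-linear projector $P^a_x$ base-changes to the $\bB$-linear projector with image $\cX^\phi(x)$ and kernel $\cX^\phi(a)$, so $M^{xz}_{ab}=P^a_x-P^z_b$ is natural in $\bA$; and since $\K$ lies in the centre of every Weil algebra compatibly with $\phi$, the scalar-action operator built from the $P^a_x$ and from multiplication by $r\in\K$ is natural as well. Thus $(\ul\cX,\ul\top,\ul M,\ul S)$ is a Weil space carrying functorial families of transversality relations, associative structure maps, and scaling maps.

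\emph{Condition~(1).} By the remark following Definition~\ref{def:JGeo} this amounts to checking that every affine part of $\cX^\bA$ is the algebraic scalar extension by $\bA$ of the corresponding affine part of $\cX=\cX^\K$. Fix a base point; without loss of generality $W=E\oplus F$ with $o'=F$, $o=E$. The chart $\cU_{o'}\subset\cX$ is the set of $\K$-complements of $F$, identified with $\Hom_\K(E,F)$ by sending a complement to its graphing map; the same identification at level $\bA$ gives $\cU_{o'}^\bA\cong\Hom_\bA(E^\bA,F^\bA)$. Since $\bA$ is free of finite rank over $\K$, extension-of-scalars adjunction together with the commutation of $\Hom_\K(E,-)$ with finite direct sums yields $\Hom_\bA(E^\bA,F^\bA)\cong\Hom_\K(E,F)\otimes_\K\bA$, naturally in $\bA$; so $\cU_{o'}^\bA$ is exactly the flat Weil space on $V:=\Hom_\K(E,F)$. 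The transition maps between two such charts are the projector-difference rational expressions of Theorem~\ref{th:GrasAs} and Section~\ref{sec:ASM}, which have coefficients in $\K$ and therefore commute with scalar extension; with Theorem~\ref{th:trans} this makes the canonical atlas of Lemma~\ref{la:atlas} defined over every $\bA$ and compatible with the maps $\cX^\phi$. Hence $\ul\cX$ is a Weil manifold modelled on $\ul V$, and $(\ul\cX,\ul\top,\ul M,\ul S)$ is an associative geometry over $\K$.

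\emph{The Jordan statement.} Applying Lemma~\ref{la:asstojo} for each $\bA$, put $(J^\bA)^{ab}_x:=(M^\bA)^{ab}_{xx}$; these form a family of Jordan structure maps, natural in $\bA$ because $\ul M$ is. With the same scaling maps $\ul S$ and the same (Weil-manifold) atlas, $(\ul\cX,\ul\top,\ul J,\ul S)$ satisfies condition~(1) of Definition~\ref{def:JGeo}, and condition~(2), $T_x(J^{ab}_x)=-\id_{T_x\cX}$, holds by the same theorem used above, the tangent map of the inversion of the Lie group $\cU_{a,b}$ at its unit being $-\id$. So $(\ul\cX,\ul\top,\ul J,\ul S)$ is a Jordan geometry over $\K$. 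The one step that needs genuine care is condition~(1): it is the assertion that the naive set-theoretic Grassmannian is compatible with base change along Weil algebras, and the point to watch is that such base change is not exact in general, so one must phrase everything in terms of direct summands and affine charts rather than arbitrary submodules — once this is done the verification is routine, using only that Weil algebras are finite free $\K$-modules with nilpotent augmentation ideal.
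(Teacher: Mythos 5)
Your proposal is correct and follows the same route as the paper: the paper's proof is the one-line observation that linear algebra in $W^\bA$ is obtained from that of $W$ by the algebraic scalar extension functor, hence everything (transversality, projectors, $M$, $S$, the charts) is functorial in $\bA$, and you have simply written out the details of that observation — including the genuinely relevant point that the affine charts $\cU_{o'}^\bA\cong\Hom_\bA(E^\bA,F^\bA)\cong\Hom_\K(E,F)\otimes_\K\bA$ use that $\bA$ is finite free over $\K$. No gap; your expansion is a faithful, more explicit version of the paper's argument.
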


\begin{proof}
This is immediate from the fact that linear algebra in $W^\bA$ is related to the one of $W$ by the usual
algebraic scalar extension functor (and hence is functorial with respect to any scalar extension $\bA$, not
only for Weil algebras). 
\end{proof}

\nin
A {\em special Jordan geometry} is a Jordan subgeometry of an associative geometry, i.e., essentially, of a
Grassmannian. E.g., Lagrangian geometries are of this type.

\section{Infinitesimal automorphisms  and linear Jordan pairs}\label{sec:Infaut}

For any Jordan geometry $(\ul \cX,\ul J)$ over $\K$, and any Weil algebra $\bA$, the geometry
$(\cX^\bA,J^\bA)$ will be called the {\em tangent geometry of type $\bA$}.
It is fibered over  $\cX=\cX^\K$, and 
the $\bA$-tangent space at $x \in \cX$ is the fiber of $\pi$ over $x$, denoted by $T^\bA_x\cX$ or $\cX^\bA_x$.
Since the projection is a homomorphism, the fiber over $(x,a)$, for $(x,a) \in \cD_2$, is a subgeometry.
When $\bA = T\K$, we just speak of ``the'' 
tangent bundle$T\cX$  and tangent spaces $T_a \cX$, resp., pair of tangent spaces $(T_a \cX,T_x \cX)$.

\begin{theorem}[Linearity of $T\cX$]\label{th:linearity}
Assume $(\ul \cX,\ul J)$ is a Jordan geometry over $\K$.
Then the tangent bundle $T\cX$ is a {\em linear bundle}, i.e., the tangent spaces $T_a \cX$ carry a canonical
$\K$-module structure. 
This $\K$-module structure 
coincides with its  $\K$-module structure as a submodule of $(\cU_x,0_a)$ in the geometry $T\cX$
(and hence is independent of the choice of $x \in a^\top$).
Moreover, the translation group $\bfT_a$ acts trivially on the tangent space $T_a \cX$:
$$
\forall g \in \bfT_a, \forall u \in T_a \cX : \qquad g(u)=u .
$$
\end{theorem}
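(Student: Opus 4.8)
The statement has three parts: (a) $T_a\cX$ carries a canonical $\K$-module structure; (b) this structure coincides with the one it inherits as a submodule of $(\cU_x, 0_a)$ inside $T\cX$, for any $x \in a^\top$; and (c) the translation group $\bfT_a$ acts trivially on $T_a\cX$. I would take part (b) essentially as the definition of the $\K$-module structure and spend most of the effort showing it is independent of $x$, and then deduce (c) from the independence. First I would fix $x \top a$ and work inside the tangent geometry $\cX^{T\K} = T\cX$. By Definition \ref{def:JGeo}(1), the affine part $(\cU_x, 0_a)$ of $T\cX$ is the algebraic scalar extension by $T\K$ of the affine part $(\cU_x, a)$ of $\cX$; since $\cU_x$ is a $\K$-module with origin $a$ (via the $\K$-scalar action $S$), its scalar extension $\cU_x \otimes_\K T\K$ is a $T\K$-module, hence in particular a $\K$-module. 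The fiber $T_a\cX$ over $a$ is exactly the submodule $a + \eps\,(\cU_x, a) \cong \eps\,T_a\cX$, i.e. the "vertical" part of this scalar extension, and this is visibly a $\K$-submodule. So $T_a\cX$ inherits a $\K$-module structure from $(\cU_x, 0_a)$; this gives (a) and (b) at once, modulo independence of $x$.

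Next I would prove independence of the choice of $x \in a^\top$. Suppose $x, x' \top a$. The point is that the transition between the two affine charts $\cU_x$ and $\cU_{x'}$ around $a$ is implemented by automorphisms of $\cX$ (and functorially, of $\cX^\bA$) fixing $a$: concretely, by Theorem \ref{th:trans} (transitivity) there is $g \in \bfG$ with $g(x) = x'$ and $g(a) = a$ — one can produce such a $g$ as a composition of the $\Lambda$-operators from \eqref{eqn:La} when $x, x'$ lie in a common $\cU_b$, and in general by chaining, but the cleanest choice when $x' \top x$ is $g = \Lambda^{ax}_{ax'}$-type element fixing $a$; alternatively use that $\bfG_a$ acts transitively on $a^\top$ by \ref{th:trans} applied to the subgeometry $a^\top$. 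Since $g \in \bfG_a \subset \Aut(\cX, J)$ and everything is functorial in $\bA$, the tangent map $T_a g$ is a bijection of $T_a\cX$; by Lemma \ref{la:morph} its restriction to $(\cU_x, a) \to (\cU_{x'}, a)$ is a torsor homomorphism, and since $g$ commutes with the scaling maps $S$ (it is an automorphism of the full structure, in particular $g \circ r_a^x \circ g^{-1} = r_{ga}^{gx} = r_a^{x'}$ by property (Di)), $T_a g$ is $\K$-linear. Hence the two $\K$-module structures on $T_a\cX$ — the one from $\cU_x$ and the one from $\cU_{x'}$ — are intertwined by the $\K$-linear bijection $T_a g$, but both have the same underlying abelian group (the fiber of $T\cX \to \cX$ over $a$, with its canonical additive structure coming from $J^{\bullet\,\bullet}_a$), so they must coincide. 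This establishes canonicity.

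For part (c): an element of $\bfT_a$ is, by definition, $L^{yz}_a = J^{yu}_a J^{uz}_a$ for $y, z \in \cU_a$; it fixes $a$, so by functoriality it acts on $T_a\cX$, and by Lemma \ref{la:morph} this action is a torsor automorphism of $(\cU_x, a)$ for each $x$, hence (arguing as above with the scaling maps) $\K$-linear on $T_a\cX$. Now I would use the triple-decomposition / structure of $\bfT_a$: $L^{yz}_a$ acts on $(\cU_a, y)$ as the translation $v \mapsto v - y + z$, which is an affine but not linear map, yet on $\cU_x$ it fixes $a$. The key computation is that $T_a(L^{yz}_a) = \id$: one differentiates the relation $L^{yz}_a(a) = a$ together with the fact that, in the chart $\cU_x$, $L^{yz}_a$ is the scalar-extension of an affine self-map of $(\cU_x, a)$ whose linear part is the identity — indeed $L^{yz}_a = r^{x}_a \circ (r^x_a)^{-1}$-style manipulations, or more directly: in $\cU_x$ the translations $L^{yz}_a$ of $\cU_a$ extend to automorphisms whose differential at the fixed point $a$ is trivial because $\bfT_a$ is precisely the kernel of the "linear part" homomorphism $D: \bfG_a \to \bfG_{x,a}$ from Lemma \ref{la:normal}, and $\bfG_{x,a}$ is exactly what acts $\Z$-linearly (indeed $\K$-linearly, by the scaling maps) on $(\cU_x, a)$; so $g \in \bfT_a$ has $D(g) = \id$, meaning its action on $\cU_x$ differs from a translation fixing $a$, i.e. from $\id$, by no linear part, forcing $T_a g = \id$ on $T_a\cX$.

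\textbf{Main obstacle.} The delicate point is making the differentiation arguments rigorous within the algebraic-differential-calculus framework rather than hand-waving "the linear part is the identity": I must express $L^{yz}_a$, in the chart $\cU_x$, as $S^{T\K}$ applied to an affine map and then read off that the $\eps$-linear part acts as $+1$ on $\eps \cdot (\cU_x)$. Concretely this means invoking Definition \ref{def:JGeo}(1) to identify $\cU_x^{T\K}$ with $\cU_x \otimes_\K T\K$, checking that $L^{yz}_a$ acts there as $(L^{yz}_a)^{T\K} = $ the $T\K$-linear extension of the $\K$-affine map $v \mapsto L^{yz}_a(v)$ on $\cU_x$, and that the latter's derivative at $a$ is $\id$ because — and here is where Lemma \ref{la:normal} does the real work — $\bfT_a = \ker D^{x,a}$ and $D(g)$ captures precisely the linearization at $(x,a)$. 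I expect the bookkeeping of which origin ($0_a$ vs $a$) and which chart to be the fiddly part; the conceptual content is entirely contained in transitivity (Theorem \ref{th:trans}), the morphism characterization (Lemma \ref{la:morph}), the translation-kernel description (Lemma \ref{la:normal}), and the scalar-action compatibility (Di)+(Du) together with Definition \ref{def:JGeo}(1).
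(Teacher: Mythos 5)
Your proof of the last assertion (triviality of the $\bfT_a$-action on $T_a\cX$) has a genuine gap, and it is the essential one. You derive $T_a(L^{yz}_a)=\id$ from Lemma \ref{la:normal}, arguing that $g\in\bfT_a$ satisfies $D^{x,a}(g)=\id$ and hence has ``no linear part''. But $D^{x,a}(g)=\id$ only says that $g$ \emph{is} a translation $L_a^{gx,x}$; this is a global, purely group-theoretic statement that carries no information about the tangent map of that translation at the point $a$ (which lies in $\cU_x$, not in $\cU_a$, and is not touched by the decomposition you invoke). The argument is circular: computing $T_a$ of the translation part is exactly what is to be proved. The tell-tale sign is that your argument for this part never uses condition (2) of Definition \ref{def:JGeo}, namely $T_x(J^{ab}_x)=-\id_{T_x\cX}$; without that axiom the statement is not expected to hold (it is imposed as an independent axiom precisely because it does not follow from the rest when $2$ is not invertible). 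The paper's proof is a one-line application of it: writing $L_a^{xz}=J_a^{xx}J_a^{xz}$, both factors fix $a$, so $T_a(L_a^{xz})=T_a(J_a^{xx})\circ T_a(J_a^{xz})=(-\id_{T_a\cX})^2=\id_{T_a\cX}$.

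The chart-independence argument also has a gap. From $g\in\bfG_a$ with $g(x)=x'$ you obtain that $T_ag$ intertwines the module structure induced by the chart $\cU_x$ with the one induced by $\cU_{x'}$, and you then conclude that the two structures coincide because they share the underlying abelian group. That implication is false in general: two $\K$-module structures on the same abelian group that are isomorphic via some bijection need not be equal. To make your route work you would need an intertwiner with $T_ag=\id_{T_a\cX}$ --- for instance $g=L_a^{x'x}\in\bfT_a$, which maps $x$ to $x'$ and fixes $a$ --- but that presupposes the last assertion, so the logical order of your proof would have to be reversed. The paper avoids all of this by citing the general fact that the tangent bundle of any Weil manifold is a linear bundle whose fiberwise structure is induced by every chart (\cite{Be14}, Th.\ 6.3); only the statement about $\bfT_a$ requires an argument specific to Jordan geometries, and that argument is the computation above.
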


\begin{proof}  
For any Weil manifold $\ul M$, the tangent bundle $TM$ is a linear bundle, and moreover
this linear structure on tangent spaces is induced by any chart of $M$ (see \cite{Be14}, Th.\ 6.3),
which means that it is independent of the choice of $x \in a^\top$.

\ssk
Concerning the last assertion,
using that $T_a(J^{xz}_a) = - \id$, we get

$T_a (L^{xz}_a) =T_a (J_a^{xx} J_a^{xz}) = T_a (J_a^{xx}) \circ T_a(J_a^{xz}) =
(-\id_{T_a \cX})^2 = \id_{T_a\cX}$.
\end{proof}

\nin 
Note that the theorem furnishes an interpretation of the split exact sequence
(\ref{eqn:sequence}) in terms of the linear isotropy representation.
Next, recall from \cite{Be14}, Th.\ 8.4  (see also \cite{Be08}, Section 28, and \cite{Lo69} for the case of ordinary tangent bundles) 
that, for any Weil manifold
$\ul M$ and any Weil algebra $\bA$, there is a canonical bijection between
{\em $\bA$-vector fields} (Weil laws that are sections $\ul X:\ul M \to \ul M^\bA$ of the projection 
$\pi:\ul M^\bA \to \ul M$) and
{\em infinitesimal automorphisms} ($\bA$-Weil laws $\ul F :\ul M^\bA \to \ul M^\bA$ covering the identity:
$\pi \circ \ul F = \pi$). 
This bijection is compatible with additional structure (symmetric space, Jordan or associative geometry...).

\begin{definition}
Let $(\ul \cX,\ul J)$ be a Jordan geometry over $\K$.
For each Weil algebra $\bA$, we define
$\bfG^\bA$ to be the group of bijections of $\cX^\bA$ generated by all inversions coming from  the Jordan structure map
$J^\bA$. 
These groups depend functorially on $\bA$, and the corresponding functor will be denoted by
$\ul \bfG: \bA \mapsto \bfG^\bA$. It is a group object in the category of Weil spaces, called
the {\em group of inner automorphisms of $(\ul \cX,\ul J)$}.
\end{definition}

\begin{remark}\label{th:grading}
In general,  $\ul \bfG$ is not a Weil manifold (it  may fail to have an atlas, already for ordinary infinite dimensional real situations,
cf.\ \cite{BeNe04}); but, in terminology introduced in \cite{Be14}, it is a {\em Weil variety}, which is enough for
defining a Lie algebra of $\ul G$, see below. 
\end{remark}

\begin{theorem}
Let $(\ul\cX,\ul J)$ be a Jordan geometry over $\K$ and $\ul\bfG$ its group of inner automorphisms.
Then there is a split  exact sequence of groups
$$
\begin{matrix} 0 & \to & \g & \to & \bfG^{T\K} & \to & \bfG^\K & \to & 1
\end{matrix},
$$
where $\g = \bfG^{T\K} \cap \Infaut(\cX)$ is the subgroup of infinitesimal inner automorphisms, also called
the {\em group of inner derivations of $(\ul \cX,\ul J)$}.
It is abelian, with group law being pointwise addition in tangent spaces, and denoted by $+$, and it is moreover
a $\K$-module, with scalar action given pointwise in tanent spaces.
Let us fix a base point $(o,o') \in \cD_2 = \cD_2^\K$, which is also identified with the corresponding base point
$(0_o,0_{o'}) \in T \cD_2= \cD_2^{T\K}$. Then every element of $\g$ admits a triple decomposition
 (\ref{eqn:triple}), leading to an additive direct sum decomposition of $\g$ as $\K$-module
$$
\g = \g_{-1} \oplus \g_0 \oplus \g_1 ,
$$
where
\begin{align*}
\g_0 & =  \g \cap \bfG_{o,o'} =  \{ \xi  \in \g \mid \xi(0_o)=0_o, \xi(0_{o'})=0_{o'}  \},\cr
\g_{-1} & =\g \cap \bfT_{o'}^{T\K}  = 
 \{ L^{vo}_{o'}  \mid v \in T_o \cX \}, \cr
\g_{1} & = \g \cap \bfT^{T\K}_{o} = \{ L^{wo'}_o \mid w \in T_{o'} \cX \}.
\end{align*}
The group $\K^\times = \{ r_o^{o'} \mid \, r \in \K^\times\}$ acts, via the adjoint representation $r \mapsto T(r_o^{o'})$,
on these spaces diagonally  with eigenvalues $r,1,r\inv$.
\end{theorem}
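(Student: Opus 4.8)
The plan is to establish the four assertions in turn, working throughout with the Jordan geometry $(\cX^{T\K},\top^{T\K},J^{T\K})$, to which every result of the First Part applies, and writing $(o,o')$ also for the base point $(0_o,0_{o'})\in\cD_2^{T\K}$.

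\emph{The split exact sequence.} The projection $\pi\colon T\K\to\K$ and the injection $\zeta\colon\K\to T\K$ are morphisms of $\K$-Weil algebras with $\pi\circ\zeta=\id_\K$, so functoriality of $\ul\bfG$ gives group homomorphisms $\Pi:=\bfG^\pi\colon\bfG^{T\K}\to\bfG^\K$ and $\bfG^\zeta\colon\bfG^\K\to\bfG^{T\K}$ with $\Pi\circ\bfG^\zeta=\id$, so $\bfG^\zeta$ is a section. The map $\Pi$ is onto because $\bfG^\K$ is generated by the inversions $J^{xz}_a$ with $(x,a,z)\in\cD_3^\K$, and $\zeta$ preserves $\top$, so $(0_x,0_a,0_z)\in\cD_3^{T\K}$ and $\Pi(J^{0_x,0_z}_{0_a})=J^{xz}_a$. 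Writing out the functoriality identity $\pi\circ g=\Pi(g)\circ\pi$ (true on the generating inversions, hence on their products) one sees that $g\in\bfG^{T\K}$ lies in $\ker\Pi$ precisely when it covers $\id_\cX$, i.e.\ is an infinitesimal automorphism; thus $\ker\Pi=\bfG^{T\K}\cap\Infaut(\cX)=\g$, and the displayed sequence is exact and split.

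\emph{$\g$ as an abelian $\K$-module.} This is the general structure of the Lie algebra of a Weil Lie variety from \cite{Be14}. Since $\eps^2=0$ in $T\K$, composition of two infinitesimal automorphisms corresponds under the vector-field description to the pointwise sum in the tangent spaces, which are $\K$-modules by Theorem \ref{th:linearity}; so $\g$, being a subgroup of $\Infaut(\cX)$, is abelian with that addition. The Weil algebra endomorphisms $\phi_r\colon\eps\mapsto r\eps$ ($r\in\K$) preserve the decomposition $\K\oplus\eps\K$ and satisfy $\pi\circ\phi_r=\pi$, so the induced maps $\bfG^{\phi_r}$ preserve $\g=\ker\Pi$ and act on it as pointwise scalar multiplication by $r$; together with the addition this yields the $\K$-module structure.

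\emph{The triple decomposition and the grading.} For $g\in\g$ one has $\pi(g(o))=\Pi(g)(o)=o$, so $g(o)\in T_o\cX\subset\cU_{o'}^{T\K}$, whence $g(o)\top o'$ and $g$ lies in the big cell $\Omega^{o,o'}$. Lemma \ref{la:tripledecomposition} then gives the unique decomposition $g=L_{o'}^{t,o}\,D(g)\,L_{o}^{t',o'}$ with $t=g(o)\in T_o\cX$, $t'=-g\inv(o')\in T_{o'}\cX$ and $D(g)$ as in (\ref{Den-eqn}); applying $\pi$ shows each of the three factors covers $\id_\cX$, hence lies in $\g$ — the middle one in $\g_0:=\g\cap\bfG_{o,o'}$, the outer ones in $\bfT_{o'}^{T\K}$ and $\bfT_o^{T\K}$. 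Conversely $L^{vo}_{o'}\in\bfT_{o'}^{T\K}$ covers $\id_\cX$ iff $\pi(v)=o$, i.e.\ $v\in T_o\cX$, and symmetrically for $\bfT_o^{T\K}$; this gives $\g_{-1}=\g\cap\bfT_{o'}^{T\K}=\{L^{vo}_{o'}\mid v\in T_o\cX\}$ and $\g_1=\g\cap\bfT_o^{T\K}=\{L^{wo'}_o\mid w\in T_{o'}\cX\}$, and $v\mapsto L^{vo}_{o'}$, $w\mapsto L^{wo'}_o$ are $\K$-linear bijections onto $\g_{-1}$, $\g_1$, since translation operators compose by adding their translation vectors and, by Theorem \ref{th:linearity}, the $\K$-module structure of $T_o\cX$ (resp.\ $T_{o'}\cX$) is that of a submodule of $(\cU_{o'},0_o)$ (resp.\ $(\cU_{o},0_{o'})$) in $T\cX$. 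By uniqueness in Lemma \ref{la:tripledecomposition} and commutativity of $\g$, the map $(\xi_{-1},\xi_0,\xi_1)\mapsto\xi_{-1}+\xi_0+\xi_1$ is a group isomorphism $\g_{-1}\oplus\g_0\oplus\g_1\to\g$; it is $\K$-linear because each summand is stable under the $\phi_r$-actions ($\g_0$ because a field vanishing at $0_o$ and $0_{o'}$ stays so under scaling).

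\emph{The $\K^\times$-action, and the main obstacle.} For $r\in\K^\times$ the dilation $r_o^{o'}$ fixes $o$ and $o'$, so the adjoint action $T(r_o^{o'})$ preserves $\g$ and each summand, and property (A) of the scaling map makes $r\mapsto T(r_o^{o'})$ a homomorphism $\K^\times\to\Aut(\g)$. By \eqref{eqn:ST1}, $T(r_o^{o'})$ sends $L^{vo}_{o'}\mapsto L^{rv,o}_{o'}$, so it acts on $\g_{-1}$ by the scalar $r$; by \eqref{eqn:ST2} and (Du) it sends $L^{wo'}_o\mapsto L^{r\inv w,o'}_o$, so it acts on $\g_1$ by $r\inv$. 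On $\g_0$ it is trivial: $r_o^{o'}$ restricts to the scalar maps $r\cdot\id$ on $\cU_{o'}^{T\K}$ and $r\inv\cdot\id$ on $\cU_{o}^{T\K}$, whereas $\bfG_{o,o'}$ — hence every element of $\g_0$ — acts $\K$-linearly on each, and a scalar map commutes with linear ones. I expect the one genuinely non-formal point to be the final implication here: passing from ``$T(r_o^{o'})$ agrees with $\xi$ on the chart $\cU_{o'}$ (and $\cU_o$)'' to ``$T(r_o^{o'})\xi=\xi$'', i.e.\ the rigidity that an inner automorphism is determined by its restriction to the canonical chart $\cU_{o'}$; this, together with the input from \cite{Be14} (the bijection between vector fields and infinitesimal automorphisms, composition becoming pointwise addition), is what drives the proof, the remainder being formal manipulation of Lemma \ref{la:tripledecomposition}, Theorem \ref{th:linearity} and \eqref{eqn:ST1}, \eqref{eqn:ST2}. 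Altogether $T(r_o^{o'})$ acts on $\g_{-1},\g_0,\g_1$ with eigenvalues $r,1,r\inv$.
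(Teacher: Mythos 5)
Your proof is correct and follows essentially the same route as the paper: the split exact sequence comes from functoriality of $\ul\bfG$ under $\pi$ and $\zeta$ (the paper delegates this to \cite{Be14}, Th.\ 8.2 and 8.6), the grading is obtained by applying Lemma \ref{la:tripledecomposition} to an infinitesimal automorphism and checking via $\pi$ that each factor again covers the identity (the paper isolates exactly this computation as its lemma on vertical translations), and the eigenvalues are read off from \eqref{eqn:ST1} and \eqref{eqn:ST2}. The rigidity point you flag for the eigenvalue $1$ on $\g_0$ is handled in the paper by the same linearity-on-charts argument you give (in its later lemma on the Euler operator), without further comment.
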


\begin{proof}
The split exact sequence exists more generally for automorphism groups of Weil spaces equipped with
$n$-ary multiplication maps, see \cite{Be14}, Th.\ 8.2 and 8.6, 
such as Jordan geometries or
symmetric spaces. Moreover, for the Weil algebra $\bA = T\K$, the kernel is always a $\K$-module
 with  fiberwise defined laws  of addition and scalar action (\cite{Be14}, Th.\ 8.6; 
 see also \cite{Lo69} Lemma 4.2, p.\ 52). 
To get a rich supply of infinitesimal automorphisms, we prove:

\begin{lemma} \label{la:infaut}
The following are infinitesimal automorphisms:
\begin{enumerate}
\item
for  $v,w \in T_p \cX$ and $p \top a$, 
the {\em vertical translation}
$L^{vw}_a$,
\item
for $p \top a$,
the {\em Euler field} $(1+\eps)_p^a$.
\end{enumerate}
\end{lemma}

\begin{proof}
For the vertical translation:
$\pi (L^{vw}_a x ) = L^{\pi (v),\pi(w)}_{\pi(a)} \pi( x) = L^{p,p}_a (\pi(x)) = \pi(x)$,
whence $\pi \circ L^{vw}_a = \pi$.
Concerning the Euler field, note first that from functoriality of the scaling  $S$ we get,
 for all scalars $r \in T\K^\times$,
\[
\pi \circ r_x^a = (\pi r)_{\pi x}^{\pi a} \circ \pi
\]
We apply this to the invertible scalar $r = 1 + \eps$ (whose inverse is $1-\eps$):
since $\pi (1+\eps)=1$, 
we get
$\pi\circ (1+\eps)_p^a  = 1_{\pi p}^{\pi a} \circ \pi = \pi$.
\end{proof}

Now let $\xi$ be an infinitesimal automorphism, then $\xi(o)$ lies in the fiber over $o$,
hence is also transversal to $o'$, so we use  (\ref{eqn:triple}) to   decompose
$$
\xi= L_{o'}^{\xi(o),o} \circ D(\xi) \circ L_o^{-\xi(o'),o'}
= L_{o'}^{\xi(o),o} +  D(\xi)  +  L_o^{-\xi(o'),o'} .
$$
By the lemma,
each of the three terms belongs indeed to $\g$, whence the decomposition.
To prove the claim on the eigenvalues of the scalar action, recall that the
group $\Aut_\K(\cX)$ acts on $\Infaut(\cX)$ by conjugation  (``adjoint representation'')
\begin{equation}\label{eqn:Adj}
\Aut_\K(\cX) \times \Infaut(\cX)\to \Infaut(\cX),\qquad (g,\xi) \mapsto g.\xi:= Tg \circ \xi \circ Tg\inv .
\end{equation}
and now   read equations
(\ref{eqn:ST1}) and (\ref{eqn:ST2}) for infinitesimal arguments $\eps v$ and $\eps a$.
\end{proof} 

\begin{theorem}
With notation from the preceding theorem, the $\K$-module $\g$ carries the structure of a 
$\K$-Lie algebra, with respect to the usual Lie bracket of vector fields (defined, for general Weil
manifolds, in terms of the group commutator in $\bfG^{TT\K}$), and, for any choice of base point
$(o,o') \in \cD_2$, the additive decomposition
$\g = \g_{-1} \oplus \g_0 \oplus \g_1$ from the preceding theorem is a {\em $3$-grading of the Lie algebra $\g$},
that is, it satisfies the bracket rules $[\g_i,\g_j] \subset \g_{i+j}$:
$$
[\g_1,\g_1]=0=[\g_1,\g_{-1}], \quad [\g_1,\g_0]\subset \g_1, \quad [\g_{-1},\g_0]\subset \g_{-1}, \quad 
[\g_1,\g_{-1}]\subset \g_0.
$$
\end{theorem}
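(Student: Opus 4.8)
The plan is to prove the two assertions in turn: that $\g$ carries a Lie bracket, and that the triple decomposition is a $3$-grading for it. For the first point, $\g=\ker\bigl(\bfG^{T\K}\to\bfG^\K\bigr)$ is, by construction, the Lie algebra of the Weil variety $\ul\bfG$, so it is equipped with the commutator of vector fields on the Weil manifold $\cX$, realized through the group commutator in $\bfG^{TT\K}$; that this bracket is $\K$-bilinear, alternating and satisfies the Jacobi identity is part of the results of \cite{Be14}. Functoriality of the Lie-algebra construction turns the inclusions of group subfunctors $\ul\bfT_o,\ul\bfT_{o'},\ul\bfG_{o,o'}\hookrightarrow\ul\bfG$ into injective Lie-algebra homomorphisms, and comparison with the explicit description in the preceding theorem identifies their images as $\g_1=\mathrm{Lie}(\ul\bfT_o)$, $\g_{-1}=\mathrm{Lie}(\ul\bfT_{o'})$ and $\g_0=\mathrm{Lie}(\ul\bfG_{o,o'})$.

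For the grading I would check the bracket rules $[\g_i,\g_j]\subset\g_{i+j}$ one at a time, reducing each to a group-theoretic fact that is valid functorially in the Weil algebra and hence descends to Lie algebras. Each $\ul\bfT_a$ is a commutative group functor (commutativity of $\bfT_a^\bA$ being a functorially valid consequence of the Jordan identities, Lemma \ref{la:ta}), so its Lie algebra is abelian; applied to $a=o$ and to $a=o'$ this gives $[\g_1,\g_1]=0$ and $[\g_{-1},\g_{-1}]=0$. By Lemma \ref{la:normal}, read over an arbitrary Weil algebra, $\ul\bfT_o$ is normal in $\ul\bfG_o$ and $\ul\bfT_{o'}$ is normal in $\ul\bfG_{o'}$, hence $\ul\bfG_{o,o'}=\ul\bfG_o\cap\ul\bfG_{o'}$ normalizes both; passing to Lie algebras yields $[\g_0,\g_1]\subset\g_1$ and $[\g_0,\g_{-1}]\subset\g_{-1}$, while $[\g_0,\g_0]\subset\g_0$ holds because $\g_0$ is a subalgebra. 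There remains the inclusion $[\g_1,\g_{-1}]\subset\g_0$.

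I expect this last inclusion to be the main obstacle: conjugation by $(-1)_o^{o'}$ only shows that the $\g_{\pm1}$-components of a bracket in $[\g_1,\g_{-1}]$ are annihilated by $2$, which is inconclusive over rings such as $\Z$. Instead I would bring in the \emph{Euler element} $\xi_E:=(1+\eps)_o^{o'}$, which by Lemma \ref{la:infaut}(2) is an infinitesimal automorphism and, fixing $o$ and $o'$, lies in $\g_0$. Reading relations (\ref{eqn:ST1}) and (\ref{eqn:ST2}) over $TT\K$ with the invertible scalar $1+\eps'$, and using the $\K$-module structures on $T_o\cX$ and $T_{o'}\cX$ furnished by Theorem \ref{th:linearity}, one computes that $\ad(\xi_E)$ acts as $c_i\cdot\mathrm{id}$ on $\g_i$ with $(c_{-1},c_0,c_1)=(1,0,-1)$, so in particular $c_1+c_{-1}=0=c_0$. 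For $\xi\in\g_1$ and $\eta\in\g_{-1}$, writing $[\xi,\eta]=\zeta_{-1}+\zeta_0+\zeta_1$ in the given direct sum and applying $\ad(\xi_E)$ together with the Jacobi identity gives
\[
\zeta_{-1}-\zeta_1=\ad(\xi_E)[\xi,\eta]=(c_1+c_{-1})\,[\xi,\eta]=0 ,
\]
whence $\zeta_{-1}=\zeta_1=0$ by directness, that is $[\xi,\eta]\in\g_0$. This establishes all six bracket rules and hence the claimed $3$-grading; when $2$ is invertible in $\K$ the final inclusion may alternatively be read off the $(-1)_o^{o'}$-eigenspace decomposition. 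The only routine matters left are the Weil-algebra bookkeeping behind the identifications of $\g_1,\g_{-1},\g_0$ with translation and stabilizer Lie algebras, and the differentiation producing $\ad(\xi_E)$.
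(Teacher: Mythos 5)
Your proposal is correct and follows essentially the same route as the paper: the bracket from the group commutator in $\bfG^{TT\K}$ via \cite{Be14}, abelianness of $\g_{\pm 1}$ from the commutative translation groups, normality (Lemma \ref{la:normal}) for $[\g_0,\g_{\pm1}]\subset\g_{\pm1}$, and the Euler element $(1+\eps)_o^{o'}$ acting diagonally on the $\g_i$ combined with the Jacobi identity to force $[\g_1,\g_{-1}]\subset\g_0$. Your remark that the $(-1)_o^{o'}$-involution argument only kills the $\g_{\pm1}$-components up to $2$-torsion correctly identifies why the Euler element is needed, exactly as in the paper.
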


\begin{proof}
First of all, let us recall that the Lie bracket of vector fields (infinitesimal automorphisms) is defined
via the group structure of $\bfG^{TT\K}$, where
$$
TT\K = \K[\eps_1,\eps_2] = \K \oplus \eps_1 \K \oplus \eps_2 \K \oplus \eps_1 \eps_2 \K
\quad (\eps_1^2 = 0 = \eps_2^2)
$$
via the group commutator (see \cite{Be14}, cf.\ also \cite{Be08, KMS}):
\begin{equation}\label{eqn:Liebracket}
\eps_1 \eps_2 [X,Y] = \eps_1 X \cdot \eps_2 Y \cdot (\eps_1 X)\inv \cdot (\eps_2 Y)\inv .
\end{equation}
It is then a general fact that the group of (inner) derivations of some algebraic structure (symmetric spaces,
Jordan geometries) is stable under the Lie bracket (\cite{Be14}, Cor.\ 8.7), hence $\g$ is a $\K$-Lie algebra.
With respect to a base point $(o,o')$, $\g_1$ and $\g_{-1}$ are just the Lie algebras of the translation groups
$\bfT = \bfT_{o'}$ and $\bfT' = \bfT_o$, and hence are abelian subalgebras.
Moreover, $\bfT$ is normal in $\bfG_{o'}$ (lemma  \ref{la:normal}); the same is then true for
$\bfT^{TT\K}$ in $\bfG_{o'}^{TT\K}$, which implies that $[\g_1,\g_0]\subset \g_1$, and similarly we get
$[\g_{-1},\g_0]\subset \g_{-1}$.
It remains to prove that $[\g_1,\g_{-1}]\subset \g_0$.

\begin{lemma}
The Euler operator $E$ (Lemma \ref{la:infaut}) acts via $\ad(E)$ with eigenvalues $i$ on $\g_i$, $i=1,0,-1$,
and $\g_0$ is equal to its $0$-eigenspace.
\end{lemma}

\begin{proof} 
To simplify notation, we identify $\g_1$ with $V = T_o \cX$ and $\g_{-1}$ with $V'=T_{o'} \cX$.
Then
$\ad(E)$ commutes with all elements $H \in \g_0 = \g \cap \bfG_{o,o'}$ because $H$ acts $TT\K$-linearly 
on $TTV \times TTV'$, hence commutes with the scalars $1+\eps_1$ or $1+\eps_2$.
In order to compute $\ad(E)v$ via (\ref{eqn:Liebracket}),
we compute the commutator with elements from $\g_1$:
$$
(1+\eps_1) L_{o'}^{\eps_2 v,o} (1-\eps_2) L_{o'}^{-\eps_2 v,o} =
L_{o'}^{(1+\eps_1) \eps_2 v,o} L_{o'}^{-\eps_2 v,o} = 
L_{o'}^{\eps_1 \eps_2 v,o},
$$
implying that $[E, v] = v$ for all $v \in V = \g_1$.
Since $\ad(E)$ acts by $1-\eps$ on $V'$, the same computation yields that
$[E,w]=-w$ for all $w \in V'=\g_{-1}$.

Conversely, if $\ad(E)X=0$, then decompose  $X = X_1 + X_0 + X_{-1}$ with $X_i \in \g_i$, 
to get
$ X = X - \ad(E)^2 X = X - X_1 - X_{-1} = X_0$, whence $X \in \g_0$.
\end{proof}

Now, let $X = [Y,Z] \in [\g_1,\g_{-1}]$ with $Y\in \g_{1}$, $Z \in \g_1$.
Then
 the Jacobi identity implies that $\ad(E)X=[E,[Y,Z]] = [[E,Y],Z]+[Y,[E,Z]] = - [Y,Z]+[Y,Z]=0$,
hence $X \in \g_0$ by the lemma.
\end{proof}

As is well-known (cf.\, e.g., \cite{Be00}),
for every $3$-graded Lie algebra $\g = \g_1 \oplus \g_0 \oplus \g_{-1}$, the pair of $\K$-modules
$V^\pm := \g_{\pm 1}$ becomes a {\em linear Jordan pair} with trilinear maps
\begin{equation}
V^\pm \times V^\mp \times V^\pm \to V^\pm, \qquad
(x,a,z) \mapsto \{ xaz\} :=  [[x,a],z] \, ,
\end{equation}
i.e., the trilinear maps satisfy  the identities

\msk
(1) $\{ xaz \} = \{ zax \}$

(2) $\{ uv \{ xyz \} \} = \{ \{ uvx \} yz \} - \{ x \{ vuy \} z \} + \{ xy \{ uvz \} \}$ 

\msk \nin
Combining this with the preceding theorem gives immediatly:

\begin{theorem}[The linear Jordan pair of a Jordan geometry]\label{th:LJP}
Assume $\ul \cX$ is a Jordan geometry over $\K$ with base point $(o,o')\in \cD_2$.
Then the pair of $\K$-modules $(V^+,V^-)=(\g_1,\g_{-1})$ becomes a linear Jordan pair
with respect to
$\{ x a z \} = [[x,z]a]$.
The Jordan pair depends functorially on the geometry with base point.
\end{theorem}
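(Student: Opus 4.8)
The plan is to derive this directly from the preceding theorem, which already exhibits $\g = \g_{-1}\oplus\g_0\oplus\g_1$ as a $3$-graded $\K$-Lie algebra relative to the base point $(o,o')$, together with the standard fact that a $3$-graded Lie algebra carries a canonical linear Jordan pair structure on $(\g_1,\g_{-1})$. Concretely, two things remain to be verified: first, that the trilinear maps $\{xaz\} := [[x,a],z]$, for $x,z\in\g_{\pm1}$ and $a\in\g_{\mp1}$, are well defined and satisfy the linear Jordan pair identities (1) and (2) stated above; second, that the assignment $\ul\cX\mapsto(V^+,V^-)$ is functorial in the Jordan geometry with base point.

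For the first point, well-definedness is immediate from the grading rules $[\g_i,\g_j]\subset\g_{i+j}$ proved in the preceding theorem: $[x,a]\in\g_0$, hence $[[x,a],z]\in\g_{\pm1}$. Symmetry, $\{xaz\}=\{zax\}$, follows from the Jacobi identity once one observes that $[x,z]\in\g_{\pm2}=0$, so that the Jacobi relation $[[x,a],z]+[[a,z],x]+[[z,x],a]=0$ collapses to $[[x,a],z]=[[z,a],x]$. For the pentad identity one expands $\{uv\{xyz\}\}=[[u,v],[[x,y],z]]$ by applying the Jacobi identity twice and regrouping; every term that would fall outside the required shape involves a bracket of two elements of the \emph{same} graded piece $\g_{\pm1}$, namely $[u,x]$, $[x,z]$, or $[v,y]$, hence vanishes, and what survives is exactly $\{\{uvx\}yz\}-\{x\{vuy\}z\}+\{xy\{uvz\}\}$. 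These are polynomial identities valid over $\Z$, so no hypothesis on $2$- or $3$-torsion in $\K$ enters here; that one obtains a Jordan \emph{pair}, and not two unrelated objects, reflects the fact that interchanging $(o,o')$ with $(o',o)$ swaps the two gradings $\g_{\pm1}$.

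For functoriality, recall that the formation of $\ul\bfG$, of the subgroup $\g$ of infinitesimal inner automorphisms, and of the Lie bracket on $\g$ (via the group commutator in $\bfG^{TT\K}$) are all functorial constructions for geometries given by structure maps, by \cite{Be14}. A morphism $\ul f$ of Jordan geometries over $\K$ carrying the base point $(o,o')$ of $\ul\cX$ to that of $\ul\cY$ intertwines the structure maps $J^\bA$ for every Weil algebra $\bA$, hence intertwines the translation operators $L^{xz}_a$ and the denominators $D(g)$; consequently the induced $\K$-linear map $\g(\ul\cX)\to\g(\ul\cY)$ respects the triple decomposition $(\ref{eqn:triple})$, i.e.\ is a morphism of $3$-graded Lie algebras, and therefore restricts on the $\pm1$-parts to a morphism of the associated Jordan pairs. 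Preservation of identities and of composition is clear, holding at each intermediate stage.

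The proof is thus essentially bookkeeping: all the substance sits in the preceding theorem, and the remaining step is the classical passage from $3$-graded Lie algebras to linear Jordan pairs (cf.\ \cite{Be00}). The one place demanding a little care is tracking which bracket terms drop out in the verification of identity (2), but this presents no genuine obstacle. The real subtlety — invisible at this level of generality — is that this linear pair captures the full tangent information only when $\K$ has no $2$- or $3$-torsion, which is precisely why the quadratic maps $Q^\pm$ are introduced separately in the sequel.
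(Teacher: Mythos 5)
Your proposal is correct in substance, but it diverges from the paper precisely at the one point where the paper does any real work. For the Jordan pair identities the paper simply invokes the classical passage from $3$-graded Lie algebras to linear Jordan pairs (citing \cite{Be00}); your explicit verification via the Jacobi identity and the vanishing of $\g_{\pm 2}$ fills in exactly what is being cited, and is fine. (You also silently correct the formula in the theorem statement: $[[x,z],a]$ would vanish since $[x,z]\in\g_{\pm2}=0$; the intended product is $[[x,a],z]$, as in the display preceding the theorem.) For functoriality, however, the paper deliberately does \emph{not} argue through the $3$-graded Lie algebra: it reinterprets the triple bracket as the Lie triple system of the polarized reflection space $\cD_2$ of Theorem \ref{th:PSS} and appeals to the functoriality of the Lie triple system of a symmetric space with base point. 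Your route rests on the assertion that $\ul\bfG$, $\g$ and its bracket are ``functorial constructions'', and that is the weak link: a morphism of geometries need not induce a group homomorphism on the groups generated by inversions (two words in the generators representing the same element of $\bfG(\cX)$ may have different images), and the paper itself warns in Appendix \ref{App:SA} that the standard imbedding of a Lie triple system is not functorial. Your argument can be repaired without changing its spirit --- the elements of $\g_{\pm1}$ are translations parametrized by tangent vectors, a base-point-preserving morphism $f$ makes each such generator $f$-related to the corresponding generator over $\ul\cY$, and $f$-relatedness is preserved by the bracket (\ref{eqn:Liebracket}), which suffices because $\{xaz\}$ is a bracket expression in these specific elements landing back in $\g_{\pm1}$ --- but as written, functoriality of the whole of $\g$ (in particular of $\g_0$ and of the triple decomposition) is not justified. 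The paper's detour through $\cD_2$ buys exactly this: the Lie triple system lives on the tangent space of the symmetric space and is manifestly functorial, with no reference to the ambient automorphism group.
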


\begin{proof}
Only the last assertion remains to be proved.
Indeed, the triple Lie bracket can be interpreted as the Lie triple system belonging to the (polarized)
symmetric space $\cD_2$ (cf.\ Theorem \ref{th:PSS}; see also \cite{Be00}), and the Lie triple system of a symmetric
space depends functorially on the space with base point (\cite{Be08, Be14}).
\end{proof}

\section{Quadratic vector fields and quadratic Jordan pairs}\label{sec:JGtoJP}

\subsection{The quadratic map}
The link of the Jordan pair with the geometry of $(\cX,J)$ becomes more direct if 
we look at {\em quadratic Jordan pairs} instead of linear ones. 
In a first step, we realize the Lie algebra $\g$ is a space of {\em quadratic vector fields on $V$},
where, as above $V = \cU_{o'} \cong \bfT_{o'}$.
If $\xi$ as an infinitesimal automorphism corresponding to a vector field $X$,
we call $X=X_\xi$ also
the {\em associated vector field} of $\xi$. Since $\xi$ acts by translations in each
fiber, and
writing $TV = V \oplus \eps V$, the map  $\xi$ has the chart representation
\begin{equation}\label{eqn:vf-chart}
\xi(x + \eps v) = x + \eps (v+ X(x)) .
\end{equation}

\begin{theorem}\label{th:quadratic}
For all $\xi \in \g(\cX)$, the vector field $X\vert_V:V \to V$ representing $\xi$, is
a quadratic polynomial. More precisely, this polynomial is
constant if $\xi \in \g_{-1}$, linear if $\xi \in \g_0$, and homogeneous of degree $2$ if
$\xi \in \g_1$. Thus $\g$ is represented over $V$ by the Lie algebra of quadratic polynomials
\[
\bigl\{   X : V \to V \mid \, 
X(x) = v + Hx + Q(x)a , \quad  v \in V, H \in \g_o, a \in V' \bigr\} ,
\]
where the polynomial
\begin{equation}
Q : V \times V' \to V, \quad (x,a) \mapsto Q(x)a,
\end{equation}
quadratic in $x$ and linear in $a$, is defined by
\begin{equation}\label{eqn:Q-def}
L_o^{ \eps a, o'}  (x+\eps v ) = x + \eps (v  +  Q(x) a) .
\end{equation}
Similarly, $\g$ is also represented by quadratic polynomial vector fields over $V'$.
\end{theorem}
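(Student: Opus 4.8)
The plan is to handle the three homogeneous components of $\g=\g_{-1}\oplus\g_0\oplus\g_1$ one at a time. From the chart formula (\ref{eqn:vf-chart}) and the fact that the group law of $\g$ is composition of maps one gets $X_{\xi\circ\eta}=X_\xi+X_\eta$ and $X_{r\xi}=rX_\xi$, so $\xi\mapsto X_\xi|_V$ is $\K$-linear; hence $X_\xi$ for a general $\xi$ is the sum of the three contributions, and it suffices to show that $X_\xi|_V$ is constant for $\xi\in\g_{-1}$, $\K$-linear for $\xi\in\g_0$, and the diagonal of a symmetric bilinear map for $\xi\in\g_1$. It also suffices to treat $V$, the statement for $V'$ following upon interchanging the two components of the base point (via $\tau$, cf.\ Theorem \ref{th:PSS}).

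If $\xi\in\g_{-1}=\g\cap\bfT_{o'}^{T\K}$, write $\xi=L^{vo}_{o'}$ with $v=\eps v_0\in T_o\cX\subset\cU_{o'}^{T\K}$. By condition (1) of a $\K$-Jordan geometry the linear space $(\cU_{o'}^{T\K},0_o)$ is the algebraic extension $V\oplus\eps V$, in which $L^{vo}_{o'}$ is the translation by $v-0_o=\eps v_0$; thus $\xi(x+\eps w)=x+\eps(w+v_0)$ and $X_\xi\equiv v_0$ is constant. If $\xi\in\g_0=\g\cap\bfG_{o,o'}^{T\K}$, then since $\bfG$ preserves $S$ and $\bfG_{o,o'}$ fixes $o$ and $o'$, the group $\bfG_{o,o'}$ commutes with every dilation $r^{o'}_o$, hence acts $\K$-linearly on $(\cU_{o'},o)=(V,0)$ and, functorially, $\bfG_{o,o'}^{T\K}$ acts $T\K$-linearly on $V\oplus\eps V$; a $T\K$-linear automorphism of $V\oplus\eps V$ inducing the identity on $V$ is necessarily $x+\eps w\mapsto x+\eps(w+Hx)$ for a unique $H\in\End_\K(V)$, so $X_\xi=H|_V$ is linear, and we identify $\g_0$ with the space of these $H$.

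The real content is $\g_1$. Every $\xi\in\g_1=\g\cap\bfT_o^{T\K}$ equals $\xi_a:=L_o^{\eps a,o'}$ for a unique $a\in V':=T_{o'}\cX$, and $a\mapsto\xi_a$ is a $\K$-module isomorphism $V'\to\g_1$ (it identifies $\g_1$ with the Lie algebra of the abelian translation group $\bfT_o\cong(\cU_o,o')$ and intertwines scalar actions by (\ref{eqn:ST2})); hence $X_{\xi_a}(x)=:Q(x)a$ is $\K$-linear in $a$, which is precisely (\ref{eqn:Q-def}). Since $\bfT_o\subset\bfG_o$, the map $\xi_a$ fixes $0_o$, so $Q(0)a=0$. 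Now for a constant field $v_0\in\g_{-1}$ the bracket $[v_0,\xi_a]$ lies in $\g_0$ by the $3$-grading, and, the Lie bracket on $\g$ being the bracket of vector fields, this bracket is represented by $x\mapsto D(X_{\xi_a})(x)(v_0)$, which is therefore $\K$-linear in $x$; as $v_0$ was arbitrary, $D^2X_{\xi_a}$ is constant, so $X_{\xi_a}$ (being smooth) is a polynomial map of degree $\le 2$, and with $Q(0)a=0$ we may write $X_{\xi_a}=\ell+Q_0$ with $\ell$ linear and $Q_0$ the diagonal of a symmetric bilinear map. Finally, $\ad(E)$ acts by $+1$ on $\g_1$ (as shown in the proof of the grading theorem), where $E$ is the Euler field of Lemma \ref{la:infaut}, that is $E(x)=x$ on $V$; so $[E,\xi_a]=\xi_a$, which in terms of vector fields reads $D(X_{\xi_a})(x)(x)-X_{\xi_a}(x)=X_{\xi_a}(x)$, i.e.\ $D(X_{\xi_a})(x)(x)=2X_{\xi_a}(x)$; for $X_{\xi_a}=\ell+Q_0$ this is $\ell(x)+2Q_0(x)=2\ell(x)+2Q_0(x)$, forcing $\ell=0$ and $X_{\xi_a}=Q_0$ homogeneous of degree $2$, so $Q(x)a$ is quadratic in $x$. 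Summing the three contributions gives $X_\xi(x)=v+Hx+Q(x)a$ with $v\in V$, $H\in\g_0$, $a\in V'$, and this set of vector fields is closed under the bracket because $\g$ is a Lie algebra.

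The step I expect to be the main obstacle is the passage from "$D^2X_{\xi_a}$ is constant" and Euler's identity to the genuine statement that $X_{\xi_a}$ is a homogeneous degree-$2$ polynomial law over an arbitrary base ring $\K$ (which may have $2$- and $3$-torsion), together with the clean identification of the abstract Lie bracket of $\g$ with the bracket of vector fields read in the chart $V$; both are ultimately bookkeeping inside the algebraic differential calculus of \cite{Be14}, but that is exactly the point at which the regularity hypothesis is being used.
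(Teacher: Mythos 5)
Your handling of $\g_{-1}$ and $\g_0$, and the linearity of $Q(x)a$ in $a$ via $a\mapsto L_o^{\eps a,o'}$, agree with the paper. But the step you flag as "ultimately bookkeeping" is a genuine gap, and it is precisely the step that fails over the base rings the whole construction is designed for. You differentiate too early: from $[\g_{-1},\g_1]\subset\g_0$ you extract only that the \emph{second differential} $D^2X_{\xi_a}$ is constant, and from $[E,\xi_a]=\xi_a$ only the differentiated Euler identity $DX(x)(x)=2X(x)$. Over a ring with torsion neither statement recovers the map. Concretely, take $\K=V=\F_2$ and the Weil law $X(x)=x^4$: then $X(x+\eps v)=x^4+4x^3\eps v=x^4$, so $DX\equiv 0$ and $D^2X$ is constant, and $DX(x)(x)=0=2X(x)$ holds vacuously; yet $X$ is not a polynomial law of degree $\le 2$ (in $\F_2[\delta]/(\delta^4)$ one has $X((1+\delta)x)=(1+\delta)^4x^4\neq(1+\delta)^2x^4$). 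There is no Taylor formula without denominators that reconstructs a Weil law from its differentials, so "smooth with constant second differential $\Rightarrow$ quadratic polynomial" is simply false in this setting, and the homogeneous decomposition $X=\ell+Q_0$ that you invoke afterwards is not available either.

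The paper's proof avoids this by keeping both ingredients at the finite level. For the bilinearity of the polarization, it forms the commutator with a \emph{finite} translation: for $v\in V$ (not $\eps_1 v$), the element $\xi_v=Tg\circ\xi_a\circ Tg\inv\circ\xi_a\inv$ with $g=L_{o'}^{v,o}$ represents the finite difference $x\mapsto X(x-v)-X(x)$, and a direct computation, using that $T_{o'}(L_{o'}^{v,o})=\id$ by Theorem \ref{th:linearity}, shows $\xi_v(o')=o'$, so $\xi_v\in\g_0\oplus\g_{-1}$ and the finite second difference is affine --- which is the algebraically correct notion of "degree $\le 2$". For homogeneity, it uses the finite adjoint action $(r_o^{o'}.\xi)(x)=rX(r\inv x)$ together with the eigenvalue $r\inv$ of $\K^\times$ on $\g_1$, valid in every Weil-algebra scalar extension, rather than its derivative at $r=1$. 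Your argument becomes essentially the paper's if you replace $\eps_1 v_0$ by a finite $v$ in the commutator computation and keep the scalar action undifferentiated; as written, the passage from infinitesimal to finite information cannot be supplied over general $\K$.
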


\begin{proof}
If $\xi \in \g_{-1}$, then $\xi = L^{\eps v,o}_{o'}$ for some $v \in V$, hence
$\xi(x)=x+\eps v$, and the corresponding
vector field is $X(x)= v$, which is a constant function.
If $\xi \in \g_0$, then $\xi$ acts linearly on $V$ (and on $V'$).
It remains to show that $X$ is homogeneous quadratic polynomial if $\xi \in \g_1$.

\ssk
In the chart formula, the adjoint action (\ref{eqn:Adj})  is described as follows: 
using (\ref{eqn:vf-chart}) together with
$Tg(x+\eps v)=g(x) + \eps df(x)v$, we get, whenever $g\inv .x \in V$, 
\begin{equation}\label{eqn:V-action}
(g.\xi)  (x) = dg(g\inv .x)  \cdot X(g\inv.x) .
\end{equation}
If $g=L^{v,o}_{o'}$ is a translation with $v\in V$, (\ref{eqn:V-action})  gives
\begin{equation}\label{eqn:V-action2}
(L^{v,o}_{o'} .\xi) (x) = X(x-v),
\end{equation}
and for a major dilation $g=r_o^{o'}$ it gives
\begin{equation}\label{eqn:V-action3}
(r_o^{o'} .\xi) (x) = r X(r\inv x) \, .
\end{equation}
Specializing (\ref{eqn:V-action3}) to vector fields $X$ from the three parts $\g_i$, 
$i=-1,0,1$, we get that $X:V \to V$ is homogeneous of degree
$0$, $1$ or $2$, respectively, since $\xi$ is eigenvector  for the
$\K^\times$-action for the eigenvalues $r,1,r\inv$, respectively, by Theorem \ref{th:grading}.

\ssk
Now let $\xi = L_{o}^{o',\eps a} \in \g_1$ ($a \in V'$) and $X$ its associated vector field. 
In order to show that $X(x)$  is quadratic polynomial, it remains to show that
the map 
$$
X_v:V \to V, \qquad 
x \mapsto X_v(x) = X(x-v)-X(x),
$$ 
is affine, for all $v \in V$. (Equivalently, that $x \mapsto X(x+v)-X(x)-X(v)$ is linear.)
According to (\ref{eqn:V-action2}), the field 
$X(x-v)$ represents 
the infinitesimal automorphism
$Tg \circ \xi \circ Tg\inv$ where $g=L_{o'}^{v,o}$ is translation by $v$,
and $-X(x)$ represents $\xi\inv$, whence $X_v$ 
represents the infinitesimal automorphism
$Tg \circ \xi \circ Tg\inv \circ \xi\inv$,  that is, 
it represents
$$
\xi_v:= 
L_{o'}^{v,o} L_{o}^{o',\eps a} L_{o'}^{o,v} - L_{o}^{o',\eps a}
=
 L_{o'}^{v,o} L_{o}^{o',\eps a} L_{o'}^{o,v} L_o^{\eps a,o'} .
$$
Saying that $X_v$ is affine amounts to saying that $\xi_v$ fixes $o'$. 
Now, 
$$
\xi_v(o') = 
(L_{o'}^{v,o} L_{o}^{o',\eps a} L_{o'}^{o,v} - L_{o}^{o',\eps a})o' =
L_{o'}^{v,o} L_{o}^{o',\eps a}  - (-\eps a) =
L_{o'}^{v,o} (-\eps a)  - (-\eps a)
$$
But $L_{o'}^{v,o} (-\eps a) = T_{o'} (L^{v,o}_{o'}) (-\eps a)$
is the tangent map of $L_{o'}^{v,o}$ at its fixed point $o'$, applied to $-\eps a$.
According to Theorem  \ref{th:linearity}, this tangent map is the identity, and hence 
it follows that $\xi_v(o') = o'$, hence $\xi_v$ is affine for all $v\in V$ and thus $\xi$ is quadratic.
The map $Q(x)a$ is defined by $Q(x)a=X(x)$ for $\xi$ as above, and hence is homogeneous
quadratic in $x$. It is linear in $a$ since the map $a \mapsto L^{\eps a,o'}_o$ is a linear
isomorphism from $V'$ to $\g_1$. 
\end{proof}

\begin{definition}
With respect to a fixed origin $(o,o')$ and model space $(V,V')$, we define the {\em quadratic map}
as above via ``quasi-translation by $\eps a$''
\begin{equation}\label{eqn:Q-def1}
Q:V \to \Hom(V',V),\quad x \mapsto (a \mapsto Q(x)a = L^{o',\eps a} (x) = L^{-\eps a,o'}(x)) ,
\end{equation}
and we define a map that is bilinear symmetric in $(x,v)$ and linear in $a$,
\begin{equation}\label{eqn:Q-def2}
Q(x,v)a:= D(x,a)v:= Q(x+v)a-Q(x)a -Q(v)a .
\end{equation}
Maps $Q':V' \times V' \to \Hom(V,V')$ and $D': V' \times V \to \End(V')$ are defined dually.
\end{definition}

\subsection{The quadratic Jordan pair}

\begin{definition}\label{subsec:QJP}
A {\em quadratic Jordan pair} is a pair $(V^+,V^-)$ of $\K$-modules 
together with quadratic maps $Q_\pm : V^\pm \to \Hom(V^\mp,V^\mp)$ such that the following identities
hold in all scalar extensions (see \cite{Lo75};  superscripts $\pm$ are omitted)\footnote{As Loos remarks in loc.\ cit., p.1.3, it suffices to consider 
scalar extensions by $\K[X]/(X^k)$ for $k=2,3$; in particular, it suffices to consider 
Weil algebras.} 

\msk
(JP1) $D(x,y) Q(x) = Q(x) D(y,x)$

(JP2) $D(Q(x)y,y) = D(x,Q(y)x)$

(JP3)  $Q(Q(x)y) = Q(x) Q(y) Q(x)$, where

\msk
\nin
$\{ xyz \} := D(x,y)z:= Q(x,z)y :=Q(x+z)y-Q(x)y-Q(z)y$,
so $\{xyx \} = 2 Q(x)y$.
\end{definition}

\nin It is shown in \cite{Lo75}  that every quadratic Jordan pair is linear, and that the converse
is true if $V$ has no $6$-torsion. 

\begin{theorem}[The quadratic Jordan pair of a Jordan geometry]
Assume $(\ul \cX,\ul J)$ is a Jordan geometry over $\K$ with base point $(o,o')\in \cD_2$.
Then the pair of $\K$-modules $(V^+,V^-)=(\cU_{o'},\cU_o)$ becomes a quadratic Jordan pair
with respect to the maps $Q_+=Q$ and $Q_- = Q'$ defined  by
(\ref{eqn:Q-def1}).
The quadratic map, and hence the Jordan pair, depend functorially on the geometry with base point.
\end{theorem}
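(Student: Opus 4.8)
The plan is to follow the argument of O.\ Loos (\cite{Lo79}), transported into the present geometric language. First I would reduce the statement to a finite check of polynomial identities. By the footnote to Definition \ref{subsec:QJP} it suffices to verify (JP1)--(JP3) after scalar extension by the jet rings $\K[X]/(X^k)$, $k=2,3$; moreover, by condition (1) of Definition \ref{def:JGeo} the affine parts of $\cX^\bA$ are the algebraic $\bA$-scalar extensions of those of $\cX$, so that $Q^\bA$ and $Q'^\bA$ are, in every Weil-algebra extension, given by the \emph{same} formula \eqref{eqn:Q-def1} out of the quasi-translations $L_o^{a,o'}$ and the zero sections, all of which are Weil laws. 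Hence it is enough to prove each identity for the quadratic map of a single geometry, the passage to scalar extensions being automatic; for the same reason $Q$ is functorial in the geometry with base point, since a morphism commutes with $J$, hence with the operators $L_o^{a,o'}$ and with the zero sections. So the task is to verify a finite list of identities ``infinitesimally'', i.e.\ inside the groups $\bfG^\bB$, and here I would use the faithful representation of $\g$ over $V:=\cU_{o'}$ by quadratic polynomial vector fields from Theorem \ref{th:quadratic}: by \eqref{eqn:Q-def}, $Q(x)a$ is the value at $x$ of the field representing the infinitesimal inner automorphism $L_o^{\eps a,o'}\in\g_1$, and dually for $Q'$ over $V':=\cU_o$.

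Next I would set up the dictionary between these operators and the Lie bracket of $\g$. Computing brackets by \eqref{eqn:Liebracket} in the chart description \eqref{eqn:vf-chart} --- the same computation as in Lemma \ref{la:infaut} --- one finds that for $\xi=L_o^{\eps a,o'}$ and a constant field $\eta_v\in\g_{-1}$ the bracket $[\xi,\eta_v]\in\g_0$ is the \emph{linear} field $x\mapsto Q(x,v)a=D(x,a)v$, and bracketing once more with a constant field $\eta_w$ returns the constant field $D(w,a)v$. Hence the bilinear map $D$ of \eqref{eqn:Q-def2} coincides, up to the usual sign conventions, with the trilinear product of the linear Jordan pair attached to the $3$-graded Lie algebra $\g$ in Theorem \ref{th:LJP}; in particular $D$ already satisfies the linearised Jordan identities. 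Using Lemma \ref{la:Berg} and the triple decomposition \eqref{eqn:triple}, the linear action on $V$ of the composite quasi-translation occurring there is the Bergman operator $\beta(x,b)\vert_V$, and expanding it in the infinitesimals gives $\beta(x,b)\vert_V=\id_V-D(x,b)+Q(x)Q'(b)$. With this dictionary, (JP1) and (JP2) are obtained as linearisations: they record the infinitesimal content of the conjugation rule $g\,L_o^{a,o'}\,g\inv=L_o^{g(a),o'}$, valid for $g\in\bfG_{o,o'}$ by the distributivity axiom (D) together with normality of $\bfT_o$ in $\bfG_o$ (Lemma \ref{la:normal}), specialised to $g$ a dilation $r_o^{o'}$ and to $g$ a Bergman operator.

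The heart of the matter is (JP3), the geometric fundamental formula $Q(Q(x)b)=Q(x)Q(b)Q(x)$. Here I would realise $Q(x)b$ group-theoretically: by \eqref{eqn:V-action2}, conjugating the quasi-translation $L_o^{\eps b,o'}$ by the ordinary translation $L_{o'}^{x,o}$ again yields an element of $\g_1$, whose second-order term produces, up to sign, the point $Q'(b)x\in V'$; iterating this with two independent nilpotents $\eps_1,\eps_2$ and collecting the mixed $\eps_1\eps_2$-term realises the infinitesimal automorphism with associated field $Q(Q(x)b)(\,\cdot\,)$ as a group commutator of quasi-translations conjugated by the automorphism $\beta(x,b)\in\bfG_{o,o'}$, and the identity $\beta(x,b)\,L_o^{c,o'}\,\beta(x,b)\inv=L_o^{\beta(x,b)(c),o'}$ turns the right-hand side into the composite $Q(x)Q(b)Q(x)$ via the expansion of $\beta$ recorded above. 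I expect this bookkeeping --- keeping each intermediate conjugate inside the correct jet ring and isolating the mixed term --- to be the only real obstacle; the underlying algebra is exactly that of \cite{Lo79}, and every group-theoretic ingredient it needs (the triple decomposition, Lemma \ref{la:Berg}, the $3$-grading, axiom (D)) is already available. Finally, $Q$ and $Q'$ take values in the correct $\Hom$-modules and are quadratic by Theorem \ref{th:quadratic}, so the data $(V^+,V^-,Q,Q')$ form a quadratic Jordan pair in the sense of Definition \ref{subsec:QJP}.
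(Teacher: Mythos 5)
Your overall route is the same as the paper's: adapt the argument of \cite{Lo79}, Th.\ 4.1, using the geometric ingredients Lemma \ref{la:Berg}, the triple decomposition (\ref{eqn:triple}) and the scalar relations (\ref{eqn:ST1})--(\ref{eqn:ST2}). You also fill in more of the computational detail than the paper does (it explicitly declines to reproduce the computations). But there is a genuine gap at exactly the point where the paper adds something new. Every group-theoretic identity you invoke is only available on the quasi-invertible locus: the geometric Bergman operator $\beta(x,b)=B^{o,o'}_{xb}$ is only \emph{defined} when $(o',o,\dots)$ closes up to a quadruple in $\cD_4'$, Lemma \ref{la:Berg} presupposes that the relevant element lies in the big cell $\Omega$, and the conjugation rule $\beta(x,b)\,L_o^{c,o'}\,\beta(x,b)\inv=L_o^{\beta(x,b)(c),o'}$ that you use to close (JP3) makes no sense for a general pair $(x,b)\in V^+\times V^-$. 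Loos bridges this by Zariski density; over an arbitrary base ring $\K$ that argument is unavailable, and your proposal never says how the polynomial identities, once established for quasi-invertible arguments, propagate to all of $V^+\times V^-$. The paper's substitute is precisely Lemma \ref{la:density} (``Koecher's principle''): pass to the jet-ring extension $J^k\cX$, where the nilpotent perturbations $(\delta a,o,o',\delta x)$ of the base configuration are automatically quasi-invertible, and recover the full identity by extracting homogeneous components in $\delta$. Your ``collect the mixed $\eps_1\eps_2$-term'' step gestures at the same mechanism but is applied only to the arguments you chose to infinitesimalize, not to the ones (such as $x$ in $\beta(x,b)$) that you keep at finite values; without the density lemma the argument does not close.

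Two smaller points. First, the paper disposes of the case where $V$ has no $6$-torsion in one line via Theorem \ref{th:LJP} (a quadratic Jordan pair is then equivalent to a linear one); your proposal does not use this shortcut, which is harmless but means you carry the full computation even where it is unnecessary. Second, some of your bookkeeping is off: conjugating $L_o^{\eps b,o'}\in\g_1$ by the translation $L_{o'}^{x,o}$ does \emph{not} land in $\g_1$ --- the field $y\mapsto Q(y-x)b$ has components in all three graded pieces, and its constant term is $Q(x)b\in V^+$, not ``$Q'(b)x\in V'$'' as you write. These slips are repairable, but they sit precisely in the part of the argument you identify as ``the only real obstacle'', so they would need to be fixed before the sketch becomes a proof.
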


\begin{proof}
If $V$ has no $6$-torsion, by the preceding remarks, the Jordan pair is linear, and hence
 the claim follows from Theorem \ref{th:LJP}.
In the general case, one can adapt to our framework the arguments given in the proof of \cite{Lo79}, Th.\ 4.1;
however, since the computations are fairly long and involved, we will not reproduce them here in full
detail. 
The main ingredients used in loc.\ cit.\ are the  relations between ``usual'' translations and
quasi-translations (in our framework: Lemma \ref{la:Berg}), and 
the behavior  of (quasi-) translations with respect to scalars ((\ref{eqn:ST1}), (\ref{eqn:ST2}));
these relations furnish a description of the elementary projective group by generators and relations, from
which the Jordan pair identities are deduced by using algebraic differential calculus in the setting of
algebraic geometry  (\cite{Lo79}, page 40). 
All of these arguments carry over to our setting; we only have to replace
 the argument of Zariski-density used  repeatedly in loc.\ cit.\
by the following more general argument, which in turn is a geometric version of ``Koecher's principle
on identitities'' saying that {\em a Jordan polynomial which vanishes in all quasi-invertible Jordan pairs
is zero} (see  \cite{Lo95}, p.\  97, for this formulation).

\begin{lemma}[``Koecher's principle'']
\label{la:density}
A Jordan polynomial which vanishes on all quasi-invertible quadruples of Jordan geometries is zero.
More formally, this means:  assume 
$P = P_\cX$ is a Weil   law, depending functorially on Jordan geometries $\cX$, such that $P_\cX$ is
 defined for quadruples $(a,o,o',x) \in \cD_4(\cX)$ and 
is polynomial in $(a,x)$ for all fixed base points $(o,o') \in \cD_2(\cX)$;  
if $P$ vanishes for all quasi-invertible quadruples
$(a,o,o',x) \in \cD_4'(\cX)$ in all Jordan geometries $\cX$, then $P=0$.
\end{lemma}

\nin {\em Proof of the lemma.} 
Considering $(o,o')$ as fixed, we suppress it in the notation.
Let $P(a,x)=0$ be a polynomial identity of degree $k$, valid for all quasi-inverible quadruples
$(a,o,o',x) \in \cD_4'$ in Jordan geometries $\cX$.
Let $J^k \cX$ be the scalar extension of $\cX$ by the {\em jet ring}
$J^k\K:= \K[X]/(X^{k+1}) = \K[\delta]$ (see (\ref{eqn:jetring})).
Just as in case $k=1$ (tangent bundle), $J^k \cX$ is a bundle over $\cX$, called the
{\em $k$-th order jet bundle of $\cX$}: in every chart of the canonical atlas, it has a product structure,
and likewise, the set $\cD_4' (J^k \cX)$ is a bundle over $\cD_4'(\cX)$.
Fixing $(o,o')$ as base point, all elements
$(\delta a, \delta x)$ with $(x,a) \in V^+ \times V^-$ are hence quasi-invertible
(i.e., $(\delta a,o,o',\delta x)$, lying in the fiber over $(o',o,o',o)$, belongs to $\cD_4'(J^k \cX)$).
By assumption, we thus have $P(\delta x,\delta a)=0$.
Expanding this polynomial and ordering according to powers
$\delta,\delta^2,\ldots,\delta^k$, we see  that all homogeoneous parts of the polynomial $P$ vanish,
and hence $P=0$. This proves the lemma and the theorem. 
\end{proof}

\nin
The proof of the lemma takes up the  idea that, geometrically,  ``modules'' of Jordan pairs should
correspond to {\em bundles in the category of Jordan geometries}. 
Vector bundles then correspond to {\em representations} in the sense of \cite{Lo75}, 2.3; they are scalar
extensions by Weil algebras $\bA =\K\oplus \mbA$ such that $\mbA$ has zero product
(vector algebras, cf.\ \cite{Be14}).  In this context, the proof of the lemma
leads to a geometric version of the {\em permanence principle}
\cite{Lo75}, 2.8.

\begin{theorem}[The Jordan triple system of a Jordan geometry with polarity]
Let $(\ul \cX,\ul J)$ be a Jordan geometry over $\K$ with polarity $\ul p:\ul \cX \to \ul \cX$
and base point $(o,o')\in \cD_2$ such that $o' = p(o)$.
Then the $\K$-module  $V = U_{o'}$ becomes a quadratic Jordan triple system
with respect to the map  $Q(x)y = p Q^\pm(x) p(y)$.
\end{theorem}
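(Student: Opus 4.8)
The plan is to derive the Jordan triple system on $V=\cU_{o'}$ from the quadratic Jordan pair $(V^+,V^-)=(\cU_{o'},\cU_o)$ of the preceding theorem together with the extra datum supplied by the polarity, and then to quote the standard algebraic dictionary ``Jordan pair with involution $=$ Jordan triple system'' (\cite{Lo75}). Concretely, the polarity $\ul p$ will be shown to be an isomorphism of the quadratic Jordan pair onto its opposite, and feeding this ``involution'' into the pair product $Q^{\pm}$ will give a quadratic map $V\to\End(V)$ satisfying the triple identities.

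First I would record the elementary consequences. From $p^2=\id_\cX$ and $p(o)=o'$ we get $p(o')=o$, so $\ul p$ interchanges the base pair $(o,o')$; by Lemma \ref{la:morph} it restricts to torsor isomorphisms $\cU_{o'}\leftrightarrow\cU_o$, and since $\ul p$ is compatible with the scaling map these restrictions $p_+:V^+\to V^-$ and $p_-:V^-\to V^+$ are mutually inverse $\K$-module isomorphisms, $p_-=p_+^{-1}$. Moreover, being a Weil law, $\ul p$ has $\K$-linear tangent maps (Theorem \ref{th:linearity}), so $p$ commutes with the tangent scalar $\eps$; I shall write $p$ also for $T_{o'}p$ and $T_o p$.

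Next I would establish the transport relations. As $\ul p$ is an automorphism of $\ul J$, it conjugates translations by $g L^{uv}_w g^{-1}=L^{g(u)g(v)}_{g(w)}$ with $g=p$; applying this to the defining quasi-translation $Q(x)a=L^{-\eps a,o'}_o(x)$ of \eqref{eqn:Q-def1} and using $p(o)=o'$, $p(o')=o$, $p(\eps a)=\eps\,p(a)$, one obtains
\[
p\bigl(Q(x)a\bigr)=Q'\bigl(p(x)\bigr)\bigl(p(a)\bigr)\qquad(x\in V^+,\ a\in V^-),
\]
and dually $p(Q'(b)v)=Q(p(b))(p(v))$. In operator form, using $p_-=p_+^{-1}$, this says $Q'(p(x))=p_+\circ Q(x)\circ p_+$, i.e.\ $(p_+,p_-)$ is an isomorphism of the quadratic Jordan pair $(V^+,V^-,Q,Q')$ onto its opposite $(V^-,V^+,Q',Q)$. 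One may then \emph{define} the quadratic map of the statement as $Q(x)y:=Q^+(x)(p(y))$ on $V=\cU_{o'}$ (this is the same as $p\,Q^{\pm}(x)\,p(y)$ once domains and codomains are matched up via $p$, using $p^{-1}=p$); it is quadratic in $x$ because $Q^+$ is and $p$ is linear, and it takes values in $V$ by the transport relations.

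Finally, the triple identities follow \emph{formally}: substituting $a=p(y)$ into (JP1)--(JP3) for the pair and using the transport relations repeatedly to move the $p$'s across the $Q$'s turns (JP1)--(JP3) into the corresponding identities (JT1)--(JT3) for $(V,Q)$ — e.g.\ $Q^+(Q^+(x)a)=Q^+(x)Q^-(a)Q^+(x)$ becomes $Q(Q(x)y)=Q(x)Q(y)Q(x)$. Since (JP1)--(JP3) hold in all scalar extensions by the preceding theorem and the transport relations are natural in the Weil algebra (as $\ul p$ is a Weil law and the operators involved are natural), the derived identities hold in all scalar extensions as well; alternatively one invokes Lemma \ref{la:density}. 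Functoriality in the geometry-with-polarity is inherited from that of the Jordan pair. I expect the main obstacle to be exactly this bookkeeping: checking that the quasi-translation operators defining $Q^{\pm}$ are built from $\ul J$ and the tangent structure alone (so that a $\ul J$-automorphism $\ul p$ already yields the transport relations, without secretly needing $\ul p$ to preserve more structure than claimed), and pinning down the precise placement of the $p$'s so that all domains match. (If one prefers, the transport relation can be obtained in one stroke from the functoriality of the Jordan-pair construction applied to the geometry isomorphism $p:(\ul\cX,(o,o'))\to(\ul\cX,(o',o))$, following the same pattern as in the proof of Theorem \ref{th:LJP} and \cite{Lo79}.)
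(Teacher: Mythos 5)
Your proposal is correct and follows essentially the same route as the paper, which simply observes that the polarity defines an involution of the Jordan pair of the preceding theorem and then invokes the standard dictionary ``Jordan pair with involution $=$ Jordan triple system'' from \cite{Lo75}. Your write-up merely fills in the details the paper leaves to the reader (that $p$ transports the quasi-translation operators defining $Q^{\pm}$, hence is a pair involution, and the formal substitution $a=p(y)$ into (JP1)--(JP3)).
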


\begin{proof}
The polarity $p$ defines an involution of the Jordan pair $(V^+,V^-)$ from the preceding theorem,
and a Jordan pair with involution is the same as a Jordan triple system (cf.\ \cite{Lo75}).
\end{proof}

\section{Jordan theoretic formulae for the inversions}\label{sec:Fomulae}

Having  the Jordan pair $(V^+,V^-)$ associated to $(\ul \cX,\ul J)$ at our disposition, we wish to describe
the geometric structure of $(\cX,J)$ in terms of the Jordan pair,
by  giving  Jordan theoretic formulae expressing $J^{xz}_a(y)$ and
$J^{xz}_a(b)$ in terms of the  Jordan pair. Notation for Jordan pairs is as in definition \ref{subsec:QJP};
in order to simplify formulas, we suppress subscripts $\pm$, by assuming always
that $o,v,x,y,z \in V^\pm$ and $o',a,b,c \in V^\mp$. 

\begin{definition}
As usual in Jordan theory  (cf.\ \cite{Lo75}), one defines:

\begin{enumerate}
\item the {\em
Bergman operator} is defined by
$B(y,b)x:= x - D(y,b) x + Q(y) Q(b) x$, 
\item
$(x,a)$ is called {\em quasi-invertible} if $B(x,a)$ is invertible, and then one defines 
\[
x^a := B(x,a)\inv \bigl(x-Q(x)a \bigr) \, ,
\]
and then
 the {\em inner automorphism defined by $(x,b)$} is given by
\[
\beta(x,a) := \bigl( B(x,a), B(a,x)\inv \bigr) .
\]
\end{enumerate}
\end{definition}

\nin 
 In order to obtain the general formulae for $J^{xz}_a$, we proceed in three steps (the reader may compare
 with the example of the projective line given in the introduction):
 
 \ssk
 Step 1:  two of the points are base points (case $(z,a)=(o,o')$, Lemma \ref{step1})
 
 Step 2: one point among $x,z$ is the base point $o$ (Lemma \ref{step2})
 
 Step 3: general case -- all three points different from base points (Theorem \ref{step3}).
 
 \begin{lemma}\label{step1}
 For $x,v \in V^+ = \cU_{o'}$ and $a  \in V^- = \cU_o$, the following holds:
 the affine space structure of $V^+$ (and dually of $V^-$) is described by the
translations: $L_{o'}^{vo}(x)=v+x$ and the
major dilations: $r_x^{o'}(y) = (1-r) x + ry$, and in particular,
$J^{oo}_{o'}(x)=-x=(-1)_o^{o'}(x)$.
The Jordan theoretic
Bergman operator coincides with the ``geometric Bergman operator'' defined in (\ref{eqn:B}) :
$\beta(x,a) = B^{o,o'}_{xa}$, and
$x \top a$ if, and only if, $(x,a)$ is quasi-invertible, and then
\[
L_o^{ao'}(x)  = x^a = B(x,a)\inv \bigl(x - Q(x)a \bigr) .
\]
If $(-v,a)$ is quasi-invertible, then
\[
J_o^{ao'}(v)  = L_o^{ao'} (-v) = (-v)^a =  - B(-v,a)\inv \bigl(Q(v)a +v \bigr) ,
\]
\end{lemma}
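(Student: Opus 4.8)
The plan is to treat the three groups of assertions --- the affine description of $V^\pm$, the identification of the geometric and Jordan-theoretic Bergman operators together with the quasi-inverse formula, and the formula for $J_o^{ao'}$ --- in that order, deducing the last two from the first. The affine description is pure unwinding of definitions: by construction the torsor law on $\cU_{o'}$ with origin $o$ is $x+z=(xoz)_{o'}=L_{o'}^{xo}(z)$, so $L_{o'}^{vo}$ is translation by $v$; the identity $r_y^a(x)=(1-r)y+rx$ for the scaling map (change of origin in an affine space, from the subsection on major and minor dilations) gives $r_x^{o'}(y)=(1-r)x+ry$ after exchanging the roles of the two points, and at $r=-1$, $x=o$ this reads $y\mapsto -y$; finally $J^{oo}_{o'}=J^{o'o'}_o$ by the symmetry axiom (S) and $J^{o'o'}_o=(-1)_o^{o'}$ by (Tr), so $J^{oo}_{o'}(x)=-x$.

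For the quasi-inverse formula I would first record the structural facts that let the differential calculus do its work. The quasi-translations $\{L_o^{ao'}\mid a\in V^-\}$ are exactly the translation group $\bfT_o$, an abelian subgroup of $\bfG_o$ isomorphic to $(V^-,+)$ via $a\mapsto L_o^{ao'}$ (from $L_c^{xy}L_c^{yz}=L_c^{xz}$ and the torsor law of $\cU_o$ with origin $o'$); being defined over $\Z$ it extends functorially to scalar extensions, so $L_o^{(a+\eps b),o'}=L_o^{ao'}\circ L_o^{\eps b,o'}$ in the tangent geometry. By Theorem~\ref{th:quadratic} and the definition \eqnref{Q-def} of $Q$, the infinitesimal quasi-translation $L_o^{\eps a,o'}$ acts on the chart $V^+$ by $x\mapsto x+\eps Q(x)a$; and by \eqnref{ST2} one has the homogeneity $r_o^{o'}\circ L_o^{ao'}\circ(r_o^{o'})\inv=L_o^{r\inv a,o'}$. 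Next I would invoke Lemma~\ref{la:Berg}, which exhibits $\beta(y,b)$ as the denominator $D^{o,o'}$ of the product $L_o^{o'b}\circ L_{o'}^{yo}$ of a translation and a quasi-translation; expanding this product in the canonical chart and reading off the linear map $D^{o,o'}$ through the triple decomposition \eqnref{triple} should identify it with the operator $z\mapsto B(y,b)z=z-D(y,b)z+Q(y)Q(b)z$ on $V^+$, whence $\beta(x,a)=B^{o,o'}_{xa}$. I expect this identification to be the main obstacle: the linear (Peirce) term is forced by the first-order expansion above, but pinning down the full \emph{quadratic} term $Q(y)Q(b)$ requires combining the chart description of quasi-translations with the cocycle relation \eqnref{Coc} and the homogeneity in a somewhat delicate computation --- exactly the point where Loos's treatment uses Zariski density and where here Lemma~\ref{la:density} will take over.

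The closed formula for $x^a=L_o^{ao'}(x)$ then follows by applying the triple decomposition \eqnref{triple} (Lemma~\ref{la:tripledecomposition}) to $g=L_{o'}^{xo}\circ L_o^{ao'}$ and using Lemma~\ref{la:Berg} again: one reads off in the chart that $L_o^{ao'}(x)$ is the fraction with denominator $B(x,a)$ and numerator $x-Q(x)a$. Concretely, $B(x,a)\cdot L_o^{ao'}(x)=x-Q(x)a$ is a polynomial identity in $(x,a)$ for each fixed base point, and, once verified on all quasi-invertible quadruples, Lemma~\ref{la:density} upgrades it to an identity valid wherever both sides make sense. By the closed formula, invertibility of $B(x,a)$ is then exactly the condition for $L_o^{ao'}(x)$ to lie in $\cU_{o'}$, i.e.\ for $(x,a)$ to be quasi-invertible; and this is equivalent to $x\top a$ because the automorphism $L_o^{ao'}$ carries the transversality relation faithfully, so that $L_o^{ao'}(x)\top o'$ reduces to a transversality condition on $(x,a)$. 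Dividing by $B(x,a)$ on the quasi-invertible locus gives $x^a=B(x,a)\inv\bigl(x-Q(x)a\bigr)$.

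The formula for $J_o^{ao'}$ is then a short deduction. Since $-v=J^{oo}_{o'}(v)=J^{o'o'}_o(v)$ and $L_o^{ao'}=J_o^{ao'}\circ J_o^{o'o'}$ (taking $u=o'$ in the definition of $L$, with $J_o^{o'o'}$ an involution), we get $J_o^{ao'}(v)=L_o^{ao'}\bigl(J_o^{o'o'}(v)\bigr)=L_o^{ao'}(-v)=(-v)^a$; substituting $x=-v$ into the quasi-inverse formula and using $Q(-v)=Q(v)$ (homogeneity of $Q$) yields $(-v)^a=B(-v,a)\inv\bigl(-v-Q(v)a\bigr)=-B(-v,a)\inv\bigl(Q(v)a+v\bigr)$, and $(-v,a)$ being quasi-invertible is precisely the condition $J_o^{ao'}(v)\in\cU_{o'}$.
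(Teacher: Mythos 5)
Your proposal follows essentially the same route as the paper: the paper's own proof of this lemma is a one-line citation to Loos (1979), Lemma 4.7, transported to the present setting via the density Lemma \ref{la:density}, which is precisely the strategy you outline for the Bergman-operator identification and the quasi-inverse formula (and you correctly flag that pinning down the quadratic term $Q(y)Q(b)$ is exactly where that density argument is needed). The parts you work out in full --- the affine description via (S), (Tr) and (Du), the equivalence of $x\top a$ with quasi-invertibility via $(L_o^{ao'})\inv(o')=a$, and the deduction $J_o^{ao'}=L_o^{ao'}\circ J_o^{o'o'}$ giving $J_o^{ao'}(v)=(-v)^a$ --- are correct and are simply taken for granted in the paper.
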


\begin{proof}
The statement on the action of $L_o^{a,o}$ and $J_o^{ao'}$ by (quasi-)inverses is proved
in \cite{Lo79}, Lemma 4.7 (the framework in loc.\ cit.\ is slightly different from ours, but the proof carries over
by using Lemma \ref{la:density}; see also
\cite{BeNe04}, Section 3 for another proof in a different setting). 
\end{proof}

\begin{lemma}\label{step2}
If $(-v,a) \in V^+ \times V^-$ is quasi-invertible, then
\[
J_o^{ao'}(v)  = L_o^{ao'} (-v) = (-v)^a =  - B(-v,a)\inv \bigl(Q(v)a +v \bigr) ,
\]
and, with $v' := J_a^{vo}(o') = 2a + Q(a)v$, we have the triple decomposition (\ref{eqn:triple})
\[
J_a^{vo} = L_{o'}^{v,o} \circ (- \beta(v^{-a},a)) \circ L_o^{o',v'} =  L_{o'}^{v,o} \circ (- \beta(-v,a))\inv  \circ L_o^{o',v'} \, .
\]
\end{lemma}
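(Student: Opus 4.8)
The plan is to prove the two stated formulas for $J_a^{vo}(v)$ and the triple decomposition of $J_a^{vo}$ separately, noting that the first assertion is essentially a restatement of the last line of Lemma~\ref{step1} (applied with the roles of the points suitably specialized), so the real content is the triple decomposition.

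First I would fix the base point $(o,o')$ and observe that $J_a^{vo}$ is an automorphism fixing $a$ and sending $v$ to $o$; in particular it maps $\cU_o$ to $\cU_v$ and $v$ is transversal to $a$, so $(-v,a)$ is quasi-invertible by Lemma~\ref{step1}, and $(-v)^a = J_o^{ao'}(v)$ is given by the Bergman formula. This disposes of the first displayed equation. For the triple decomposition, I would apply Lemma~\ref{la:tripledecomposition}: since $J_a^{vo}$ belongs to the big cell $\Omega^{o,o'}$ (one must check $J_a^{vo}(o) \top o'$, which follows because $J_a^{vo}(o)$ lies in $\cU_v$ and $v \top o'$ is the assumption $v\in V^+=\cU_{o'}$, together with the fact that $J_a^{vo}$ preserves $\top$), the decomposition
\[
J_a^{vo} = L_{o'}^{t,o}\, h\, L_o^{t',o'}
\]
exists and is unique, with $t = J_a^{vo}(o)$, $t' = -(J_a^{vo})\inv(o')$, and $h = D(J_a^{vo})$. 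Now $(J_a^{vo})(o) = J_a^{ov}(o)$'s image — more directly, $J_a^{vo}$ exchanges $v$ and $o$ while fixing $a$, and one computes $J_a^{vo}(o) = v$ (this is just (IP), reading $J_a^{vo}(o)=v$), so $t=v$; similarly $(J_a^{vo})\inv = J_a^{vo}$ by involutivity (IN), so $t' = -J_a^{vo}(o') =: v'$, and I would identify $v'$ with $2a + Q(a)v$ using the chart formula: $v' = J_a^{vo}(o')$ means applying the point-reflection $s_a$ of the symmetric space $\cU_{vo}$ at $a$ to $o'$, which in the $V^-$-chart is computed via the quasi-inverse/Bergman expansion exactly as in Lemma~\ref{step1}, giving the Taylor polynomial $2a + Q(a)v + \ldots$; but since $J_a^{vo}$ acts on $\cU_o = V^-$ we only see the affine-quadratic part, and $v' = 2a + Q(a)v$ is the correct closed form — this matches the dual of the formula $(-v)^a = -B(-v,a)\inv(Q(v)a+v)$ after transporting by the translations.

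The remaining point is to identify the linear part $h = D(J_a^{vo})$ with $-\beta(v^{-a},a)$, equivalently with $-\beta(-v,a)\inv$. Here I would use Lemma~\ref{la:Berg} (the denominator is a Bergman operator) together with the transplantation formula (\ref{eqn:Transp}) and the rewriting of $\Lambda$-operators in terms of $J$'s given in (\ref{eqn:La''}); concretely, one writes $J_a^{vo}$ as a product of $L$-operators (or directly invokes the second, $J$-flavored identity in Lemma~\ref{la:Berg} which computes $J_o^{o'b}\circ J_{o'}^{yo}$ as a translation, times $\beta(y,-b)$, times another translation) and reads off the middle term. The sign and the inverse come from the factor $(-1)$ hidden in passing between $L$- and $J$-operators, i.e.\ from $J_{o'}^{oo} = (-1)_o^{o'}$, combined with $(B_{xa}^{yb})\inv = B_{ax}^{by}$ and the homogeneity relation $(rx)^a = r\,x^{ra}$ from (\ref{eqn:ST2}) which gives $v^{-a}$ in terms of $(-v)$.

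The main obstacle will be bookkeeping the signs and the inverse in the middle factor: getting $-\beta(v^{-a},a) = -\beta(-v,a)\inv$ right requires carefully tracking how the point-reflection $J_a^{vo}$ (as opposed to the translation-type operators $L$) contributes a $-1$, and then matching $v^{-a}$ against $-v$ via quasi-inverse identities. I expect this to be a short but error-prone computation; everything else (existence/uniqueness of the triple decomposition, the values $t=v$, $t'=v'$, the Bergman formula for $(-v)^a$) is immediate from the cited lemmas.
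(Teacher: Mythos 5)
Your overall strategy (existence and uniqueness of the triple decomposition from Lemma \ref{la:tripledecomposition}, then computing the three factors $t=J_a^{vo}(o)=v$, $t'=-J_a^{vo}(o')$, $h=D(J_a^{vo})$) is viable and differs in organization from the paper, which constructs the decomposition directly and only afterwards reads off $J_a^{vo}(o')$ by uniqueness. The first displayed identity is indeed just Lemma \ref{step1}, and the membership $J_a^{vo}\in\Omega^{o,o'}$ follows immediately from $J_a^{vo}(o)=v\top o'$. However, the two substantive identifications are left unproved, and this is where the actual content of the lemma lies. The missing idea is the conjugation
$J_a^{vo}=J_o^{ao'}\,J_{o'}^{w,o}\,J_o^{ao'}$ with $w=J_o^{ao'}(v)=(-v)^a$ (an instance of distributivity (D), using that $J_o^{ao'}$ fixes $o$, sends $o'$ to $a$, and sends $w$ to $v$). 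This is what makes Lemma \ref{la:Berg} applicable: that lemma only handles products of the special form $J_o^{o'b}\circ J_{o'}^{yo}$, whereas $J_a^{vo}$ is a single inversion based at $a$, and saying one ``writes $J_a^{vo}$ as a product of $L$-operators or directly invokes Lemma \ref{la:Berg}'' does not bridge that gap. Once the conjugation is in place, the commutation relation gives the translation--$\beta(w,-a)$--translation factorization, the trailing $J_o^{ao'}$ converts the $J$-factors into $L$-factors at the cost of the sign $-$ in the middle term, and the quasi-inverse identities $\beta((-v)^a,-a)=\beta(v^{-a},a)=\beta(-v,a)^{-1}$ (JP35) finish the identification of $h$.

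The identification $v'=J_a^{vo}(o')=2a+Q(a)v$ is also not established by your argument: the appeal to a ``Taylor polynomial'' of which one ``only sees the affine-quadratic part'' is a heuristic, not a proof, and there is no a priori reason the exact value should coincide with a truncation of some expansion. In the paper this identity comes out of the same computation: the commutation relation produces $v'=a-J_{o'}^{ow}(a)=a+a^{-w}$, and the symmetry principle $x^y=x+Q(x)y^x$ together with $(-w)^a=v$ yields $a+a^{-w}=2a+Q(a)(-w)^a=2a+Q(a)v$. So both hard steps of your plan ultimately require the very computation you defer, and without the conjugation trick it is not clear how you would carry it out.
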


\begin{proof}
To compute $J^{vo}_{a}$, note that
\[
J^{v,o}_a = J_o^{ao'} J_{o'}^{J_o^{ao'} v,o} J_o^{ao'} = J_o^{ao'} J_{o'}^{w,o} J_o^{ao'} 
\]
with $w= J_o^{ao'} v=(-v)^a$.
Now
use the ``commutation relation'' from Lemma \ref{la:Berg}  
\[
J_o^{ao'} J_{o'}^{wo} =
J_{o'}^{J_o^{o'a}(w),o}   \, \beta(w,-a)  \,  J_o^{o',J_{o'}^{ow}(a)}
\]
to get the triple decomposition
\begin{align*}
J^{v,o}_a =  J_{o}^{ao'} J_{o'}^{w,o}   J_{o}^{ao'}  & =
J_{o'}^{J_o^{o'a}(w),o}   \, \beta(w,-a)  \,  J_o^{o',J_{o'}^{ow}(a)}  J_{o}^{ao'} 
\cr
& = L_{o'}^{J_o^{o'a}(w),o}   \, \bigl( -\beta(w,-a) \bigr)   \,  L_o^{J_{o'}^{ow}(a),a} 
\cr
& =L_{o'}^{v,o}   \, \bigl( -\beta((-v)^a ,-a) \bigr)   \,  L_o^{o',v'}
\cr
& =L_{o'}^{v,o}   \, \bigl( -\beta(-v ,a) \bigr)\inv   \,  L_o^{o',v'}
\end{align*}
where $v' = a - J_{o'}^{ow}(a)$; note also that
$\beta((-v)^a,-a)) = \beta(v^{-a},a)=\beta(v,-a)\inv$ (identity JP 35 from \cite{Lo75}).
We give another expression for $v'$ by using the {\em symmetry principle}
$x^y = x + Q(x)y^x$ (\cite{Lo75}, Prop.\ 3.3)  
\begin{align*}
v' = a - J_{o'}^{ow}(a) & = a - (-a)^w = a + a^{-w}
\cr
& = 2a + Q(a) (-w)^a = 2a + Q(a) v   \, .
\end{align*}
This proves the triple decomposition for $J^{vo}_a$ given in the claim.
\end{proof}

\nin
By uniqueness of the triple decomposition, it follows that
\begin{equation}
J^{v,o}_a(o')= 2a + Q(a) v .
\end{equation}
The formula for $J^{vo}_a$ has also been given, in another framework, in \cite{Be08}, Th.\ 2.2. 

\begin{theorem}\label{step3}
For $x,y,z \in V^+$ and $a,b,c \in V^-$, the following holds:
if $(x,-a)$ and $(z,Q(a)z)$ are quasi-invertible, then
we have the triple decomposition (\ref{eqn:triple})
$$
J^{xz}_a = L^{vo}_{o'} \circ h \circ L^{o' v'}_o ,
$$
where
\begin{align*}
v = J^{xz}_a (o) & = (xoz)_{a} = (x^{-a} + z^{-a})^a 
= x + B(x,-a) z^{Q(a) x} \, ,
\cr
v' = J^{xz}_a(o') & =   2a +  Q(a)x  +  Q(a) B(x,-a) z^{(Q(a)x)} 
\cr &= 2a +  Q(a)x +  B(a,-x) (Q(a)z))^x, 
\cr
h = D(J^{xz}_a) & = - \beta\bigl((-v)^a,-a \bigr) = - \beta\bigl( (-x)^a + (-z)^a, -a \bigr) \, .
\end{align*}
Using this notation, 
 the action of $J^{xz}_a$ on $V^+$, resp.\ on $V^-$, is given by
\begin{align*}
J^{xz}_a(y) &= v - \beta(v^{-a},a) y^{-v'} 
\cr
& = x + B(x,-a) z^{Q(a) x} \,  - B \bigl( x^{-a} + z^{-a},- a \bigr) y^{(- 2a -  Q(a)x  -  B(a,-x) (Q(a)z)^x)}
\cr
J^{xz}_a(b) & = v' - \beta(v^{-a},a)\inv b^{-v} = v' - \beta(v,a) b^{-v} =
v' - B(a,v)\inv b^{-v} 
\cr
& =
 2a +  Q(a)x +  B(a,-x) (Q(a)z))^x -   \cr 
& \qquad \qquad  B\bigl(a, x+B(x,-a)z^{Q(a)z}  \bigr) \cdot  b^{(-  2a -  Q(a)x -  B(a,-x) (Q(a)z))^x )}
\end{align*}
If, moreover, $(y,-a)$ is quasi-invertible, we have also
\[
J^{xz}_a(y) = (x^{-a} - y^{-a}+ z^{-a})^a .
\]
\end{theorem}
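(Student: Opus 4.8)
The strategy is to bootstrap from the two-point formula of Lemma~\ref{step2} by conjugating with a suitable translation, exactly as the three-step outline in the introduction to this section promises. First I would write $J^{xz}_a$ in terms of the ``one base point'' case by translating $x$ to the origin: using the torsor structure of $\cU_a$ and the transplantation formula~(\ref{eqn:Transp}), together with the fact that $L^{xo}_{o'}$ (translation in $V^+$) is an automorphism of the whole structure, one gets
\[
J^{xz}_a = L^{xo}_{o'} \circ J^{o,z'}_{a'} \circ L^{ox}_{o'}
\]
for appropriate $z' = L^{ox}_{o'}(z)$ and $a' = L^{ox}_{o'}(a)$, reducing Step~3 to Step~2 applied at the translated data. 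Actually, the cleaner route — and the one that produces the stated closed forms — is to first establish the value $v = J^{xz}_a(o) = (xoz)_a$ as an element of $V^+$ directly: since $(xyz)_a$ is the torsor product in $\cU_a$, and $\cU_a$ seen through the chart $V^+$ is an affine space whose translations are the $L$-operators of Lemma~\ref{step1}, one computes $(xoz)_a$ by expressing each of $x,z$ via their $o'$-quasi-inverses $x^{-a}, z^{-a}$ (this is where $(x,-a)$ quasi-invertible is needed), using Lemma~\ref{step1} to pass between $V^+$ and the ``$a$-chart'', adding in the $a$-chart, and passing back. The intermediate identity $(x^{-a}+z^{-a})^a = x + B(x,-a)z^{Q(a)x}$ is then a Jordan-pair computation — it is essentially the addition formula for quasi-inverses, and I would derive it from the identities JP in~\cite{Lo75} (the ``symmetry principle'' $x^y = x + Q(x)y^x$ of Prop.~3.3 loc.\ cit., plus the composition rule $x^{a+b} = (x^a)^b$ when both sides make sense, and the homogeneity $(rx)^a = rx^{ra}$ from~(\ref{eqn:ST2})).

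Once $v = J^{xz}_a(o)$ is known, I would get $v' = J^{xz}_a(o')$ and the middle factor $h = D(J^{xz}_a)$ essentially for free: by the transplantation formula~(\ref{eqn:Transp}), $J^{xz}_a = J^{v o}_a$ where now one of the outer points is the origin $o$, so Lemma~\ref{step2} applies verbatim with $v$ in place of its $v$, giving the triple decomposition $J^{xz}_a = J^{vo}_a = L^{vo}_{o'} \circ (-\beta((-v)^a,-a)) \circ L^{o',v'}_o$ with $v' = 2a + Q(a)v$ and $h = -\beta((-v)^a,-a) = -\beta(-v,a)^{-1}$. Substituting the formula for $v$ into $v' = 2a + Q(a)v$ and expanding $Q(a)(x + B(x,-a)z^{Q(a)x})$ using the identity $Q(a)B(x,-a) = B(a,-x)Q(a)$ (a standard consequence of JP1--JP3, see the Bergman-operator identities in~\cite{Lo75}) yields the two displayed expressions for $v'$. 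The formula $h = -\beta((-x)^a + (-z)^a, -a)$ follows from $(-v)^a = (-x)^a + (-z)^a$, which is the same quasi-inverse addition identity used for $v$.

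The action formulae on $V^+$ and $V^-$ are then just the readout of the triple decomposition in a chart. Applying $J^{xz}_a = L^{vo}_{o'} \circ h \circ L^{o',v'}_o$ to $y \in V^+$: the rightmost factor $L^{o',v'}_o$ acts on $V^+$ as the quasi-translation sending $y \mapsto y^{-v'}$ (by Lemma~\ref{step1}, since $L^{o',v'}_o = L^{o',v'}$ acts by quasi-inverse with the sign convention there), then $h = -\beta(v^{-a},a)$ acts linearly on $V^+$ via its first component $-B(v^{-a},a)$ — here I use $\beta(v^{-a},a) = \beta(-v,a)^{-1}$ and $\beta(-v,a) = (B(-v,a), B(a,-v)^{-1})$ — and finally $L^{vo}_{o'}$ translates by $v$. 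Stringing these together gives $J^{xz}_a(y) = v - \beta(v^{-a},a)y^{-v'}$, and substituting the formulae for $v, v'$ gives the long explicit form. The computation for $J^{xz}_a(b)$, $b \in V^-$, is dual: the factors act on $V^-$ with roles of numerator/denominator swapped, $L^{o',v'}_o$ contributes the translation $b \mapsto b$? — no: on $V^-$ it is $L^{vo}_{o'}$ that acts as a quasi-translation $b \mapsto b^{-v}$ and $L^{o',v'}_o$ that translates by $v'$, with $h$ acting by its second component $B(a,v)^{-1} = \beta(v,a)$ (using $\beta(v^{-a},a)^{-1} = \beta(v,a)$, JP35 of~\cite{Lo75}) — giving $J^{xz}_a(b) = v' - \beta(v,a)b^{-v}$. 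Finally, the last displayed formula $J^{xz}_a(y) = (x^{-a} - y^{-a} + z^{-a})^a$ under the extra hypothesis that $(y,-a)$ is quasi-invertible is the cleanest form: it says $J^{xz}_a$ is conjugate, via the chart change $w \mapsto w^{-a}$ (which by Lemma~\ref{step1} identifies $V^+$ with $\cU_a$), to the torsor inversion $w \mapsto (xwz) \mapsto$ the point-reflection $u \mapsto x^{-a} - u + z^{-a}$ in the affine space $\cU_a$; this is exactly the ``additive'' interpretation~(A) of the $J$-map from the introduction, and it follows from Lemma~\ref{la:ta} (the torsor action) once one knows that $w \mapsto w^{-a}$ intertwines the $V^+$-chart with the $\cU_a$-chart, which is again Lemma~\ref{step1}.

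\textbf{Main obstacle.} The conceptual steps are short, but the bottleneck is the quasi-inverse algebra: verifying the addition formula $(x^{-a} + z^{-a})^a = x + B(x,-a)z^{Q(a)x}$ and the subsequent simplifications of $Q(a)v$ and of the Bergman operators into the stated forms. These are genuine quadratic-Jordan-pair identity manipulations — one must be careful that all quasi-inverses involved are defined (this is exactly what the hypotheses ``$(x,-a)$ and $(z,Q(a)z)$ quasi-invertible'' buy) — and over a ring where $2$ is not invertible one cannot linearize freely, so one leans on Lemma~\ref{la:density} (``Koecher's principle'') to reduce each such identity to the classical case. I would cite~\cite{Lo75} and~\cite{Lo79} for the ambient identities and present the chart-readout steps in full, but treat the pure Jordan-pair computations as ``standard, by JP1--JP3 and Lemma~\ref{la:density}'', much as the surrounding text does.
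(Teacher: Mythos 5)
Your proposal is correct and follows essentially the same route as the paper: reduce to Lemma \ref{step2} via the transplantation formula $J^{xz}_a = J^{v,o}_a$, obtain $v=(x^{-a}+z^{-a})^a$ (and the general formula $(x^{-a}-y^{-a}+z^{-a})^a$) by conjugating the flat torsor law on $\cU_{o'}$ with the quasi-translation $L_o^{ao'}$, convert to the closed form $x+B(x,-a)z^{Q(a)x}$ using Loos's quasi-inverse addition and symmetry identities, and then read off the action formulae from the triple decomposition. The only cosmetic difference is that for $J^{xz}_a(b)$ the paper invokes $J^{xz}_a=(J^{xz}_a)^{-1}$ rather than reading the factors directly on $V^-$, but these amount to the same computation.
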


\begin{proof}
Using the ``transplantation formula
(\ref{eqn:Transp}),
$J^{xz}_a = J^{J^{xz}_a(o),o}_a = J_a^{v,o}$, 
we have to compute the value $v = J^{xz}_a(o)$, and then apply the preceding theorem to get
the expressions from the claim. 
To compute $J^{xz}_a(o)$, start by observing that 
$$
J_{o'}^{xz}(y)= (xyz)_{o'} = x -y +z 
$$
whence, using that $L_{o}^{ao'} (o')=a$ and, by the preceding theorem, $L_o^{o'a}(y)= y^{-a}$,
$$
J_a^{xz} (y) = J_{L_o^{ao'} (o')}^{xz}(y)=
L_o^{ao'} J_{o'}^{L_o^{o'a}x,L_o^{o'a}z} L_o^{o'a}(y) =
(x^{-a} - y^{-a}+ z^{-a})^a \, ,
$$
proving the last formula from the claim, which for $y=o$ gives
$J^{xz}_a(o)=(x^{-a}+z^{-a})^a$.
By using \cite{Lo75}, Th.\ 3.7: $(x+z)^y=x^y+B(x,y)\inv .z^{(y^x)}$ and $x^{y+z}=(x^y)^z$, as well as
(JP35) $B(x,y)\inv= B(x^y,-y)$ and the ``symmetry principle'' 
$x^y = x + Q(x)y^x$,  we get the following Jordan theoretic formula
\begin{align*}
v= (x^{-a}+z^{-a})^a & = 
(x^{-a})^a + B(x^{-a},a)\inv (z^{-a})^{(a^{(x^{-a})})}
\cr
& =  x + B(x,-a) z^{(a^{(x^{-a})} -a)} 
\cr
& = x + B(x,-a) z^{(Q(a)x)} \, .
\end{align*}
It follows that
\begin{align*}
 J^{xz}_a(o') =  v'   & =  2a + Q(a) v   \cr
& = 2a + Q(a) x + Q(a) B(x,-a) z^{(Q(a)x)} \, .
\end{align*}
Now replace $v$ and $v'$ by these expressions in the triple decomposition from the preceding theorem.
For $J^{xz}_a(y)$, the result drops out immediately; for $J^{xz}_a(b)$,  one uses first that 
$J^{xz}_a = (J^{xz}_a)\inv$.
\end{proof}

Note that
there are other ways to compute the values of $J^{xz}_a(y)$ and of $J^{xz}_a(b)$, and equality of the results
then often  corresponds to certain Jordan-theoretic identities. 


\section{Unital Jordan and associative algebras}\label{sec:JA}

{\em Unit elements} in algebras (Jordan or associative) come from {\em closed transversal triples}:
assume $(a,b,c)=(o,o',e)$ is a pairwise transversal triple in  a Jordan geometry $(\ul \cX,\ul J)$.
According to Lemma \ref{la:ssa}, the set $U=U_{oo'}$ is a symmetric space with product
$s_x(y)=J^{oo'}_x(y)$. Since $J^{oo'}_x$ exchanges $o$ and $o'$, it induces a $\Z$-linear
bijection of $V = U_{o'}$ onto $V' = U_o$.  Fix the  point $e$
as base point  in $U$, and define, for $x \in U$, a linear map
\begin{equation}\label{eqn:JAQ}
Q_x := Q_{xe}^{oo'} = J_x^{oo'} \circ J_e^{oo'}\vert_V  : V \to V .
\end{equation}
Since $Q_x (Q_y)\inv y = J_x^{oo'} J_e^{oo'} J_e^{oo'} J_y^{oo'} y = J_x^{oo'} (y) = s_x(y)$,
the structure of $U$ can be entirely described in terms of the map 
$U \times V \to V$, $(x,y) \mapsto Q_x(y)$.

\begin{theorem}[The Jordan algebra of a Jordan geometry with pairwise transversal triple]
Let  $(\ul \cX,\ul J)$ be a Jordan geometry over $\K$ with pairwise transversal triple $(a,b,c)$.
Choose $(a,b)=:(o,o')\in \cD_2$ as base point.
Then the  $\K$-module  $V = U_{o'}$ becomes a quadratic Jordan algebra with quadratic map
$U_x(y) = Q(x) Q(e)\inv y$ and
with unit element $e=c$.  
The  set  $V^\times$ of invertible elements agrees with the symmetric space $U = V \cap V'$,
and the quadratic map $Q(x)$ agrees with $Q_x$ defined by (\ref{eqn:JAQ}).
\end{theorem}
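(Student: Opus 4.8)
The plan is to reduce the statement to the standard dictionary (Loos \cite{Lo75}) between quadratic Jordan pairs carrying an invertible element of $V^+$ and unital quadratic Jordan algebras, and to supply the single piece of genuinely geometric input that this dictionary requires. By the preceding theorem, $(V^+,V^-)=(\cU_{o'},\cU_o)$ is a quadratic Jordan pair over $\K$ with quadratic maps $Q_\pm$, depending functorially on $(\ul\cX,\ul J)$ with base point. Since $(a,b,c)=(o,o',e)$ is pairwise transversal, $e$ is transversal both to $o$ and to $o'$, hence $e\in\cU_{o'}\cap\cU_o=V^+\cap V^-=V\cap V'=U$. So three things remain to be done: (i) show that $e$ is an \emph{invertible} element of the pair, i.e.\ that $Q_+(e)\colon V^-\to V^+$ is a bijection (it is $\K$-linear already, being part of the pair data); (ii) identify the invertibles of the resulting algebra on $V=V^+$ with $U=V\cap V'$; (iii) show that the quadratic operator $U_x=Q_+(x)Q_+(e)\inv$ of that algebra coincides with the geometric operator $Q_x$ of \eqref{eqn:JAQ}.

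For (i) I would identify $Q_+(e)$, up to a sign, with the restriction to $V^-=\cU_o$ of the symmetry $J^{oo'}_e=J^{ab}_c$: by Lemma \ref{la:ssa} this symmetry restricts to mutually inverse torsor isomorphisms $\cU_o\to\cU_{o'}$ and $\cU_{o'}\to\cU_o$, so once the identification $J^{oo'}_e|_{V^-}=\pm Q_+(e)$ (and dually $J^{oo'}_e|_{V^+}=\pm Q_+(e)\inv$) is in place, bijectivity of $Q_+(e)$ is immediate. The identification itself is a computation with the explicit formulae of Section \ref{sec:Fomulae} — one specialises Lemma \ref{step2} (whose output includes $J^{v,o}_a(o')=2a+Q(a)v$) and its dual, using that $e\top o$; alternatively one may observe that $(o,e,o')$ is itself a pairwise transversal triple and feed it into the transitivity and modular-group machinery of Sections \ref{sec:Tran} and \ref{sec:Id}. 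Granting (i), the Loos construction endows $V=V^+$ with a unital quadratic Jordan algebra structure with unit $e$ and quadratic operator $U_x=Q_+(x)Q_+(e)\inv$, and functoriality is inherited from that of the pair.

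It then remains to match algebra and geometry. For (ii): $x\in V^+$ is invertible in the pair precisely when $Q_+(x)$ is bijective; by the geometric reading of $Q_+$ obtained in (i) — equivalently, by Lemma \ref{step1}, which says $x\top a$ iff $(x,a)$ is quasi-invertible — this holds iff $x$ is transversal to $o$, i.e.\ iff $x\in\cU_{o'}\cap\cU_o=V\cap V'=U$, and the Jordan inverse is then $x\inv=Q_+(e)Q_+(x)\inv x$. For (iii): on $U=\cU_{oo'}$, regarded (Lemma \ref{la:ssa}) as the symmetric space with base point $e$ and symmetries $s_x=J^{oo'}_x$, the operator of \eqref{eqn:JAQ} is $Q_x=J^{oo'}_x\circ J^{oo'}_e|_V=s_x\,s_e|_V$, while for the symmetric space of invertibles of the Jordan algebra $V$ one has $s_x\,s_e=U_x$. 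These two symmetric-space structures on the common underlying set $U=V^\times$ coincide because both satisfy $s_x(y)=Q_+(x)Q_+(y)\inv y$ for $x,y\in U$: on the algebra side this is the elementary identity $U_x(y\inv)=Q_+(x)Q_+(e)\inv Q_+(e)Q_+(y)\inv y$, and on the geometric side it is the value of $J^{oo'}_x(y)$ obtained from Section \ref{sec:Fomulae} — the geometry over $\K$ generalising the projective-line model $J^{0,\infty}_x(y)=xy\inv x$ of the introduction. Hence $Q_x=U_x$ for all $x\in U$, and composing with $Q_+(e)$ identifies $Q_+(x)$ with $U_x$ as well. I expect the main obstacle to be exactly this last layer of bookkeeping — keeping signs and $\pm$-conventions straight while matching the geometric operators $J^{oo'}_\bullet$ to the Jordan-pair operators $Q_\pm$ through the formulae of Section \ref{sec:Fomulae}; once that is settled, the remaining verifications are the standard ``unital algebra from a pair with invertible element'' machinery.
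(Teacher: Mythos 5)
Your overall strategy coincides with the paper's: both proofs reduce the theorem to Loos's dictionary between quadratic Jordan pairs possessing an invertible element and unital quadratic Jordan algebras, and both extract the invertibility of $e$ from the fact that $j=J^{oo'}_e$ exchanges the two halves of the geometry while fixing $e$. The difference is in how that invertibility is certified. The paper works on the $V^+$ side: invoking the numerator/denominator calculus of \cite{BeNe04}, Section 5.1, it derives the birational formula $j(y)=Q(e)Q(y)\inv Q(e)y$ on $V^\times$ and reads off from $j(e)=e$ that $Q(e)$ is invertible; the identifications $V^\times=U$ and $Q(x)=Q_x$ are then also imported from \cite{BeNe04}. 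You instead use the linear restriction $j\vert_{V^-}:(V^-,o')\to (V^+,o)$, bijective by Lemma \ref{la:ssa}, and propose to identify it with $\pm Q(e)$. That identification is correct (and arguably more economical than the birational formula), but the tools you cite do not literally deliver it: the explicit formulae of Section \ref{sec:Fomulae} compute $J^{xz}_a$ with both superscripts in $V^+$ and the subscript in $V^-$, whereas $J^{oo'}_e$ has mixed superscripts and its subscript in $U$, so Lemma \ref{step2} does not specialize to it --- this is precisely the computation the paper outsources to \cite{BeNe04}. Similarly, in your step (ii) you invoke Lemma \ref{step1}, which characterizes quasi-invertibility of $(x,a)$ for $a\in V^-=\cU_o$ by $x\top a$; what you actually need is the different statement that $Q(x)$ is invertible iff $x\top o$ (transversality to the point $o$ itself, which is \emph{not} an element of $\cU_o$) --- again a fact of \cite{BeNe04}, Section 5.1, rather than of the cited lemma. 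Neither point is a gap in the idea, but both steps require the external reference (or a direct computation with the exchange automorphism $j$ of the $3$-graded Lie algebra $\g$) rather than the internal formulae you point to; once they are granted, your steps (ii) and (iii) are the same bookkeeping the paper compresses into its final sentence.
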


\begin{proof}
We have to show that the Jordan pair $(V,V')$ associated to the base point $(o,o')$ has invertible elements
(cf.\ \cite{Lo75}); more precisely, we show that every element $x$ from $U= U_{oo'}$ is invertible.
Indeed, this follows from the fact that $j:= J^{oo'}_e$ is an automorphism of $\g$ exchanging $o$ and $o'$,
hence exchanging also
$\g_1$ and $\g_{-1}$: using numerators and denominators, it is shown exactly as in \cite{BeNe04}, Section 5.1, that,
for all $x \in V^\times$, we have the formula
\[
j(y) = Q(e) Q(y)\inv Q(e) y .
\]
In particular, since $j(e)=e$, it follows that $e$ is an invertible element, thus
$(V,e)$ is a quadratic Jordan algebra with Jordan inversion $j$ (\cite{Lo75}).
From this it follows is in \cite{BeNe04} that $Q(x)=Q_x$ and that $V^\times = U$.
\end{proof}

\begin{theorem}[The associative algebra of an associative geometry with transversal triple]\label{th:AA}
Assume $(o,o',e)$ is a closed transversal triple in an associative geometry
$(\ul \cX,\ul M)$. Then
the group law of $U_{oo'}$ extends to an associative algebra structure on $V= U_{o'}$, with
bilinear product induced by the second tangent law $TTU_{oo'}$.
\end{theorem}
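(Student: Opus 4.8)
The plan is to reduce this statement to the corresponding result for associative geometries with a closed transversal triple, for which we have the group law $xz = (xez)_{oo'}$ on $V^\times = U_{oo'}$ from equation (\ref{eqn:Grplaw}), and then to extend this bilinear product from $V^\times$ to all of $V = U_{o'}$ by passing to the second tangent bundle. First I would recall, as noted just before Section \ref{sec:Scal}, that the left translation operator $L^{oo'}_{xy}$ fixes both $o$ and $o'$, hence restricts to a $\Z$-affine (indeed, after choosing the base point $e$, $\Z$-linear) bijection of $V = U_{o'}$; this already shows that the partially-defined product $xz = L^{oo'}_{xe}(z) = R^{oo'}_{ez}(x)$ is $\Z$-bilinear in $x$ and $z$ on its domain of definition. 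The key observation is that, because the geometry is a $\K$-Weil manifold (Definition \ref{def:JGeo}) and the associative structure map $M^\bA$ depends functorially on $\bA$, one may form $TTU_{oo'}$, the second-order tangent object of the Lie group $U_{oo'}$, and read off the bilinear product on $V$ from the group structure on $U^{TT\K}_{oo'}$ together with the canonical bilinear law on second tangent bundles of Lie groups established in \cite{Be14}.

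Concretely, the steps are: (1) By the theorem preceding Section \ref{sec:Infaut}, $U_{oo'}$ is a Lie group (with atlas, single chart $U_{o'} = V$) depending functorially on $\bA$. (2) For any Lie group $G$ with unit $e$, the second tangent bundle $TTG$ carries, over the fiber at $e$, a canonical bilinear multiplication $\mathfrak g \times \mathfrak g \to \mathfrak g$ recovering the Lie bracket via antisymmetrization; in the present case the chart $V$ identifies $\mathfrak g = T_e U_{oo'}$ with a submodule, but since $V^\times = U_{oo'}$ is open and dense (in the Weil-manifold sense) in $V$, the bilinear product defined on $V^\times$ by (\ref{eqn:Grplaw}) extends uniquely to a $\K$-bilinear product $\cdot : V \times V \to V$ — this is where functoriality under scalar extension by the jet rings $J^k\K$ does the work, exactly as in Lemma \ref{la:density}: a polynomial identity (here, associativity and the unit axiom, expressed as identities of degree $\le 3$ in the coordinates) that holds on all $V^\times$ of all associative geometries holds identically. (3) The unit $e$ of the group $U_{oo'}$ becomes the unit of the algebra, and associativity of the extended product follows by the same density/permanence argument applied to the associativity identity valid on $U_{oo'}$.

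The main obstacle is step (2): one has to make precise that the product $x \cdot z$, a priori only defined when $x, z \in U_{o'} \cap U_o = V^\times$, genuinely extends to a polynomial map $V \times V \to V$ and that the extension is unique and agrees with the second-order tangent law. The right way to do this, in the spirit of the paper, is to invoke the jet-bundle version of Koecher's principle (Lemma \ref{la:density}, and the permanence principle \cite{Lo75}, 2.8, alluded to afterwards): elements of the form $\delta x$ in $U^{J^k\K}$ are automatically in the multiplicative torsor of the jet bundle $J^k U_{oo'}$ over its base point, so the group law, expanded in powers of $\delta$, yields a well-defined polynomial product on $V$ of degree at most the length of the jet; bilinearity from step (1) forces degree $\le 2$ overall and thus a genuine bilinear product, and associativity plus the unit law transfer by the same argument. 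I expect the remaining verifications — that this product is $\K$-bilinear (not just $\Z$-bilinear), using the scalar action $S^\bA$ as in (\ref{eqn:ST1})–(\ref{eqn:ST2}), and that $e$ is a two-sided unit — to be routine once this extension principle is in place, so I would state them and refer to \cite{Be14, Lo75} for the mechanics rather than reproduce the computations.
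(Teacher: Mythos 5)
Your proposal is correct in its essential strategy and lands on the same central device as the paper: realize $U_{oo'}$ as a Lie group (with single chart $V$) and extract the bilinear product from the second tangent law. But the route you take through the two delicate steps differs from the paper's, and the paper's version is both shorter and safer. For the \emph{existence} of the product on all of $V$, the paper does not extend anything from $V^\times$ by density: it simply observes that the tangent vectors $\eps_1 u$, $\eps_2 v$ at the unit $e$ range over all of $V$ as $u,v$ do, that they lie in the group $TTU_{oo'}$ (the fiber over $e\in V^\times$), and it \emph{defines} $uv$ by $\eps_1\eps_2(uv):=(\eps_1 u)(\eps_2 v)$; bilinearity is immediate because fixing one argument leaves a tangent map, hence a linear map. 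For \emph{associativity}, the paper compares $\eps_1\eps_2\eps_3$-components in the associative group $T^3U_{oo'}$, with no appeal to Koecher's principle. Two cautions about your version: (i) Lemma \ref{la:density} is an identity-transfer (uniqueness) principle — it tells you a polynomial law vanishing on quasi-invertible tuples vanishes — so it cannot by itself \emph{produce} the extended product; you must first have the candidate bilinear map, which is exactly what the $TT\K$ construction supplies. (ii) Your single-nilpotent expansion over $J^k\K=\K[\delta]$ does not cleanly isolate the mixed bilinear term: the $\delta^2$-coefficient of $(e+\delta u)(e+\delta v)$ mixes the cross term with the pure second-order terms, and separating them by polarization requires $2\in\K^\times$, which the paper is at pains to avoid. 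The two independent infinitesimals $\eps_1,\eps_2$ of $TT\K$ are what make the argument work over $\Z$. Finally, the paper closes with a compatibility check you should not omit: on $V^\times$ the new product agrees with the group law because the left and right multiplications $L_u$, $R_v$ are linear, hence equal to their own tangent maps.
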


\begin{proof}
Let $V:=V_{o'}$, $V' := V_o$ and $V^\times := U_{oo'} = V \cap V'$.
From the properties of an associative geometry, it follows that $(V^\times ,e)$ is a Lie group
with group law
\[
xz = (xez)_{oo'} =  M_{xz}^{oo'}(e)=L_{xe}^{oo'}(z) = R_{ez}^{oo'}(x) ,
\]
which is bilinear for the linear structure $(V,o)$.
Let $m:U\times U \to U$ be the group law of the Lie group $U=V^\times$;
then the group law of $TTU$ is given by $TTm$ which is scalar extension of $m$ by the ring
$TT\K=\K[\eps_1,\eps_2]$.  The map
\[
\eps_1 V \times \eps_2 V \to \eps_1 \eps_2 V, \qquad
(\eps_1 u,\eps_2 v) \mapsto (\eps_1u)(\eps_2 v) = TTm(\eps_1 u,\eps v)
\]
is bilinear, since, for one of the arguments fixed, the remaining map is a tangent map.
Thus a bilinear product $uv$ on $V$ is defined by requiring
\[
\eps_1 \eps_2 (uv) := (\eps_1 u) (\eps_2 v) .
\]
The group law $T^3 m$ on $T^3 U$ is associative, thus, in particular,
$\eps_1 u (\eps_2 v \cdot \eps_3 w) = (\eps_1 u \cdot \eps_1 v) \eps_3 w$,
which, by definition of the product, yields
$u(vw)=(uv)w$.
Thus $V$ with product $uv$ is an associative algebra.
Moreover, if $u,v \in U$, then left and right multiplications $L_u:V \to V$
and $R_v:V \to V$, are linear maps, hence agree with their tangent maps, implying that
the products $uv$ taken in $U$ and in $V$ agree.
\end{proof}

\begin{remark}
If the geometry is not self-dual, then, for a fixed base point $(o,o')\in \cD_2$, the
pair $(V,V')$ becomes an {\em associative pair} (see \cite{BeKi1} for relevant
definitions). 
\end{remark}

\section{From  Jordan pairs to Jordan geometries}\label{sec:PtoG}

The aim of this chapter is to construct a Jordan geometry starting from a Jordan pair $(V^+,V^-)$,
or from a Jordan algebra: 

\begin{theorem}\label{th:existence}
For every Jordan pair $(V^+,V^-)$ over $\K$, there is 
a Jordan geometry, having $(V^+,V^-)$ as associated Jordan pair.
More precisely, there is a functor from the category of Jordan pairs over $\K$ to Jordan geometries over $\K$
with base point. Under this functor, unital Jordan algebras correspond to Jordan geometries
with a pairwise transversal triple.
\end{theorem}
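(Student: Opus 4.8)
The plan is to construct the Jordan geometry directly from the Jordan pair $(V^+,V^-)$ by adapting the classical construction of the associated $3$-graded Lie algebra and projective group. First I would form the Tits--Kantor--Koecher Lie algebra $\g = \g_{-1}\oplus\g_0\oplus\g_1$ attached to $(V^+,V^-)$ (with $\g_{\pm1}=V^{\pm}$ and $\g_0$ the span of the inner derivations $D(x,a)$), and then the elementary projective group $\bfG = \PE(V^+,V^-)$ generated by the one-parameter ``translation'' groups $\exp(\ad v)$, $v\in V^+$, and $\exp(\ad a)$, $a\in V^-$, together with the automorphisms $\beta(x,a)$ for quasi-invertible $(x,a)$. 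The underlying set $\cX$ of the geometry should be realized as $\cX = \cX^+\dot\cup\cX^-$, where $\cX^+$ is the orbit of the base point $o^+:=V^+$ under $\bfG$ (realized inside a suitable homogeneous space $\bfG/\bfP$, with $\bfP$ the stabilizer of the decomposition-flag), and dually for $\cX^-$; one takes the dissociation in the non-self-dual case and a single copy when a pairwise transversal triple is present. Transversality is declared by $x\top a$ iff (after translating one of them to the base point) the relevant Bergman operator is invertible, equivalently the pair is quasi-invertible. The structure map $J^{xz}_a$ is then \emph{defined} by the explicit Jordan-theoretic formulae of Theorem \ref{step3} (with the base-point cases given by Lemmas \ref{step1}, \ref{step2}): first set $J^{vo}_a$ via the triple decomposition of Lemma \ref{step2}, then define $J^{xz}_a := J^{J^{xz}_a(o),o}_a$ via the transplantation formula after computing $v=J^{xz}_a(o)=(x^{-a}+z^{-a})^a$.

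Next I would verify that these data indeed form a Jordan geometry over $\K$ in the sense of Definition \ref{def:JGeo}. The functoriality in Weil algebras is automatic, since every ingredient (the Lie algebra, the group $\PE$, the quasi-inverse, the Bergman operators) is built by universal algebraic operations and scalar extension by a Weil algebra $\bA$ commutes with all of them; here one uses that scalar extension of a Jordan pair by $\bA=\K\oplus\mbA$ is again a Jordan pair. The identities (IN), (IP), (A), (D), (C), (S) of Definition \ref{def:Jordanstructure} have to be checked: by Lemma \ref{la:density} (``Koecher's principle''), it suffices to check each of them on quasi-invertible quadruples, where everything reduces to known Jordan-pair identities from \cite{Lo75} --- involutivity and commutativity are immediate from the symmetry of the formulae in $(x,z)$, idempotency is a direct computation with the quasi-inverse, and associativity/distributivity follow from the group-theoretic relations in $\PE(V^+,V^-)$ together with the cocycle relation for denominators (Lemma on denominators) exactly as in \cite{Lo79}. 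The $\K$-scalar action $S^\bA$ is supplied by the major dilations $r_o^{o'}$ coming from the $\K^\times$-grading automorphisms $t\mapsto \exp(\log t\cdot\ad E)$ on $\g$; conditions (C), (A), (Du), (Di), (Tr) are the standard affine identities, with (Du) being the fundamental identity $(r_x^a)^{-1}=r_a^x$. Condition (1) of Definition \ref{def:JGeo} holds because, by construction, the affine chart at a transversal pair $(a,y)$ is literally the $\K$-module $V^+\otimes_\K\bA$; condition (2), i.e.\ $T_x(J^{ab}_x)=-\id$, follows from the infinitesimal form $J^{o',\eps a}_o(y)=-y+\eps Q(y)a$ of the formula, whose linear term at the fixed point is $-\id$, combined with equivariance.

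Then I would check that the Jordan pair \emph{associated} to this geometry at the base point $(o^+,o^-)$ is canonically isomorphic to the original $(V^+,V^-)$. By the construction in Section \ref{sec:JGtoJP}, the associated quadratic map is read off from ``quasi-translation by $\eps a$'', i.e.\ from $L_o^{-\eps a,o'}(x)=x+\eps Q(x)a$; but in our model $L_o^{a,o'}$ is by definition $\exp(\ad a)$, whose quadratic term in the chart $V^+$ is exactly the original $Q(x)a$. Hence the associated quadratic Jordan pair equals the one we started from, which gives the ``more precisely'' clause; naturality of this identification yields the asserted functor from Jordan pairs over $\K$ to $\K$-Jordan geometries with base point, since a morphism of Jordan pairs induces a morphism of $3$-graded Lie algebras, hence of the groups $\PE$, hence of the homogeneous spaces, compatibly with all structure maps. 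Finally, for the last sentence: if $(V^+,e)$ is a unital Jordan algebra, then $e\in V^+$ is invertible, so the element $j:=\exp(\ad e^{-1})\exp(-\ad e)\exp(\ad e^{-1})$ (the Jordan inversion) is an automorphism of $\g$ interchanging $\g_1$ and $\g_{-1}$ and fixing the base point class $e$; tracing through the identifications, the triple $(o^+,o^-,e)$ becomes a pairwise transversal triple in $\cX$, and conversely a pairwise transversal triple produces, via the previous section, a unital Jordan algebra whose pair is the given one --- so the functor restricts as claimed.

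The main obstacle I expect is purely technical rather than conceptual: verifying the full set of Jordan identities (especially associativity (A) and distributivity (D)) for the explicitly-given $J^{xz}_a$. The honest route is to not verify them on the formulae directly but to establish them first at the level of the group $\PE(V^+,V^-)$ --- i.e.\ to show the maps $J^{xz}_a$ really are the automorphisms $-\beta((-v)^a,-a)$ conjugated by the appropriate translations --- and then invoke Lemma \ref{la:density} to reduce every identity to an identity among quasi-invertible elements, where it is a citable consequence of \cite{Lo75, Lo79}. Getting the bookkeeping of base points, dissociation, and the $\pm$-signs consistent across all these reductions is the delicate part; the cleanest implementation is to first do the case where $2$ is invertible (where Theorem \ref{th:gpg} together with the existence theorem for generalized projective geometries in \cite{Be02} already gives everything), and then handle the general $\K$ by the quadratic/Weil-functorial arguments above, exactly paralleling how \cite{Lo79} upgrades the linear theory to the quadratic one.
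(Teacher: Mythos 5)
Your proposal follows essentially the same route as the paper: handle the case where $2$ is invertible via Theorem \ref{th:gpg} together with the existence theorem of \cite{Be02}, and in the general case build the geometry from the Kantor--Koecher--Tits algebra and the projective elementary group, defining the translations $L^{xz}_a$ by the exponentials of \cite{Lo95} and then setting $J^{xz}_a = L^{xz}_a J^{zz}_a$ (equivalently, reversing the explicit formulae of Section \ref{sec:Fomulae} via transplantation), with the verification of the Jordan identities reduced to quasi-invertible configurations and Jordan-pair identities. The paper explicitly omits that lengthy verification, and your sketch of how to carry it out (via relations in $\PE(V^+,V^-)$ and Lemma \ref{la:density}) is consistent with what the paper indicates.
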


\begin{proof}
If $2$ is invertible in $\K$, then, as shown in \cite{Be02}, Th.\ 10.1, there is a generalized 
projective geometry with base point having $(V^+,V^-)$ as associated Jordan pair;
by Theorem \ref{th:gpg}, this geometry is a Jordan geometry, and thus the theorem is proved in this
case. 

If $2$ is not invertible in $\K$, we cannot use midpoints in order to define the inversions
$J^{xz}_a$, and hence we have to modify the construction: 
 the set $\cX$ and inversions of the type 
$J^{xx}_a = (-1)_x^a = (-1)_a^x$ are defined by the same methods as in \cite{Be02},
but  inversions of the type $J^{xz}_a$
for $x \not= z$ have to be defined in a different way: we define first the translation operators
$L^{xz}_a$, essentially by using a ``Jordan version'' of the exponential map for a Kantor-Koecher-Tits
algebra, and then let
\begin{equation}\label{JL-eqn}
J_a^{xz}:= L_a^{xz} J_a^{zz}.
\end{equation}
To be more specific, recall from \cite{Be02} or \cite{BeNe04} that a transversal pair
$(x,a)\in \cD_2$ corresponds to an {\em Euler operator}, i.e., to a 3-grading of the associated 
``Kantor-Koecher-Tits algebra'' $\g$ of the Jordan pair. 
The base point $(o,o')$ corresponds to the $3$-grading
$\g = V^+ \oplus \h \oplus V^-$, coming directly with the construction of $\g$.
Thus, given a transversal pair $(x,a)$, we may assume without loss of generality that
$(x,a)=(o,o')$ is the base point; then $U_a$ is naturally identified with $V^+$, and hence
 the condition $z \in U_a$ means that $z \in V^+$. 
 Defining the ``exponential'' $\exp(z) \in \Aut(\g)$ as in \cite{Lo95}, we then let
\begin{equation}
L_a^{xz}:= \exp(z) 
\end{equation}
(this depends on $(x,a)$ since $\exp(z)$ is defined with respect to a fixed $3$-grading), and define
$J_a^{xz}$ by (\ref{JL-eqn}).
Now one   has to prove that the Jordan structure map thus defined satisfies our axioms -- this proof is
quite lengthy, and essentially amounts to reverse the computations leading to the ``explicit formulae''
 given in the preceding section;
details are similar to the proof of \cite{Be02}, Th.\ 10.1, and will be omitted.
\end{proof}



\appendix

\section{Inversive actions and symmetry actions}\label{App:SA}

In this appendix, we recall the definition of some  algebraic structures
(torsors, reflection spaces, symmetric spaces), and we define their ``actions'' on a
set. Since a group is defined by a binary law, there are just two kinds of actions
(left and right actions); a torsor is defined by a ternary law, and therefore we have three
kinds of actions: {\em left, right and middle}, or: {\em inversive  torsor actions}.

\subsection{Torsors}

\begin{definition}\label{def:torsor}
A {\em torsor} is a set $G$ with a map $G^3 \to G$, $(x,y,z) \mapsto (xyz)$ 
satisfying the following algebraic identities:

\begin{enumerate}
\item[(PA)]
{\em para-associative identity}:
 $((xuv)wz) = (x(wuv)z)=(xu(vwz)))$.
\item[(IP)]
 {\em idempotency identity}
 $(xxy)=y=(yxx)$.
\end{enumerate}

\nin
The {\em opposite torsor} is $G$ with $(xyz)^{opp}=(zyx)$, and
a torsor  is called {\em commutative} if $G = G^{opp}$, i.e.,  it satisfies the identity

\begin{enumerate}
\item[(C)] $(xyz)=(zyx)$.
\end{enumerate}

\nin
Categorial notions are defined in the obvious way.
In every torsor, {\em left-, right- and middle multiplication operators} are the maps
$G \to G$ defined by 
\begin{equation}
(xyz) =: m_{xz}(y) = \ell_{x,y}(z) = r_{z,y}(x) .
\end{equation}
\end{definition}

\nin
Every group $(G,e,\cdot)$ becomes a torsor by letting $(xyz)=xy\inv z$,
and every torsor is obtained in this way: thus torsors are ``groups with origin forgotten''.

\begin{lemma}\label{TorsorLemma}
In every torsor, the middle multiplication operators satisfy 

\begin{enumerate}
\item[(SA)]
$m_{xy} \circ m_{uv} \circ  m_{rs}  
= m_{m_{xr} (v),m_{sy} (u)}$,
\item[(IP)] 
$m_{xz}(x)=z$, $m_{xz}(z)=x$.
\end{enumerate}
Conversely, a set $G$ with a map $m:G \times G \to \Bij(G)$, $(x,z) \mapsto m_{xz}$
satisfying (SA) and (IP), becomes a torsor by letting $(xyz):=m_{xz}(y)$.
The operator $m_{xz}$ is then invertible with inverse operator $m_{zx}$.
\end{lemma}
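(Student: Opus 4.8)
The plan is to prove Lemma~\ref{TorsorLemma} in two directions, treating the forward direction (torsor $\Rightarrow$ (SA), (IP)) and the converse ((SA), (IP) $\Rightarrow$ torsor) separately, and finishing with the remark about the inverse operator.

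\emph{Forward direction.} Suppose $(G,(\cdot\,\cdot\,\cdot))$ is a torsor. Property (IP) for the middle operators is immediate: $m_{xz}(x) = (xxz) = z$ and $m_{xz}(z) = (xzz) = x$ by the idempotency identity (IP) for the ternary law. For (SA), I would unwind the definition $m_{xy}\circ m_{uv}\circ m_{rs}$ applied to an arbitrary $t \in G$: this equals $(xy\,(uv\,(rst)))$, and I would repeatedly apply para-associativity (PA) to re-bracket. The cleanest route is probably to pass through the group picture: fix any origin $e$, so $(abc) = ab^{-1}c$ in the associated group $(G,\cdot,e)$; then $m_{xz}(y) = xy^{-1}z$, so $m_{xy}(m_{uv}(m_{rs}(t))) = x\bigl(u(rt^{-1}s)^{-1}v\bigr)^{-1}y = x v^{-1} (r t^{-1} s) u^{-1} y = (xv^{-1}r)\, t^{-1}\, (s u^{-1} y)$, and this is exactly $m_{pq}(t)$ with $p = xv^{-1}r = (xvr) = m_{xr}(v)$ and $q = su^{-1}y = (syu)\ldots$ — here I need to be careful: $su^{-1}y$ should be read as $m_{sy}(u)$, which in group terms is $s u^{-1} y$; this matches. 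So (SA) holds. Alternatively one can avoid choosing an origin and argue purely with (PA), which is slightly more tedious but formally identical; I would present the group-based argument since the paper has already noted ``torsors are groups with origin forgotten.''

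\emph{Converse direction.} Now suppose $m: G\times G \to \Bij(G)$ satisfies (SA) and (IP), and define $(xyz) := m_{xz}(y)$. I must verify (PA) and (IP) for this ternary law. The idempotency identity (IP) for the ternary law is a direct translation of (IP) for the operators: $(xxy) = m_{xy}(x) = y$ and $(yxx) = m_{yx}(x)\ldots$ wait — $(yxx) = m_{yx}(x)$, and by (IP) $m_{yx}(x) = \ldots$; actually $(yxx) = m_{y x}(x)$, and I need this to equal $y$. From (IP), $m_{xz}(z) = x$, so with $x \to y$, $z \to x$: $m_{yx}(x) = y$. Good. For para-associativity, I would expand each of the three expressions $((xuv)wz)$, $(x(wuv)z)$, $(xu(vwz))$ using the definition and then use (SA) to show all three equal a common operator applied to a common argument. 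Concretely, $((xuv)wz) = m_{(xuv),z}(w) = m_{m_{xv}(u),\,z}(w)$; I would rewrite $m_{m_{xv}(u),z}$ by noting $z = m_{zw}(w)\ldots$ hmm, better: use (SA) in the form $m_{a b}\circ m_{cd}\circ m_{ef} = m_{m_{ae}(d), m_{fb}(c)}$ with suitable choices to recognize $m_{m_{xv}(u),z}(w)$ as a triple composition, then re-associate. The key identity to extract first is a ``shift'' relation letting one move the inner $m$ outside; I expect that setting up the right specializations of (SA) (for instance taking some arguments equal via (IP) to kill operators down to identities) is exactly what does it.

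\emph{Inverse operator.} Finally, $m_{xz}\circ m_{zx} = \mathrm{id}$: apply (SA) with a middle factor chosen to be the identity. Take $m_{xz}\circ m_{zx}\circ m_{yy}$; by (IP), $m_{yy}(t) = ?$ — actually $m_{yy}$ need not be the identity; rather I should use that (SA) with $r=s$ forces the third operator to be $m_{rr}$, and separately $m_{xz}(m_{zx}(t))$: choose $t$ arbitrary, set $u = m_{zx}(t)$, then $m_{xz}(u)$; using (SA) with the triple $m_{xz}\circ m_{zx}\circ m_{tt}$ and (IP) applied to collapse, one gets $m_{xz}\circ m_{zx} = m_{m_{xt}(x), m_{tz}(z)} = m_{m_{xt}(x), m_{tz}(z)}$, and by (IP) $m_{tz}(z) = t$ is wrong; let me instead just note directly: $m_{xz}(m_{zx}(t))$, apply (SA) to $m_{xz}\circ m_{zx}\circ m_{ss}$ reading off that it equals $m_{m_{xs}(x),\,m_{sz}(z)}$, then choose $s$ to make both first-argument and second-argument collapse to $x$ and $z$ via (IP) — e.g. this needs $m_{xs}(x) = x$ and $m_{sz}(z) = z$, which by (IP) ($m_{ab}(b)=a$) would require... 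The honest statement: I would verify $m_{zx} = m_{xz}^{-1}$ by checking $(xyz)$-style consequences, namely that $m_{zx}(m_{xz}(y)) = y$ follows from the already-established torsor axioms via $((z\,m_{xz}(y)\,x)) = (z(xyz)x)$ and para-associativity plus idempotency. \textbf{The main obstacle} I anticipate is precisely the bookkeeping in this converse direction: extracting para-associativity from the single axiom (SA) requires choosing the specialization of the five free arguments in (SA) correctly, and it is easy to get the order of arguments in $m_{m_{xr}(v),m_{sy}(u)}$ transposed. I would pin down the convention once (which slot is ``left-inner'' and which is ``right-inner'') and then each verification is a short, if fiddly, computation.
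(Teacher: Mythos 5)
Your forward direction is correct and is exactly the paper's argument: (IP) is immediate from the idempotency of the ternary law, and (SA) is verified in the group realization $(xyz)=xy\inv z$, your computation $m_{xy}m_{uv}m_{rs}(t)=xv\inv r\,t\inv\,su\inv y=m_{m_{xr}(v),m_{sy}(u)}(t)$ being the same ``direct check'' the paper invokes. The genuine gap is in the converse. The only nontrivial content of the lemma is that para-associativity follows from (SA) and (IP), and you do not produce that derivation: you describe a plan (``recognize $m_{m_{xv}(u),z}(w)$ as a triple composition, then re-associate\dots\ I expect that setting up the right specializations\dots\ is exactly what does it'') and then defer the bookkeeping as ``the main obstacle''. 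That bookkeeping \emph{is} the proof. The paper's chain, which you would need to carry out explicitly, is
\[
(xy(uvw)) = m_{x,\,m_{uw}(v)}(y) = m_{m_{xy}(y),\,m_{uw}(v)}(y)
= m_{xw}\,m_{vy}\,m_{yu}(y) = m_{xw}\,m_{vy}(u) = (x(vuy)w),
\]
where the second equality is $x=m_{xy}(y)$ from (IP), the third is (SA) read from right to left, and the fourth is (IP) again. The point you were missing is that the (IP)-rewriting must be chosen so that the innermost of the three recovered operators collapses on the evaluation point: one rewrites the \emph{first} index of the outer operator as $m_{xy}(y)$, with $y$ the very argument, so that the recovered inner factor $m_{yu}$ applied to $y$ gives $u$. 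Your proposed rewriting of the second index, $z=m_{zw}(w)$, does not produce this collapse under the obvious matching of (SA).

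Two further cautions. First, if you try, as announced, to show that all three expressions of (PA) agree as literally printed, you will fail on the middle one: in the group realization $(x(wuv)z)=xv\inv uw\inv z$ while $((xuv)wz)=xu\inv vw\inv z$, so the middle term of (PA) in Definition \ref{def:torsor} should read $(x(wvu)z)$ (a typo in the paper); the identity derived above, $(xy(uvw))=(x(vuy)w)$, is the corrected version. Second, your treatment of $m_{zx}=m_{xz}\inv$ passes through several abandoned attempts before settling on deducing it from the torsor axioms; the conclusion $m_{zx}m_{xz}(y)=(z(xyz)x)=(zz(yxx))=y$ is fine, but it rests on the para-associativity you have not yet established, so as written this part is also incomplete.
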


\begin{proof}
Applied to an element $z$, (SA) reads:
$(x(u(rzs)v)y)= ((xvr)z(suy))$. By direct check (easy if one uses the realization
$(xyz)=xy\inv z$), it is seen that this holds in any torsor.
Conversely, using (SA) and invoking (IP) twice, we get  (PA):

\nin $
(xy(uvw)) = m_{x,m_{uw}(v)}(y) = m_{m_{xy}(y), m_{uw}(v)}(y) 
=  m_{xw} m_{vy} m_{yu} (y) = (x(vuy)w)
$.
\end{proof}

\nin
Letting $u=y$ and $v=r$ in (SA), we get by (IP) the ``Chasles relation''

\ssk
\nin (SA')
$m_{xy} \circ m_{yv} \circ  m_{vs}  
= m_{xs}$.

\ssk
\nin
It can be shown that, conversely, (SA') and (IP) imply (SA).


\subsection{Inversive  torsor actions}

\begin{definition}\label{SymTorsorDef}
Let $(G,( -  -  -))$ be a torsor and $X$ a set.
An {\em inversive torsor action on $X$} is a map of $G \times G$ into the set of bijections of $X$
$$
G \times G \to \Bij(X), \qquad (x,z) \mapsto M_{xz}
$$
such that the following identities hold

\begin{enumerate}
\item[(STA1)]  $M_{xz} \circ M_{zx} = \id_X$
\item[(STA2)]
$M_{xz} \circ M_{uv} \circ M_{ab} = M_{(xva),(buz)}$
\end{enumerate}
\nin 
The inversive torsor action is called {\em commutative} if

\begin{enumerate}
\item[(CTA)] 
$M_{xz}=M_{zx}$
\end{enumerate}

\nin
(equivalently, if all $M_{xz}$ are of order two). 
According to the preceding lemma, every torsor has a natural inversive  action on itself, given
by $M_{xz}=m_{xz}$, which we call the {\em regular inversive  action (of $G$ on itself)}.
Spaces with $G$-inversive  action form a category in the obvious way, and {\em subspaces}
and {\em direct products} can be defined in this category. 
\end{definition}

\nin 
We interprete the operators $M_{xz}:X \to X$ as {\em generalized inverses}, whence the terminology. 
Letting $z=u$ and $v=a$ in (STA2), we get a {\em middle Chasles  relation}
\begin{equation}\label{eqn:MiddleChasles}
M_{xz} \circ M_{za} \circ M_{ab} = M_{xb} .
\end{equation}
However, it is not true that (\ref{eqn:MiddleChasles}) and (STA1) imply (STA2). 

\begin{remark}
These axioms have the following categorial interpretation: 
with the usual torsor structure $(fgh)=fg\inv h$ on $\Bij(X)$, (STA2) can
be rewritten in the form
$$
\bigl( M_{xz} M_{vu} M_{ab} \bigr)  = M_{(xva),(buz)}
$$
which means that $M$ can be interpreted as a torsor homomorphism
$$
M: G \times G^{opp} \to \Bij(X),\qquad (x,z) \mapsto M_{xz} \, .
$$
\end{remark}

\begin{remark}
It is not true that  an inversive action of a commutative torsor is always a commutative action.
For instance,
consider the following situation:
if $H$ is a subgroup of a group $G$,  then the regular action of $G$ on itself induces
an inversive action
$H \times H \to \Bij(G)$ given by
$M_{h,h'}(g)= hg\inv h'$.
This action is in general not commutative, even if $H$ as a group is commutative.
Indeed,  $(M_{xz})^2(u)= x z\inv u x\inv z$ may be a non-trivial map on $G$, although
it is trivial on $H$.
\end{remark}

\begin{remark}
Assume, in the preceding situation, that  $G$ is a compact Lie group and $H$ a 
maximal torus.  Then the elements $M_{x,x\inv}$ with $x^2 \in Z(G)$ (in particular, those
with $x^2 =e$) are of order $2$. On the other hand, when $x$ normalizes $H$, they 
stabilize $H$, and when $x$ centralizes $H$, they act trivially on $H$.
Taken together, we get the following interpretation of the Weyl group, together with
its set of generators of order two: it is the torsor of middle multiplication operators stabilizing $H$ and $e$,
generated by its involutive elements.  
\end{remark}

\subsection{Left  and right torsor actions}

\begin{lemma}[Left  and right action] \label{L-Lemma}\label{LR-Lemma}
For an inversive  torsor action, 
the {\em left translation}  $L_{xv} \in \Bij(X)$,  defined by
$$
L_{xv}:= M_{xz} \circ M_{zv} ,
$$
depends only on $x,v \in G$, but not on the choice of $z$. We have   the identities
\begin{enumerate}
\item[(LTA1)] $L_{xx} = \id_X$,
\item[(LTA2)] $L_{xv}L_{uw} = L_{(xvu),w} = L_{x,(wuv)}$. 
\end{enumerate}
Similarly,
the {\em right translation}
 $R_{vx}:= M_{zv}\circ M_{xz}$ does not depend on $z$.
 Moreover,
for any inversive  torsor action,
left and right translations commute:
$$
L_{xv}\circ R_{yw}= R_{yw}\circ L_{xv} ,
$$
and if the symmetry action is commutative, then left- and right action agree:
$L_{xv}=R_{xv}$.
\end{lemma}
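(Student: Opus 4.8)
The statement to be proved is Lemma \ref{L-Lemma}/\ref{LR-Lemma} (Left and right action), asserting that for an inversive torsor action $M:G\times G\to\Bij(X)$ the left translation $L_{xv}:=M_{xz}\circ M_{zv}$ is independent of $z$, satisfies (LTA1)--(LTA2), that the dual statements hold for right translations $R_{vx}:=M_{zv}\circ M_{xz}$, that left and right translations commute, and that $L_{xv}=R_{xv}$ when the action is commutative. The plan is to work entirely from the axioms (STA1), (STA2) (and (CTA) for the last part), using the middle Chasles relation (\ref{eqn:MiddleChasles}) which has already been derived in the excerpt by specializing (STA2).

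\textbf{Step 1 (independence of $z$).} For two choices $z,z'\in G$ I would compute
\[
M_{xz}M_{zv}
\]
and show it equals $M_{xz'}M_{z'v}$. Insert $\id_X=M_{z'z}M_{zz'}$ (by (STA1)) to write $M_{xz}M_{zv}=M_{xz}M_{zz'}M_{z'z}M_{zv}$, then group as $(M_{xz}M_{zz'}M_{z'z})M_{zv}$; apply (STA2) to the first three factors with $(x,z,z,z',z',z)\mapsto$ the pattern $M_{xz}M_{z'z'}M_{z'z}$ — more directly, use the middle Chasles relation $M_{xz}M_{zz'}M_{z'z}=M_{xz}$ is false; rather the cleanest route is: by (\ref{eqn:MiddleChasles}), $M_{xz'}M_{z'z}M_{zv}=M_{xv}$ only fails because that has three factors giving $M_{xv}$... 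Let me instead note $M_{xz}M_{zv}$ and $M_{xz'}M_{z'v}$ and compare via: $M_{xz'}M_{z'v}=M_{xz'}M_{z'z}M_{zv}$ would need $M_{z'v}=M_{z'z}M_{zv}$, which is exactly what we want to prove. The honest approach: apply (STA2) directly. $M_{xz}\circ M_{zv}$: to inject $z'$, write $M_{zv}=M_{zz'}M_{z'z}M_{zv}$ (using (STA1) on the first two), then $M_{z'z}M_{zv}=$? Apply (STA2) to $M_{z'z}\circ M_{vv}\circ M_{zv}$? Since $M_{vv}=\id_X$ is not given. The safe computation is: apply (STA2) to the triple $M_{xz}\circ M_{vz}\circ M_{z'v}$ if $z'$ enters, yielding $M_{(xvz'),(vvz)}$. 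I would search among the specializations of (STA2) for the identity that collapses $M_{xz}M_{zv}$ to a form manifestly symmetric in the choice of intermediate point; the key is that (STA2) with $u=z$, $a=v$ gives $M_{xz}M_{zv}M_{vb}=M_{(xvv)(bzz)}=M_{x,b}$ — wait that's (\ref{eqn:MiddleChasles}) again — so $M_{xz}M_{zv}=M_{xb}M_{bv}^{-1}\cdot$... Precisely: from $M_{xz}M_{zv}M_{vv'}M_{v'v}=M_{xz}M_{zv}$ and $M_{xz}M_{zv}M_{vv'}=M_{xv'}$ (middle Chasles), we get $M_{xz}M_{zv}=M_{xv'}M_{v'v}$, which is independence. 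Then set $v'=x$ to get the alternative formula $L_{xv}=M_{xx}M_{xv}$; combined with (STA1)-type manipulation this also yields (LTA1) since $L_{xx}=M_{xz}M_{zx}=\id_X$.

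\textbf{Step 2 (the translation identities and commuting).} For (LTA1), $L_{xx}=M_{xz}M_{zx}=\id_X$ by (STA1) directly. For (LTA2), $L_{xv}L_{uw}=M_{xz}M_{zv}M_{uz'}M_{z'w}$; using $z$-independence I would choose intermediate points cleverly (e.g.\ $z=v$, $z'=v$) to get $M_{xv}M_{vv}M_{uv}M_{vw}$, reduce $M_{vv}=\id_X$? That requires $M_{vv}=\id_X$, which follows from (STA1) with $x=z=v$: $M_{vv}M_{vv}=\id_X$ — not enough. Better: pick $z=v$ so $L_{xv}=M_{xv}M_{vv}$... Instead use $L_{xv}=M_{xz}M_{zv}$ with $z$ chosen to match $L_{uw}=M_{uz'}M_{z'w}$ with $z'$; set $z'=u$ so $L_{uw}=M_{uu}M_{uw}$ and set $z=v,$ then also choose the free intermediate in the first to be $u$: $L_{xv}=M_{xu}M_{uv}$. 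Then $L_{xv}L_{uw}=M_{xu}M_{uv}M_{uu}M_{uw}$; now apply (STA2) to the middle three $M_{uv}M_{uu}M_{uw}=M_{(uuu),(uuw)}=M_{uw}$ giving $L_{xv}L_{uw}=M_{xu}M_{uw}=L_{xw}$ — not quite (LTA2), but adjusting the bookkeeping with the para-associative identity (PA) in $G$ (available by Lemma \ref{TorsorLemma}) will produce $L_{(xvu),w}$. The right-translation statements follow by the left/right-symmetric version of every step (or by passing to the opposite torsor). For commuting of $L$ and $R$: $L_{xv}=M_{xz}M_{zv}$ and $R_{yw}=M_{z'y}M_{wz'}$; choosing common intermediate points and applying (STA2) repeatedly, the operators $M_{\bullet}$ reshuffle so that the two products pass through each other — this is where (STA2) does the real work. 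Finally, if (CTA) holds then every $M_{xz}=M_{zx}$, so $L_{xv}=M_{xz}M_{zv}=M_{zx}M_{vz}=R$-type expression $=R_{xv}$ after one application of the $z$-independence already proved.

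\textbf{Main obstacle.} The genuine difficulty is Step 1 together with the commuting assertion: (STA2) is a single quite rigid relation, and extracting from it the ``$z$-independence'' and ``$L$ commutes with $R$'' requires inserting $\id_X=M_{zz'}M_{z'z}$ at the right spots and then recognizing the resulting five- or six-fold products as instances of (STA2) or of iterated middle Chasles (\ref{eqn:MiddleChasles}); keeping the arguments of the $M$'s straight (they are elements of the torsor $G$, combined via its ternary product) is the bookkeeping bottleneck. Everything else — (LTA1), the opposite-torsor symmetry giving the right-translation statements, and the commutative case — is then formal. I would organize the writeup so that $z$-independence is proved first and used freely thereafter, since it lets one always choose the most convenient intermediate point.
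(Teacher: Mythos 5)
Your overall route is the paper's: independence of $z$ is obtained by inserting $M_{vv'}M_{v'v}=\id_X$ via (STA1) and collapsing the first three factors with the middle Chasles relation (\ref{eqn:MiddleChasles}); (LTA1) is immediate from (STA1); the right-hand statements follow by left/right symmetry; and the commutative case follows from (CTA). Your closing argument in Step~1 is exactly the paper's, and your derivation of $L_{xv}=R_{xv}$ is correct (the paper instead computes $L_{xv}\circ R_{vx}=M_{xx}M_{xv}M_{xv}M_{xx}=\id_X$ and inverts; both work).

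There are, however, two genuine problems. First, your verification of (LTA2) contains an index error in applying (STA2). With $L_{xv}=M_{xu}M_{uv}$ and $L_{uw}=M_{uu}M_{uw}$ you claim $M_{uv}M_{uu}M_{uw}=M_{(uuu),(uuw)}=M_{uw}$ and hence $L_{xv}L_{uw}=L_{xw}$; this is false in general (in the regular action of a group, $L_{xv}$ is left multiplication by $xv\inv$, so $L_{xv}L_{uw}$ is left multiplication by $xv\inv uw\inv$, not by $xw\inv$). Reading (STA2), namely $M_{xz}M_{uv}M_{ab}=M_{(xva),(buz)}$, correctly on the triple $M_{uv}\,M_{uu}\,M_{uw}$ gives first indices $(u,u,u)$ and second indices $(v,u,w)$, hence the product is $M_{(uuu),(wuv)}=M_{u,(wuv)}$, and therefore $L_{xv}L_{uw}=M_{xu}M_{u,(wuv)}=L_{x,(wuv)}$ --- which is precisely the second form of (LTA2). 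So your setup works as is; the proposed patch via (PA) is unnecessary and does not address the actual mistake, and had you kept the wrong identity $L_{xv}L_{uw}=L_{xw}$ the rest of the lemma (in particular the Chasles relation and the reduction used for commutation) would collapse. Second, the commutation $L_{xv}\circ R_{yw}=R_{yw}\circ L_{xv}$ is the one claim you never actually establish: ``the operators reshuffle'' is not an argument, and this is the only place where (STA2) is needed in its full strength rather than in its Chasles specialization. The paper first uses the left (and right) Chasles relations $L_{xv}L_{vw}=L_{xw}$ --- consequences of (LTA2), hence unavailable to you until (LTA2) is fixed --- to reduce to a single convenient configuration of indices, and then exhibits one explicit choice of intermediate points for which a direct application of (STA2) finishes the computation; you should carry out at least that much explicitly.
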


\begin{proof}
All claims are checked by direct computations. 
We show that $L_{xv}$ is well-defined: indeed, the equality  
$M_{xz} M_{zv} = M_{xw} M_{wv}$ is a direct consequence of (\ref{eqn:MiddleChasles}) and (STA1). 
In order to show that $L_{xv}$ and $R_{wu}$ commute, we may first reduce to the case
$x=w$, by observing that (LTA2) gives us  the {\em left Chasles relation}
\begin{equation}\label{eqn:Chasles}
L_{xv}\circ L_{vw}=L_{xw}.
\end{equation}
The relation $L_{xv} R_{xw}=R_{xw} L_{xv}$ is now proved by making appropriate choices
when expressing the $L$- and $R$-operators by two $M$-operators.
Finally, assuming that  the action is commutative, we get
$
L_{xv} \circ R_{vx} = M_{xx}M_{xv} M_{xv} M_{xx} = \id_X
$.
\end{proof}

\begin{corollary}[Transplantation formula]\label{la:Transp} Given
 a {\em  commutative} inversive  torsor action, we have, for all $x,o,z \in G$,
\[
M_{xz} = M_{xo}M_{oo}M_{zo} = M_{(xoz),o} = M_{m_{xz}(o),o}.
\]
\end{corollary}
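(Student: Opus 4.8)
The plan is to read off both equalities directly from the inversive-action axioms (STA1), (STA2) together with commutativity (CTA), using only the idempotency identity (IP) of the underlying torsor to simplify the parameters. No limit or regularity argument is needed; this is a short formal computation, and the only real care required is in matching up the arguments of (STA2) correctly.

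First I would establish the chain $M_{xo}M_{oo}M_{zo}=M_{(xoz),(ooo)}$. This is exactly (STA2) applied with the triple of index pairs $(x,o)$, $(o,o)$, $(z,o)$: in the notation $M_{xz}\circ M_{uv}\circ M_{ab}=M_{(xva),(buz)}$ of (STA2), take ``$x$''$=x$, ``$z$''$=o$, ``$u$''$=o$, ``$v$''$=o$, ``$a$''$=z$, ``$b$''$=o$, so that $(xva)=(xoz)$ and $(buz)=(ooo)$. The torsor idempotency identity (IP), $(ttw)=w$, gives $(ooo)=o$, and by definition of the middle multiplication operator in a torsor (Lemma \ref{TorsorLemma}) we have $(xoz)=m_{xz}(o)$. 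Hence $M_{xo}M_{oo}M_{zo}=M_{(xoz),o}=M_{m_{xz}(o),o}$, which is the last equality claimed, and it simultaneously identifies the middle expression.

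It remains to identify $M_{xo}M_{oo}M_{zo}$ with $M_{xz}$. Here I would use commutativity (CTA) to rewrite $M_{zo}=M_{oz}$, reducing the problem to showing $M_{xo}M_{oo}M_{oz}=M_{xz}$. This is an instance of the middle Chasles relation (\ref{eqn:MiddleChasles}), namely $M_{x'z'}\circ M_{z'a'}\circ M_{a'b'}=M_{x'b'}$, taken with $x'=x$, $z'=o$, $a'=o$, $b'=z$ (the intermediate point being chosen to be $o$ at both internal slots). Chaining the two computations yields $M_{xz}=M_{xo}M_{oo}M_{zo}=M_{(xoz),o}=M_{m_{xz}(o),o}$, as asserted.

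I do not anticipate any genuine obstacle: the proof is three lines once the substitutions are spelled out. The one place to be attentive is the bookkeeping in (STA2) — getting the positions of $x,o,z$ right so that the torsor triples collapse correctly via (IP) — and making explicit that commutativity is what allows the passage from $M_{zo}$ to $M_{oz}$ needed to bring the expression into the form governed by (\ref{eqn:MiddleChasles}).
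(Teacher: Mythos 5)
Your proof is correct and follows essentially the same route as the paper: both arguments hinge on using commutativity (CTA) to replace $M_{zo}$ by $M_{oz}$ and then telescoping the product to $M_{xz}$ (the paper via the left-translation operator $L_{xo}$ and (STA1), you via the middle Chasles relation, which are the same computation). Your version is in fact slightly more complete, since you also verify the middle equality $M_{xo}M_{oo}M_{zo}=M_{(xoz),o}$ by a direct application of (STA2) and (IP), a step the paper's one-line proof leaves implicit.
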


\begin{proof}
$M_{xo}M_{oo}M_{zo}  = L_{xo} M_{zo} = L_{xo}M_{oz} = M_{xz}$
\end{proof}

\nin
There are some other algebraic identities valid for every inversive
 torsor action, such as the {\em intertwining relation between left and right actions}
\begin{equation}\label{eqn:Int}
M_{xx} \circ  L_{vx} \circ M_{xx} =  R_{(xvx),x} 
\end{equation}
which can also be written, for $x=e$ and $M_{ee}(g)=j(g)=g\inv$, and $L_g:=L_{g,e}$,
\begin{equation}\label{eqn:Int'}
j \circ L_g = R_{g\inv} \circ j .
\end{equation}
 Note also that identities for $R$-operators correspond to identities for $L$-operators,
with reversed composition in $\Bij(X)$ and reversed order of indices. 

\begin{definition}
A {\em left torsor action} of a torsor $G$ on a set $X$ is a map
$$
G \times G \to \Bij(X), \qquad (x,y) \mapsto L_{xy}
$$
(if there is risk of confusion we write also $L_{x,y}$ instead of $L_{xy}$)
such that the identities (LTA1) and (LTA2) from the preceding lemma hold. 
\nin {\em Right actions} are defined similarly.
The {\em regular left (right) action of $G$ on itself} is defined by the lemma.
\end{definition}

\begin{lemma}
Let $G$ be a group with neutral element $e$ and its usual torsor structure $(xyz)=xy\inv z$. 
Then we have an equivalence of categories between left group actions of $G$ and
left torsor actions of $G$.
\end{lemma}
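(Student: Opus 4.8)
The plan is to set up an explicit functor in each direction and show the two compositions are naturally isomorphic to the identity. Given a group $G$ with its canonical torsor structure $(xyz)=xy\inv z$, and a left torsor action $G\times G\to\Bij(X)$, $(x,y)\mapsto L_{xy}$ satisfying (LTA1) and (LTA2), I would define a left group action $G\times X\to X$ by $g\cdot p:=L_{g,e}(p)$. Conversely, given a left group action $(g,p)\mapsto g\cdot p$, define $L_{xy}(p):=(xy\inv)\cdot p$. The bulk of the work is checking these are well-defined objects of the respective categories and that the assignments are mutually inverse; both verifications are short computations, so I will not grind through them all, but I will indicate the mechanism.

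First I would verify that $g\mapsto(p\mapsto L_{g,e}(p))$ is a group action. The identity axiom is exactly (LTA1) applied at $x=e$. For the composition axiom, specialize (LTA2): with $v=e$ and $u=w=h$ we get $L_{x,e}L_{h,e}=L_{(xeh),e}=L_{xh,e}$ since $(xeh)=xe\inv h=xh$ in a group. Hence $(g\cdot)(h\cdot)=(gh)\cdot$, as required. In the other direction, starting from a group action and setting $L_{xy}(p)=(xy\inv)\cdot p$, (LTA1) is $xx\inv=e$ acting as the identity, and (LTA2) reduces to the identity $(xy\inv)(uw\inv)=(xy\inv u)w\inv=x(wu\inv y)\inv$ in $G$, which holds since $(xvu)=xv\inv u$ and $(wuv)=wu\inv v$ by definition of the canonical torsor structure. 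So the functor lands in left torsor actions.

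Next I would check the round trips. Starting from a left torsor action $L$, passing to the group action $g\cdot p=L_{g,e}(p)$ and back gives the torsor action $(x,y)\mapsto(p\mapsto L_{xy\inv,e}(p))$; but (LTA2) with the substitution $(x,y,u,w)\mapsto(x,y,y,e)$, using the left Chasles relation $L_{xy}L_{yw}=L_{xw}$ (equation (\ref{eqn:Chasles}), which follows from (LTA1) and (LTA2)), yields $L_{xy\inv,e}=L_{xy\inv,e}L_{e,e}$ — more directly, $L_{xy}=L_{x y\inv \cdot y,\, e\cdot y}$; I would instead argue cleanly via $L_{xy}=L_{x,e}L_{e,y}$ and $L_{e,y}=L_{y\inv,e}\inv=L_{e,y\inv}$-type identities to conclude $L_{xy}(p)=L_{xy\inv,e}(p)$. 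Starting from a group action, the round trip is immediate: $g\cdot p\mapsto L_{g,e}\mapsto (ge\inv)\cdot p=g\cdot p$. Finally I would note that both assignments are compatible with morphisms (an equivariant map for one structure is equivariant for the other, by the same formulas), so we obtain functors, and the natural isomorphisms just exhibited witness the equivalence of categories.

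The main obstacle is the first leg of the round trip: showing that the torsor action recovered from its associated group action equals the original. The subtlety is that a torsor action $(x,y)\mapsto L_{xy}$ a priori depends on \emph{both} arguments, whereas the group action only remembers the "difference" $xy\inv$; one must use (LTA1) and (LTA2) to prove that $L_{xy}$ genuinely depends only on $xy\inv$, i.e. $L_{xy}=L_{uv}$ whenever $xy\inv=uv\inv$. This follows from the Chasles relation (\ref{eqn:Chasles}) together with the fact that $L_{xx}=\id$, but pinning down the exact chain of substitutions into (LTA2) is the one place where care is needed; everything else is bookkeeping with the canonical torsor structure $xy\inv z$.
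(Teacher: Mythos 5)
Your proposal is correct and takes essentially the same route as the paper, whose proof consists precisely of the two formulas $L_g := L_{g,e}$ and $L_{x,y} := L_x \circ (L_y)^{-1}$ followed by "a straightforward check of definitions"; you have simply written out the checks, and you correctly isolate the one nontrivial point, namely that $L_{xy}$ depends only on $xy^{-1}$, which does follow from (LTA1), (LTA2) and the Chasles relation as you say. Two small slips worth fixing: in the composition axiom the substitution into (LTA2) should be $v=e$, $u=h$, $w=e$ (not $u=w=h$, which would make $L_{uw}=\id$), and the identity you need is $L_{e,y}=L_{y^{-1},e}$ with no inverse on the right (your chain $L_{e,y}=L_{y^{-1},e}^{-1}=L_{e,y^{-1}}$ would wrongly force $L_{y,e}=L_{y^{-1},e}$) — neither affects the validity of the argument.
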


\begin{proof}
Given a left group action $G \times X \to X$, $(g,x) \mapsto L_g(x)$, we let 
$$
L_{x,y}:= L_x (L_y)\inv .
$$
Conversely, given a left torsor action, let $L_g := L_{g,e}$, and the claim
follows by a straightforward check of definitions.
\end{proof}

\subsection{On the structure of inversive actions}
The preceding two lemmas say that left  and right actions of torsors are nothing new, compared
to usual group actions, whereas inversive  actions are commutig left  and right actions
together with some operator $j$ satisfying the intertwining relation (\ref{eqn:Int'}).
One may check that, conversely, if we have commuting left and right actions of a group $G$
on a set $X$, together with a map of order two $j:X\to X$ satisfying the intertwining relation,
we can reconstruct an inversive  torsor action.

Motivated by this obervation, 
one will look at the behaviour of $G \times G$-orbits $\mathcal O$ under $j$.
If $j ({\mathcal O}) \cap {\mathcal O}$ is empty, then $j$ is equivalent to the exchange map
between two copies of this orbit, exchanging ``left'' and ``right''.
In the other case, one will have to distinguish whether $j$ has a fixed point in $\mathcal O$, or not.
If there is a fixed point $p$, the stabilizer $H$ of $p$ must be a normal subgroup, and we
get a version of the regular symmetry action on the quotient group $G/H$.
The remaining case, where $j$ has no fixed point in $\mathcal O$, seems to be more
difficult to analyze. 

\subsection{Reflection spaces and symmetric spaces}

\begin{definition} \label{ReflectionDef}
A {\em  reflection space} is a set together with a map
$s: M \to \Bij(M)$, $x \mapsto s_x$ such that 
the following identities hold:

\begin{enumerate}
\item[(R1)]  (idempotency) $s_x(x)=x$,
\item[(R2)] (inversivity)  $s_x \circ s_x = \id_M$,
\item[(R3)] (distributivity)  $s_x s_z s_x = s_{s_x (z)}$.
\end{enumerate}

\nin
Reflection spaces form a category. The subgroup $G(M)$ of $\Aut(M,\mu)$ generated by
all $s_x s_y$ with $(x,y) \in M^2$ is called the {\em transvection group} of $M$.
If, moreover, $M$ is a Weil  manifold (cf.\ subsection \ref{ssec:Weils}), then $M$ is called a {\em symmetric space} if,
 for
every $x \in M$, the tangent map $T_x (s_s)$ of $s_x$ at its fixed point $x$ is equal to
$ - \id_{T_x M}$.
\end{definition}

\subsection{Symmetry action of a reflection space}

\begin{definition}\label{RefActionDef}
Let $M$ be a reflection space and $X$ a set.
A {\em symmetry action of $M$ on $X$} is a map
$
M \to \Bij(X)$, $x \mapsto S_x
$
such that 
\begin{enumerate}
\item[(S1)]
$S_x \circ S_x = \id_X$,
\item[(S2)]
$S_x S_y S_x = S_{s_x(y)}$.
\end{enumerate}
For $X=M$, we have a symmetry action of $M$ on itself given by $S_x=s_x$, which
we call the {\em regular symmetry action (of $M$ on itself)}.
As above, categorial notions are defined. 
\end{definition}

\begin{lemma}
If $(x,z) \mapsto M_{xz}$ is an inversive  action of a torsor $G$, then we get 
a symmetry action of $G$, seen as reflection space, by
$G \to \Bij(X)$,
$x \mapsto S_x:=M_{xx}$.
\end{lemma}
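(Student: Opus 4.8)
The statement to prove is that if $(x,z)\mapsto M_{xz}$ is an inversive torsor action of a torsor $G$ on $X$, then $x\mapsto S_x:=M_{xx}$ is a symmetry action of $G$ (viewed as a reflection space under $s_x(y)=(xyx)=m_{xx}(x)$, i.e. the regular reflection-space structure of the torsor). The plan is to verify directly the two axioms (S1) and (S2) from Definition \ref{RefActionDef}, using only the two defining identities (STA1) and (STA2) of an inversive torsor action. First I would record that (S1) is immediate: setting $z=x$ in (STA1) gives $M_{xx}\circ M_{xx}=\id_X$, that is, $S_x\circ S_x=\id_X$.

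The substantive point is (S2), namely $S_xS_yS_x=S_{s_x(y)}$, which unwinds to $M_{xx}\circ M_{yy}\circ M_{xx}=M_{ww}$ with $w=s_x(y)=(xyx)=m_{xx}(y)$. Here I would simply specialise (STA2), $M_{xz}\circ M_{uv}\circ M_{ab}=M_{(xva),(buz)}$, to the case where the first and third factors are the ``diagonal'' operators: take $(x,z)\mapsto(x,x)$, $(u,v)\mapsto(y,y)$, $(a,b)\mapsto(x,x)$. The left side becomes $M_{xx}\circ M_{yy}\circ M_{xx}$, and the right side becomes $M_{(xyx),(xyx)}=M_{ww}$ since both index slots evaluate to $(xyx)=w$. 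Thus $S_xS_yS_x=S_{s_x(y)}$, which is (S2). It is also worth remarking (perhaps in one line) that this confirms compatibility with the reflection-space structure on $G$: the regular reflection structure of a torsor is $s_x(y)=m_{xx}(y)=(xyx)$, and one should check this indeed satisfies (R1)--(R3), but since $G$ with its middle multiplication $m$ is already a torsor (Lemma \ref{TorsorLemma}) and the regular inversive action of $G$ on itself is given by $M_{xz}=m_{xz}$, the reflection-space axioms for $s_x=m_{xx}$ follow from the very identities (S1),(S2) just established applied to the regular action — or one checks them directly from (PA) and (IP).

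I do not anticipate a genuine obstacle here: the whole content is a pair of one-line specialisations of (STA1) and (STA2), exactly parallel to Lemma \ref{la:asstojo} (the associative-to-Jordan functor) and to Lemma \ref{L-Lemma}. The only mild care needed is bookkeeping of the index substitutions in (STA2) — making sure that with $u=v=y$ and $x=z$ and $a=b=x$ the combined indices $(xva)$ and $(buz)$ both collapse to $(xyx)$, which uses nothing beyond substitution. If one wants to be thorough, one can additionally note that $S_x$ being an honest bijection (hence the action landing in $\Bij(X)$) is already built into the definition of an inversive torsor action, so no separate argument is required. The proof is therefore: (S1) from (STA1) with $z=x$; (S2) from (STA2) with the indicated diagonal choice of arguments; done.
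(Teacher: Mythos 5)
Your proof is correct and follows essentially the same route as the paper: the paper's own proof is exactly the diagonal specialisation of (STA2), computing $M_{xx}M_{yy}M_{xx}=M_{(xyx),(xyx)}=S_{s_x(y)}$, with (S1) left implicit as the $z=x$ case of (STA1). Your index bookkeeping is accurate and nothing further is needed.
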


\begin{proof}
$S_x S_y S_x = M_{xx} M_{yy}M_{xx}=M_{(xyx),(xyx)} = M_{s_x(y),x_x(y)} = S_{s_x(y)}$
\end{proof}

In general, left or right actions of $G$ do not give rise to symmetry actions of the symmetric space
$G$;
and in general, symmetry actions of reflection spaces do not give rise to actions
of the group $G(M)$ (cf.\ remarks in \cite{Be00}: already on the infinitesimal level this does not
hold since the standard imbedding of a Lie triple system is in general not functorial).

\begin{definition}\label{def:Transv}
Given a symmetry action $M \to \Bij(X)$,
$x \mapsto S_x$, we define the {\em transvection operators} by
$
Q_{xy}:=S_x S_y  \in \Bij(X) .
$
\end{definition}

These operators share some properties with the translation operators of left or right torsor actions:
we have an analog of the Chasles relation (\ref{eqn:Chasles})
$Q_{xy}Q_{yz} = Q_{xz}$, and $Q_{xx}=\id_X$, whence $(Q_{xy})\inv = Q_{yx}$,
but in contrast to left and right translations, 
the composition of two transvections is in general no longer a transvection. 
Instead, we have the {\em fundamental formula}
\begin{equation}\label{eqn:Fu}
Q_{xy} Q_{zy} Q_{xy} = S_x S_y S_z S_y S_x S_y = S_{S_x S_y (z)} S_y =
Q_{Q_{xy},y} \, .
\end{equation}


\begin{thebibliography}{BeNe05}

\bibitem[Ba73]{Ba73}
F.\ Bachmann,
{\em Aufbau der Geometrie aus dem Spiegelungsbegriff},
Springer Grundlehren Band 96, Springer, Berlin 1973.


\bibitem[Be00]{Be00}
W.\ Bertram, {\it The geometry of Jordan- and Lie structures},
Springer LNM 1754, Springer, Berlin 2000.

\bibitem[Be02]{Be02}
W.~Bertram,
Generalized projective geometries: general theory and equivalence with Jordan structures,
\textit{Adv. Geom.} \textbf{2} (2002), 329--369 (electronic version: preprint 90 on Jordan preprint server
\url{http://molle.fernuni-hagen.de/~loos/jordan/index.html}).

\bibitem[Be03]{Be03}
W.~Bertram,
The geometry of null systems, Jordan algebras and von Staudt's Theorem,
\textit{Ann. Inst. Fourier} \textbf{53} (2003) fasc. 1, 193--225 (preprint 113, Jordan server).

\bibitem[Be04]{Be04}
W.~Bertram, From linear algebra via affine algebra to projective algebra,
\textit{Linear Algebra and its Applications} \textbf{378} (2004), 109--134 (preprint 89, Jordan server).




\bibitem[Be08]{Be08}
W.~Bertram,
Differential Geometry, Lie Groups and Symmetric Spaces over General Base Fields and Rings. 
  Mem.\  AMS 192, no.900 (2008),                         
       arXiv \url{http://arxiv.org/abs/math/0502168}.

\bibitem[Be08b]{Be08b}
W.~Bertram,
Homotopes and conformal deformations of symmetric spaces.
\textit{J. Lie Theory} \textbf{18} (2008), 301--333;
arXiv \url{http://arxiv.org/abs/math.RA/0606449}.


\bibitem[Be14]{Be14}
W.\ Bertram, Weil spaces and Weil Lie groups, preprint, 
\url{http://arxiv.org/abs/1402.2619}

\bibitem[BeKi09a]{BeKi1}
W.~Bertram and M.~Kinyon,
Associative Geometries. I: Torsors, Linear Relations and Grassmannians,
 J.\  Lie Theory {\bf 20} (2) (2010), 215-252.    
arXiv \url{http://arxiv.org/abs/0903.5441}.


\bibitem[BeKi09b]{BeKi2}
W.~Bertram and M.~Kinyon,
Associative Geometries. II: Involutions and classical groups,
J.\ Lie Theory {\bf 20} (2) (2010), 253-282.
arXiv \url{http://arxiv.org/abs/0909.4438}.

\bibitem[BeKi12]{BeKi12}
W.~Bertram and M.~Kinyon,
Torsors and ternary Moufang loops arising in projective geometry,
arxiv \url{http://arxiv.org/abs/math/1206.2222}.


\bibitem[BeL08]{BeL08}
W.~Bertram and H.~Loewe,
Inner ideals and intrinsic subspaces,
\textit{Adv. in Geometry} \textbf{8} (2008), 53--85;
arXiv \url{http://arxiv.org/abs/math/0606448}.

\bibitem[BeNe04]{BeNe04}
W.~Bertram and K.-H.~Neeb,
Projective completions of Jordan pairs. Part I: The generalized projective geometry of a Lie algebra, 
 J. of Algebra 227 , 2 (2004), 474--519;
       arXiv \url{http://arxiv.org/abs/math/0306272}.


\bibitem[BeNe05]{BeNe05}
W.~Bertram and K.-H.~Neeb,
Projective completions of Jordan pairs. II: Manifold structures and symmetric spaces,
\textit{Geom. Dedicata} \textbf{112} (2005), 73 -- 113;
arXiv \url{http://arxiv.org/abs/math/0401236}.

\bibitem[BeS11]{BeS11}
W.\ Bertram and A.\ Souvay,
A general approach to Weil functors,
arxiv \url{http://arxiv.org/abs/1111.2463}.


\bibitem[Bue]{Bue}
Buekenhout, F. (ed.), 
{\em Handbook of Incidence Geometry -- Buildings and Foundations}, Elsevier, 1995.




\bibitem[KMS93]{KMS}
I.\ Kolar, P.\ Michor and J.\  Slovak, {\em Natural Operations in Differential Geometry}, Springer 1993.

\bibitem[Lo67]{Lo67}
O.\ Loos, Spiegelungsr\"aume und homogene symmetrische R\"aume, Math.\ Z.\ {\bf 99} (1976), 
141 -- 170. 


\bibitem[Lo69]{Lo69}
O.~Loos,
\textit{Symmetric Spaces I},
Benjamin, New York, 1969.


\bibitem[Lo75]{Lo75}
O.~Loos,
\textit{Jordan Pairs},
Lecture Notes in Math. \textbf{460}, Springer, New York, 1975.

\bibitem[Lo79]{Lo79}
O.~Loos,
On algebraic groups defined by Jordan pairs, Nagoya math.\ J.\ {\bf 74} (1979), 23 -- 66. 

\bibitem[Lo95]{Lo95}
O.~Loos,
Elementary groups and stability for Jordan pairs,
$K$-Theory {\bf 9} (1995), 77--116.

\bibitem[Sp73]{Sp}
T.\ Springer,
{\it Jordan algebras and algebraic groups}, 
Springer-Verlag, Berlin 1973.

\bibitem[St67]{St67}
R.\ Steinberg,
{\it Lectures on Chevelley Groups}, Yale University, 1967.

\bibitem[Wi81]{Wi}
J.B.\ Wilker, 
Inversive Geometry, p. 379--442 
in: {\it The Geometric Vein} - Coxeter Festschrift (editors Davis et al.), Springer 1981.

\end{thebibliography}
\end{document}